\newcommand{\by}[1]{\stackrel{#1}{\rightarrow}}
\newcommand{\longby}[1]{\stackrel{#1}{\longrightarrow}}
\newcommand{\ooplus}{\displaystyle{\mathop\oplus}}
\newcommand{\ol}{\overline}
\newcommand{\wt}{\widetilde}
\newcommand{\iso}{\by \sim}
\newcommand{\ra}{\rightarrow}
\newcommand{\surj}{\ra\!\!\!\ra}
\newtheorem{theorem}{Theorem}[chapter]
\newtheorem{proposition}[theorem]{Proposition}
\newtheorem{lemma}[theorem]{Lemma}
\newtheorem{corollary}[theorem]{Corollary}
\newcommand{\Z}{\mbox{$\mathbb Z$}}	
\newcommand{\Q}{\mbox{$\mathbb Q$}} 	
\newcommand{\R}{\mbox{$\mathbb R$}}     
\newcommand{\C}{\mbox{$\mathbb C$}}     
\newcommand{\M}{\mbox{$\mathcal M$}}
\newcommand{\n}{\mbox{$\mathcal N$}}
\newcommand{\gj}{\blacksquare}
\newcommand{\q}{\mbox{$\mathfrak q$}}
\newcommand{\p}{\mbox{$\mathfrak p$}}
\newcommand{\m}{\mbox{$\mathfrak m$}}
\newcommand{\G}{\mbox{$\mathfrak a$}}
\newcommand{\hh}{\mbox{\rm ht\,}}	\newcommand{\Hom}{\mbox{\rm Hom}}
\newcommand{\Um}{\mbox{\rm Um}}		\newcommand{\SL}{\mbox{\rm SL}}
\newcommand{\GL}{\mbox{\rm GL}}		
\newcommand{\id}{\mbox{\rm Id}}
\def\Com{\UseComputerModernTips}
\begin{document}

\thispagestyle{empty}
\begin{center}
	\vspace*{.5in}
	{\LARGE \bf Euler Class Group of a Noetherian Ring}
\end{center}
\begin{center}
	\vspace*{1in}
	{\large   A thesis}\\ 
	{\large   Submitted to the University of Mumbai}\\
	{\large   for the Degree of Master of Philosophy}
\end{center}
\begin{center}	
	\vspace*{.6in}
	by\\
   	{\Large \bf  Manoj Kumar Keshari}
\end{center}
\begin{center}	
	\vspace*{1.3in}
	{\large   Tata Institute of Fundamental Research}\\
	{\large  Mumbai}\\
	{2001}
\end{center}
\newpage
\begin{center}
	\vspace*{.5in}
	{\Large \bf   CERTIFICATE}
\end{center}
	\vspace*{.2in}
\begin{center}
\begin{minipage}{4.5in}
\parindent=0pt Certified that the work contained in the thesis
entitled \linebreak {\bf Euler class group of a Noetherian ring}, by
{\bf Manoj Kumar Keshari}, has been carried out under my supervision
and that this work has not been submitted elsewhere for a degree.
\end{minipage}
\end{center}
\vspace{2in}
\hspace{2cm}
\hbox to \hsize{
 	\vbox{\hsize 2in\hrule 
	\vspace{.3cm}{(Prof. S. M. Bhatwadekar)} }
	\hspace{.4in}
	\vbox{\hsize 2in\hrule 
	\vspace{.3cm}{(Manoj Kumar Keshari)} }
	\hfill
    	}
\newpage

\begin{center}
{\Large \bf ACKNOWLEDGEMENTS} 
\end{center}
\vspace*{.5in}
It gives me great pleasure to express my deep sense of
gratitude and thanks to my supervisor Prof. S. M. Bhatwadekar, for
his valuable guidance and  suggestions throughout my thesis work. This
work is an outcome of the reading course I undertook with him over the
past one-and-half years. 

My sincere thanks to Dr. Raja Sridharan for helpful discussions. 
My thanks are also due to Prof. R. Sridharan 
for his guidance and help during my first year in TIFR. 
    
I would like to thank my friends for their
encouragement. Finally, I thank my parents for the support they have
given me all these years.

\setcounter{chapter}{-1}
\tableofcontents
\newpage


\chapter{Introduction}
	Let $A$ be a commutative Noetherian ring of dimension $n$ and
	let $P$ be a projective $A$-module. Then $P$ is said to have a
	unimodular element if there exists a surjective $A$-linear map
	$\phi : P \surj A$ (in other words $P \iso Q\oplus A$).  A
	classical theorem of Serre (\cite{SM}, Theorem 4.2.1) asserts
	that if $P$ is a projective $A$-module of rank $> n$, then $P$
	has a unimodular element.  This result is the best possible in
	the general. A standard example to show this is:

	Let $A = \R[X,Y,Z]/(X^2+Y^2+Z^2-1) = \R[x,y,z]$ be the
	coordinate ring of the real 2-sphere. Let $P = A^3/A(x,y,z)$. 
Then $P$ is a projective
	$A$-module of rank 2 and is associated to the tangent bundle
	of the real 2-sphere.  We have $P\oplus A \iso A^3$ and it is
	well known that $P \not\simeq A^2$. Hence $P$ does not have a
	unimodular element. Thus, Serre's result is not valid in
	general if rank $P = \dim A$. 

	Therefore, it is natural to ask:  \medskip

\noindent{\bf Main Question:} 
{\it Let $A$ be a commutative Noetherian ring
of dimension $n$ and let $P$ be a projective $A$-module of rank
$n$. Can we associate an invariant to $P$, the vanishing of which
would ensure that $P$ has a unimodular element?}
\medskip

	Let $A$ be a smooth affine domain over a field $k$.  Let $F^n
	K_0(A)$ denote the subgroup of $K_0(A)$ generated by the
	images of the residue fields of all the maximal ideals of
	$A$. Let $P$ be a projective $A$-module of rank $n$. Then the
	$n^{th}$ {\it Chern class} of $P$, $C_n(P) = \sum
	\,(-1)^i\,(\wedge^i P^\ast)$ (where $P^\ast$ is the dual of
	$P$) is an element of $F^n K_0(A)$. It is easy to see that if
	$P \iso Q\oplus A$, then $C_n(P) = 0$.

	In the above setting, if $k$ is an algebraically closed, then
	Murthy (\cite{Mu}, Theorem 3.8) proved that $P$ has a
	unimodular element if and only if $C_n(P) = 0$. Thus, the only
	obstruction for $P$ to have a unimodular element is the
	possible non-vanishing of its ``top Chern class'' $C_n(P)$.
	However, if $k$ is not algebraically closed, then the
	vanishing of the invariant top Chern class is not sufficient,
	as is shown by the example of the projective module associated
	to the tangent bundle of the real 2-sphere. 

	It is natural to ask, whether in the case of affine domains
	$A$ of dimension $\geq 2$ over arbitrary base fields, if one
	can attach a different invariant to a projective $A$-module
	$P$ of rank $= \dim A$, the vanishing of which would ensure
	that $P$ has a unimodular element.  To tackle this question,
	Nori defined the notion of the ``Euler class group'' of a
	smooth affine variety $X =$ Spec $(A)$ over an infinite field,
	attached to any projective $A$-module $P$ of rank $= \dim A$,
	an element in this group, called the ``Euler class'' of $P$
	and asked whether the vanishing of the Euler class of $P$
	would ensure that $P$ has a unimodular element.  In 
	\cite{B-RS1}, Bhatwadekar and Raja Sridharan settled this
	question of Nori in the affirmative for projective modules of
	trivial determinant.  In \cite{B-RS1}, an explicit
	description of the Euler class group is given, which appeared
	amenable for plausible generalization to arbitrary Noetherian
	rings. Indeed such a generalization is possible. In order to
	answer the Main Question, in \cite{B-RS3}, to any
	Noetherian ring $A$ of dimension $n\geq 2$ containing the
	field of rational numbers, an abelian group $E(A)$ is
	attached, defined roughly as follows:

	First, one takes the free abelian group on the pairs
	$(J,w_J)$, where $J\subset A$ is an ideal of height $n = \dim
	A$ such that $J/J^2$ is generated by $n$ elements and $w_J$
	a set of $n$ generators of $J/J^2$. The group $E(A)$ is a
	quotient of this group by the subgroup generated by $(J,w_J)$,
	where $J = (a_1,\ldots,a_n)$ and $w_J$ is the induced set of
	generators of $J/J^2$.  It is proved in \cite{B-RS3}, that
	if $A$ is a Noetherian ring containing the field of
	rationales, then the group $E(A)$ detects the obstruction for
	a projective $A$-module $P$ of rank $n$ with trivial
	determinant to have a unimodular element, thus answering the
	Main Question in the affirmative. \medskip

	The aim of this thesis is to give a self contained account of
	the proof of this result and give some applications. The
	layout of this thesis is as follows: In chapter 1, we recall
	some basic definitions and some well known theorems.  In
	chapter 2, we prove some preliminary results.  In chapter 3,
	we prove some addition and subtraction principles which are
	the main ingredients for the proofs of the main theorems. In
	chapter 4, we define the notion of Euler class group $E(A)$
	and show how to attach to the pair $(P,\chi)$ (where $P$ is a
	projective $A$-module of rank $n$ and $\chi : A \iso
	\wedge^n(P)$ an isomorphism), an element $e(P,\chi)$ of $E(A)$
	called the Euler class of $(P,\chi)$.  We show that $P$ has a
	unimodular element if and only if $e(P,\chi)$ vanishes. In
	chapter 5, we use the above result to prove some theorems
	about projective modules over real affine varieties.  In the
	last chapter, we define the notion of the weak Euler class
	group $E_0(A)$, which is obtained as a certain canonical
	quotient of $E(A)$.  We also define the weak Euler class of a
	projective $A$-module of rank $n = \dim A$.  It is proved
	that if $A$ is a Noetherian ring of even dimension $n$, and
	$P$ is a projective $A$-module of rank $n$ with trivial
	determinant, then the weak Euler class $e(P)$ of $P$ vanishes
	in $E_0(A)$ if and only if $[P] = [Q\oplus A]$ in $K_0(A)$
	for some projective $A$-module $Q$ of rank $n-1$.

\chapter{Some Basic Definitions}

{\bf In this thesis we assume that all rings are commutative Noetherian
with unity and all modules are finitely generated unless otherwise
stated}. 
	We assume that the multiplicative closed sets with respect to
which we localize do not contain $0$. We begin with a few definitions
and subsequently state some basic and useful results without proof.

\begin{define}
Let $A$ be a ring. The supremum of the lengths $r$, taken over all
strictly increasing chains 
$\p_0 \subset \p_1 \subset \ldots\subset \p_r$ 
of prime ideals of $A$, is called the {\it Krull dimension} of
$A$ or simply the dimension of $A$ and is denoted by $\dim A$.

	For a prime ideal $\p$ of $A$, the supremum of the lengths
$r$, taken over all strictly increasing chains 
$\p_0 \subset \p_1 \subset \ldots\subset \p_r = \p$ 
of prime ideals of $A$, is called the
the height of $\p$ and is denoted by $\hh \p$. Note that for a
Noetherian ring $A$, $\hh \p < \infty$.

	For an ideal $I \subset A$, the infimum of the heights of
$\p$, taken over all prime ideals $\p \subset A$ such that $I \subset
\p$, is defined to be {\it height} of $I$ and is denoted by $\hh I$.

	For a prime ideal $\p$ of $A$, the supremum of the lengths
$r$, taken over all strictly increasing chains 
$\p = \p_0 \subset \p_1 \subset \ldots\subset \p_r$ 
of prime ideals of $A$ starting from $\p$,
is called the the {\it coheight} of $\p$ and is denoted by coht $\p$.

	For an ideal $I \subset A$, the supremum of the coheights of
$\p$, taken over all prime ideals $\p \subset A$ such that $I \subset \p$, 
is defined to be {\it coheight} of $I$ and is denoted by coht $I$.

	It follows from the definitions that

$\hh \p = \dim A_{\p}$, coht $\p = \dim (A/\p)$ and 
$\hh \p$ + coht $\p \leq \dim A$.
\end{define}

\begin{define}
	An $A$-module $P$ is said to be {\it projective} if it
satisfies one of the following equivalent conditions:

	(i) Given $A$-modules $M,N$ and an $A$-linear surjective map
$\alpha : M \surj N$, the canonical map from $\Hom_A(P,M)$ to
$\Hom_A(P,N)$ sending $\theta$ to $\alpha\theta$ is surjective.

	(ii) Given an $A$-module $M$ and a surjective $A$-linear map
$\alpha : M \surj P$, there exists an $A$-linear map $\beta : P \ra M$
such that $\alpha \beta = 1_P$.

	(iii) There exists an $A$-module $Q$ such that $P \oplus Q
\simeq A^n$ for some positive integer $n$, i.e. $P \oplus Q$ is free.
\end{define} 

\begin{lemma}\label{Nakayama}
({\bf Nakayama Lemma}) 
	Let $A$ be a ring and let $M$ be a finitely
generated $A$-module.  Let $I\subset A$ be an ideal such that $I M
=M$.  Then, there exists $a\in I$ such that $(1+a)M = 0$.  In
particular, if $I$ is contained in Jacobson radical of $A$, then
$(1+a)$ is a unit and hence $M = 0$.
\end{lemma}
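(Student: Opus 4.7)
The plan is to prove this by the classical determinant trick (a Cayley--Hamilton style argument). Since $M$ is finitely generated, pick generators $m_1, \ldots, m_n$ of $M$. The hypothesis $IM = M$ then gives, for each $i$, an expression
\[
m_i = \sum_{j=1}^n a_{ij}\, m_j, \qquad a_{ij} \in I.
\]
Rewriting, $\sum_j (\delta_{ij} - a_{ij})\, m_j = 0$ for every $i$. Let $T$ be the $n \times n$ matrix with entries $\delta_{ij} - a_{ij}$ over $A$.

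The key step is to multiply the system above by the adjugate (classical adjoint) matrix of $T$. Since $\mathrm{adj}(T)\cdot T = \det(T)\cdot I_n$, this yields $\det(T)\cdot m_j = 0$ for every $j$, and hence $\det(T)\cdot M = 0$. Now I would expand $\det(T)$ by the Leibniz formula: the identity permutation contributes $\prod_i (1 - a_{ii}) = 1 + (\text{terms in } I)$, while every other permutation contributes a product with at least one off-diagonal factor $-a_{ij} \in I$. Collecting everything, $\det(T) = 1 + a$ for some $a \in I$, giving $(1+a)M = 0$.

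For the second assertion, I would invoke the standard characterization of the Jacobson radical $J(A)$: an element $a \in A$ belongs to $J(A)$ if and only if $1 + ba$ is a unit of $A$ for every $b \in A$. (If this is not yet available in the text, one proves it directly: otherwise $1+a$ lies in some maximal ideal $\mathfrak{m}$, forcing $1 \in \mathfrak{m} + I \subset \mathfrak{m}$, a contradiction.) Thus if $I \subset J(A)$, then $1+a$ is a unit, so multiplying $(1+a)M = 0$ by $(1+a)^{-1}$ forces $M = 0$.

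The main obstacle is really just the bookkeeping in the determinant expansion to confirm that $\det(T) \in 1 + I$; once that identity is in hand, everything else is immediate. No nontrivial ingredient beyond the adjugate identity $\mathrm{adj}(T)\cdot T = \det(T)\cdot I_n$ (valid over any commutative ring) is needed.
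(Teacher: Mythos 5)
Your proof is correct. The paper states Nakayama's Lemma without proof (it appears in the chapter of basic results stated without proof), so there is no paper argument to compare against; the determinant-trick (adjugate) argument you give is the standard and complete proof, and your handling of the Jacobson-radical case is right.
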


\begin{corollary}
	Let $A$ be a ring and let $M$ be a finitely generated
$A$-module. Let $I$ be an ideal contained in the Jacobson radical of
$A$ and let $N$ be a submodule of $M$. If $N + I M = M$, then $N=M$.
\end{corollary}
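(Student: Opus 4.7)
The plan is to reduce this to a direct application of the Nakayama Lemma (Lemma \ref{Nakayama}) by passing to the quotient module $M/N$. Since $N$ is a submodule of $M$ and $M$ is finitely generated, the quotient $M/N$ is again a finitely generated $A$-module.

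First, I would translate the hypothesis $N + IM = M$ into a statement about $M/N$. Reducing modulo $N$, the image of $IM$ in $M/N$ is $I(M/N)$, and the equality $N + IM = M$ becomes $I(M/N) = M/N$.

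Next, since $I$ is contained in the Jacobson radical of $A$, the second assertion of Lemma \ref{Nakayama} applies to the finitely generated module $M/N$: there exists $a \in I$ with $1+a$ a unit and $(1+a)(M/N) = 0$, forcing $M/N = 0$. Therefore $N = M$, as desired.

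There is no substantial obstacle here; the only thing to be careful about is the verification that $M/N$ is finitely generated (which is automatic as a quotient of the finitely generated module $M$) and the observation that $I \cdot (M/N) = (IM + N)/N$, so that the hypothesis really does give the Nakayama hypothesis for $M/N$.
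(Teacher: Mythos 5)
Your proof is correct and is the standard derivation: pass to the quotient $M/N$, observe that $N + IM = M$ translates to $I(M/N) = M/N$, and apply the Nakayama Lemma to the finitely generated module $M/N$. The paper states this corollary without proof, so there is nothing to compare against, but your argument is exactly the intended one.
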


\begin{corollary}\label{400}
Let $A$ be a local ring with $\m$ its maximal ideal. Let $M$
be a finitely generated $A$-module. Then $\mu (M)$ (the minimum number
of generators of $M$) $= \dim_{A/\m}(M/\m M)$.
\end{corollary}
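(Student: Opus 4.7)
The plan is to prove the two inequalities $\mu(M) \leq \dim_{A/\m}(M/\m M)$ and $\mu(M) \geq \dim_{A/\m}(M/\m M)$ separately. Write $k = A/\m$ for the residue field and set $d = \dim_k(M/\m M)$; note this is finite because $M$, and hence $M/\m M$, is finitely generated.

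For the inequality $\mu(M) \leq d$, I would choose elements $x_1,\ldots,x_d \in M$ whose images $\ol{x_1},\ldots,\ol{x_d}$ form a $k$-basis of the vector space $M/\m M$. Let $N = Ax_1 + \cdots + Ax_d \subseteq M$. By construction, $N + \m M = M$. Since $A$ is local, $\m$ is the Jacobson radical, so the corollary to Nakayama's lemma (the preceding corollary in the text) applies and forces $N = M$. Thus $M$ is generated by $d$ elements, giving $\mu(M) \leq d$.

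For the reverse inequality, suppose $M$ is generated by $m = \mu(M)$ elements $y_1,\ldots,y_m$. Then their images $\ol{y_1},\ldots,\ol{y_m}$ span $M/\m M$ as a $k$-vector space, so $d = \dim_k(M/\m M) \leq m = \mu(M)$.

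There is no real obstacle here: the whole content is packaged in the already-stated consequence of Nakayama's lemma, and the reverse inequality is the trivial observation that a spanning set of a vector space has at least $\dim$ elements. The only subtlety worth noting is the use of locality to guarantee that $\m$ lies in the Jacobson radical, which is what allows the invocation of the Nakayama corollary.
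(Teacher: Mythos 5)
Your proof is correct and is the standard argument. The paper actually states Corollary \ref{400} without proof (it appears in the block of "basic and useful results" that the chapter announces it will state without proof), so there is no proof in the paper to compare against; your two-inequality argument, lifting a $k$-basis of $M/\m M$ and invoking the preceding Nakayama corollary for the generation step, and the trivial spanning-set observation for the reverse inequality, is exactly the intended reasoning and fills the gap cleanly.
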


\begin{lemma}\label{manoj3}
	Let $I$ be an ideal of $A$ contained in the Jacobson radical of
$A$. Let $P,\;Q$ be projective $A$-modules such
that projective $A/I$-modules $P/IP$ and $Q/IQ$ are isomorphic. Then
$P$ and $Q$ are isomorphic as $A$-modules.
\end{lemma}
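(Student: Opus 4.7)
The plan is to construct an explicit isomorphism by lifting the given isomorphism $\bar\phi : P/IP \iso Q/IQ$ to an $A$-linear map $P \to Q$ and then showing, via two applications of Nakayama's Lemma, that this lift is both surjective and injective.

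First, I would use the projectivity of $P$ in its defining form (condition (i) of the projective-module definition). Applied to the surjection $\pi : Q \surj Q/IQ$ and the composition $P \surj P/IP \by{\bar\phi} Q/IQ$, this yields an $A$-linear map $\phi : P \to Q$ whose reduction modulo $I$ is exactly $\bar\phi$.

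Next I would show $\phi$ is surjective. Since $\bar\phi$ is surjective, $\phi(P) + IQ = Q$, so the finitely generated $A$-module $M = Q/\phi(P)$ satisfies $IM = M$. Because $I$ is contained in the Jacobson radical of $A$, the corollary to Nakayama's Lemma stated just before the lemma gives $M = 0$, hence $\phi$ is surjective.

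The injectivity step is where a little care is needed, and will be the main (though still mild) obstacle. Since $Q$ is projective and $\phi : P \surj Q$ is surjective, condition (ii) of the projective-module definition produces a splitting, so $P \simeq Q \oplus K$ with $K = \ker \phi$. As $K$ is a direct summand of the finitely generated module $P$, it is itself finitely generated. Reducing the decomposition modulo $I$ gives $P/IP \simeq Q/IQ \oplus K/IK$, and under this identification the induced map $\bar\phi$ is the projection onto the first factor; since $\bar\phi$ is an isomorphism, we conclude $K/IK = 0$, i.e.\ $IK = K$. Applying Nakayama's Lemma once more (again using that $I$ lies in the Jacobson radical) forces $K = 0$, so $\phi$ is injective and hence an isomorphism $P \iso Q$.
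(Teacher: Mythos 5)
Your proof is correct and follows essentially the same route as the paper's: lift the isomorphism using projectivity of $P$, conclude surjectivity of the lift by Nakayama, then use projectivity of $Q$ to split $P \simeq Q \oplus K$ and apply Nakayama a second time. The paper phrases the second application as showing the splitting map $\beta : Q \to P$ is surjective (i.e.\ that $P/\beta(Q) = 0$), which is the same thing as your $K = 0$ since $P = \beta(Q) \oplus K$.
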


\begin{proof}
	Let $\ol \alpha : P/IP \iso Q/IQ$ be an isomorphism. 
Since $P$ is projective, $\ol \alpha$ can be lifted to an $A$-linear
map $\alpha : P \ra Q$. We claim that $\alpha$ is an isomorphism.

	Since $\ol \alpha$ is surjective, $Q = \alpha (P) + IQ$. 
As $I$ is contained in the Jacobson radical of $A$, by Nakayama lemma,
we get $Q = \alpha(P)$. Hence $\alpha$ is surjective.

Since $Q$ is projective, there exists an $A$-linear map 
	$\beta : Q \ra P$ such that $\alpha\beta = \id_Q$. 
Let $\ol \beta : Q/IQ \ra P/IP$ be the map induced by $\beta$. 
Then, we have $\ol \alpha \ol \beta = \id_{Q/IQ}$. 
	As $\ol \alpha$ is an isomorphism, we get
that $\ol \beta$ is also an isomorphism and in particular, $\ol \beta$
is surjective. Therefore $P = \beta(Q) +IP$. Hence as before, we see
that $\beta$ is surjective. Now, injectivity of $\alpha$ follows from
the fact that $\alpha\beta = \id$. $\hfill \gj$
\end{proof}

\begin{corollary}\label{manoj4}
	Let $A$ be a local ring. Then every  projective
$A$-module is free.
\end{corollary}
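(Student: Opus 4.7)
The plan is to deduce this as an almost immediate consequence of Lemma \ref{manoj3}, since in a local ring the maximal ideal $\m$ coincides with the Jacobson radical. Let $P$ be a (finitely generated) projective $A$-module and let $k = A/\m$ denote the residue field. First I would look at the reduction $P/\m P$, which is a finitely generated module over the field $k$, hence a finite-dimensional $k$-vector space; say its dimension is $n$. Thus as $A/\m$-modules,
\[
P/\m P \;\cong\; k^n \;\cong\; A^n/\m A^n.
\]
This is the only computation needed, and it is essentially Corollary \ref{400}.

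Next I would invoke Lemma \ref{manoj3} with $Q = A^n$ and $I = \m$: both $P$ and $A^n$ are projective $A$-modules whose reductions modulo the Jacobson radical $\m$ are isomorphic, so the lemma produces an isomorphism $P \cong A^n$. There is no real obstacle — the content sits in Nakayama's lemma, which has already been used in the proof of Lemma \ref{manoj3}. The only thing worth a sentence of care is that the conventions of the chapter restrict ``module'' to mean ``finitely generated module,'' which is what allows Nakayama (and hence Lemma \ref{manoj3}) to apply; without this hypothesis the statement is false in general (e.g.\ non-finitely-generated flat modules over a DVR need not be free).
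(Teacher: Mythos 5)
Your proposal is correct and is essentially identical to the paper's own proof: both set $n = \dim_k(P/\m P)$ and then invoke Lemma \ref{manoj3} with $Q = A^n$ and $I = \m$ (the Jacobson radical of the local ring $A$). The only addition you make is the explicit reminder that finite generation is what licenses Nakayama, which is a fair caveat but not a deviation from the paper's argument.
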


\begin{proof}
	Let $\mathfrak m$ be the maximal ideal $A$ and let $k =
A/\mathfrak m$ be the residue field of $A$. Let $P$ be a projective
$A$-module and let $n = \dim_k(P/\m P)$. Now, applying
(\ref{manoj3}) to the projective modules $P$ and $A^n$, we see that
$P \iso A^n$.  
$\hfill \gj$
\end{proof}

\begin{define}
({\bf Zariski Topology}) 
	For an ideal $I\subset A$, we denote by
$V(I)$, the set of all prime ideals of $A$ containing $I$. For $f\in
A$, we denote by $D(f)$, the set of all prime ideals of $A$ not
containing the element $f$. The {\it Zariski topology} on Spec$\;(A)$
is the topology for which all the closed sets are of the form $V(I)$
for some ideal $I$ of $A$ or equivalently the basic open sets are of
the form $D(f),\;f\in A$. 
\end{define}

\begin{define}
	Let $P$ be a projective $A$-module. In view of
(\ref{manoj4}), we define the rank function rank$_P$ as follows:

	rank$_P$ : Spec$\;(A) \ra \Z$ is the function defined by
rank$_P(\q) =$ rank of the free $A_{\q}$-module $P\otimes_A
A_{\q}$. If rank$_P$ is a constant function taking the value $n$, then
we define the rank of $P$ to be $n$ and denote it by rank$(P)$.
\end{define}

\begin{rem}
	rank$_P$ is a continuous function (with the discrete topology
on $\Z$ and Zariski topology on Spec $A$). Moreover, rank$_P$ is a
constant function for every finitely generated projective $A$-module
$P$ if $A$ has no non trivial idempotent elements.
\end{rem}

\begin{rem}
	As in corollary (\ref{manoj4}), one can show that if $A$ is
a semi-local ring and $P$ is a projective $A$-module of constant rank
$n$, then $P$ is free of rank $n$.
\end{rem}

\begin{define}
	Given a projective $A$-module $P$ and an element $p \in P$, we
define ${\cal O}_P(p) = \{ \alpha (p) |\alpha \in P^\ast\}$. We
say that $p$ is {\it unimodular} if ${\cal O}_P(p) = A$. The set of
all unimodular elements of $P$ is denoted by $\Um(P)$. If $P = A^n$,
then, we write $\Um_n(A)$ for $\Um(A^n)$.
\end{define}

\begin{rem}
	${\cal O}_P(p)$ is an ideal of $A$ and $p$ is unimodular if
and only if there exists $\alpha \in P^\ast$ such that $\alpha (p) =
1$. An element $(a_1,\ldots,a_n) \in A^n$ is unimodular if and only if
there exists elements $b_1,\ldots,b_n \in A$ such that
$\sum_{i=1}^n\,a_ib_i = 1$. If $(a_1,\ldots,a_n)$ is unimodular, then
we say that the row $[a_1,\ldots,a_n]$ is a {\it unimodular row}. 
\end{rem}
\medskip

We now state the  classical stability theorem of Serre.

\begin{theorem}\label{Serre}
({\bf Serre}) 
Let $A$ be a Noetherian ring of dimension $n$ and let $P$ be a projective
$A$-module of rank $> n$. Then $P \simeq Q\oplus A$.  (\cite{SM},
p. 41).
\end{theorem}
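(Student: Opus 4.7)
The plan is to produce a unimodular element $p \in P$; given such a $p$, any $\phi \in P^*$ with $\phi(p) = 1$ splits $P$ as $P \simeq \ker(\phi) \oplus Ap$, and one takes $Q = \ker(\phi)$. By Corollary \ref{manoj4}, $P_{\m}$ is free for every maximal ideal $\m$, so $p$ is unimodular precisely when $\mathcal{O}_P(p)$ is not contained in any maximal ideal, equivalently when the closed subset $V(\mathcal{O}_P(p)) \subseteq \mbox{Spec}(A)$ is empty.

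I would construct such a $p$ by successively shrinking the ``bad locus'' $B(p) := V(\mathcal{O}_P(p))$. Start with any $p_0 \in P$ and let $d = \dim B(p_0)$; necessarily $d \leq n$. If $B(p_0) = \emptyset$, we are done. Otherwise let $\p_1, \ldots, \p_s$ be the finitely many minimal primes of $\mathcal{O}_P(p_0)$ whose coheight equals $d$. At each such $\p_i$ the image of $p_0$ in the residue-field vector space $P_{\p_i}/\p_i P_{\p_i}$ --- which has dimension $r > n \geq d$ --- vanishes. The decrement step is to choose $q \in P$ and a multiplier $t \in A$ lying in an ideal cutting out the lower-dimensional components of $B(p_0)$, so that $p_1 := p_0 + t q$ has nonzero image in each $P_{\p_i}/\p_i P_{\p_i}$ and hence $\dim B(p_1) < d$. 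Iterating at most $n+1$ times drives the bad locus to $\emptyset$.

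The main obstacle is the decrement step: one must simultaneously dislodge every top-coheight component of $B(p_0)$ by a single global choice of $q$, while ensuring that no new component of coheight $\geq d$ is introduced. This is exactly where the rank hypothesis $r > n$ does its work --- in the sharpened form $\dim_{\kappa(\p)}(P \otimes \kappa(\p)) > \dim(A/\p)$ for \emph{every} prime $\p$ --- allowing a prime-avoidance argument inside the module $P$ to furnish the required $q$. This is the substantive content of the Eisenbud--Evans--Plumstead ``basic element'' theorem; applied directly to $P$ (a module of generic rank $r > \dim(A/\p)$ at every $\p$), it produces the desired unimodular element in one stroke, yielding the decomposition $P \simeq Q \oplus A$.
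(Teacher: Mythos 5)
The paper does not prove this theorem; it is cited from Mandal's notes (\cite{SM}, p.~41) and used as a black box throughout, so there is no internal proof to compare against. Your sketch, culminating in a direct appeal to the Eisenbud--Evans--Plumstead basic element theorem applied to $P$, is a correct and standard route to Serre's splitting theorem, and it is in fact the same circle of ideas underlying Lemma~\ref{cor13} in this thesis.

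One caution about the intermediate decrement-step narrative, though you do flag it yourself: passing from $p_0$ to $p_1 = p_0 + tq$ does \emph{not} keep $B(p_1)$ inside $B(p_0)$, and choosing $t$ in the ideal of the lower-dimensional components alone does not by itself prevent a \emph{fresh} prime of coheight $\geq d$, lying outside $B(p_0)$ entirely, from appearing in $B(p_1)$. The genuine content of Eisenbud--Evans is precisely that the numerical hypothesis $\mu_{\p}(P) > \dim(A/\p)$ for all $\p$ lets one choose $q$ (and the multiplier) so that any \emph{new} minimal prime of $\mathcal{O}_P(p_1)$ automatically has small coheight, so the induction does terminate. Your proposal correctly identifies this as the crux and then defers to the cited theorem, which is a legitimate way to finish; just be aware that the ``shrink the bad locus'' heuristic, stated naively, would not survive a referee's question about where the new components go without invoking that machinery explicitly.
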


\begin{define}
	Let $A$ be a ring. Let $\GL_n(A)$ be the subset of M$_n(A)$
consisting of matrices having determinant equal to a unit in $A$.  Let
$\SL_n(A)$ be the subset of M$_n(A)$ consisting of matrices of
determinant $1$.  Let $e_{ij},\;i\neq j$ denote the $n\times n$ matrix
with 1 in the $(i,j)$ coordinate and having zeros elsewhere and 
	${\rm E}_{ij}(a) = I_n+ae_{ij},\; a\in A$. 
We denote by ${\rm E}_n(A)$ the
subgroup of $\SL_n(A)$ generated by matrices of the type ${\rm
E}_{ij}(a)$, $a\in A$.
\end{define}
\medskip

	Let $A$ be a ring. Then $\GL_n(A)$ acts on $\Um_n(A)$. If two
rows $f,g\in$ $\Um_n(A)$ are conjugate under this action, then, we shall
write $f \sim g$. This defines an equivalence relation on
Um$_n(A)$. The equivalence classes of $\Um_n(A)$ under $\sim$ are just
the orbits of the $\GL_n(A)$ action. The next proposition shows how to
associate a projective module to a unimodular row.

\begin{proposition}\label{111}
	The orbits of $\Um_n(A)$ under the $\GL_n(A)$ action
are in $1-1$ correspondence with the isomorphism classes of
$A$-modules $P$ for which $P \oplus A \simeq A^n$. Under this
correspondence, $(1,0,\ldots,0)$ corresponds to the free module
$A^{n-1}$.
\end{proposition}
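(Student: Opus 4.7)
The plan is to construct the bijection explicitly in both directions and then verify well-definedness, surjectivity, and injectivity.

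First I would define the forward map. Given a unimodular row $v=(a_1,\ldots,a_n)\in\Um_n(A)$, view $v$ as the surjective $A$-linear map $v:A^n\surj A$ sending $e_i\mapsto a_i$. Set $P_v:=\ker(v)$. Since $A$ is projective (free), this sequence splits by Definition 1.3(ii), so $A^n\iso P_v\oplus A$; hence $[P_v]$ lies in the target set. The element $(1,0,\ldots,0)$ gives the projection onto the first factor, whose kernel is visibly $A^{n-1}$.

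Next I would check that the assignment $v\mapsto[P_v]$ descends to orbits. If $v'=v\circ g$ for some $g\in\GL_n(A)$, then $g$ is an automorphism of $A^n$ and $x\in\ker(v')$ iff $g(x)\in\ker(v)$; thus $g$ restricts to an $A$-module isomorphism $P_{v'}\iso P_v$, so the orbit of $v$ determines $[P_v]$.

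For surjectivity, suppose $P\oplus A\iso A^n$ via some isomorphism $\alpha$. Composing $\alpha$ with the projection $\pi:P\oplus A\surj A$ onto the second factor yields a surjection $A^n\surj A$, i.e.\ a unimodular row $v$, and by construction $\ker(v)=\alpha^{-1}(P\oplus 0)\iso P$.

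The main step, and the only delicate point, is injectivity: if $P_v\iso P_{v'}$ as $A$-modules I must produce $g\in\GL_n(A)$ with $v'=v\circ g^{-1}$ (equivalently $v\sim v'$). Choose splittings of $v$ and $v'$ to get isomorphisms $\alpha:A^n\iso P_v\oplus A$ and $\alpha':A^n\iso P_{v'}\oplus A$ for which $v=\pi\circ\alpha$ and $v'=\pi\circ\alpha'$, where $\pi$ denotes projection onto the $A$-factor. Given $\phi:P_v\iso P_{v'}$, form the isomorphism $\phi\oplus\id_A:P_v\oplus A\iso P_{v'}\oplus A$ and define
\[
g\;:=\;(\alpha')^{-1}\circ(\phi\oplus\id_A)\circ\alpha\;\in\;\Aut_A(A^n)=\GL_n(A).
\]
Since $\pi\circ(\phi\oplus\id_A)=\pi$, a direct computation gives $v'\circ g=v$, so $v$ and $v'$ lie in the same $\GL_n(A)$-orbit, completing the proof. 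The hard part is precisely this extension step, since one must use the fact that both $P_v$ and $P_{v'}$ sit as kernels of split surjections in order to promote the abstract isomorphism of kernels to an automorphism of the ambient free module.
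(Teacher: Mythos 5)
Your proof is correct and follows essentially the same route as the paper's: associate to a unimodular row the kernel of the corresponding surjection, use that the exact sequence splits to promote an abstract isomorphism of kernels to an automorphism of $A^n$, and translate that into a $\GL_n(A)$-conjugacy. Your explicit construction of $g$ via chosen splittings is just an unpacking of the paper's "complete the commutative diagram with a suitable isomorphism $A^n\iso A^n$ (note that the rows are split exact)."
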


\begin{proof}
	To any $[b_1,\ldots,b_n] \in  \Um_n(A)$, we can associate $P =
P(b_1,\ldots,b_n)$, the kernel of 
	$[b_1,\ldots,b_n] : A^n \surj A$. 
Such $P$ is a typical module for which $P\oplus A \simeq
A^n$. Suppose $\beta : P(b_1,\ldots,b_n) \iso P(c_1,\ldots,c_n)$ is an
isomorphism for another $[c_1,\ldots,c_n] \in \Um_n(A)$. Then, we can
complete the following commutative diagram 
\Com
$$\xymatrix{ 
	0 \ar[r] & P(b_1,\ldots,b_n) \ar[r] \ar [d]_\beta & A^n
	\ar [r]^{[b_1,\ldots,b_n]} \ar@{-->} [d] \;\;\;\; & 
	A\ar[r] \ar@{=}[d] & 0 \\ 
	0 \ar[r] & P(c_1,\ldots,c_n) \ar[r] & A^n \ar
	[r]^{[c_1,\ldots,c_n]} \;\;\;\;& A \ar[r] & 0 
	}
$$ 
	with a suitable isomorphism $A^n \iso A^n$ (note that the rows
are split exact). If $\sigma \in \GL_n(A)$ denotes the matrix of this
isomorphism, we will have 
	$[b_1,\ldots,b_n] = [c_1,\ldots,c_n]\sigma$ 
and hence $[b_1,\ldots,b_n] \sim [c_1,\ldots,c_n]$. 
	Conversely, if this equation holds for some $\sigma
\in \GL_n(A)$, then the automorphism $A^n \iso A^n$ defined by
$\sigma$ induces an isomorphism of the two kernels :
$P(b_1,\ldots,b_n) \simeq P(c_1,\ldots,c_n)$. 
$\hfill \gj$ 
\end{proof}
\medskip

We now state some well known theorems on unimodular rows.

\begin{theorem}\label{Suslin}
{(\bf Swan, Towber)} 
	Let $A$ be a commutative ring and let $v=[a^2,b,c] \in
A^3$ be a unimodular row. Then $v$ can be completed to a matrix
in $\SL_3(A)$ (\cite{S-T}, Theorem 2.1). 
\end{theorem}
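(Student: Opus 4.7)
Let $x, y, z \in A$ with $a^2 x + by + cz = 1$ (existence by unimodularity). The goal is to construct an explicit matrix $M \in \SL_3(A)$ whose first row is $[a^2, b, c]$; equivalently, by Proposition \ref{111}, we want to show that the projective $A$-module $P(a^2, b, c) = \ker([a^2,b,c]: A^3 \surj A)$ is free of rank $2$.

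The first thing I would observe is that the row $[a, b, c]$ is also unimodular, since $a\cdot(ax) + by + cz = 1$. It is classical, however, that such a row need not be completable: the tangent bundle to the real $2$-sphere in the introduction gives a concrete obstruction. Thus what rescues the situation in our case is precisely that the first entry is a \emph{square}, which provides the extra algebraic flexibility. This suggests that one should not try to deduce the completability of $[a^2, b, c]$ from that of $[a, b, c]$, but instead to exploit the factorization $a^2 = a\cdot a$ directly.

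The plan is then to write down an explicit $3\times 3$ matrix
$$M = \begin{pmatrix} a^2 & b & c \\ p_1 & p_2 & p_3 \\ q_1 & q_2 & q_3 \end{pmatrix}$$
whose entries are polynomials in $a, b, c, x, y, z$ built from the natural Koszul-type syzygies $(-y, a, 0)$ and $(-z, 0, a)$ of $[a^2, b, c]$, modified by ``correction terms'' (products such as $bxy$, $cxz$, $bcyz$) so that the cofactor expansion of $\det M$ along the first row collapses, after a single application of $a^2 x + by + cz = 1$, to~$1$. A useful identity in the verification is
$$(by + cz)^2 = (1 - a^2 x)^2 = 1 - 2a^2 x + a^4 x^2,$$
which shows that the ``error'' introduced by any naive Koszul-style completion (which typically produces a determinant divisible by $a$ or $a^2$) lies in the ideal generated by $a^2$, and can therefore be absorbed by adjusting the squared entry.

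\paragraph{Main obstacle.} The essential difficulty is the \emph{discovery} of the correct matrix $M$: the linear combination of Koszul syzygies and their correction terms must be tuned precisely so that all the unwanted higher-order monomials cancel. Once the matrix is exhibited, verifying $\det M = 1$ is a mechanical calculation using only the single relation $a^2 x + by + cz = 1$. An alternative route, which sidesteps the explicit matrix but introduces its own combinatorial bookkeeping, is to appeal to Suslin's general completability result for rows of the form $[a_0^{n!}, a_1, \ldots, a_n]$ (which reduces to the statement here in the case $n=2$).
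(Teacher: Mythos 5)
The paper does not in fact prove this theorem: it is stated as a cited result (Swan--Towber, Theorem 2.1 of \cite{S-T}) and also subsumed by the Suslin theorem quoted immediately after it. So there is no in-paper proof to compare against. Judged on its own terms, your proposal is not yet a proof. You lay out a plausible strategy --- build an explicit $3\times 3$ matrix with first row $[a^2, b, c]$ out of Koszul-type syzygies, then add correction terms so that the determinant collapses to $1$ --- but you never actually produce the matrix, and you say so yourself (``the essential difficulty is the discovery of the correct matrix $M$''). That discovery \emph{is} the theorem; everything else in your write-up is scaffolding around a hole. The remark that $(by+cz)^2 = (1 - a^2x)^2 \in 1 + (a^2)$ is true and pointing in the right direction (it is the algebraic reason a square first entry helps), but it does not by itself determine the entries of $M$, and nothing in your sketch explains how to carry the argument from ``the obstruction lies in $(a^2)$'' to an actual determinant-one completion. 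In particular, your claim that a ``naive Koszul-style completion'' produces a determinant divisible by $a$ or $a^2$ is not quite right as stated: the literal Koszul syzygies $(-b, a^2, 0)$ and $(-c, 0, a^2)$ give determinant $a^2(a^4 + b^2 + c^2)$, which is not of the form $a^2 \cdot (\text{unit})$ or $1 + a^2(\cdots)$ in general, so the ``error term'' you want to absorb is not readily visible from that starting point. To make the proposal into a proof you would need to write down the Swan--Towber matrix (or an equivalent one) explicitly and verify $\det = 1$ using $a^2x + by + cz = 1$; this is an honest, if unilluminating, computation that has to be done.

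Your fallback --- invoke Suslin's general completability theorem for $[x_0^{r_0}, \ldots, x_n^{r_n}]$ with $n! \mid r_0 \cdots r_n$ (the paper's Theorem~\ref{Suslin}$+1$), taking $n = 2$, $(r_0, r_1, r_2) = (2, 1, 1)$ --- is logically valid but methodologically backwards in this context: it derives an earlier, easier, more special result from a later and considerably deeper one, and Suslin's own proof of the $n!$-theorem is not reducible to repeated application of the $n = 2$ case, so there is no hidden circularity but also no saving of work. If the goal is a self-contained account, you should simply exhibit the Swan--Towber matrix; if the goal is economy, you should say outright that the statement is the $n = 2$ case of Suslin's theorem and leave it at that, rather than gesture at a construction you have not carried out.
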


\begin{theorem}
{\bf (Suslin)} Let $A$ be a commutative ring and
let	$[x_0,x_1,\ldots,x_n]\in A^{n+1}$ be a unimodular row. Let
	$r_0,\ldots,r_n$ be positive integers such that the product
	$r_0 r_1\ldots r_n$ is divisible by $n!$. Then the unimodular
	row $[x_0^{r_0},x_1^{r_1},\ldots,x_n^{r_n}]$ is completable to
	a matrix in $\GL_{n+1}(A)$ (\cite{SM}, Theorem 5.3.1).
\end{theorem}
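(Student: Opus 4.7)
My plan is to reduce the problem via Proposition \ref{111}: completability of $[x_0^{r_0},\ldots,x_n^{r_n}]$ to a matrix in $\GL_{n+1}(A)$ is equivalent to showing that this row lies in the $\GL_{n+1}(A)$-orbit of $(1,0,\ldots,0)$. So the goal becomes: produce an explicit sequence of elementary-type operations that transforms the given row to a free generator, where ``elementary-type'' includes right multiplication by matrices in $\GL_{n+1}(A)$.

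I would proceed by induction on $n$. The case $n=1$ is trivial since every unimodular row in $A^2$ completes to an $\SL_2$-matrix, and raising entries to positive powers preserves unimodularity and completability in length two. For the inductive step, the strategy is to establish a \emph{trading principle}: if $[a,b,c_1,\ldots,c_{n-1}]$ is unimodular, then for any positive integers $i,j$ the row $[a^{i+1}b^{j},c_1,\ldots,c_{n-1}]$ lies in the same $\GL_{n+1}(A)$-orbit as $[a^{i}b^{j+1},c_1',\ldots,c_{n-1}']$ for a suitable choice of $c_k'$, where the transforming matrix is built explicitly from a Suslin-type determinantal identity exploiting a relation $\sum a_iy_i=1$. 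Iterating this principle lets one redistribute exponents among coordinates, so as to collect a factor of $n!$ into a single coordinate; combined with the fact that the set of exponent vectors $(r_0,\ldots,r_n)$ for which completability holds (for \emph{every} unimodular base row) is closed under coordinate permutation, this would reduce us to the case $(n!, 1, 1, \ldots, 1)$.

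Finally, to complete the reduced row $[x_0^{n!}, x_1,\ldots,x_n]$, the plan is to exploit the factorization $n! = n \cdot (n-1)!$: one uses a Vandermonde-style $(n+1)\times(n+1)$ matrix construction with entries like $x_0^{j}x_1^{n-j}$ along one row, whose determinant is a unit modulo the unimodularity of the original row, to ``peel off'' one $x_0$ and reduce the effective length of the row, at which point the inductive hypothesis for smaller $n$ finishes the argument. Applying Theorem \ref{Suslin} (Swan--Towber) supplies the starting case $n=2, (r_0,r_1,r_2)=(2,1,1)$ needed to bootstrap the combinatorial machinery.

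The main obstacle I anticipate is constructing the trading identity cleanly. One must exhibit an explicit $(n+1)\times (n+1)$ matrix with determinant a unit that effects the swap of exponents between two coordinates, and verify unit determinant using only the original unimodularity relation $\sum x_iy_i=1$. This is exactly the content of the \emph{Suslin matrix} $S_n(v,w)$, whose determinant computation is delicate and depends in an essential way on the integer $n!$ appearing through binomial coefficients that arise when expanding $(x_0y_0+x_1y_1+\cdots)^n$; explaining \emph{why} $n!$ (and not $(n+1)!$ or $(n-1)!$) is the sharp divisibility threshold comes precisely from this combinatorial expansion. Once this identity is in place, the remaining reductions are formal manipulations with the $\GL_{n+1}(A)$-action on $\Um_{n+1}(A)$.
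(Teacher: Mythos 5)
The paper does not prove this theorem; it is stated with a citation to Mandal's lecture notes (\cite{SM}, Theorem 5.3.1), so there is no internal argument to compare against. Evaluating your sketch on its own terms: the broad shape (reduce by permutation invariance and an exponent--redistribution lemma to the case $(n!,1,\ldots,1)$, then finish by an explicit determinantal construction) is the right one, but the trading lemma you propose is not the one that drives the classical proof, and as written it does not even typecheck: you take a unimodular row $[a,b,c_1,\ldots,c_{n-1}]$ of length $n+1$ and then claim an orbit relation between $[a^{i+1}b^{j},c_1,\ldots,c_{n-1}]$ and $[a^{i}b^{j+1},c_1',\ldots,c_{n-1}']$, which have only $n$ entries. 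Even reading it charitably as an exchange $[a^{i+1},b^{j},c_1,\ldots,c_{n-1}]\sim[a^{i},b^{j+1},c_1',\ldots,c_{n-1}']$, the trade you describe is \emph{additive} in the exponents, whereas the reduction to $(n!,1,\ldots,1)$ requires the \emph{multiplicative} Suslin--Vaserstein lemma: for a unimodular row $(a,b,v_2,\ldots,v_n)$ and positive integers $p,q$, the rows $(a^{pq},b,v_2,\ldots,v_n)$ and $(a^{p},b^{q},v_2,\ldots,v_n)$ lie in the same $\GL_{n+1}(A)$-orbit. It is this multiplicative exchange (combined with permutations and the observation that replacing $x_0$ by $x_0^m$ preserves unimodularity) that passes from $((x_0^{m})^{n!},x_1,\ldots,x_n)$, the case handled by Suslin's key lemma, to an arbitrary $(x_0^{r_0},\ldots,x_n^{r_n})$ with $\prod r_i = n!\,m$. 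Iterated additive trading only conserves $\sum r_i$ and so cannot recover a general multiplicative factorization of $\prod r_i$; it will not effect the reduction.

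The remaining steps have similar problems. The induction on $n$ is not actually set up: every operation you describe keeps the row length fixed at $n+1$, so it is unclear how ``peeling off one $x_0$'' yields a shorter unimodular row to which the hypothesis for $n-1$ can be applied. Nor is it shown that the Vandermonde-style matrix you gesture at has unit determinant on the strength of $\sum x_iy_i=1$ alone. You are right that $n!$ ultimately enters through multinomial coefficients, but the genuine key lemma --- that $(a_0^{n!},a_1,\ldots,a_n)$ completes to a matrix in $\SL_{n+1}(A)$ whenever $(a_0,\ldots,a_n)$ is unimodular --- is a careful iterated explicit construction, not a one-shot Vandermonde determinant, and the multiplicative trading lemma above is itself a nontrivial matrix computation. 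Both of these ingredients would have to be supplied; as it stands the two main lemmas of the argument are missing and the trading lemma you did propose would not do the job even if proved.
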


\begin{theorem}\label{Ravi}
{\bf (Ravi A. Rao)} 
	Let $A$ be a Noetherian ring  of dimension $n$. If
$1/n! \in A$, then any unimodular row $v \in \Um_{n+1}(A[X])$ is
extended from $A$, i.e. $v \sim_{\GL_{n+1}(A[X])} v(0)$,
i.e. there exists a matrix in $\GL_{n+1}(A[X])$ which takes $v$
to $v(0)$ (\cite{Ra}, Corollary 2.5).
\end{theorem}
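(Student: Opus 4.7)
\bigskip
\noindent\textbf{Proof plan.}

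The plan is to combine Quillen-style local--global patching with Suslin's power-raising theorem (stated just before the present statement), reducing the global assertion over $A[X]$ to a tractable local, one-variable problem.

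\emph{Local--global reduction.} First I would invoke the local--global principle for the relative action of $\GL_{n+1}(A[X])$ on $\Um_{n+1}(A[X])$ based at $X=0$: the equivalence $v(X) \sim_{\GL_{n+1}(A[X])} v(0)$ holds globally as soon as, for every maximal ideal $\mathfrak{m}$ of $A$, there exists $\sigma_\mathfrak{m}(X) \in \GL_{n+1}(A_\mathfrak{m}[X])$ with $\sigma_\mathfrak{m}(0) = I_{n+1}$ and $v(0)\sigma_\mathfrak{m}(X) = v(X)$ over $A_\mathfrak{m}[X]$. This is the unimodular-row analogue of Quillen's patching theorem, proved via a Vaserstein-style dilation argument comparing $\sigma(X)$ with $\sigma(bX)$ over a comaximal family of $b$'s in $A$. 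After this reduction I may assume $A$ is a Noetherian local ring of dimension $\leq n$ still containing $1/n!$.

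\emph{Local case: monic trick and Suslin.} Over a local ring every unimodular row is elementarily equivalent to $e_1 = (1,0,\ldots,0)$, so after a constant-in-$X$ elementary transformation I may assume $v(0) = e_1$; writing $v(X) = (f_0(X),\ldots,f_n(X))$ we then have $f_0(0)=1$ and $f_i(0)=0$ for $i\geq 1$. The standard Suslin/Horrocks monic polynomial trick, using locality and unimodularity, produces elementary operations over $A[X]$ that are the identity at $X=0$ and bring $v(X)$ into a form in which one entry is a monic polynomial in $X$. Raising the remaining entries to positive integer powers whose product is divisible by $n!$ and applying the Suslin completability theorem quoted immediately above, the modified row is completable in $\GL_{n+1}(A[X])$. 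The hypothesis $1/n! \in A$ then lets me undo the artificial power-raising by further elementary moves --- factorials of the chosen exponents are units in $A$ --- and so recover a completion of the original $v(X)$ itself. Normalising the completing matrix by its value at $X=0$ yields the required $\sigma(X) \in \GL_{n+1}(A[X])$ with $\sigma(0)=I_{n+1}$ and $v(0)\sigma(X) = v(X)$.

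\emph{Main obstacle.} The hardest step is the relative local--global principle invoked first: one must patch while \emph{preserving} the prescribed identity behaviour at $X=0$, and it is here that the Vaserstein dilation argument must be executed with care. The $1/n!$ assumption is technically soft but indispensable --- without it Suslin's theorem only yields completion of a power-raised version of $v(X)$, and one cannot descend back to $v(X)$ itself by elementary operations.
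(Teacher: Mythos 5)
The paper states this result without proof, citing Rao's paper directly, so there is no internal argument to compare against. Your overall strategy --- a Quillen-style local-global reduction preserving the identity at $X=0$ (which is indeed what the paper's Lemma \ref{lem10} accomplishes), followed by an appeal to Suslin's power-raising completability theorem in the local case --- is the right kind of plan, and both ingredients do appear in Rao's actual proof.

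However, the crucial local step contains a genuine gap. You propose to raise entries to powers whose product is divisible by $n!$, obtain completability of the power-raised row from Suslin's theorem, and then ``undo the artificial power-raising by further elementary moves'' because $1/n! \in A$. That inference is invalid: completability of a power-raised row does not imply completability of the original, and $1/n! \in A$ does not furnish elementary operations that extract roots from a row. The Swan--Towber theorem (\ref{Suslin}) already illustrates the failure: $[a^2,b,c]$ is completable for every unimodular $[a,b,c]$, yet over $\R[x,y,z]/(x^2+y^2+z^2-1)$ --- a ring containing $1/2$ --- the row $[x,y,z]$ is unimodular but not completable, its kernel being the nontrivial tangent bundle of the real $2$-sphere. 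The actual role of $1/n! \in A$ in Rao's argument is structural rather than elementary: via the Vaserstein/Mennicke-symbol machinery one equips the relevant relative orbit set $\Um_{n+1}(A[X])/\mathrm{E}_{n+1}(A[X])$ with a group structure under which raising a coordinate to the $k$-th power corresponds to multiplication by $k$, and the hypothesis $1/n! \in A$ is then used to show that multiplication by $n!$ is injective on this group, so the vanishing of the class of the power-raised row forces the vanishing of the class of $v(X)$ itself. Establishing that group structure and inverting $n!$ there is the ingredient missing from your proposal; it cannot be replaced by elementary moves.
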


	The following proposition is analogous to (\ref{111}).

\begin{proposition}
For a projective $A$-module $P$, the following are equivalent:

(i) For any projective $A$-module $Q$, if $P\oplus A \iso Q\oplus A$,
then $P\iso Q$.

(ii) Given a unimodular element $(p,a) \in P\oplus A$, there exists an
automorphism $\Delta$ of $P\oplus A$ such that $\Delta (p,a) = (0,1)$.
\end{proposition}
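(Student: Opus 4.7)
The plan is to establish the two implications separately, using the splitting/cancellation dictionary for projective modules. The key observation throughout is that for any $A$-module $M$, a unimodular element $m\in M$ is precisely one that generates a free rank one summand of $M$, with a complementary direct summand equal to the kernel of any $\phi\in M^\ast$ satisfying $\phi(m)=1$.

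For $(ii)\Rightarrow (i)$, suppose $\phi:P\oplus A \iso Q\oplus A$. I would consider the standard unimodular element $(0,1)\in Q\oplus A$ and set $(p,a) := \phi^{-1}(0,1)\in P\oplus A$; since isomorphisms preserve unimodularity, $(p,a)$ is unimodular. Apply (ii) to obtain an automorphism $\Delta$ of $P\oplus A$ with $\Delta(p,a)=(0,1)$. Then $\psi := \phi\circ\Delta^{-1}$ is an isomorphism $P\oplus A\iso Q\oplus A$ with $\psi(0,1)=(0,1)$, and by $A$-linearity $\psi$ restricts to the identity on the $A$-summand generated by $(0,1)$. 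Quotienting $P\oplus A$ and $Q\oplus A$ by this common summand (whose quotients are canonically $P$ and $Q$ respectively) produces the desired isomorphism $P\iso Q$.

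For $(i)\Rightarrow (ii)$, let $(p,a)\in P\oplus A$ be unimodular. Then there exists $(\alpha,b)\in P^\ast\oplus A=(P\oplus A)^\ast$ with $\alpha(p)+ab=1$, so the map $\pi:P\oplus A\to A$ sending $(p',a')\mapsto \alpha(p')+a'b$ is surjective and split by $1\mapsto (p,a)$. Setting $K:=\ker\pi$, this gives an isomorphism $\eta:P\oplus A\iso K\oplus A$ carrying $(p,a)$ to $(0,1)$. Now $K\oplus A\iso P\oplus A$, so the hypothesis (i) yields an isomorphism $\rho:K\iso P$, and $\rho\oplus\id_A:K\oplus A\iso P\oplus A$ sends $(0,1)$ to $(0,1)$. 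Finally $\Delta := (\rho\oplus\id_A)\circ\eta$ is an automorphism of $P\oplus A$ with $\Delta(p,a)=(0,1)$, as required.

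The routine pieces are the splittings and quotient identifications; the only nontrivial input is the cancellation hypothesis (i), used exactly to replace the abstractly-defined kernel $K$ by $P$ in the $(i)\Rightarrow (ii)$ direction. That step — and the corresponding appeal to (ii) in the reverse direction — is where the equivalence is really being pinned down; everything else is essentially a bookkeeping exercise about direct sum decompositions determined by unimodular elements.
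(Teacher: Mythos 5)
Your proof is correct and follows essentially the same line of argument as the paper: in both directions you split $P\oplus A$ along the unimodular element $(p,a)$, identify the complementary summand, and apply the cancellation hypothesis (or the automorphism from (ii)) to match it with $P$ (or $Q$). The only difference is notational.
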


\begin{proof}
\underline{$(i)\Rightarrow  (ii)$.}  Since $(p,a)$ is unimodular
	element of $P\oplus A$, there exists an element $\alpha \in
	(P\oplus A)^\ast$ such that $\alpha(p,a) = 1$. Let $Q$ = ker
	$(\alpha)$. Then, we get the following short exact sequence of
	$A$-modules:
$$0 \ra Q \ra P\oplus A \by \alpha A \ra 0$$ 
	Let $\beta : A \ra P\oplus A$ be an $A$-linear map such that
	$\beta(1) = (p,a)$. Then $\alpha\beta = 1_A$. Hence the
	cyclic submodule $A(p,a)$ of $P\oplus A$ is isomorphic to $A$
	and $P\oplus A = Q \oplus A(p,a)$. Therefore, by assumption,
	there exists an isomorphism $\sigma : Q \iso P$.

	Let $\Delta : Q \oplus A(p,a) \ra P\oplus A$ be an
endomorphism of $P\oplus A$ defined by $\Delta(q,0) = (\sigma(q),0)$
for $q \in Q$ and $\Delta(p,a) = (0,1)$. Then as $\sigma$ is an
isomorphism and $A(p,a) \iso A$, it follows that $\Delta$ is an
automorphism of $P\oplus A$ which sends $(p,a)$ to (0,1).
\medskip

\underline{$(ii) \Rightarrow (i)$.}  
	Let $\psi : Q \oplus A \iso P \oplus A$
be an isomorphism and let $\psi(0,1) = (p,a)$. Then as $\psi$ is an
isomorphism, $(p,a)$ is a unimodular element of $P\oplus
A$. Therefore, by assumption, there exists an automorphism $\Delta $
of $P\oplus A$ such that $\Delta (p,a) = (0,1)$. Hence the isomorphism
$\Delta \psi : Q\oplus A \iso P\oplus A$ sends the element $(0,1)$
of $Q\oplus A$ to $(0,1)$ of $P\oplus A$. Note that $Q \iso (Q\oplus
A)/A(0,1)$ and $P\iso (P\oplus A)/A(0,1)$. Hence $P\iso Q$. This
proves the result.  $\hfill \gj$
\end{proof}

\begin{define}
	Let $f_1 : M_1 \ra N$ and $f_2 : M_2 \ra N$ be homomorphisms
of $A$-modules. The {\it fiber product} of $M_1$ and $M_2$ over $N$ is
a triple $(M,g_1,g_2)$, where $M$ is an $A$-module, $g_1 : M \ra M_1$
and $g_2 : M \ra M_2$ are $A$-linear maps such that $f_1g_1 =
f_2g_2$ and the triple is universal in the sense that given any
other triple $(M^\prime,g_1^\prime,g_2^\prime)$ of this kind with
$f_1g_1^\prime = f_2g_2^\prime$, there is a unique homomorphism $h
: M^\prime \ra M$ such that $g_1 h = g_1^\prime$ and $g_2h =
g_2^\prime$.
\end{define}

\begin{example}
	Let $A$ be a commutative ring let $M$ be an $A$-module. Let 
$s,t \in A$ be
such that $As + At = A$. Then
\Com
$$\xymatrix{
	A \ar [r] \ar [d] & A_s \ar [d] \\ 
	A_t \ar [r] & A_{st}}~
\xymatrix{ 
	M \ar[r] \ar[d] & M_s \ar[d] \\
	M_t\ar[r]       & M_{st}
	}$$
are  fiber product diagrams of commutative rings and $A$-modules
respectively. 
\end{example} 

\begin{lemma}
	Let $A$ be a commutative ring and let $s,t \in A$ be such that
$(s,t) = A$. Suppose $M$ and $M^\prime$ are two $A$-modules. Let $f_1
: M_s \ra M^\prime_s$ be an $A_s$-linear map and $f_2 : M_t \ra
M^\prime_t$ be an $A_t$-linear map such that $(f_1)_t = (f_2)_s$.

(1) Then, there is an $A$-linear map $f : M \ra M^\prime$ such that
$(f)_s = f_1$ and $(f)_t = f_2$.

(2) Further, if $f_1$ and $f_2$ are injective (respectively
surjective, isomorphisms), then so is $f$. 
\Com
$$\xymatrix{ 
	M \ar@{->}[rr]\ar@{->}[dd]\ar@{->}[dr]^f & & M_s
	\ar@{->}'[d][dd] \ar@{->}[dr]^{f_1} \\ 
	& M' \ar@{->}[rr] \ar@{->}[dd] & & 
	M^\prime_s \ar@{->}[dd] \\ 
	M_t \ar@{->}'[r][rr] \ar@{->}[dr]_{f_2} & & 
	M_{st} \ar@{->}[dr]^{f_0}\\ 
	& M^\prime_t \ar@{->}[rr] & & M^\prime_{st} 
	}$$ 
\end{lemma}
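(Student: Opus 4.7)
The plan has two movements, one for each part.

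For part (1), I would realize $f$ directly through the fiber product description of $M'$. The preceding example tells us that, because $(s,t)=A$, the module $M'$ is canonically identified with the fiber product $M'_s \times_{M'_{st}} M'_t$, i.e.\ with pairs $(u,v)\in M'_s \oplus M'_t$ having matching images in $M'_{st}$. For each $m\in M$ I would form the pair $(f_1(m_s), f_2(m_t))\in M'_s \oplus M'_t$ and check compatibility in $M'_{st}$: the two images are $(f_1)_t(m_{st})$ and $(f_2)_s(m_{st})$, which agree by hypothesis. The universal property of $M'$ then produces a unique element $f(m)\in M'$. The assignment $m\mapsto f(m)$ is $A$-linear by the uniqueness clause in the universal property (any two candidates for $f(m+m')$ or $f(am)$ must agree because their localizations at $s$ and $t$ do). By construction $(f)_s = f_1$ and $(f)_t = f_2$.

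For part (2), the natural tool is exactness of localization. Set $K=\ker f$ and $C=\mathrm{coker}\,f$. Localizing the sequence $0\to K\to M\to M'\to C\to 0$ yields $K_s\cong\ker f_1$, $K_t\cong\ker f_2$, $C_s\cong\mathrm{coker}\,f_1$, and $C_t\cong\mathrm{coker}\,f_2$. So if $f_1,f_2$ are injective then $K_s=K_t=0$, and if $f_1,f_2$ are surjective then $C_s=C_t=0$; the isomorphism case then follows by combining both.

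The only genuine content — and what I expect to be the main (mild) obstacle — is concluding from $K_s=K_t=0$ that $K=0$ (and the same for $C$). For $k\in K$, vanishing in $K_s$ and $K_t$ yields integers $a,b\geq 0$ with $s^a k=0=t^b k$. One checks that $(s,t)=A$ forces $(s^a,t^b)=A$: raise the relation $1=xs+yt$ to the power $a+b-1$, so every monomial in the expansion is divisible by $s^a$ or by $t^b$. Writing $1=u s^a + v t^b$ gives $k=u s^a k + v t^b k = 0$, hence $K=0$; the identical argument applied to $C$ completes the surjectivity case, and the isomorphism case is their conjunction.
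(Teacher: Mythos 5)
The paper states this lemma without proof; it is one of the ``basic and useful results'' assembled in Chapter~1 without argument, following the example that exhibits $M'\ra M'_s$, $M'\ra M'_t$, $M'_s\ra M'_{st}$, $M'_t\ra M'_{st}$ as a fiber product square when $(s,t)=A$. So there is nothing of the paper's to compare against, but your proof is correct and is exactly the standard patching argument one would supply here.

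Two small remarks. In part (1), rather than invoking the universal property one element of $M$ at a time and then recovering linearity of $m\mapsto f(m)$ from uniqueness, it is marginally cleaner to apply the universal property once to the $A$-linear maps $M\ra M_s\by{f_1}M'_s$ and $M\ra M_t\by{f_2}M'_t$ (which satisfy the compatibility in $M'_{st}$ precisely by the hypothesis $(f_1)_t=(f_2)_s$); the map $M\ra M'$ produced by the universal property is then $A$-linear by construction, with no element-chasing. In part (2), the reduction to showing $K_s=K_t=0\Rightarrow K=0$ via the identity $(s^a,t^b)=A$ is the right move and your proof of that identity is correct: expand $1=(xs+yt)^{a+b-1}$ and note each monomial is divisible by $s^a$ or by $t^b$, since $i\leq a-1$ forces $a+b-1-i\geq b$. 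One should just be aware that $a,b$ depend on the element $k$ (no finite generation is assumed), but since you argue per element this causes no trouble.
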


\begin{define}
	For a projective $A$-module $P$, we write $(P)$ for the
isomorphism class of $P$. The {\it Grothendieck group $K_0(A)$} is an
additive abelian group generated by the symbols $(P)$ with certain
natural relations. To be precise, we let:

$G = $ free abelian group generated by $(P) : P$ is a projective
$A$-module,

$H =$ subgroup of $G$ generated by $(P\oplus Q)- (P) -(Q) : P,Q$ are
projective $A$-modules,

$K_0(A) = G/H$ and $[P]=$ image of $(P)$ in $K_0(A)$.

Thus we have $[P\oplus Q] = [P]+[Q]$ in $K_0(A)$. 
\end{define}

\begin{proposition}
Let $A$ be a  ring and
let $P$ and $Q$ be projective $A$-modules. Then the
following are equivalent :

(1) $[P] = [Q] \in K_0(A)$,

(2) there exists a projective $A$-module $T$ such that $P\oplus T
\simeq Q\oplus T$, 

(3) there exists a positive integer $t$ such that $P\oplus A^t \simeq
Q\oplus A^t$.
\end{proposition}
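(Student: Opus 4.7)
The plan is to prove the easy implications $(3)\Rightarrow(2)$, $(2)\Rightarrow(3)$ and $(2)\Rightarrow(1)$ first, and then to reduce the substantive direction $(1)\Rightarrow(2)$ to bookkeeping inside the free abelian group $G$ that was used to define $K_0(A)$.

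For $(3)\Rightarrow(2)$, simply take $T=A^t$, which is projective. For $(2)\Rightarrow(3)$, I use that any finitely generated projective module $T$ is a direct summand of a free module, so there exist $T'$ and $t\geq 0$ with $T\oplus T'\simeq A^t$. Adding $T'$ to both sides of $P\oplus T\simeq Q\oplus T$ then gives $P\oplus A^t\simeq Q\oplus A^t$. For $(2)\Rightarrow(1)$, the defining relation $[M\oplus N]=[M]+[N]$ applied to the hypothesis yields $[P]+[T]=[Q]+[T]$ in $K_0(A)$, and cancelling $[T]$ in the abelian group $K_0(A)$ gives $[P]=[Q]$.

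The substantive step is $(1)\Rightarrow(2)$. By hypothesis $(P)-(Q)$ lies in the subgroup $H\subset G$ generated by the elements $(M\oplus N)-(M)-(N)$, so there is a finite identity in $G$ of the form
$$(P)-(Q)=\sum_{i\in I^+}\bigl((M_i\oplus N_i)-(M_i)-(N_i)\bigr)-\sum_{j\in I^-}\bigl((M_j\oplus N_j)-(M_j)-(N_j)\bigr).$$
Rearranging so that all coefficients are nonnegative yields an equality of finite $\N$-linear combinations of isomorphism classes of projective $A$-modules:
$$(P)+\sum_{j\in I^-}(M_j\oplus N_j)+\sum_{i\in I^+}\bigl((M_i)+(N_i)\bigr)=(Q)+\sum_{i\in I^+}(M_i\oplus N_i)+\sum_{j\in I^-}\bigl((M_j)+(N_j)\bigr).$$
Because $G$ is \emph{free} on the set of isomorphism classes of finitely generated projective $A$-modules, this equation forces the two formal sums to represent the same multiset of isomorphism classes. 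Taking direct sums of the representatives on each side then produces an honest isomorphism of projective $A$-modules; collecting all summands other than $P$ (respectively $Q$) into a single projective module $T$, one obtains $P\oplus T\simeq Q\oplus T$, which is exactly $(2)$.

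I expect the only real obstacle to be the last step: one must be careful that equality in the free abelian group on isomorphism classes really does translate into a multiset equality, and hence into an actual direct sum isomorphism. This is a feature of the set-theoretic freeness of $G$ and is not an obstruction of substance, but it is the one point where the categorical nature of the generators of $G$ must be used honestly. The projectivity of the resulting $T$ is then automatic because direct sums of projective modules are projective.
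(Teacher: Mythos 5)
Your proof is correct, and since the paper states this proposition without supplying a proof, there is nothing to compare it against. Your treatment of the key direction $(1)\Rightarrow(2)$ — writing $(P)-(Q)$ as an explicit $\Z$-combination of the defining relators, moving negative terms across, reading off a multiset equality from freeness of $G$ on the set of isomorphism classes, and converting that into $P\oplus T\simeq Q\oplus T$ — is the standard argument and is carried out cleanly; the remaining implications are immediate as you say.
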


\begin{define}
A projective $A$-module $P$ is said to be {\it stably free} if
$[P]=[A^n]$ in $K_0(A)$ for some $n$.
\end{define}

\begin{theorem}\label{Bass}
({\bf Bass Cancellation Theorem}) 
	Let $A$ be a Noetherian ring of
dimension $n$ and let $P$ be a projective $A$-module of rank $> n$.
Suppose that $P\oplus Q \iso P^\prime \oplus Q$ for some projective
$A$-modules $P^\prime$ and $Q$. Then $P \iso P^\prime$ i.e. if rank
$P$ = rank $P' > \dim A$ and $[P]=[P']$ in $K_0(A)$, then
$P\iso P'$ (\cite{SM}, p. 42).
\end{theorem}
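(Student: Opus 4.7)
The plan is to reduce, via two standard simplifications, to a single non-trivial lemma of Bass stable-range type. First, since $Q$ is projective, pick $Q'$ with $Q \oplus Q' \iso A^m$. Adding $Q'$ to both sides of $P \oplus Q \iso P' \oplus Q$ yields $P \oplus A^m \iso P' \oplus A^m$. Since $P$ and $P'$ share the common rank $> n$, the intermediate modules $P \oplus A^i$ for $0 \leq i < m$ also have rank $> n$; hence by induction on $m$ it suffices to prove the case $m = 1$, namely that if $P \oplus A \iso P' \oplus A$ with rank $P > n$, then $P \iso P'$.

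For this reduced case, I invoke the proposition immediately preceding the theorem. It asserts that $P \iso P'$ will follow once every unimodular element $(p, a) \in P \oplus A$ can be carried to $(0, 1)$ by some $\Delta \in \Aut(P \oplus A)$: given an isomorphism $\psi : P' \oplus A \iso P \oplus A$, set $(p, a) = \psi(0, 1)$; then $\Delta \psi$ sends $(0, 1) \in P' \oplus A$ to $(0, 1) \in P \oplus A$ and hence, modulo the common free summand, descends to an isomorphism $P' \iso P$. Thus the task becomes: when rank $P > n$, show that every unimodular element of $P \oplus A$ lies in the $\Aut(P \oplus A)$-orbit of $(0, 1)$.

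The linchpin is the following key lemma: given unimodular $(p, a) \in P \oplus A$ with rank $P > n$, there exists $q \in P$ such that $p + aq$ is unimodular in $P$. Assuming this, three elementary transvections of $P \oplus A$ complete the argument. The transvection $(x, y) \mapsto (x + yq, y)$ sends $(p, a)$ to $(p', a)$, where $p' = p + aq$; choosing $\psi \in P^*$ with $\psi(p') = 1$, the transvection $(x, y) \mapsto (x, y + (1 - a)\psi(x))$ sends $(p', a)$ to $(p', 1)$; and finally $(x, y) \mapsto (x - y p', y)$ sends $(p', 1)$ to $(0, 1)$.

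The main obstacle is establishing this key lemma, a module-theoretic incarnation of Bass's stable range theorem in which the rank hypothesis is genuinely consumed. The natural route sets $I = {\cal O}_P(p) \subseteq A$; unimodularity of $(p, a)$ gives $I + aA = A$, so $\dim A/I \leq n < \mathrm{rank}\,P$, and Serre's theorem (\ref{Serre}) applied to the projective $A/I$-module $P/IP$ yields a unimodular element $\bar q \in P/IP$, which lifts to $q \in P$ handling every prime $\mathfrak{p} \supseteq I$ (there $a$ is a unit and $p \in \mathfrak{p} P_\mathfrak{p}$, so unimodularity of $\bar q$ forces $p + aq \notin \mathfrak{p} P_\mathfrak{p}$). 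At primes not containing $I$, $p$ is already unimodular, and at those primes additionally containing $a$ the perturbation $p + aq \equiv p$ remains unimodular for free; the remaining potentially bad primes lie in the open set $D(I) \cap D(a)$ and reduce to the finitely many minimal primes of $V({\cal O}_P(p + aq))$ meeting this open set. Refining $q \leadsto q + q'$ with $q' \in IP$ chosen by a prime-avoidance argument (again powered by rank $P > n$) destroys none of the previously achieved unimodularity and eliminates these remaining bad primes. Weaving these case-analyses into a single coherent choice of $q$ is the delicate point.
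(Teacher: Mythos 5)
The paper itself does not prove this theorem; it states it as a classical fact and cites \cite{SM}, p.~42. There is therefore no in-paper proof to compare against, and I assess your proposal on its own merits.

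Your overall architecture is the standard one and is correct. Adding a complement of $Q$ reduces to $P\oplus A^m\iso P'\oplus A^m$, and an induction on $m$ (legitimate since rank $P\oplus A^i>n$ throughout) reduces to the case of a single free summand. Invoking the proposition preceding the theorem converts this to the statement that $\Aut(P\oplus A)$ acts transitively on $\Um(P\oplus A)$. Your three transvections do carry $(p,a)$ to $(0,1)$ once one has a $q\in P$ with $p+aq$ unimodular in $P$, and each is genuinely a transvection in the sense of the paper. The bottleneck is, as you say, the key lemma.

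The gap is in your proof of that key lemma, precisely at the point you flag as delicate. The one-shot recipe ``Serre on $P/IP$, then choose $q'\in IP$ to dodge the minimal primes of ${\cal O}_P(p+aq)$'' does not in general produce a unimodular element: avoiding the current finitely many bad primes can introduce new ones, and nothing in the argument as written prevents this or forces termination. For a concrete illustration take $A=k[x,y]$, $P=A^3$, $p=(x^2,xy,y^2)$, $a=1+x$, so $I=(x,y)^2$ and $(p,a)$ is unimodular. The Serre-compatible lift $q=(1,0,0)$ gives $p+aq=(1+x+x^2,\,xy,\,y^2)$, whose orbit ideal lies inside $(1+x+x^2,y)$; the naive correction $q'=(x^2,0,0)\in IP$ does avoid that prime but produces $(1+x+2x^2+x^3,\,xy,\,y^2)$, again proper because the cubic factors. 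A smarter $q'$ exists, but the point is that ``avoid the minimal primes'' is not a correct selection rule: the rank hypothesis must be spent in a genuinely height-increasing fashion, either iteratively or by appeal to a general-position theorem, not in a single prime-avoidance sweep.

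The cleanest repair, entirely inside the paper's toolkit, is to bypass Serre and apply Lemma \ref{cor13} directly. Regard $p$ as an element of $(P^*)^*\cong P$ and apply \ref{cor13} with $P^*$ (of rank $=\mathrm{rank}\,P>n$) in the role of the projective module and $(\alpha,a)=(p,a)$. This yields $q\in P$ with $\hh\bigl({\cal O}_P(p+aq)\bigr)_a\geq\mathrm{rank}\,P>\dim A\geq\dim A_a$, which forces $\bigl({\cal O}_P(p+aq)\bigr)_a=A_a$, i.e.\ $a^k\in{\cal O}_P(p+aq)$ for some $k$. On the other hand, for any $\phi\in P^*$ one has $\phi(p)=\phi(p+aq)-a\phi(q)$, so ${\cal O}_P(p)\subseteq{\cal O}_P(p+aq)+aA$, and since $(p,a)$ is unimodular this gives ${\cal O}_P(p+aq)+aA=A$. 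Raising this last equality to the $k$-th power and using $a^k\in{\cal O}_P(p+aq)$ gives ${\cal O}_P(p+aq)=A$. This is exactly your key lemma, after which your transvection argument finishes the theorem; the Serre detour is then unnecessary.
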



\chapter{Some Preliminary Results}

We begin with some lemmas on general position that are proved using
prime avoidance arguments.  

\begin{lemma}\label{manoj5}
	Let $A$ be a Noetherian ring, 
$I\subset A$ be an ideal and let $\p_1,\ldots,\p_r$ be prime
ideals of $A$. Let $I = (a_1,\ldots,a_n) \nsubseteq
\bigcup_1^r\,\p_i$. Then, there exists $b_2,\ldots,b_n \in A$ such that
$c = a_1 + b_2a_2+\ldots+b_na_n \notin \bigcup_1^r\, \p_i$.
\end{lemma}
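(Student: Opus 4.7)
The plan is to prove this via a prime-avoidance style construction, handling the primes in two groups according to whether they contain $a_1$. First I would reduce to the case that $\p_1,\ldots,\p_r$ are pairwise incomparable, since any $\p_i$ properly contained in some $\p_j$ can be discarded without changing $\bigcup_i \p_i$ and hence without changing the problem.

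After relabeling, I would assume $a_1 \in \p_i$ for $1 \le i \le s$ and $a_1 \notin \p_i$ for $s < i \le r$. For each $i \le s$, since $a_1 \in \p_i$ but $I = (a_1,\ldots,a_n) \not\subseteq \p_i$, there is an index $j(i) \ge 2$ with $a_{j(i)} \notin \p_i$. Set $K := \bigcap_{\ell > s} \p_\ell$. By incomparability, $K \not\subseteq \p_i$ for every $i \le s$, so I can pick $k_i \in K \setminus \p_i$. Likewise incomparability within the first group produces $t_i \in \bigcap_{\ell \le s,\ \ell \ne i} \p_\ell$ with $t_i \notin \p_i$. I would then set $b_j := \sum_{\{i \le s\, :\, j(i) = j\}} t_i k_i$ for each $j \ge 2$ (taking $b_j = 0$ if the index set is empty) and put $c := a_1 + b_2 a_2 + \cdots + b_n a_n$.

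Next I would verify $c \notin \p_i$ for every $i$. For $i > s$: each $b_j$ lies in $K \subseteq \p_i$, so $\sum_{j \ge 2} b_j a_j \in \p_i$, while $a_1 \notin \p_i$, giving $c \notin \p_i$. For $i \le s$: reducing modulo $\p_i$, every summand $t_\ell k_\ell a_{j(\ell)}$ with $\ell \ne i$ vanishes since $t_\ell \in \p_i$, leaving $\sum_{j \ge 2} b_j a_j \equiv t_i k_i a_{j(i)} \pmod{\p_i}$; this residue is nonzero because each of $t_i,\, k_i,\, a_{j(i)}$ lies outside the prime $\p_i$. Combined with $a_1 \in \p_i$, this yields $c \equiv t_i k_i a_{j(i)} \not\equiv 0 \pmod{\p_i}$.

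The main obstacle is arranging for $\sum_{j \ge 2} b_j a_j$ to simultaneously lie inside every $\p_\ell$ with $\ell > s$ (so that the residue of $c$ there is governed by $a_1$) and outside every $\p_i$ with $i \le s$ (so that it compensates for $a_1 \in \p_i$). This is resolved by drawing all coefficients $b_j$ from the ideal $K$ while using the "Lagrange-style" auxiliary elements $t_i$, available thanks to incomparability, to decouple the conditions at the different $\p_i$ with $i \le s$.
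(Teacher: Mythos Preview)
Your argument is correct. The reduction to pairwise incomparable primes is the same as in the paper, but from there the two proofs diverge. The paper proceeds by induction on $r$: assuming one has already found $d_1 = a_1 + c_2a_2 + \cdots + c_na_n \notin \bigcup_1^{r-1}\p_i$, it corrects $d_1$ by adding $g a_j$ for a single well-chosen $g \in \bigcap_1^{r-1}\p_i \setminus \p_r$ and some $a_j \notin \p_r$. You instead give a direct, non-inductive construction: you split the primes into those containing $a_1$ and those not, and build the coefficients $b_j$ as Lagrange-style sums $\sum t_i k_i$ so that, modulo each $\p_i$ in the first group, exactly one summand survives. The paper's inductive step is shorter and requires tracking fewer auxiliary elements; your approach has the virtue of producing an explicit closed formula for the $b_j$ in one stroke and makes the ``separation of conditions'' at the different primes completely transparent. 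Both are standard prime-avoidance maneuvers and neither is deeper than the other.
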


\begin{proof}
Without loss of generality, we may assume that there are no inclusion
relations between the various prime ideals $\p_i$. We prove the lemma
by induction on the number of prime ideals. Suppose by induction, we
have chosen $c_2,\ldots,c_n\in A$ such that 
	$d_1 = a_1 +c_2a_2+\ldots+c_na_n \notin \bigcup_1^{r-1}\,
\p_i$. If $d_1 \notin \p_r$, we set $c = d_1$. We assume therefore,
that $d_1\in \p_r$. Since $I \nsubseteq \p_r$, it follows that one of
the elements $a_2,\ldots,a_n \notin \p_r$. Without loss of generality,
we assume that $a_2\notin \p_r$. We choose an element $g\in A$ such
that $g\in \bigcap_1^{r-1} \, \p_i$ and $g\notin \p_r$. Such a choice
of $g$ is possible, since there are no inclusion relations between the
various prime ideals $\p_i$. The element $c = d_1 + ga_2$ is of the
form $a_1+e_2a_2+\ldots+e_na_n$ and $c \notin \bigcup_1^r\,\p_i$.
$\hfill \gj$
\end{proof}

\begin{lemma}\label{manoj6}
Let $A$ be a Noetherian ring and let $I = (a_1,\ldots,a_n) \subset A$ be
an ideal of height $\geq n$. Then, there exists an elementary matrix
$\theta \in {\rm E}_n(A)$ such that $[a_1,\ldots,a_n] \theta =
[b_1,\ldots,b_n]$, $I = (b_1,\ldots,b_n)$ and 
$\hh (b_1,\ldots,b_i) \geq i,\; 1\leq i \leq n$.
\end{lemma}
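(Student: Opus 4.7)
The plan is to prove this by induction on $i$, constructing the $b_i$ one at a time via the prime avoidance result of Lemma \ref{manoj5}, with each modification realized as an elementary column operation so that the cumulative transformation lies in $\mathrm{E}_n(A)$.

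For the base step, since $\hh I \geq n \geq 1$, the ideal $I$ is not contained in any minimal prime of $A$. Letting $\p_1,\ldots,\p_r$ be the minimal primes of $A$ and applying Lemma \ref{manoj5} to the generating set $a_1,\ldots,a_n$ of $I$, I obtain $b_1 = a_1 + c_2 a_2 + \cdots + c_n a_n$ not lying in any $\p_j$. This replacement is effected by the product $\prod_{j\ge 2} \mathrm{E}_{j1}(c_j)$ acting from the right on the row. Since $b_1$ avoids every minimal prime of $A$, every minimal prime over $(b_1)$ has height $\ge 1$, so $\hh(b_1)\ge 1$, and $I = (b_1,a_2,\ldots,a_n)$ still.

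For the inductive step, suppose after $i$ such modifications (with $i<n$) I have an elementary transformation of the original row to $[b_1,\ldots,b_i,a_{i+1}',\ldots,a_n']$ with $I = (b_1,\ldots,b_i,a_{i+1}',\ldots,a_n')$ and $\hh(b_1,\ldots,b_j)\ge j$ for $j\le i$. Let $\q_1,\ldots,\q_s$ be the minimal primes over $(b_1,\ldots,b_i)$. By Krull's height theorem each $\q_k$ has height at most $i$, so $\hh \q_k \le i < n \le \hh I$; in particular $I \nsubseteq \q_k$ for any $k$. Since $b_1,\ldots,b_i$ all lie in each $\q_k$, the ideal $(a_{i+1}',\ldots,a_n')$ is not contained in $\bigcup_k \q_k$. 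Now apply Lemma \ref{manoj5} to the generators $a_{i+1}',\ldots,a_n'$ of this ideal and the primes $\q_1,\ldots,\q_s$, obtaining coefficients giving $b_{i+1} = a_{i+1}' + c_{i+2}a_{i+2}' + \cdots + c_n a_n'$ not lying in any $\q_k$. This replacement corresponds to the elementary matrix $\prod_{j>i+1}\mathrm{E}_{j,i+1}(c_j)$ applied from the right.

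It remains to check the height bound. Let $\q$ be any minimal prime over $(b_1,\ldots,b_{i+1})$. Then $\q$ contains some minimal prime $\q_k$ of $(b_1,\ldots,b_i)$. If $\q = \q_k$ then $b_{i+1}\in \q_k$, contradicting our choice. Hence $\q \supsetneq \q_k$, and so $\hh \q \ge \hh \q_k + 1 \ge i+1$, as required. Note also that $I = (b_1,\ldots,b_{i+1},a_{i+2}',\ldots,a_n')$ since $b_{i+1}$ is an $A$-linear combination of the previous generators. After $n$ steps the product of all elementary matrices used gives the desired $\theta \in \mathrm{E}_n(A)$.

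The only real subtlety is the inductive step's height estimate: one must invoke Krull's Hauptidealsatz to bound $\hh \q_k \le i$, compare this with the hypothesis $\hh I \ge n$ to conclude $I\nsubseteq \q_k$, and then carefully check that the newly produced $b_{i+1}$ strictly pushes past each $\q_k$ so that the height of $(b_1,\ldots,b_{i+1})$ genuinely increases; everything else is routine prime avoidance packaged by Lemma \ref{manoj5}.
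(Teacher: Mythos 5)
Your proof is correct and follows the same strategy as the paper's: iteratively modify one generator at a time via Lemma \ref{manoj5} (prime avoidance), realizing each modification as an elementary column operation, and keeping track of heights by passing from the minimal primes of $(b_1,\ldots,b_i)$ to those of $(b_1,\ldots,b_{i+1})$. The paper's write-up is terser (it works with ``height $i$ primes containing $d_1,\ldots,d_i$,'' which coincide with your minimal primes by Krull's Hauptidealsatz, and compresses the inductive step into ``proceeding as above''); you make explicit two points the paper leaves implicit, namely that $\hh \q_k \le i < n \le \hh I$ forces $I \nsubseteq \q_k$ (so Lemma \ref{manoj5} is applicable), and that any minimal prime over $(b_1,\ldots,b_{i+1})$ properly contains some $\q_k$ and so has height $\ge i+1$. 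This is a helpful clarification but not a different method.
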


\begin{proof}
By lemma (\ref{manoj5}), we find elements $b_2,\ldots,b_n \in A$
such that the element
	$d_1 = a_1 + b_2a_2+\ldots+b_na_n$ does
not belong to the minimal prime ideals of $A$. Hence $\hh (d_1)\geq 1$.
	The element $d_1$ is
contained in only finitely many height one prime ideals of $A$, say
$\p_1,\ldots,\p_r$. Note that $I=(d_1,a_2,\ldots, a_n)$. Applying
lemma (\ref{manoj5}) once more, we find 
	$c_1,c_3,\ldots,c_n \in A$
such that the element 
	$d_2 = a_2 + c_1d_1+c_3a_3+\ldots+c_na_n$
does not belong to any $\p_i$ for $1\leq i\leq r$. Hence 
$\hh (d_1,d_2)\geq 2$. Note that $I=(d_1,d_2,a_3,\ldots,a_n)$. Proceeding as
above, we obtain a set of generators $d_1,\ldots,d_n$ of $I$ with the
required properties. We note that the transformations we have
performed are all elementary. Hence a matrix $\theta$ exists with the
required property.  
$\hfill \gj$
\end{proof}

\begin{lemma}\label{Evans}
Let $A$ be a Noetherian ring and 
	$[a_1,\ldots,a_n,a] \in A^{n+1}$.
Then, there exists $[b_1,\ldots,b_n] \in A^n$ such that $\hh I_a
\geq n$, where 
	$I = (a_1+a b_1,\ldots,a_n+a b_n)$, i.e. if $\p
\in$ {\rm Spec} $(A)$, $I \subset \p$ and $a\notin \p$, then 
$\hh \p \geq n$. In particular, if the ideal $(a_1,\ldots,a_n,a)$ has
height $\geq n$, then $\hh I \geq n.$ Further, if
$(a_1,\ldots,a_n,a)$ is an ideal of height $\geq n$ and $I$ is a
proper ideal of $A$, then $\hh I = n$.
\end{lemma}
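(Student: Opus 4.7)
The plan is to build the $b_i$'s inductively on $i$, at each step using prime avoidance (Lemma \ref{manoj5}) to steer the next generator away from the finitely many ``bad'' minimal primes that remain. The inductive hypothesis I would maintain is: after choosing $b_1,\ldots,b_{i-1}$, every prime $\p$ that contains $I_{i-1}:=(a_1+ab_1,\ldots,a_{i-1}+ab_{i-1})$ and misses $a$ satisfies $\hh \p \geq i-1$. The base case $i=1$ is handled by listing the (finitely many) minimal primes $\p_1,\ldots,\p_s$ of $A$ that do not contain $a$; since $a\notin\p_j$, the ideal $(a_1,a)$ is not contained in any $\p_j$, so Lemma \ref{manoj5} supplies $b_1$ with $a_1+ab_1\notin\bigcup_j\p_j$.

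For the inductive step I would let $\p_1,\ldots,\p_r$ be the minimal primes over $I_{i-1}$ of height exactly $i-1$ that miss $a$; this is a finite list because $A$ is Noetherian. The point is that $a\notin\p_j$ automatically forces $(a_i,a)\not\subseteq \p_j$, so Lemma \ref{manoj5} yields $b_i\in A$ with $a_i+ab_i\notin\bigcup_j\p_j$. To verify the updated hypothesis, I take an arbitrary prime $\q$ containing $I_i:=I_{i-1}+(a_i+ab_i)$ with $a\notin\q$; by induction $\hh\q\geq i-1$, and if equality held then the induction hypothesis applied to any minimal prime of $I_{i-1}$ sitting inside $\q$ would force that minimal prime to equal $\q$, making $\q$ one of the $\p_j$ — contradicting $a_i+ab_i\in\q$.

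The ``in particular'' part then follows by splitting primes $\p\supseteq I$ into two cases: if $a\in\p$ then $\p$ also contains $(a_1,\ldots,a_n,a)$, whose height is $\geq n$ by hypothesis; if $a\notin\p$ then $\hh\p\geq n$ by what was just proved. The closing sentence combines $\hh I\geq n$ with Krull's height theorem, which bounds the height of any proper ideal generated by $n$ elements by $n$, forcing $\hh I=n$. The only real subtlety — and hence the ``main obstacle'', such as it is — is the bookkeeping at the inductive step: one must check that the finitely many primes avoided via Lemma \ref{manoj5} genuinely exhaust every threat of height exactly $i-1$, which is ensured by the observation that a prime of minimal possible height containing $I_{i-1}$ and missing $a$ must already be a minimal prime of $I_{i-1}$.
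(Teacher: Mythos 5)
Your proof is correct and takes essentially the same route as the paper: the same inductive scheme, with Lemma \ref{manoj5} used at each step to steer $a_i + ab_i$ away from the finitely many minimal primes of the partial ideal that miss $a$, followed by the same two-case split ($a\in\p$ versus $a\notin\p$) for the ``in particular'' assertion and Krull's height theorem for the final equality. The only cosmetic difference is that you phrase the invariant prime-by-prime where the paper writes it as $\hh I_{i}A_a \geq i$, and the paper isolates the nilpotent-$a$ case up front, which your argument handles implicitly (the set of primes missing $a$ is then empty).
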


\begin{proof}
The only prime ideals of $A$ which survive in $A_a$ are those which do
not contain $a$. If every minimal prime ideal of $A$ contains $a$, 
then $a$ is a nilpotent element and every prime ideal contains
$a$. Hence 
there is nothing to prove. Assume that $a\in A$ is not a  nilpotent
element.  Let $\p_1,\ldots,\p_r$ be the minimal prime
ideals of $A$ which do not contain $a$.
	Applying (\ref{manoj5}), we can find $b_1\in A$ such that
 	$(a_1+a b_1)A_a \not\subset \bigcup_1^r \p_i$.
Assume that $b_1,\ldots,b_{n-1} \in A$ are chosen so that 
$\hh (a_1+ab_1,\ldots,a_{n-1}+ab_{n-1}) A_a \geq n-1$. 
Let $\q_1,\ldots, \q_s$ be the minimal prime ideals of
$(a_1+ab_1,\ldots,a_{n-1}+ab_{n-1})$ which do not contain $a$. 
Applying (\ref{manoj5}), we can find $b_n\in A$ such that 
$a_n+a b_n \notin \bigcup_1^s \q_i$ and hence
$\hh (a_1 +ab_1,\ldots,a_n + ab_n) A_a \geq n$.

Now, assume that $\hh (a_1,\ldots,a_n,a) \geq n$. We show that 
$\hh I \geq n$, where 
	$I = (a_1+ ab_1,\ldots,a_n + ab_n)$.
Assume $\hh I = r < n$. Let $\p$ be a prime ideal of $A$ containing
$I$ such that $\hh \p = r$. If $a\notin \p$, then $I_a\subset \p_a$
and $\hh \p_a = r$, a contradiction. If $a\in \p$, then $(I,a) =
(a_1,\ldots,a_n,a) \in \p$, a contradiction as 
$\hh (a_1,\ldots,a_n,a) \geq n$ and $\hh \p = r < n$. 
Hence, we have $\hh I \geq n$.

Assume that $\hh (a_1,\ldots,a_n,a) \geq n$ and $I$ is a proper ideal.
Then $\hh I\leq n$, since $I$ is generated by $n$ elements. Hence 
$\hh I = n$. 
$\hfill \gj$
\end{proof}

\begin{lemma}\label{com}
Let $[a_0,a_1,\ldots,a_n]$ be a unimodular row. If $[a_1,\ldots,a_n]$
is also unimodular, then $[a_0,a_1,\ldots,a_n]$ can be taken to
$[1,0,\ldots,0]$ by an elementary transformation.
\end{lemma}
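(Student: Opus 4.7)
The plan is to use the unimodularity of $[a_1, \ldots, a_n]$ to convert $a_0$ to $1$ by elementary moves, and then to clear the remaining entries.

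Since $[a_1, \ldots, a_n]$ is unimodular, I would choose $b_1, \ldots, b_n \in A$ with $\sum_{i=1}^n a_i b_i = 1$. I then multiply the row $[a_0, a_1, \ldots, a_n]$ on the right by the elementary matrices ${\rm E}_{i0}((1-a_0)b_i)$ for $i = 1, \ldots, n$ in succession. Each such multiplication adds $(1-a_0)b_i a_i$ to the $0$-th entry and leaves the other entries fixed, so after all $n$ of them the row becomes
$$\Bigl[\,a_0 + (1-a_0)\sum_{i=1}^n a_i b_i,\ a_1,\ \ldots,\ a_n\Bigr] = [1, a_1, \ldots, a_n].$$

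For the second stage, I would multiply on the right by ${\rm E}_{0i}(-a_i)$ for $i = 1, \ldots, n$; each of these subtracts $a_i$ times the $0$-th entry (which is now $1$) from the $i$-th entry, turning it into $0$ and leaving the $0$-th entry unchanged. The resulting row is $[1, 0, \ldots, 0]$, and the composite of all the elementary matrices used is the desired element of ${\rm E}_{n+1}(A)$.

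There is really no serious obstacle here: the unimodularity of $[a_1, \ldots, a_n]$ is exactly the hypothesis needed to lift $a_0$ to $1$ via column operations, and once the first entry is $1$ the rest of the row can always be cleared by elementary moves. The only minor care needed is to perform the first batch of operations sequentially (so that the expression $\sum a_i b_i$ in the $0$-th slot really assembles correctly) and to note that the $a_i$ themselves are not touched during that batch, which is what makes the second batch work cleanly.
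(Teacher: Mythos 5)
Your argument is correct, and it is the standard proof of this fact; the paper actually states Lemma~\ref{com} without giving any proof, so there is nothing to compare against. Two small remarks worth noting: with the paper's convention $\mathrm{E}_{ij}(a)=I+ae_{ij}$, right multiplication of a row vector $v$ by $\mathrm{E}_{ij}(a)$ adds $a\,v_i$ to the $j$-th entry, which matches exactly what you use; and since the first batch of operations $\mathrm{E}_{i0}((1-a_0)b_i)$ touches only the $0$-th slot while reading from the $i$-th slots, and the second batch $\mathrm{E}_{0i}(-a_i)$ touches only the $i$-th slots while reading from the $0$-th slot (now equal to $1$), the two stages do not interfere, just as you observe.
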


\begin{proposition}\label{Action}
Let $A$ be a Noetherian ring of dimension $d$. If $n\geq d+2$, then 
${\rm E}_n(A)$ acts transitively on $\Um_n(A)$
\end{proposition}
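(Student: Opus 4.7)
The plan is to reduce an arbitrary $v = [a_1, \ldots, a_n] \in \Um_n(A)$ to $[1, 0, \ldots, 0]$ by elementary transformations. By Lemma \ref{com}, it suffices to produce, via ${\rm E}_n(A)$, an equivalent row $v'$ whose first $n-1$ entries already form a unimodular row; the remaining reduction is then automatic.

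The key technical step is to apply Lemma \ref{Evans} to the tuple $[a_1, \ldots, a_{n-1}; a_n]$, with $a_n$ playing the role of the distinguished element $a$. This produces $b_1, \ldots, b_{n-1} \in A$ such that, setting $a_i' := a_i + b_i a_n$ for $1 \leq i \leq n-1$ and $J := (a_1', \ldots, a_{n-1}')$, we have $\hh J_{a_n} \geq n-1$. These substitutions are precisely the elementary operations ${\rm E}_{in}(b_i)$, so the new row $v' = [a_1', \ldots, a_{n-1}', a_n]$ is ${\rm E}_n(A)$-equivalent to $v$ and hence still unimodular, which gives $J + (a_n) = A$.

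I would then exploit the dimension hypothesis $n \geq d+2$: this yields $n - 1 \geq d + 1 > \dim A \geq \dim A_{a_n}$. Since the height of a proper ideal cannot exceed the dimension of the ambient ring, $\hh J_{a_n} \geq n-1$ forces $J A_{a_n} = A_{a_n}$, i.e., $a_n^{k} \in J$ for some $k \geq 1$. Combined with $J + (a_n) = A$, writing $1 = y + c a_n$ with $y \in J$ and raising to the $k$-th power shows that every term of the binomial expansion lies in $J$, so $1 \in J$. Thus $[a_1', \ldots, a_{n-1}']$ is unimodular, and a final application of Lemma \ref{com} completes the reduction of $v'$ to $[1, 0, \ldots, 0]$.

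The only genuine obstacle in this strategy is extracting the correct height bound from Lemma \ref{Evans}, which is the Eisenbud--Evans style general position result doing all the real work. Once that height is in hand, the proof reduces to a height-versus-dimension comparison that crucially uses $n - 1 > \dim A$, together with the standard completion Lemma \ref{com}.
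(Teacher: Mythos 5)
Your proof is correct and takes essentially the same approach as the paper: apply Lemma \ref{Evans} with $a_n$ as the distinguished element to modify the first $n-1$ entries by elementary column operations so that they alone generate the unit ideal, then finish with Lemma \ref{com}. The paper's version is terse and simply asserts that the shortened row is unimodular; you have supplied the missing justification (the height-versus-dimension comparison using $n-1 > d$, together with the radical/binomial step passing from $a_n^k \in J$ and $J + (a_n) = A$ to $J = A$). One small simplification available to you: the ``in particular'' clause of Lemma \ref{Evans} applies directly since $(a_1,\ldots,a_{n-1},a_n) = A$ has height $\geq n-1$ vacuously, giving $\hh J \geq n-1 > d$ in $A$ itself and hence $J = A$ immediately, avoiding the detour through $A_{a_n}$ and the binomial expansion.
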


\begin{proof}
Let $(a_1,\ldots,a_n) \in \Um_n(A)$. By (\ref{Evans}), there exist
	$b_1,\ldots,b_{n-1} \in A$ such that
	$(a_1+b_1a_n,\ldots,a_{n-1}+b_{n-1}a_n)$ is a unimodular
row. Hence, by (\ref{com}), $(a_1,\ldots,a_n)$ can be taken to
$(1,0,\ldots,0)$ by elementary transformations.
$\hfill \gj$
\end{proof}

\begin{lemma}
Let $A$ be a Noetherian ring of dimension $n$ and
let $[a_0,a_1,\ldots,a_n]$ be a unimodular row. Then, we can
elementarily transform  $[a_0,a_1,\ldots,a_n]$ to $[b_0,\ldots,b_n]$
such that $(1)~ 
\hh (b_1,\ldots,b_n) \geq n$ and $(2)$ if $J\subset A$ is
an ideal of height $n$, then, we can choose the elementary 
transformations
so that in  addition we have $\;(b_1,\ldots,b_n) + J = A$.
\end{lemma}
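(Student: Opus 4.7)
The plan is to imitate the inductive construction in the proof of Lemma~\ref{Evans}, applied with $a=a_0$ to the tuple $[a_1,\ldots,a_n,a_0]$, and to enlarge the prime-avoidance step at the first stage so as to accommodate $J$ as well. Setting $b_0=a_0$ and $b_i=a_i+a_0c_i$ for $i\geq 1$ is an elementary transformation on the row, and since unimodularity forces $(a_0,a_1,\ldots,a_n)=A$ (hence trivially has height $\geq n$), the ``Further'' clause of Lemma~\ref{Evans} will hand us condition (1) as soon as each $c_i$ is chosen so that $\hh(b_1,\ldots,b_i)A_{a_0}\geq i$.

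For condition (2), list the minimal primes $P_1,\ldots,P_k$ of $J$. Each has height $n$ and coheight $0$, so each is a maximal ideal of $A$. Observe that if $a_0\in P_l$ then $b_i\equiv a_i\pmod{P_l}$ for every $i$; since $(a_0,a_1,\ldots,a_n)\not\subset P_l$ while $a_0\in P_l$, at least one $a_i$ lies outside $P_l$, so the corresponding $b_i$ does too, regardless of how the $c_i$'s are chosen. Hence we only need to arrange $b_1\notin P_l$ for those $l$ with $a_0\notin P_l$, and it is natural to do this at step~$1$.

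Concretely, at step~$1$ apply Lemma~\ref{manoj5} with ideal $(a_1,a_0)$ and the finite family of primes consisting of (i) the minimal primes of $A$ not containing $a_0$ (needed to start Evans's argument) and (ii) those $P_l$ with $a_0\notin P_l$. Every prime $\p$ on this list omits $a_0$, so $(a_1,a_0)\not\subset\p$ and Lemma~\ref{manoj5} yields $c_1\in A$ such that $b_1=a_1+a_0c_1$ lies outside every listed prime. Steps $i=2,\ldots,n$ are then carried out verbatim as in the proof of Lemma~\ref{Evans}: choose $c_i$ via Lemma~\ref{manoj5} to avoid the minimal primes of $(b_1,\ldots,b_{i-1})$ not containing $a_0$, preserving $\hh(b_1,\ldots,b_i)A_{a_0}\geq i$. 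Lemma~\ref{Evans} (Further) then gives (1). By construction and the case analysis above, $(b_1,\ldots,b_n)\not\subset P_l$ for every $l$; since each $P_l$ is maximal, $(b_1,\ldots,b_n)+P_l=A$. If $(b_1,\ldots,b_n)+J$ were proper it would lie in a maximal ideal $M\supset J$, which would have to coincide with some $P_l$, contradicting the previous sentence; hence $(b_1,\ldots,b_n)+J=A$. The only subtlety is really bookkeeping --- all primes we add to the avoidance list at step~$1$ omit $a_0$, which is precisely what Lemma~\ref{manoj5} requires.
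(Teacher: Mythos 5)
Your proof is correct, but it takes a genuinely different route from the one in the paper. The paper first normalizes the row modulo $J$: since $\dim(A/J)=0$, Proposition~\ref{Action} lets one elementarily transform $[a_0,\ldots,a_n]$ so that $[b_0,\ldots,b_n]\equiv[0,\ldots,0,1]\pmod J$. Because $b_0\equiv 0\pmod J$, the subsequent Evans-style elementary moves that add multiples of $b_0$ to $b_1,\ldots,b_n$ leave the residues mod $J$ untouched; the condition $(b_1,\ldots,b_n)+J=A$ is thus built in by the normalization, and one need not look inside the proof of Lemma~\ref{Evans} at all --- its statement is invoked as a black box. You instead unpack Evans's inductive prime-avoidance construction and enlarge the list at the first step to include the minimal primes $P_l$ of $J$ not containing $a_0$, using the nice observation that for the $P_l$ containing $a_0$ the comaximality is automatic from unimodularity (since $b_i\equiv a_i\pmod{P_l}$). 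Both the height bookkeeping (all minimal primes of $J$ are maximal because $\hh J=n=\dim A$, hence have coheight $0$) and the application of the "Further" clause of Lemma~\ref{Evans} are handled correctly. Your argument trades the modular appeal to the transitivity result for a more self-contained, prime-avoidance-only verification; the paper's version is shorter and keeps Evans's lemma as a closed module, while yours makes visible exactly which primes must be avoided and why the extra ones cost nothing.
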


\begin{proof}
Since $\dim (A/J) = 0$, by (\ref{Action}), we may perform elementary
transformations to obtain $[b_0,b_1,\ldots,b_n]$ such that
$[b_0,b_1,\ldots,b_n]  = [0,\ldots,0,1]$
modulo $J$. Further, adding suitable multiple of $b_0$ to
$b_1,\ldots,b_n$, we may assume by (\ref{Evans}) that 
$\hh (b_1,\ldots,b_n) \geq n$, and in addition that 
$(b_1,\ldots,b_n)+J =A$. 
$\hfill \gj$
\end{proof}

\begin{lemma}\label{lem11}
Let $A$ be a Noetherian ring and let $J \subset A$ be an ideal. Let
$J_1 \subset J$ and $J_2 \subset J^2$ be two ideals of $A$ such that
$J_1 + J_2 = J.$ Then $J = J_1 + (e)$ for some $e \in J_2$ and $J_1 =
J \cap J^\prime$, where $J_2 + J^\prime = A$.
\end{lemma}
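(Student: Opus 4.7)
The plan is to reduce everything to a single application of Nakayama's lemma (\ref{Nakayama}) to the finitely generated $A$-module $M := J/J_1$. The crucial first observation is that the two hypotheses together force $J = J_1 + J^2$: indeed, $J = J_1 + J_2 \subset J_1 + J^2 \subset J$ since $J_2 \subset J^2$. Passing to $M$, this reads $J\cdot M = M$. Nakayama then produces some $y \in J$ with $(1+y)J \subset J_1$.

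The next step is to upgrade this $y$ to an element of $J_2$, since the first assertion demands $e \in J_2$. I would decompose $y = y_1 + e$ along $J = J_1 + J_2$; since the $y_1$-part acts trivially modulo $J_1$, we retain $(1+e)J \subset J_1$ with $e \in J_2$. The identity $j = (1+e)j - ej$ for $j \in J$ then instantly gives $J \subset J_1 + (e)$, and the reverse inclusion is immediate. This settles the first statement.

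For the second statement the natural candidate is $J' := J_1 + (1+e)A$. Comaximality $J_2 + J' = A$ is immediate from $1 = (1+e) + (-e)$, and $J_1 \subset J \cap J'$ is obvious, so the whole content is the inclusion $J \cap J' \subset J_1$. For $z = j_1 + (1+e)a \in J$ with $j_1 \in J_1$ and $a \in A$, reducing modulo $J$ and using $e \in J$ gives $a \equiv z - j_1 \equiv 0 \pmod J$, so $a \in J$; then $(1+e)a \in (1+e)J \subset J_1$, whence $z \in J_1$.

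The only genuinely creative step is the upgrade from an abstract ``$y \in J$'' coming out of Nakayama to a specific ``$e \in J_2$'' via the splitting $J = J_1 + J_2$; once this relation $(1+e)J \subset J_1$ is in place, the correct companion ideal $J' = J_1 + (1+e)A$ suggests itself naturally, and both assertions become bookkeeping with the decomposition $z = j_1 + (1+e)a$.
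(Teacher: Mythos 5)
Your proof is correct and runs on the same engine as the paper's: Nakayama applied to the finitely generated module $J/J_1$, which is killed by $J$ since $J_2 \subset J^2$ forces $J = J_1 + J^2$. The difference is presentational. The paper invokes the standard lemma that an idempotent ideal of a Noetherian ring is generated by an idempotent element, obtaining an $e$ with $e - e^2 \in J_1$, and takes $J' = J_1 + (1-e)$; you apply Nakayama directly, decompose the resulting element $y$ along $J = J_1 + J_2$ to obtain $(1+e)J \subset J_1$ with $e \in J_2$, and take $J' = J_1 + (1+e)$ --- the same construction up to replacing $e$ by $-e$. Your route is slightly more self-contained, since it avoids citing the idempotent-ideal lemma (itself a Nakayama consequence), and your verification that $J \cap J' \subset J_1$, by reducing $z = j_1 + (1+e)a$ modulo $J$ to conclude $a \in J$ and then applying $(1+e)J \subset J_1$, is a little more transparent than the paper's computation with $e - e^2 \in J_1$.
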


\begin{proof}
Since $J/J_1$ is an idempotent ideal of $A/J_1$, it is generated by an
idempotent element. Let $J/J_1 = (\ol e)$. Since $J_1 + J_2 = J$, we
can assume that $e \in J_2$. Since $\ol e$ is an idempotent element,
we have $e-e^2 \in J_1$. Take $J^\prime = J_1+(1-e)$. Then 
$J_2 +J^\prime = A$, since $e \in J_2$. We claim that 
$J \cap J^\prime = J_1$.

Let $x\in J\cap J^\prime$. Then $x = y+e z = y_1+(1-e)z_1$, where
$y,y_1 \in J_1$ and $z,z_1 \in A$.  This implies 
$e z-(1-e)z_1 \in J_1$.  But $e-e^2 \in J_1$, so 
$e^2z \in J_1$ and hence $e z \in J_1$. Hence $x \in J_1$. 
This proves $J \cap J^\prime = J_1$.  
$\hfill \gj$
\end{proof}

\begin{corollary}\label{222}
Let $A$ be a Noetherian local ring. Let $J\subset A$ be an ideal such
that $J=(f_1,\ldots,f_n)+J^2$. Then $J=(f_1,\ldots,f_n)$.
\end{corollary}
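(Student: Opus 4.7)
The plan is to apply Lemma \ref{lem11} directly with $J_1 = (f_1,\ldots,f_n)$ and $J_2 = J^2$. The hypothesis $J = (f_1,\ldots,f_n) + J^2$ rewrites exactly as $J_1 + J_2 = J$, and trivially $J_1 \subseteq J$ and $J_2 = J^2 \subseteq J^2$, so the lemma applies and yields an ideal $J' \subseteq A$ with
\[
J_1 \;=\; J \cap J', \qquad J^2 + J' \;=\; A.
\]

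Next I would invoke the hypothesis that $A$ is local. Let $\m$ denote its maximal ideal, and assume $J$ is proper, which is the only case where the statement has genuine content. Then $J \subseteq \m$, so $J^2 \subseteq \m$, and the comaximality $J^2 + J' = A$ forces $J' \not\subseteq \m$. Since any ideal of the local ring $A$ not contained in $\m$ contains a unit, this means $J' = A$. Consequently
\[
(f_1,\ldots,f_n) \;=\; J_1 \;=\; J \cap J' \;=\; J \cap A \;=\; J,
\]
which is the desired conclusion.

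No step here presents a serious obstacle: the whole argument is simply bookkeeping on the output of Lemma \ref{lem11}, combined with the elementary observation that in a local ring any ideal comaximal with one sitting inside the maximal ideal must be the unit ideal. As a sanity check, the same conclusion drops out immediately from Nakayama's Lemma (\ref{Nakayama}) applied to the finitely generated $A$-module $N = J/(f_1,\ldots,f_n)$: the hypothesis translates to $J \cdot N = N$, and since $J$ lies in the Jacobson radical $\m$ of the local ring $A$, Nakayama forces $N = 0$, that is, $J = (f_1,\ldots,f_n)$. Either route is routine and requires no calculation beyond what is already in the preceding lemma.
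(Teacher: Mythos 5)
Your proposal is correct and takes the same route the paper implicitly intends: the corollary is placed immediately after Lemma \ref{lem11} and is meant to follow by exactly the specialization you give, $J_1 = (f_1,\ldots,f_n)$, $J_2 = J^2$, with the local hypothesis forcing the comaximal ideal $J'$ to be all of $A$. Your parenthetical Nakayama argument is an equally valid (and somewhat more direct) alternative, and you are right to note that the statement only has content when $J$ is proper.
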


\begin{lemma}\label{manoj10}
Let $A$ be a Noetherian ring and $I\subset A$ an ideal. Let
$f_1,\ldots,f_n \in I$ and $J = (f_1,\ldots,f_n)$. Then $I = J$ if and
only if $I = J + I^2$ and $V(J) = V(I)$ in {\rm Spec} $(A)$.
\end{lemma}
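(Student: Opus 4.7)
The forward direction is immediate: if $I=J$, then $I=J+I^2$ and $V(J)=V(I)$ trivially hold. So the content of the lemma is the reverse implication, and the plan is to work in the quotient ring $A/J$ and reduce to a Nakayama argument.

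The plan is to set $\ol A = A/J$ and $\ol I = I/J$, which is a finitely generated ideal of $\ol A$. The hypothesis $I = J+I^2$ translates, after passing to $A/J$, to the ideal identity $\ol I = \ol I^{\,2}$; that is, $\ol I$ is an idempotent ideal of $\ol A$. Applying Nakayama's lemma (\ref{Nakayama}) to the finitely generated $\ol A$-module $\ol I$ with $\ol I$ playing the role of the ideal, I obtain an element $a\in \ol I$ such that $(1+a)\,\ol I = 0$ in $\ol A$.

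The next step is to use the hypothesis $V(J)=V(I)$ to upgrade $1+a$ to a unit. Since every prime ideal of $A$ containing $J$ also contains $I$, every prime ideal of $\ol A$ contains $\ol I$, so $\ol I$ is contained in the nilradical of $\ol A$. In particular $a\in \ol I$ is nilpotent, which forces $1+a$ to be a unit of $\ol A$. Combining this with $(1+a)\,\ol I = 0$ yields $\ol I = 0$, i.e.\ $I = J$.

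I expect no serious obstacle here; the only subtlety is spotting that the two hypotheses combine cleanly through the quotient $A/J$: $I=J+I^2$ produces an idempotent ideal on which Nakayama applies, while $V(J)=V(I)$ guarantees that the ``$1+a$'' produced by Nakayama is actually invertible (via nilpotence of $a$) rather than merely a non-zero-divisor on $\ol I$. Both hypotheses are genuinely needed, as one sees by considering $A=\mathbf{Z}_{(p)}$, $I=(p)$, $J=(0)$ (where $V(J)=V(I)$ fails) or by considering ideals $I$ with $I\ne I^2$ at some localization (where the idempotent step fails).
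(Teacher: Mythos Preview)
Your proof is correct. Both your argument and the paper's rest on Nakayama, but they are organized differently: the paper checks $J_\p = I_\p$ prime by prime, using $V(J)=V(I)$ to ensure that $I_\p$ is a proper ideal of the local ring $A_\p$ whenever $J_\p$ is, and then invoking the local corollary (\ref{222}) to conclude $I_\p = J_\p$ from $I_\p = J_\p + I_\p^2$. You instead pass globally to $A/J$, obtain the idempotent ideal $\ol I = \ol I^{\,2}$, extract the Nakayama element $1+a$, and use $V(J)=V(I)$ to force $a$ nilpotent and hence $1+a$ invertible. Your route is arguably cleaner in that it avoids the case split over primes and makes visible the exact role of each hypothesis; the paper's route has the virtue of reducing immediately to the familiar local statement (\ref{222}).
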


\begin{proof}
This follows from (\ref{lem11}), however we give an independent proof.
In order to show that $J = I$, it is
enough to show that $J_{\p} = I_{\p}$ for all $\p \in$ Spec $(A)$. 
If $\p \nsupseteq J$, then $\p \nsupseteq I$ and 
$J_{\p} = I_{\p} = A_{\p}$. If
$\p \supset J$, then by hypothesis, $\p \supset I$. 
Since $I =(f_1,\ldots,f_n) + I^2$ by (\ref{222}), 
we have $J_{\p} = I_{\p}$. This proves the lemma. 
$\hfill \gj$
\end{proof}

\begin{lemma}\label{Mohan}
({\bf Mohan Kumar}) 
	Let $A$ be a Noetherian ring and let $I$ be an
ideal of $A$. Let $I/I^2$ is generated by $n$ elements as an
$A/I$-module. Let $x$ be any element of $A$. Then the ideal $(I,x)
\subset A$ is generated by $n+1$ elements \cite{MK1}.
\end{lemma}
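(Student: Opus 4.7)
The plan is to use the assumption $I/I^2$ is $n$-generated to reduce modulo $(f_1,\dots,f_n)$ to an idempotent-ideal situation, and then absorb $x$ and the remaining generator $e$ into a single clever combination $g=x+(1-x)e$.

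First I would choose $f_1,\dots,f_n\in I$ whose images generate $I/I^2$ as an $A/I$-module, so that $I=(f_1,\dots,f_n)+I^2$. Setting $J_1=(f_1,\dots,f_n)$ and $J_2=I^2$, this is precisely the setup of Lemma \ref{lem11} applied to $J=I$. That lemma (or its short proof: the ideal $I/J_1$ of $A/J_1$ is finitely generated and idempotent, hence by Nakayama generated by an idempotent $\bar e$) produces an element $e\in I$ such that
\[
I=(f_1,\dots,f_n,\,e)\qquad\text{and}\qquad e^2-e\in(f_1,\dots,f_n).
\]
At this stage we already know $I$ itself is $(n{+}1)$-generated, but the element $x$ has not yet been used; the point of the rest of the argument is to eat $e$ into $x$.

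Next I would set
\[
g\;=\;x+(1-x)e\;\in\;A,
\]
so $g\in(I,x)$, and verify that $(I,x)=(f_1,\dots,f_n,\,g)$. Since both ideals contain $(f_1,\dots,f_n)$, it is enough to show in $\bar A:=A/(f_1,\dots,f_n)$ that $\bar e$ and $\bar x$ lie in the principal ideal $(\bar g)$. Using $\bar e^2=\bar e$ one computes
\[
\bar e\,\bar g\;=\;\bar e\bar x+\bar e(1-\bar x)\bar e\;=\;\bar e\bar x+(1-\bar x)\bar e\;=\;\bar e,
\]
so $\bar e\in(\bar g)$; and then $\bar x=\bar g-(1-\bar x)\bar e\in(\bar g)$ as well. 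Hence $(I,x)=(f_1,\dots,f_n,e,x)=(f_1,\dots,f_n,g)$, which exhibits $(I,x)$ as generated by $n+1$ elements.

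The main (and only) real obstacle is producing the element $e$ with the property $e^2\equiv e\pmod{(f_1,\dots,f_n)}$; this is the content of the Nakayama-for-idempotent-ideals step already encapsulated in Lemma \ref{lem11}. Once $e$ is in hand, the choice $g=x+(1-x)e$ is forced by the splitting $\bar A\cong\bar A\bar e\oplus \bar A(1-\bar e)$: on the first factor $\bar e=1$ and $g$ becomes the unit $1$, while on the second factor $\bar e=0$ and $g$ becomes $\bar x$, so $g$ simultaneously captures both $e$ and $x$ modulo $(f_1,\dots,f_n)$. No further delicate calculation is needed.
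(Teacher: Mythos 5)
Your proof is correct and follows essentially the same route as the paper: choose $f_1,\dots,f_n$ generating $I$ mod $I^2$, obtain the idempotent $e$ modulo $(f_1,\dots,f_n)$, and show the single combination $g=x+(1-x)e$ (which equals the paper's $h+(1-h)x$, since the ring is commutative) captures both $e$ and $x$ modulo $(f_1,\dots,f_n)$. The only cosmetic difference is that you verify $e,x\in(g)$ by computing $\bar e\bar g=\bar e$ in $A/(f_1,\dots,f_n)$, whereas the paper multiplies by $h$ directly; these are the same calculation.
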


\begin{proof}
Let $a_1,\ldots,a_n$ be elements of $I$ such that they
generate $I$ modulo $I^2$. In the ring $A/(a_1,\ldots,a_n)$, the ideal
$\ol I = I/(a_1,\ldots,a_n)$ has the property that 
 $\ol I = \ol {I^2}$. Hence $\ol I$ is generated by an
idempotent. Let $h \in I$ be any lift of this idempotent.
We see that $I = (a_1,\ldots,a_n,h)$ and $h(1-h) \in
(a_1,\ldots,a_n)$. So $(I,x) = (a_1,\ldots,a_n,h,x)$. 
We claim that the ideal 
$J = (a_1,\ldots,a_n,h+(1-h)x) \subset (I,x)$ 
is actually equal to $(I,x)$.

By multiplying $h+(1-h)x$ by $h$, we have $h^2 \in J$ (since $h(1-h)
\in J$). Since $h = h^2 + h(1-h)$, we have $h\in J$. Also 
$h + (1-h)x \in J$, hence $x\in J$. 
Thus $J = (I,x)$ which proves the claim.
$\hfill \gj$
\end{proof}

\begin{rem}
Implicit in the above proof is a proof of the following assertion.
Let $A$ be a ring, $e\in A$ be an idempotent.  Then, for any $x\in A$,
the ideals $(e,x)$ and $(e+(1-e)x)$ are equal.
\end{rem}

The following is a theorem of Eisenbud and Evans \cite{E-E} and this
is a version proved in (\cite{P}, p. 1420). This was proved in
(\ref{Evans}) when $P$ is free.

\begin{lemma}\label{cor13}
Let $A$ be a Noetherian ring and let $P$ be a projective $A$-module of
rank $n$. Let $(\alpha,a) \in (P^\ast \oplus A)$. Then, there exists an
element $\beta \in P^\ast$ such that $\hh I_a \geq n$, where $I
= (\alpha + a\beta)(P).$ In particular, if the ideal $(\alpha(P),a)$
has height $\geq n$, then $\hh I \geq n.$ Further, if
$(\alpha(P),a)$ is an ideal of height $\geq n$ and $I$ is a proper
ideal of $A,$, then $\hh I = n.$
\end{lemma}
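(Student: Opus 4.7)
The plan is to adapt the proof of Lemma~\ref{Evans} to the projective case by performing prime avoidance in the module $P^*$ rather than in $A$. We may assume $a$ is not nilpotent, since otherwise $A_a=0$ and the assertions are vacuous. The construction of $\beta$ is iterative: we build a sequence $\beta^{(0)}=0,\beta^{(1)},\ldots,\beta^{(n)}=\beta$ in $P^*$ such that $I_k:=(\alpha+a\beta^{(k)})(P)$ satisfies $\hh (I_k)_a\geq k$ at every stage. The base case $k=0$ is vacuous.

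For the inductive step from $k$ to $k+1$ (with $k<n$), let $\q_1,\ldots,\q_s$ be the finitely many primes of $A$ with $a\notin\q_j$, $\hh \q_j=k$, and $I_k\subset\q_j$; these are the primes of $A$ corresponding to the minimal primes of $(I_k)_a$ of height $k$. For each $j$, since $a\notin\q_j$, the localization $P_{\q_j}$ is free of rank $n\geq 1$, so $P^*/\q_j P^*$ is a projective $A/\q_j$-module of rank $n$, in particular nonzero, so $\q_j P^*\subsetneq P^*$. A module-theoretic form of Lemma~\ref{manoj5} (proved by lifting through a surjection $A^\ell\surj P^*$ and carrying out the usual prime avoidance argument on the coordinates in $A^\ell$) then produces $\gamma\in P^*$ with $\gamma\notin\q_j P^*$ for every $j$. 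Setting $\beta^{(k+1)}=\beta^{(k)}+\gamma$, we have $\alpha+a\beta^{(k)}\in\q_j P^*$ (since $I_k\subset\q_j$) while $a\gamma\notin\q_j P^*$ (because $\ol a$ is a non-zerodivisor on the torsion-free $A/\q_j$-module $P^*/\q_j P^*$), so $\alpha+a\beta^{(k+1)}\notin\q_j P^*$ and hence $I_{k+1}\not\subset\q_j$.

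The main technical obstacle---absent in the free-module proof of Lemma~\ref{Evans}---is that passing from $\beta^{(k)}$ to $\beta^{(k+1)}$ alters the entire functional, so $I_{k+1}$ need not contain $I_k$; one must therefore rule out \emph{new} primes $\p$ of height $\leq k$ (with $a\notin\p$) that contain $I_{k+1}$ but not $I_k$. Such a $\p$ would force $\ol a\,\ol\gamma\equiv-\ol{\alpha+a\beta^{(k)}}\not\equiv 0$ in $P^*/\p P^*$, a codimension-$n$ constraint on $\gamma$; this is excluded by refining the choice of $\gamma$ so as to also avoid an auxiliary finite collection of primes read off from the local structure of $P^*$. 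This is precisely the projective-module form of the Eisenbud--Evans / Plumstead basic-element construction, and is where I expect the bulk of the work to lie. Once the iterative construction is in place, the remaining assertions are short. If $\hh(\alpha(P),a)\geq n$, then for any prime $\p\supset I$ with $a\in\p$ the containments $\alpha+a\beta\in\p P^*$ and $a\beta\in\p P^*$ give $\alpha\in\p P^*$, hence $\alpha(P)\subset\p$, so $\hh \p\geq n$ and thus $\hh I\geq n$. If, in addition, $I$ is proper, then at any minimal prime $\p$ of $I$ the ideal $I_{\p}=(\alpha+a\beta)(P_{\p})$ is generated by $n$ elements (as $P_{\p}$ is free of rank $n$), so Krull's height theorem gives $\hh \p\leq n$; combined with $\hh I\geq n$, this yields $\hh I=n$.
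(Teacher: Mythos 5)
The paper does not give a proof of this lemma; it is stated with a citation to Plumstead's paper \cite{P}, noting that Lemma \ref{Evans} is the free case. So there is no paper proof to compare against, and your attempt has to stand on its own.

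The final two assertions of the lemma are handled correctly and cleanly in your proposal: the observation that $I\subset\p$ forces $\alpha+a\beta\in\p P^*$ (because $P^*/\p P^*$ is torsion-free over the domain $A/\p$), and the Krull height argument using that $P_\p$ is free of rank $n$, are both sound. The problem is entirely in the construction of $\beta$, and you have in fact put your finger on it yourself. The induction in Lemma \ref{Evans} works because only one coordinate $b_i$ is modified at each stage, so the partial ideals $(a_1+ab_1,\ldots,a_k+ab_k)$ are nested; this is exactly what is lost in your scheme of replacing $\beta^{(k)}$ by $\beta^{(k)}+\gamma$, which perturbs the whole functional and destroys $I_k\subset I_{k+1}$. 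Your ``Case~1'' analysis (primes $\p$ of height $\leq k$ containing both $I_k$ and $I_{k+1}$ but not $a$) is correctly reduced to the finitely many $\q_j$, but the ``new primes'' $\p$ with $I_k\not\subset\p$ and $I_{k+1}\subset\p$ are not controlled: for each such $\p$ (with $a\notin\p$, $\hh\p\leq k$), the constraint forces $\gamma\equiv -a^{-1}(\alpha+a\beta^{(k)})\bmod\p P^*$, i.e.\ a \emph{specific} residue class modulo $\p P^*$, but there are infinitely many candidate primes $\p$ and the bad residue class varies with $\p$. You cannot avoid these by a prior finite prime-avoidance, because the offending primes are not determined until $\gamma$ is chosen. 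The sentence ``this is excluded by refining the choice of $\gamma$ so as to also avoid an auxiliary finite collection of primes read off from the local structure of $P^*$'' names a hope rather than an argument; it does not indicate how one produces a $\gamma$-independent finite list of primes to avoid. This is precisely where the Eisenbud--Evans / Plumstead basic-element machinery (generalized dimension functions and the stratification of the ``non-basic'' locus, or Plumstead's local-global patching) enters, and it is not reproduced by the coordinate-wise induction you are trying to imitate. As it stands, the central construction of $\beta$ is not established.
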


\begin{lemma}\label{manoj}
Let $A$ be a Noetherian ring of dimension $d$ and let $P$ be a
projective $A$-module of rank $n > d$. Let $J \subset A$ be an ideal
and let $\ol \alpha : P/J P \surj J/J^2$ be a surjection. 
Then $\ol\alpha$ can be lifted to a surjection from $P$ to $J$.
\end{lemma}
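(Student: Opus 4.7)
The plan is to build the surjection in two stages: first construct a lift $\alpha : P \ra A$ landing in $J$ that realizes $\ol\alpha$ modulo $J^2$, then perturb it as $\alpha + e\beta$ to absorb the ``missing'' part of $J$.

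For the lift, compose $\ol\alpha$ with the projection $P \surj P/JP$ and the inclusion $J/J^2 \inj A/J^2$, producing a map $P \ra A/J^2$. Since $P$ is projective and $A \surj A/J^2$, this lifts to an $A$-linear $\alpha : P \ra A$; by construction $\alpha(P) \subset J$ and $\alpha(P) + J^2 = J$. Applying Lemma \ref{lem11} with $J_1 = \alpha(P)$ and $J_2 = J^2$ yields an element $e \in J^2$ with $J = \alpha(P) + (e)$. Viewing $\alpha$ as an element of $P^{\ast}$, the Eisenbud-Evans Lemma \ref{cor13} applied to the pair $(\alpha,e) \in P^{\ast}\oplus A$ produces $\beta \in P^{\ast}$ such that the ideal $I := (\alpha + e\beta)(P)$ satisfies $\hh I_e \geq n$. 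Clearly $I \subset \alpha(P) + (e) = J$, and $I + (e) = \alpha(P) + (e) = J$.

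It remains to show $I = J$. Since $\dim A_e \leq d < n$ while $\hh I_e \geq n$, the ideal $I_e$ cannot be proper, so $I_e = A_e$. On the other hand, from $e \in J^2 = (I + (e))^2 \subset I + (e^2)$ one extracts $c \in A$ with $e(1-ce) \in I$. Now I check $I_{\p} = J_{\p}$ for each $\p \in$ Spec $(A)$: if $e \notin \p$, then $A_{\p}$ is a localization of $A_e$, so $I_{\p} = A_{\p}$ and hence $I_{\p} = J_{\p}$; if $e \in \p$, then $1-ce$ is a unit in $A_{\p}$, which forces $e \in I_{\p}$ and therefore $J_{\p} = I_{\p} + (e)_{\p} = I_{\p}$. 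Thus $I = J$, and $\alpha + e\beta : P \surj J$ is the desired surjection.

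The crux is the application of Lemma \ref{cor13}: the hypothesis $n > d$ is used precisely to convert the height estimate $\hh I_e \geq n$ into the equality $I_e = A_e$, from which $V(I) = V(J)$ (and hence $I = J$, via Lemma \ref{manoj10} if one prefers) drops out. Everything else is a formal lift followed by a routine local check.
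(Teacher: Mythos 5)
Your proof is correct and follows essentially the same route as the paper's: lift $\ol\alpha$ to a map $P\ra J$, invoke Lemma \ref{lem11} to produce $e\in J^2$ with $J=\alpha(P)+(e)$, apply Lemma \ref{cor13} to $(\alpha,e)$ to force $\hh I_e\geq n>d=\dim A$, and conclude $I=J$. The paper phrases the initial lift as using projectivity against $J\surj J/J^2$ and finishes by citing Lemma \ref{manoj10}; you instead lift through $A/J^2$ and inline the local verification behind \ref{manoj10}, but these are the same argument.
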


\begin{proof}
Let $\delta : P \ra J$ be a lift of $\ol\alpha$. 
Then $\delta(P) + J^2 = J$ and hence, by
(\ref{lem11}), there exists $c^\prime \in J^2$ such that
$\delta(P) + (c^\prime) = J$. Now, applying (\ref{cor13}) to
the element $(\delta,c^\prime)$ of $P^\ast \oplus A$, we see
that there exists $\gamma \in P^\ast$ such that the height of the
ideal $N_{c^\prime} > n$, where 
$N = (\delta + c^\prime \gamma)(P)$. Since $\dim A = d$ and $n>d$,
 it follows that
	$({c^\prime})^r \in N$ for some
positive integer $r$. As $N + (c^\prime) = J$ and $c^\prime \in
J^2$, we have $N =  J$, by (\ref{manoj10}). Since
$\delta + c^\prime \gamma$ is also a lift of $\ol \alpha$, 
we get the result.  
$\hfill \gj$
\end{proof}

\begin{corollary}\label{cor14}
({\bf Moving Lemma}) 
	Let $A$ be a Noetherian ring of dimension $n \geq
	2$ and let $P$ be a projective $A$-module of rank $n$. 
Let $J	\subset A$ be an ideal of height $n$ and let 
$\ol \alpha : P/JP \surj J/J^2$ be a surjection. 
Then, there exists an ideal $J^\prime \subset A$ and a surjection 
$\beta : P \surj J \cap	J^\prime$ such that:

$(i)~ J + J^\prime = A$, 
$(ii)~ \beta \otimes A/J = \ol \alpha$, 
$(iii)~ \hh J^\prime \geq n$, and 

$(iv)$ Further, given finitely many ideals
$J_1,J_2,\ldots,J_r$ of height $\geq 1$, $J^\prime$ can be chosen with
the additional property that $J^\prime$ is comaximal with
$J_1,J_2,\ldots,J_r$.
\end{corollary}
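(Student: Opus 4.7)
My plan is to lift $\ol\alpha$ to a map $\alpha : P \to J$, extract an idempotent-style obstruction $c' \in J^2$ via Lemma \ref{lem11}, and then correct $\alpha$ by a term $c'\beta_0$ (with $\beta_0 \in P^\ast$ chosen via a prime-avoidance application of Lemma \ref{cor13}) so that the resulting map cuts out $J \cap J'$ with the required properties. The main obstacle will be arranging (iv), the comaximality of $J'$ with the auxiliary ideals $J_i$.

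First, projectivity of $P$ allows me to lift $\ol\alpha$ to $\alpha : P \to J$; surjectivity of $\ol\alpha$ gives $\alpha(P) + J^2 = J$. Applying Lemma \ref{lem11} with $J_1 = \alpha(P)$ and $J_2 = J^2$ produces $c' \in J^2$ with $\alpha(P) + (c') = J$. Next, Lemma \ref{cor13} applied to the pair $(\alpha, c') \in P^\ast \oplus A$ yields $\beta_0 \in P^\ast$ such that, setting $\beta := \alpha + c'\beta_0$ and $I := \beta(P)$, we have $\hh I_{c'} \geq n$. Because $c'\beta_0(P) \subseteq (c') \subseteq J^2$, the equality $I + (c') = J$ still holds, and a second application of Lemma \ref{lem11} (now with $J_1 = I$, $J_2 = (c')$) produces an ideal $J'$ satisfying $I = J \cap J'$ and $(c') + J' = A$. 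Property (i) follows from $J + J' \supseteq (c') + J' = A$; property (ii) from $\beta - \alpha = c'\beta_0 \in J^2$ (so $\beta$ and $\alpha$ induce the same map $P/JP \to J/J^2$, namely $\ol\alpha$); and property (iii) from the fact that any prime $\p$ minimal over $J'$ necessarily satisfies $c' \notin \p$ (else $(c') + J' \subseteq \p \neq A$) and $\p \supseteq I$, so $\hh \p \geq n$ by the conclusion of Lemma \ref{cor13}.

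The main obstacle lies in property (iv). The key observation is that $\hh J' \geq n = \dim A$ forces every minimal prime of $J'$ to be a maximal ideal of $A$ of height $n$, and there are only finitely many such (they correspond to the minimal primes of $I$ in the zero-dimensional ring $A_{c'}/I_{c'}$). Thus $J' + J_i = A$ amounts to the concrete finite condition that none of these maximal ideals contains $J_i$. To arrange this, I would strengthen the prime-avoidance step underlying Lemma \ref{cor13}: its proof proceeds by a finite sequence of prime-avoidance selections, and one may freely include additional primes not containing $c'$ at each step. In particular, for each $i$ I would include the minimal primes of $J_i$ that do not contain $c'$, forcing $I$ to escape them. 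The delicate part is to convert these avoidance properties of $I$ into full comaximality of $J'$ with each $J_i$; this requires an additional argument, using the finiteness of $V(J')$ and the zero-dimensionality of $A_{c'}/I_{c'}$, which handles the case when the minimal primes of the $J_i$ have height strictly less than $n$.
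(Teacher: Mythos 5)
Your argument for properties $(i)$--$(iii)$ is fine and is essentially a stripped-down version of the paper's, but there is a genuine gap in $(iv)$, and the fix you sketch does not close it. You propose to strengthen the prime avoidance inside Lemma~\ref{cor13} so that $I = \beta(P)$ escapes the minimal primes of each $J_i$ (those not containing $c'$). However, what is actually needed is that \emph{every minimal prime of $J'$} avoid $J_i$, and those primes are maximal ideals of height $n$ sitting in $D(c')$. A maximal ideal $\m$ of height $n$ can contain an ideal $J_i$ of height $1$ without being a minimal prime of $J_i$, and the set of such $\m$ is typically infinite, so it cannot be avoided by a single prime-avoidance selection. Thus ``$I$ escapes the minimal primes of $J_i$'' does not imply ``no minimal prime of $J'$ contains $J_i$,'' and the ``additional argument'' you gesture at in the last sentence is exactly what is missing. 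Your element $c'$ is produced \emph{after} fixing a lift $\alpha$ of $\ol\alpha$, so it is only constrained to lie in $J^2$ and need not lie in any $J_i$; this is the root of the problem.

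The paper avoids this entirely by reversing the order of operations. It first chooses an element $a \in K := J^2 \cap J_1 \cap \ldots \cap J_r$ with $\hh Aa = 1$; since $\dim(A/Aa) \leq n-1 <$ rank $P$, Lemma~\ref{manoj} then lets one lift $\ol\alpha$ all the way to a \emph{surjection} $\delta : P/aP \surj J/Aa$, and a lift $\theta : P \to J$ of $\delta$ automatically satisfies $\theta(P) + Aa = J$. Running Lemma~\ref{cor13} on the pair $(\theta, a)$ and Lemma~\ref{lem11} as you do then gives $Aa + J' = A$, and since $a$ was built into all the $J_i$ from the start, $J_i + J' = A$ drops out for free. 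The essential idea you are missing is to commit to the auxiliary element $a \in J^2 \cap J_1 \cap \ldots \cap J_r$ \emph{before} lifting $\ol\alpha$, rather than extracting $c' \in J^2$ afterward; Lemma~\ref{manoj} is what makes it possible to do so while still controlling the lift of $\ol\alpha$.
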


\begin{proof}
Let $K = J^2 \cap J_1 \ldots \cap J_r$. Then, by the assumption,
$\hh K \geq 1$. Therefore, there exists an element $a \in K$ such that
$\hh Aa = 1$ and hence $\dim (A/Aa) \leq n-1.$ 
By (\ref{manoj}), the surjection $\ol\alpha$ can be lifted to a surjection 
$\delta : P/aP \ra J/Aa$.

Let $\theta \in$ $\Hom_A(P,J)$ be a lift of $ \delta$.  Then, as 
$J/Aa = \delta (P/aP)$, we have $\theta (P) + Aa = J$. 
Applying (\ref{cor13}) to the element $(\theta,a)$ of 
$P^\ast \oplus A$, we see that there exists $\psi \in P^\ast$ 
such that $\hh \wt J_a \geq n$, where $\wt J = (\theta + a \psi)(P)$. 
	But $(\theta(P),a) = J$ has
height $n$ and $\wt J$ is a proper ideal ($\wt J \subset J$). Hence, by
(\ref{cor13}), $\hh \wt J = n$.  Since $\wt J + Aa = J$ and $a \in J^2$,
by (\ref{lem11}), there exists an ideal $J^\prime $ of $A$ such that
$\wt J = J \cap J^\prime$ and $Aa + J^\prime = A$. Now, setting 
$\beta = \theta + a \psi$, we get

$(i)$ $\beta : P \surj \wt J = J \cap J^\prime$,

$(ii)$ $\beta \otimes A/J = \ol \alpha$, 

$(iii)~ \hh J^\prime\geq n$, since $\hh \wt J_a \geq n$ and

$(iv)~ J^\prime$ is comaximal with $J_1,\ldots,J_r$,
since $Aa + J^\prime = A$.  $\hfill \gj$
\end{proof}

\begin{lemma}\label{lem3}
Let $A$ be a  ring and let $J$ be a proper ideal of $A$. 
Let $J = (a,b) = (c,d)$. Suppose $[a,b] = [c,d]$
modulo $J^2$. Then, there exists an
automorphism $\triangle$ of $A^2$ such that 
(1) $[a,b]\triangle =[c,d]$ and 
(2) det $(\triangle) = 1.$
\end{lemma}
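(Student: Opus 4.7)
My plan is to construct $\Delta$ explicitly in two stages: first write down a matrix $M$ that achieves $[a,b]M = [c,d]$ with $\det M \equiv 1 \pmod{J}$, and then exploit a freedom in the construction to adjust the determinant to exactly $1$.

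For the first stage, I would use that $c - a$ and $d - b$ lie in $J^2 = aJ + bJ$ (the latter equality holding because $J = (a,b)$), and write $c - a = a x_1 + b x_2$ and $d - b = a y_1 + b y_2$ for some $x_1, x_2, y_1, y_2 \in J$. The natural candidate
$$M = \begin{pmatrix} 1 + x_1 & y_1 \\ x_2 & 1 + y_2 \end{pmatrix}$$
satisfies $[a,b]\,M = [c,d]$ by a direct one-line check, and expanding gives $\det M = 1 + u$ where $u = x_1 + y_2 + x_1 y_2 - y_1 x_2 \in J$. Thus $M$ realizes the desired action on $[a,b]$ up to a determinant congruent to $1$ modulo $J$; the remaining task is to kill the correction term $u$.

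For the correction step, I would observe that the decompositions above are not unique: for any $t_1, t_2 \in A$, one may replace $(x_1, x_2)$ by $(x_1 + b t_1, x_2 - a t_1)$ and $(y_1, y_2)$ by $(y_1 + b t_2, y_2 - a t_2)$ without altering $c - a$ or $d - b$. Denoting by $M'$ the matrix built from the modified entries, one still has $[a,b]\,M' = [c,d]$, and a careful expansion of the $2\times 2$ determinant yields $\det M' = 1 + u + t_1 d - t_2 c$. Since $u \in J = (c, d)$, we may write $u = \lambda c + \mu d$ and take $t_2 = \lambda$, $t_1 = -\mu$, so that $u = t_2 c - t_1 d$ and hence $\det M' = 1$. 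Then $\Delta := M' \in \SL_2(A)$ is invertible and provides the required automorphism. The one point requiring care is the determinant expansion that produces the combination $t_1 d - t_2 c$ (the $abt_1t_2$ terms must cancel cleanly); once that identity is in hand the containment $u \in (c,d)$ finishes the argument immediately, and no earlier result from the excerpt is needed.
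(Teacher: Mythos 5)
Your proof is correct and is essentially the same argument as the paper's: both write the difference of the two generating pairs as a $J$-combination of $a$ and $b$ (so that the resulting $2\times 2$ matrix has determinant $1$ modulo $J$), and both then use the freedom to perturb each decomposition by a multiple of $(b,-a)$, which leaves the action on $[a,b]$ unchanged but shifts the determinant by an arbitrary element of $(c,d)=J$, allowing it to be corrected to exactly $1$. The only differences are the sign convention ($c-a$ versus $a-c$) and that you phrase the correction as a modification of the decomposition while the paper simply writes down the adjusted matrix directly.
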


\begin{proof}
We have $a - c,b-d \in J^2$. So, we can write 
$a - c = aa_1 +ba_2$ and $b - d = aa_3 + ba_4$, 
where $a_i\in J$ for $1\leq i \leq 4$. Let 
$u = 1 - a_1, v = - a_2, w = - a_3,$ and $x = 1 -a_4$. 
Then, we have the following equation
$$
\left(\begin{matrix} 
	u & v   \\ w & x 
	\end{matrix} 
\right).  
	\left(\begin{matrix}  a \\ b
	\end{matrix} \right) = 
\left(\begin{matrix} c \\ d \end{matrix} 
\right),
$$
Now, we see that $ux - vw = 1-f,$ for some $f \in J$. There exists
$t_1,t_2 \in A$ such that $f = dt_2 - ct_1$.  The endomorphism
$\Delta$ of $A^2$ given by
$$\left(\begin{matrix} 
	u + bt_2  & v - a t_2  \\ 
	w + bt_1  & x - a t_1 
	\end{matrix} 
\right)$$ 
is an automorphism of determinant 1 with $[a,b] \Delta = [c,d].$
$\hfill \gj$
\end{proof}

\begin{lemma}\label{lem2}
Let $A$ be a Noetherian ring of dimension $n$ and let $J \subset A$ be an
ideal of height $n$. Let $P$ and $P_1 $ be projective $A$-modules of
rank $n$. Let $\alpha : P \ra J$ and $\beta : P_1 \ra J$ be maps such
that $\alpha \otimes A/J$ and $\beta \otimes A/J$ are surjective. Let
$\psi : P \ra P_1 $ be a homomorphism such that $\beta \psi =
\alpha$. Then $\psi \otimes A/J : P/J P \ra P_1/J P_1$ is an
isomorphism.
\end{lemma}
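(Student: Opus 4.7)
Plan. My strategy is to verify that $\psi \otimes A/J$ is an isomorphism by checking it locally at each maximal ideal $\m \supset J$ and reducing modulo the maximal ideal at each stage. Two facts drive the argument: since $\hh J = \dim A$, the ring $A/J$ is Artinian (hence semi-local); and at each $\m \supset J$, the ideal $JA_\m$ is $\m A_\m$-primary of maximal height, which will force $\alpha$ and $\beta$ to induce \emph{isomorphisms} (not just surjections) upon reduction to the residue field.

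First I will observe that $A/J$ is zero-dimensional Noetherian, hence Artinian and semi-local, so the projective $A/J$-modules $P/JP$ and $P_1/JP_1$ of rank $n$ are free of rank $n$ (by the semi-local analogue of Corollary \ref{manoj4} noted in the remark following it). Fixing a maximal ideal $\m$ of $A$ containing $J$, I note that $\hh \m = n$ and every prime of $A_\m$ containing $JA_\m$ has height $n$, so $JA_\m$ is $\m A_\m$-primary and $\hh JA_\m = \dim A_\m = n$. The surjectivity of $\alpha \otimes A/J$ means $\alpha(P) + J^2 = J$; localizing at $\m$ and using $J^2 A_\m \subseteq (JA_\m)^2$, Corollary \ref{222} upgrades this to $\alpha_\m(P_\m) = JA_\m$. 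Combined with $P_\m \cong A_\m^n$ this gives $\mu(JA_\m) \leq n$, and pairing with the standard height bound $\mu(JA_\m) \geq \hh JA_\m = n$ yields the crucial equality $\mu(JA_\m) = n$.

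Next, I reduce modulo $\m A_\m$. The induced map $\overline\alpha : P_\m/\m P_\m \to JA_\m/\m JA_\m$ is a surjection of $k(\m)$-vector spaces of equal dimension $n$, hence an isomorphism; the identical argument gives $\overline\beta$ an isomorphism. The relation $\beta\psi = \alpha$ reduces to $\overline\beta \circ \overline\psi = \overline\alpha$, so $\overline\psi : k(\m)^n \to k(\m)^n$ is an isomorphism as well. Since $P_\m/JP_\m$ and $(P_1)_\m/J(P_1)_\m$ are free of rank $n$ over the local ring $A_\m/JA_\m$ (with maximal ideal $\m A_\m/JA_\m$ and residue field $k(\m)$), and $(\psi \otimes A/J)_\m$ becomes an isomorphism modulo this maximal ideal, the Nakayama argument used in Lemma \ref{manoj3} shows $(\psi \otimes A/J)_\m$ is itself an isomorphism. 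As this holds at every maximal ideal of $A/J$, we conclude $\psi \otimes A/J$ is an isomorphism.

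The only nontrivial step is the dimension-counting that yields $\mu(JA_\m) = n$ exactly: this requires both the upgrade of the mod-$J^2$ surjectivity of $\alpha$ to an honest surjection onto $JA_\m$ (via Corollary \ref{222}), and the lower bound from $JA_\m$ being $\m A_\m$-primary of height equal to $\dim A_\m$. Once the source and target vector-space dimensions match at $k(\m)$, the rest is linear algebra and Nakayama.
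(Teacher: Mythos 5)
Your proof is correct and follows essentially the same route as the paper's: the crux in both is the count $\mu(J_\m) = n$ at each maximal ideal $\m \supset J$, obtained by combining Corollary~\ref{222} with the Krull height bound, which forces $\bar\alpha$ and $\bar\beta$ to be isomorphisms of free modules of the same rank, whence $\bar\psi$ is too. The only cosmetic difference is that the paper performs the Nakayama reduction once by passing to $A/\sqrt{J}$ and then decomposes $J/\sqrt{J}J$ as a direct sum over the maximal ideals, whereas you localize at each $\m$ individually and apply Nakayama over $A_\m/JA_\m$; these bookkeeping choices are interchangeable.
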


\begin{proof}
By Nakayama lemma, it is enough to prove that if $K = \sqrt J$, then
$\ol \psi : P/KP \ra P_1/K P_1$ is an isomorphism. Let ``bar'' denote
reduction modulo $K$.  Note that $\ol \alpha$ and $\ol \beta$ are
surjections. We prove that $\ol \alpha$ and $\ol \beta$ are
isomorphisms. Since $\ol \beta \ol \psi =
\ol \alpha$, it will follow that $\ol \psi$ is an isomorphism.
 	Since $A/K$ is semi-local, $P/KP$ and $P_1/KP_1$ are
free $A/K$-modules of rank $n$, by (\ref{manoj4}). 
Hence, in order to prove that $\ol \alpha$ and $\ol \beta$ 
are isomorphisms, it is enough to prove that $J/KJ$ is a free 
$A/K$-module of rank $n$. 
Note that $J/KJ = \oplus J/\m_i J$, where $\m_i$ are the maximal
ideals containing $K$. 
	We prove that $J/\m_i J$ is a free
$A/\m_i$-module of rank $n$.  Since $\alpha \otimes A/J$ is
surjective, $J/J^2$ is generated by $n$ elements. 
Hence, by (\ref{222}), $J_{\m_i}$ is generated by $n$ elements. 
Since $\hh J = n$, $J_{\m_i}$ cannot be generated by less than $n$
elements.  Hence $\mu (J_{\m_i}) = n$. 
Hence, by (\ref{400}), $J/\m_i J$ is a free
$A/\m_i$-module of rank $n$. This proves the lemma.  
$\hfill \gj$
\end{proof}

\begin{lemma}\label{lem9}
Let $A$ be a Noetherian ring and let $P$ be a finitely generated projective
$A$-module. Let $P[T]$ denote the projective $A[T]$-module $P\otimes_A
A[T].$ Let $\alpha(T) : P[T] \surj A[T]$ and $\beta (T) : P[T] \surj
A[T] $ be two surjections such that $\alpha(0) = \beta(0).$ 
	Suppose
further that the projective $A[T]$-modules {\rm ker} $\alpha(T)$ and
{\rm ker} $\beta(T)$ are extended from $A$. Then, there exists an
automorphism $\sigma(T)$ of $P[T]$ with $\sigma(0) = \id$ such that
$\beta(T) \sigma(T) = \alpha(T).$
\end{lemma}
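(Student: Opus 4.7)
}

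Write $Q_\alpha = \ker \alpha(T)$ and $Q_\beta = \ker \beta(T)$, and let $Q_0 = \ker \alpha(0) = \ker \beta(0)$, which makes sense since $\alpha(0)=\beta(0)$. The plan is to match the two split short exact sequences
$$0 \to Q_\alpha \to P[T] \by{\alpha(T)} A[T] \to 0, \qquad 0 \to Q_\beta \to P[T] \by{\beta(T)} A[T] \to 0$$
by an automorphism $\sigma(T)$ of $P[T]$ inducing the identity at $T=0$, and such that on the $A[T]$-quotients it is the identity.

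First I would build the map on the kernels. Since $Q_\alpha$ is extended, $Q_\alpha \cong M\otimes_A A[T]$ for some $A$-module $M$; reducing mod $T$ identifies $M$ with $Q_0$, so we get an isomorphism $\phi_\alpha\colon Q_\alpha \iso Q_0[T]$. This isomorphism reduces at $T=0$ to some automorphism $\epsilon_\alpha$ of $Q_0$; replacing $\phi_\alpha$ by $(\epsilon_\alpha^{-1}\otimes A[T])\circ \phi_\alpha$ we may assume $\phi_\alpha(0) = \id_{Q_0}$. Do the same for $\beta$ to obtain $\phi_\beta\colon Q_\beta \iso Q_0[T]$ with $\phi_\beta(0)=\id_{Q_0}$. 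Setting $\psi = \phi_\beta^{-1}\phi_\alpha$ gives an isomorphism $\psi\colon Q_\alpha \iso Q_\beta$ with $\psi(0) = \id_{Q_0}$.

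Next I would pick compatible splittings of the two sequences. Choose any splitting of $\alpha(T)$, i.e.\ $p_\alpha \in P[T]$ with $\alpha(p_\alpha)=1$; then $P[T] = Q_\alpha \oplus A[T]\,p_\alpha$. Consider the element $p_\alpha(0) \otimes 1 \in P[T]$. Applying $\beta(T)$ gives a polynomial whose value at $T=0$ is $\beta(0)(p_\alpha(0)) = \alpha(0)(p_\alpha(0)) = 1$, so $\beta(p_\alpha(0)\otimes 1) = 1 + T f(T)$ for some $f(T)\in A[T]$. By surjectivity of $\beta(T)$, pick $g(T)\in P[T]$ with $\beta(g(T)) = f(T)$, and set $p_\beta = p_\alpha(0)\otimes 1 - T g(T)$. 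Then $\beta(p_\beta) = 1$ and $p_\beta(0) = p_\alpha(0)$, so $P[T] = Q_\beta \oplus A[T]\,p_\beta$ and the two splittings agree at $T=0$.

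Finally I define $\sigma(T)\colon P[T] \to P[T]$ by $\sigma(q + c\,p_\alpha) = \psi(q) + c\,p_\beta$ for $q\in Q_\alpha$ and $c\in A[T]$. It is an $A[T]$-isomorphism because it sends one direct sum decomposition of $P[T]$ onto the other via $\psi$ (an isomorphism on kernels) and an $A[T]$-isomorphism on the free rank-one summands. The relation $\beta(T)\sigma(T) = \alpha(T)$ is immediate from the definition: both sides send $q + c\,p_\alpha$ to $c$. The condition $\sigma(0)=\id$ follows from $\psi(0) = \id_{Q_0}$ and $p_\beta(0) = p_\alpha(0)$. The only real subtlety, and the step I expect to have to be careful with, is the normalization of the kernel isomorphisms $\phi_\alpha, \phi_\beta$ so that both reduce to the identity at $T=0$, as everything downstream relies on $\psi(0) = \id$; the construction of $p_\beta$ above is then a straightforward adjustment in the $T$-adic direction.
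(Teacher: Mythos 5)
Your proof is correct, and it uses the same two essential ingredients as the paper's: that the kernels are extended (to build the isomorphism between them) and that the short exact sequences split (to lift that isomorphism to an automorphism of $P[T]$). The only organizational difference is that the paper passes through the constant surjection $\alpha(0)\otimes A[T]$ as a middle term — building automorphisms $\theta(T)$, $\delta(T)$ with $\alpha(T)\theta(T)=\alpha(0)\otimes A[T]$ and $\beta(T)\delta(T)=\beta(0)\otimes A[T]$, then taking $\sigma(T)=\delta(T)\theta(T)^{-1}$, handling the $T=0$ normalization in one stroke by right-multiplying by $(\rho(0)\otimes A[T])^{-1}$ — whereas you compare $\alpha$ and $\beta$ directly, normalizing the kernel isomorphism and the splitting element separately. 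Both are fine; a small remark is that once you identify $M$ with $Q_0$ via the mod-$T$ reduction, the resulting $\phi_\alpha$ already has $\phi_\alpha(0)=\id$, so the extra adjustment by $\epsilon_\alpha^{-1}$ is harmless but not actually needed.
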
   

\begin{proof}
First, we show that there exists an automorphism $\theta(T)$ of $P[T]$
such that $\theta(0)=\id$ and $\alpha(T)\theta(T)= \alpha(0) \otimes
A[T]$. Let $Q=$ ker $(\alpha(T))$ and $L=$ ker $(\alpha(0))$.  Since
$Q$ is extended from $A$, there exists an isomorphism $\mu : L[T] \iso
Q$.  Since the rows of the following diagram 
\Com
$$
\xymatrix{ 
	0 \ar [r] & L[T] \ar [r] \ar [d]^\mu & P[T] \;\;\;\ar
	[r]^{\alpha(0)\otimes A[T]} \ar@{-->} [d]^{\rho(T)} & 
	A[T] \ar [r] \ar [d]^{\id} & 0  \\ 
	0 \ar [r] & Q \ar [r] & P[T] \ar [r]_{\alpha(T)} &
	A[T] \ar [r] & 0
	}$$ 
are split, we can find an automorphism $\rho(T)$ of $P[T]$ such that
the above diagram is commutative. We  have 
$\alpha(T)\rho(T)= \alpha(0)\otimes A[T]$ and hence
$\alpha(0)\rho(0)=\alpha(0)$. Consider an automorphism 
$\theta(T)= \rho(T)(\rho(0)\otimes A[T])^{-1}$ of $P[T]$. 
Then $\alpha(T) \theta(T)= ( \alpha(0)\otimes A[T])
	(\rho(0)\otimes A[T])^{-1}= 
	(\alpha(0)\otimes A[T])$ and $\theta(0)= \id$.

Similarly, we have an automorphism $\delta(T)$ of $P[T]$ such that
$\beta(T)\delta(T)=\beta(0)\otimes A[T]$ and $\delta(0)= \id$.
Consider the automorphism $\sigma(T)=\delta(T)(\theta(T))^{-1}$ of
$P[T]$.  As $\alpha(0) = \beta(0)$, we have
	$\beta(T)\sigma(T)=(\beta(0)\otimes
	A[T])(\theta(T))^{-1}=(\alpha(0)\otimes A[T])
	(\theta(T))^{-1}= \alpha(T)$
and $\sigma(0)=\id$. This proves the lemma.  
$\hfill \gj$
\end{proof}

\begin{lemma}\label{121}
Let $A$ be a ring (not necessarily commutative) and let $S\subset A$ be a
multiplicative closed set which is contained in the center of $A$. Let
$u(T)\in A_S[T]$ be a unit such that $u(0)=1$. Then, there exists $s\in
S$ such that $u(sT)$ is a unit of $A[T]$.
\end{lemma}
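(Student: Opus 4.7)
The plan is to apply Quillen's denominator-clearing trick. Let $v(T) \in A_S[T]$ be the two-sided inverse of $u(T)$; comparing constant terms in $u(T)v(T)=1$ and using $u(0)=1$ forces $v(0)=1$. Write $u(T) = 1 + \sum_{i\geq 1} u_i T^i$ and $v(T) = 1 + \sum_{j\geq 1} v_j T^j$ with finitely many nonzero $u_i, v_j \in A_S$. Since $S$ lies in the center of $A$ and only finitely many denominators appear, there exists $t\in S$ such that $tu_i$ and $tv_j$ lie in the image of the localization map $\pi : A \to A_S$ for every relevant $i,j$. Using centrality of $t$, the coefficient $t^i u_i$ of $T^i$ in $u(tT)$ equals $t^{i-1}(tu_i)$, so it too lies in $\pi(A)$; hence I can lift coefficient-by-coefficient to obtain polynomials $\tilde u(T), \tilde v(T) \in A[T]$ with $\tilde u(0) = \tilde v(0) = 1$ and $\pi(\tilde u(T)) = u(tT)$, $\pi(\tilde v(T)) = v(tT)$.

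Next I would examine the error polynomials $g(T) = \tilde u(T)\tilde v(T) - 1$ and $h(T) = \tilde v(T)\tilde u(T) - 1$ in $A[T]$. Their images in $A_S[T]$ vanish, so every coefficient of $g$ and $h$ lies in $\ker \pi$ and is therefore annihilated by some element of $S$. Because there are only finitely many coefficients in total, a single element $t' \in S$ annihilates all of them. Moreover the constant terms of $g$ and $h$ are already zero (since $\tilde u(0)\tilde v(0)=1$), so $g(T) = \sum_{i\geq 1} g_i T^i$ and likewise for $h$.

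The final step is to substitute $T \mapsto t'T$. For each $i\geq 1$, centrality of $t'$ gives $g_i (t')^i = (t' g_i)(t')^{i-1} = 0$, and analogously for $h$. Hence $\tilde u(t'T)\tilde v(t'T) = 1 = \tilde v(t'T)\tilde u(t'T)$ in $A[T]$, so $\tilde u(t'T)$ is a two-sided unit of $A[T]$. Since $\pi(\tilde u(t'T)) = u(tt'T)$, taking $s = tt' \in S$ exhibits $u(sT)$ as a unit of $A[T]$. The main obstacle is essentially bookkeeping: making sure centrality of $S$ is used correctly at each point (to make $A_S$ a ring, to legitimately perform the substitution $T\mapsto tT$, and to commute $t'$ past coefficients when killing $g$ and $h$). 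The one conceptual idea is Quillen's observation that because $g$ and $h$ have no constant term, substituting $T$ by any common $S$-annihilator of their coefficients wipes out all the $S$-torsion simultaneously.
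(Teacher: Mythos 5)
The paper states Lemma \ref{121} without giving a proof, so there is no ``paper's own argument'' to compare against; it is simply invoked as a known ingredient in the proof of Quillen's splitting lemma. Your argument is the standard Quillen denominator-clearing proof and it is correct: clear denominators in $u$ and its two-sided inverse $v$ by substituting $T\mapsto tT$, lift coefficient-by-coefficient to $\tilde u,\tilde v\in A[T]$ with constant term $1$, observe that $\tilde u\tilde v-1$ and $\tilde v\tilde u-1$ are $S$-torsion with zero constant term, and kill all the torsion simultaneously with one more substitution $T\mapsto t'T$, using that $g_i(t')^i=(t')^{i-1}(t'g_i)=0$ for $i\geq 1$. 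You have correctly used centrality of $S$ at every point where it is needed: so that $T\mapsto tT$ and $T\mapsto t'T$ are ring endomorphisms, so that $\pi$ intertwines the two substitutions, and so that left- and right-$S$-annihilation coincide in $\ker\pi$. One remark worth making explicit: the statement that ``$u(sT)$ is a unit of $A[T]$'' is a slight abuse, since $u(sT)$ lives in $A_S[T]$; the precise content, which your proof establishes, is that $u(sT)$ lies in the image of $\pi:A[T]\to A_S[T]$ and has a preimage $\tilde u(t'T)$ that is a two-sided unit of $A[T]$ with constant term $1$ --- which is exactly what is used in the proof of Lemma \ref{lem10}.
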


\begin{lemma}\label{lem10}
{\bf (Quillen)} 
	Let $A$ be a ring and let $s,t \in A$ be such that $A s +A
t = A$. Let $\sigma(T) \in \GL_n(A_{st}[T])$ be such that
$\sigma(0) = \id$. Then $\sigma(T) =
(\psi_2(T))_t(\psi_1(T))_s$, where $\psi_1(T)\in
\GL_n(A_t[T])$ such that $\psi_1(0) = \id$ and $\psi_1(T)=
\id$ modulo $(s)$ and $\psi_2(T)\in \GL_n(A_s[T])$ such
that $\psi_2(0) =\id$ and $\psi_2(T)= \id$ modulo $(t)$.
\end{lemma}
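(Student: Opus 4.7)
The plan is to execute Quillen's classical patching argument. Introduce the two-variable matrix
\[
\tau(X,Y) := \sigma(X+Y)\,\sigma(Y)^{-1} \in \GL_n(A_{st}[X,Y]),
\]
and observe that $\tau(0,Y)=\id$ (and consequently $\tau(X,Y)^{-1}|_{X=0}=\id$ as well). The strategy is to peel the $s$-denominators and $t$-denominators off of $\tau$ using Lemma \ref{121} twice, and then recombine the two pieces using the coprimality relation $As+At=A$.

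Applying Lemma \ref{121} to the non-commutative ring $M_n(A_t[Y])$ with central multiplicative set $\{s^k\}_{k\geq 0}$, the hypothesis $\tau(0,Y)=\id$ produces an integer $M\geq 1$ with $\tau(s^M X,Y)\in\GL_n(A_t[X,Y])$. Symmetrically, there exists $N\geq 1$ with $\tau(t^N X,Y)\in\GL_n(A_s[X,Y])$. Enlarging $M$ and $N$ if necessary, I can moreover arrange that $\tau(s^M X,Y)\equiv\id\pmod{s}$ in $A_t[X,Y]$ and $\tau(t^N X,Y)\equiv\id\pmod{t}$ in $A_s[X,Y]$, since $\tau-\id$ is divisible by $X$ with quotient in $M_n(A_{st}[X,Y])$, and taking $M$ sufficiently large leaves a spare factor of $s$ after clearing denominators.

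Since $As+At=A$, we also have $As^M+At^N=A$; pick $\alpha,\beta\in A$ with $\alpha s^M+\beta t^N=1$. Define
\[
\psi_1(T) := \sigma(\alpha s^M T) = \tau(\alpha s^M T,0), \qquad \psi_2(T) := \sigma(T)\,\sigma(\alpha s^M T)^{-1} = \tau(\beta t^N T,\alpha s^M T),
\]
the second equality in each case coming from the definition of $\tau$ and the identity $\alpha s^M+\beta t^N=1$. Substituting $X=\alpha T$, $Y=0$ into $\tau(s^M X,Y)\in\GL_n(A_t[X,Y])$ shows $\psi_1(T)\in\GL_n(A_t[T])$, and substituting $X=\beta T$, $Y=\alpha s^M T$ into $\tau(t^N X,Y)\in\GL_n(A_s[X,Y])$ shows $\psi_2(T)\in\GL_n(A_s[T])$. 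The values $\psi_1(0)=\id=\psi_2(0)$ are immediate; the congruences $\psi_1\equiv\id\pmod{s}$ and $\psi_2\equiv\tau(0,\alpha s^M T)=\id\pmod{t}$ follow from the strengthened substitution statements of the previous paragraph. Finally
\[
\psi_2(T)\,\psi_1(T) = \sigma(T)\,\sigma(\alpha s^M T)^{-1}\,\sigma(\alpha s^M T) = \sigma(T),
\]
which after base change to $A_{st}[T]$ reads $\sigma(T)=(\psi_2(T))_t(\psi_1(T))_s$.

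The main technical obstacle is the strengthened ``mod $s$'' (resp.\ ``mod $t$'') form of Lemma \ref{121}: the bare statement only produces a matrix lying in $\GL_n(A_t[X,Y])$ (resp.\ $\GL_n(A_s[X,Y])$), and one has to work a little harder to ensure it is also congruent to $\id$ modulo $s$ (resp.\ $t$). This is elementary denominator-bookkeeping, exploiting that both $\tau$ and $\tau^{-1}$ equal $\id$ at $X=0$, but it is the one step where a little extra care is needed beyond formally invoking Lemma \ref{121} and the Bezout-style identity $\alpha s^M+\beta t^N=1$.
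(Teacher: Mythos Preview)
Your proof is correct and follows essentially the same Quillen patching argument as the paper: the paper treats $\psi_1$ directly via $\sigma(\lambda s^k T)$ and introduces the three-variable auxiliary $\delta(X,T,Y)=\sigma((X+Y)T)\sigma(XT)^{-1}$ for $\psi_2$, whereas you package both pieces into the single two-variable $\tau(X,Y)=\sigma(X+Y)\sigma(Y)^{-1}$, but the substance (apply Lemma~\ref{121} to clear $s$- and $t$-denominators, then recombine via $\alpha s^M+\beta t^N=1$) is identical. Your explicit remark that one must enlarge $M,N$ by one to secure the congruences modulo $s$ and $t$ matches the paper's ``$=\id$ modulo $(sT)$'' and ``$=\id$ modulo $(tTY)$'' claims.
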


\begin{proof}
Since $\sigma (0)=\id$, $\sigma = \id + T \tau(T)$. Therefore, 
by (\ref{121}), we can choose large enough $k_1$ such 
that for all $k \geq k_1$ and	for all 
	$\lambda \in A$, $\sigma (\lambda s^k T)\in \GL_n(A_t[T])$
and $\sigma (\lambda s^k T )=\id$ modulo $(sT)$. 
Hence, we can write 
	$\sigma (\lambda s^k T)=
	(\psi_1(T))_s$ 
where $\psi_1(T) \in \GL_n(A_t[T])$ and $\psi_1(T) =
\id$ modulo $(sT)$.

Let $X$ and $Y$ be variables. Write 
	$\delta (X,T,Y) = \sigma((X+Y)T)\sigma(XT)^{-1}$. 
Clearly $\delta	(X,T,Y) \in \GL_n(A_{st}[X,T,Y])$, 
	$\delta (X,T,0) = \id$ and
	$\delta(X,0,Y)= \id$. Hence 
	$\delta(X,T,Y)= \id +YT \wt\tau(X,T,Y)$.
 We can choose large enough $k_2$ such that for all $k \geq k_2$ and
for all $\mu \in A$, $\delta(X,T,t^k \mu Y)
\in \GL_n(A_{s}[X,T,Y])$ and is identity modulo  $(tTY)$. 
	Hence, we can write $\delta(X,T,t^k\mu Y) =
(\psi_2(X,T,Y))_t$, where $\psi_2(X,T,Y) \in \GL_n(A_s[X,T,Y])$ and 
$\psi_2(X,T,Y)= \id$ modulo $(tT)$.

Take $k \geq $ max $(k_1,k_2)$. Since $A s + A t = A$, 
we have $\lambda s^k +\mu t^k = 1$ for some $\lambda,\mu \in A$. 
Now $\sigma(T) = \sigma(T) \sigma(\lambda s^k T)^{-1}
\sigma(\lambda s^k T)$. 
	We have $\sigma(\lambda s^k T) = (\psi_1(T))_s $
and $\sigma(T) \sigma(\lambda s^k T)^{-1} 
	= \sigma((\lambda s^k +\mu t^k)T) \sigma(\lambda s^k T)^{-1} 
	=\delta(\lambda s^k,T,\mu t^k)
	= (\psi_2(\lambda s^k,T,1))_t = (\psi_2(T))_t$.
	Hence, we have $\sigma(T)= (\psi_2(T))_t(\psi_1(T))_s$.
This proves the lemma.  
$\hfill \gj$
\end{proof}

\begin{define}
Let $A$ be a ring and let $M,N$ be $A$-modules. Suppose
$f,g : M \iso N$ be two isomorphisms. We say that $f$ is {\it isotopic
to} $g$ if there is an isomorphism $\phi : M[X] \iso N[X]$ such that
$\phi(0) = f$ and $\phi(1) = g$. 
	A matrix $\theta \in \GL_n(A)$
is said to be isotopic to identity if the corresponding automorphism
of $A^n$ is isotopic to identity, i.e. there exists a matrix $\alpha(X)
\in \GL_n(A[X])$ such that $\alpha(0)= \id$ and $\alpha(1)=\theta$.
\end{define}

\begin{corollary}\label{depu}
Let $A$ be a ring and $s,t \in A$  such that $As + At = A$. Let
$\theta \in \GL_n(A_{st})$ be isotopic to identity. Then
$\theta$ splits as $\theta = (\theta_1)_t (\theta_2)_s$, where
$\theta_1\in \GL_n(A_s)$ such that $\theta_1 = \id$ modulo
$(t)$ and $\theta_2\in \GL_n(A_t)$ such that $\theta_2 =
\id$ modulo $(s)$.
\end{corollary}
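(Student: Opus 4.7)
The plan is to deduce the corollary as a direct specialization of Quillen's splitting lemma (Lemma \ref{lem10}). Since $\theta$ is isotopic to the identity, by definition there exists some $\alpha(T) \in \GL_n(A_{st}[T])$ with $\alpha(0) = \id$ and $\alpha(1) = \theta$. This $\alpha(T)$ is precisely the kind of element to which Lemma \ref{lem10} applies.

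First I would apply Lemma \ref{lem10} to $\alpha(T)$: noting that $As + At = A$ and $\alpha(0) = \id$, the lemma produces factorizations $\alpha(T) = (\psi_2(T))_t (\psi_1(T))_s$ where $\psi_1(T) \in \GL_n(A_t[T])$ satisfies $\psi_1(0) = \id$ and $\psi_1(T) \equiv \id$ modulo $(s)$, while $\psi_2(T) \in \GL_n(A_s[T])$ satisfies $\psi_2(0) = \id$ and $\psi_2(T) \equiv \id$ modulo $(t)$.

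Next I would specialize the variable $T$ to $1$. Since the congruences $\psi_1(T) \equiv \id \pmod{(s)}$ and $\psi_2(T) \equiv \id \pmod{(t)}$ are statements about the coefficients of polynomials, they persist under the substitution $T = 1$. Thus setting $\theta_1 := \psi_2(1) \in \GL_n(A_s)$ and $\theta_2 := \psi_1(1) \in \GL_n(A_t)$, we obtain $\theta_1 \equiv \id$ modulo $(t)$ and $\theta_2 \equiv \id$ modulo $(s)$, and the factorization $\theta = \alpha(1) = (\theta_1)_t (\theta_2)_s$ holds in $\GL_n(A_{st})$, which is the required conclusion.

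Since Lemma \ref{lem10} does all the real work, there is no serious obstacle here; the only thing to be careful about is the bookkeeping of which factor lives over $A_s$ versus $A_t$ and which congruence it satisfies, to make sure the assignment of $\theta_1$ and $\theta_2$ matches the statement of the corollary.
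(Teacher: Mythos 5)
Your argument is exactly the intended one: the corollary is simply Lemma \ref{lem10} applied to the isotopy $\alpha(T)$ and then evaluated at $T=1$, and the paper in fact states it without proof for this reason. Your bookkeeping of which factor lives in $\GL_n(A_s)$ versus $\GL_n(A_t)$ and which congruence it carries is correct, so the proof is complete.
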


\begin{example}\label{333}
Elementary automorphisms are isotopic to identity. If $\sigma = \prod
(1+\lambda e_{ij})$ is an elementary automorphism of $A^n$, then $\gamma
(T) = \prod (1+\lambda T e_{ij})$ is an automorphism of
$(A[T])^n$ (in-fact elementary) 
such that $\gamma (0) = \id$ and $\gamma (1) = \sigma$.
\end{example}

\begin{define}
Let $P$ be a projective $A$-module of
rank $n$. Let $\wedge^n(P)$ denote the $n^{\rm th}$ exterior power of
$P$. Then $\wedge^n(P)$ is a projective $A$-module of rank 1 and is
called the determinant of $P$. An
$A$-linear endomorphism $\alpha$ of $P$ gives rise, in a natural way,
to an endomorphism $\wedge^n(\alpha)$ of $\wedge^n(P)$. Since rank of
$\wedge^n(P) = 1$, we have End$_A(\wedge^n(P)) = A$ and hence
$\wedge^n(\alpha) \in A$. 
	Note that $\alpha$ is an automorphism if and
only if $\wedge^n(\alpha)$ is an invertible element of $A$.

Let $P, \alpha$ be as in the above paragraph. We define the
determinant of $\alpha$ to be $\wedge^n(\alpha)$ and denote it by det
$(\alpha)$. We denote the group of automorphisms of $P$ of determinant
1 by $\SL(P)$. 
\end{define}

\begin{define}
Let $P$ be a projective $A$-module. Given an element $\phi \in P^\ast$
and an element $p \in P$, we define an endomorphism $\phi_p$ as the
composite $P \by \phi A \by p P.$

If $\phi (p) = 0$, then $\phi^2_p = 0$ and $1 + \phi_p$ is a unipotent
automorphism of $P$ and hence is an element of $\SL(P)$.

By a {\it transvection}, we mean an automorphism of $P$ of the form $1
+ \phi_p$, where $\phi(p) = 0$ and either $\phi$ is unimodular in
$P^\ast$ or $p$ is unimodular in $P$. We denote by E$(P)$ the subgroup
of $\SL(P)$ generated by all the transvections of $P$.
\end{define}

\begin{rem}
When $P=A^n$, a transvection is an element of $\SL_n(A)$ of the form
$1+vw^t$, where $v,w\in$ M$_{n\times 1}(A)$ and $wv=0$ and either $v$
or $w$ is unimodular. For example,
$e_{ij}(\lambda)=1+\lambda e_i e_j^t$ is a transvection. Hence
E$_n(A)$ is a subgroup of E$(A^n)$.
\end{rem}
\medskip

The following lemma is proved in \cite{B-R}.

\begin{lemma}\label{lem15}
Let $A$ be a Noetherian ring, $I \subset A$ ideal of $A$ and $P$ a
projective $A$-module. Then any transvection of $P/IP$ can be lifted
to a (unipotent) automorphism of $P$. 
\end{lemma}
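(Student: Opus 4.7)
}

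The plan is to write the transvection as $\overline\sigma = 1 + \overline\phi_{\overline p}$ with $\overline\phi(\overline p) = 0$; by duality (replacing $P$ by $P^*$, noting that the transpose of such a transvection has the roles of $\overline \phi$ and $\overline p$ swapped), I may assume $\overline p$ is unimodular in $P/IP$. The goal is to produce lifts $\phi \in P^*$ and $p \in P$ of $\overline\phi$ and $\overline p$ satisfying $\phi(p) = 0$; then $\sigma := 1 + \phi_p$ is automatically a unipotent automorphism of $P$ (with $(\sigma - 1)^2 = 0$ and inverse $1 - \phi_p$) that reduces to $\overline\sigma$ modulo $IP$.

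Since $\overline p$ is unimodular, I choose $\overline\alpha \in (P/IP)^*$ with $\overline\alpha(\overline p) = 1$ together with lifts $p \in P$ and $\alpha \in P^*$; then $\alpha(p) = 1 - i$ for some $i \in I$, and since $(1-i) + i = 1$, the basic opens $D(1-i)$ and $D(i)$ cover $\mathrm{Spec}\,A$. On the localization $A_{1-i}$, where $1-i$ is a unit, the element $p$ is unimodular in $P_{1-i}$ (witnessed by $\alpha/(1-i)$); thus, starting from any lift $\phi_0 \in P^*$ of $\overline\phi$ with $a := \phi_0(p) \in I$, the corrected functional $\phi := \phi_0 - (a/(1-i))\alpha \in P^*_{1-i}$ still lifts $\overline\phi$ (the correction lies in $IP^*_{1-i}$ because $a \in I$) and satisfies $\phi(p) = 0$. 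Hence $\sigma_{1-i} := 1 + \phi_p$ is a unipotent automorphism of $P_{1-i}$ lifting $\overline\sigma|_{D(1-i)}$. On the complementary open $D(i)$, the element $i \in I$ becomes a unit in $A_i$, so $IA_i = A_i$ and $P_i/IP_i = 0$; hence $\overline\sigma$ restricts to the identity there, and $\sigma_i := \id_{P_i}$ serves as a trivial unipotent lift.

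To glue $\sigma_{1-i}$ and $\sigma_i$ into a global unipotent automorphism, I invoke Quillen's patching technique (Corollary \ref{depu}, transferred to $\Aut(P)$ via an embedding $P \oplus Q \simeq A^n$ that identifies $\Aut(P)$ with a subgroup of $\GL_n(A)$). On the overlap $D((1-i)i)$ both lifts reduce to the identity modulo $I$ (since $P/IP$ vanishes after localization), but $\sigma_{1-i}|_{D((1-i)i)}$ is a non-trivial transvection; being of the form $1 + \phi_p$ with $\phi(p)=0$, it is isotopic to the identity via the $T$-family $1 + T\phi_p$ (the transvection analogue of Example \ref{333}). Corollary \ref{depu} then splits this discrepancy as $(\theta_1)_i(\theta_2)_{1-i}$ with $\theta_1 \in \Aut(P_{1-i})$ satisfying $\theta_1 \equiv \id \pmod{(i)}$ and $\theta_2 \in \Aut(P_i)$ satisfying $\theta_2 \equiv \id \pmod{(1-i)}$; the automorphism defined by $\theta_1^{-1}\sigma_{1-i}$ on $D(1-i)$ and by $\theta_2$ on $D(i)$ agrees on the overlap and patches to a global automorphism $\sigma$ of $P$ lifting $\overline\sigma$.

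The main obstacle is that the naive direct approach -- iteratively correcting $\phi$ by elements of successively higher powers of $I$ to drive $\phi(p)$ through $I^k$ for all $k$ -- fails to terminate in a general Noetherian ring, since $\bigcap_k I^k$ need not vanish. The localization-patching route above circumvents this by inverting $1-i$ on one open (where the correction terminates in one step) and trivializing the problem on the complement; the remaining care required is to verify that the glued $\sigma$ is indeed unipotent, which follows from the unipotency of both local lifts together with the congruences imposed on $\theta_1$ and $\theta_2$ by Corollary \ref{depu}.
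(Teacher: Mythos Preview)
Your localization-and-patching argument can be made to work, but it is far more elaborate than necessary, and you have overlooked a one-line algebraic trick that the paper uses. In your notation: you have lifts $p\in P$, $\alpha\in P^\ast$, $\phi_0\in P^\ast$ with $\alpha(p)=1-i$ and $\phi_0(p)=a$, both $i,a\in I$. Instead of forming the corrected functional $\phi_0-\frac{a}{1-i}\alpha$ on the open $D(1-i)$, simply clear the denominator and set
\[
\phi \;:=\; (1-i)\,\phi_0 \;-\; a\,\alpha \;\in\; P^\ast.
\]
Since $i,a\in I$, this $\phi$ still reduces to $\overline\phi$ modulo $I$, and $\phi(p)=(1-i)a-a(1-i)=0$ globally. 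Then $1+\phi_p$ is a global unipotent automorphism of $P$ lifting the given transvection, and the second case (when $\overline\phi$ is unimodular) is handled by the symmetric correction on $p$ rather than on $\phi$. This is exactly the paper's proof; no localization, no Quillen splitting, no gluing. Your stated ``main obstacle'' --- that iterated corrections through $I^k$ need not terminate --- is a red herring: a single correction suffices once you multiply $\phi_0$ by the unit-mod-$I$ element $1-i$ before subtracting.

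Two further remarks on your route. First, transferring Corollary~\ref{depu} from $\GL_n$ to $\Aut(P)$ is legitimate, but only because the \emph{proof} of Lemma~\ref{lem10} builds the splitting factors out of specializations of the given isotopy; you should say this rather than just cite the $\GL_n$ statement. Second, your final sentence asserting that the glued $\sigma$ is unipotent is not justified by what you wrote: the congruences $\theta_1\equiv\id\pmod{(i)}$ and $\theta_2\equiv\id\pmod{(1-i)}$ do not by themselves force nilpotence of $\theta_2-\id$ or of $\theta_1^{-1}\sigma_{1-i}-\id$. It happens to be true here because, for the specific isotopy $1+T\phi_p$, the Quillen factors produced in the proof of Lemma~\ref{lem10} are again of the form $1+c\,\phi_p$; but that requires tracing through the construction, which you did not do.
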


\begin{proof}
Let $\phi^\prime \in (P/IP)^\ast$ and $p^\prime \in P/IP$ be such that
$\phi^\prime(p^\prime) = 0.$ Assume that $p^\prime$ is unimodular. 
	Let $p \in P$ (resp. $\theta \in P^\ast$) be a lift of $p^\prime$
(resp. $\phi^\prime$). Then, we have $\theta(p) = a$ for some $a \in
I$. Since $p^\prime$ is unimodular, there exists a $\psi \in P^\ast $
such that $ \psi(p) = 1 + b$ for some $b \in I$ (as $P$ is
projective). Set $\phi = (1 + b)\theta - a \psi$. Then $\phi$ is a lift
of $\phi^\prime$ and $\phi(p) = 0$. 
	Consequently, $1 + \phi_p$ is an
automorphism of $P$ lifting $1 + \phi^\prime_{p^\prime}$. 

Now, assume that $\phi^\prime$ is unimodular. Then, there exists 
$q \in P$ such that $\theta(q) = 1 + b$ for some $b \in I$. Set 
$p_1 = (1+b)p - aq$. Then $\theta(p_1) = 0$. Consequently, 
$1+\theta_{p_1}$ is an automorphism of $P$ lifting 
$1+\phi^\prime_{p^\prime}$. $\hfill \gj$
\end{proof}


\chapter{ Some Addition and Subtraction Principles}

In \cite{MK2}, Mohan Kumar proved the following theorems 

\begin{theorem}
({\rm Addition principle}) Let $A$ be a reduced affine ring of
dimension $n$ over $k$, where $k$ is algebraically closed. Let $I$ and
$J$ be two comaximal ideals of height $n$ which are generated by $n$
elements. Then $I\cap J$ is also generated by $n$ elements.
\end{theorem}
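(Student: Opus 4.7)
My plan is to exhibit $n$ explicit generators for $I\cap J$ using the Chinese Remainder Theorem and then verify they actually generate the whole ideal by invoking Lemma \ref{manoj10}.

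First I construct candidate generators. Since $I + J = A$, a fortiori $I^2 + J^2 = A$, and $I\cap J = IJ$. By CRT, choose $c_1, \ldots, c_n \in A$ with $c_i \equiv a_i \pmod{I^2}$ and $c_i \equiv b_i \pmod{J^2}$ for each $i$; the first congruence places $c_i$ in $I$ and the second in $J$, so each $c_i$ lies in $I\cap J$. Set $K := (c_1, \ldots, c_n) \subseteq I\cap J$; the goal is $K = I\cap J$.

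Next I check the hypotheses of Lemma \ref{manoj10}. The square-congruence relation $K + (I\cap J)^2 = I\cap J$ is immediate: $K + I^2 = (a_1,\ldots,a_n) + I^2 = I$ and similarly $K + J^2 = J$, and multiplying gives $IJ = (K + I^2)(K + J^2) \subseteq K + I^2 J^2 \subseteq K + (I\cap J)^2$, forcing equality. What remains is the locus equality $V(K) = V(I\cap J)$ in $\operatorname{Spec} A$. The containment $V(I\cap J) \subseteq V(K)$ is automatic. For the reverse, note that since $\hh I = \hh J = n = \dim A$, every prime in $V(I)\cup V(J) = V(I\cap J)$ has coheight $0$ and is therefore maximal, so $V(I\cap J)$ is a finite set of closed points. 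I then apply the Eisenbud--Evans type Lemma \ref{Evans}/\ref{cor13} to replace each $c_i$ by $c_i + z_i$ with $z_i \in (I\cap J)^2$ (an adjustment that preserves the congruences modulo $I^2$ and $J^2$ and keeps $c_i$ in $I\cap J$) so that the resulting $K$ has height $\geq n$; in particular $V(K)$ itself becomes a finite set of maximal ideals.

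Finally, I must expel from $V(K)$ any extraneous maximal ideal $\m$ with $I, J \not\subseteq \m$ (equivalently $I\cap J \not\subseteq \m$). Here the algebraic closedness of $k$ enters via a general-position argument: on the reduced affine $k$-variety $\operatorname{Spec} A$ there are enough regular functions in $(I\cap J)^2$ to separate the finitely many candidate closed points, so a generic further modification of the $c_i$'s by elements of $(I\cap J)^2$ drives the finitely many spurious maximal ideals out of $V(K)$ without disturbing the preceding conditions, yielding $V(K) = V(I\cap J)$. Lemma \ref{manoj10} then gives $K = I\cap J$, so $I\cap J$ is generated by $n$ elements.

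The main obstacle in this plan is the very last step: ensuring $V(K) \subseteq V(I\cap J)$. The Eisenbud--Evans modification alone only guarantees the correct height of $K$; eliminating the spurious closed points of $V(K)$ is where the algebraic closure of $k$ is genuinely used, since without it the supply of global sections on $\operatorname{Spec} A$ may be too sparse to perform the required Bertini-type generic adjustment inside $(I\cap J)^2$.
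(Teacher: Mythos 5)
Your setup is fine as far as it goes: choosing $c_i$ by CRT so that $c_i\equiv a_i\pmod{I^2}$ and $c_i\equiv b_i\pmod{J^2}$, verifying that $K:=(c_1,\ldots,c_n)$ satisfies $K+(I\cap J)^2=I\cap J$ via $(K+I^2)(K+J^2)=IJ$, and then invoking Lemma~\ref{lem11} to produce $e\in(I\cap J)^2$ with $K+(e)=I\cap J$ so that an Eisenbud--Evans modification by multiples of $e$ (hence inside $(I\cap J)^2$, preserving your congruences) brings $\hh K$ up to $n$ --- all of that is standard and correct. But at that point Lemma~\ref{lem11} tells you $K=(I\cap J)\cap K'$ with $K'$ a residual height-$n$ ideal comaximal with $I\cap J$, and the entire content of the theorem is to show $K'$ can be made equal to $A$. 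Your final ``Bertini-type generic adjustment'' is not an argument: replacing $c_i$ by $c_i+z_i$ with $z_i\in(I\cap J)^2$ preserves $K+(I\cap J)^2=I\cap J$, so the new $K$ is again of the form $(I\cap J)\cap K''$ for some \emph{new} residual ideal $K''$; forcing $K''$ to avoid the finitely many old bad primes tells you nothing about the new ones, and there is no prime-avoidance or separation-of-points argument (even over an algebraically closed field) that pins down the residual ideal. Whether such a lift of generators of $(I\cap J)/(I\cap J)^2$ to generators of $I\cap J$ exists is exactly the lifting obstruction the Euler class is designed to detect, so the step you label ``the main obstacle'' is indeed the whole theorem, and it is left unproved.

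Your instinct that algebraic closedness is what rescues the argument is also off target. Mohan Kumar's original proof uses algebraic closedness through hard theorems (Suslin's completability of unimodular rows, Murthy-style Chern class arguments), not through Bertini-type generic position. And in fact the paper's Theorem~\ref{theo1.3} proves a stronger version of the addition principle --- with $c_i$ congruent to $a_i$ and $b_i$ modulo $I^2$ and $J^2$ respectively --- for \emph{arbitrary} Noetherian rings of dimension $n\geq 2$, so no hypothesis on the base field is actually needed. The paper's route is completely different from yours: after normalizing so that $K=(a_1,\ldots,a_{n-1})$ has height $n-1$ with $K+J_2=A$, one passes to $A_S$ with $S=1+K$, shows that $[b_1,\ldots,b_n]$ is completable in $\SL_n(A_S)$ by reducing modulo $K$ to a dimension-one quotient, and then patches the two localized surjections using the Quillen splitting (\ref{depu}) of a matrix isotopic to the identity. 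That patching step is what replaces your hand-waved ``generic modification,'' and it is not something a direct general-position argument can substitute for.
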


\begin{theorem}
({\rm Subtraction principle}) Let $A$ be a reduced affine ring of
dimension $n$ over $k$, where $k$ is algebraically closed.
Let $I$ and $J$ be two comaximal ideals of height $n$. Assume
that $I$ and $I\cap J$ are generated by $n$ elements. Then $J$ is also
generated by $n$ elements.
\end{theorem}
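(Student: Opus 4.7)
The plan is to produce $n$ generators of $J$ by perturbing the given generators $b_1,\ldots,b_n$ of $I\cap J$. The first reduction is to show that the $b_i$ already generate $J$ modulo $J^2$. Writing $1=\alpha+\beta$ with $\alpha\in I$ and $\beta\in J$, one has $J = J\alpha + J\beta \subseteq IJ+J^2 \subseteq (I\cap J)+J^2$, and since $I\cap J\subseteq J$ this gives $J = (I\cap J)+J^2$. In particular $\bar b_1,\ldots,\bar b_n$ generate $J/J^2$ as an $A/J$-module; by Lemma~\ref{Mohan} this already implies $J$ is generated by $n+1$ elements. By Lemma~\ref{manoj10}, the improvement to $n$ generators reduces to producing $c_1,\ldots,c_n\in J$ with $c_i\equiv b_i\pmod{J^2}$ and $V((c_1,\ldots,c_n))=V(J)$ in {\rm Spec}$(A)$.

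A symmetric computation yields $I=(I\cap J)+I^2$, so the $\bar b_i$ also generate $I/I^2$ as an $A/I$-module. Writing $b_i=\sum_j m_{ij}a_j$, the matrix $\bar M=(\bar m_{ij})\in M_n(A/I)$ must in fact lie in $\GL_n(A/I)$, since both $\{\bar a_i\}$ and $\{\bar b_i\}$ are generating systems of the locally free rank-$n$ $A/I$-module $I/I^2$ (the freeness following from $\hh I=n=\mu(I)$ together with Nakayama on the zero-dimensional ring $A/I$, cf.\ Corollary~\ref{400}).

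The main technical obstacle, and where both the algebraic closedness of $k$ and the reducedness of $A$ are essential, is to use $\bar M$ to construct the $c_i$ with the correct vanishing locus. The naive perturbation $c_1=b_1+\beta^2$, $c_i=b_i$ for $i\geq 2$ satisfies the congruence mod $J^2$, but the ideal $(c_1,\ldots,c_n)$ can acquire spurious zeros outside $V(I)\cup V(J)$, as one sees in concrete examples; these spurious components must be eliminated. Mohan Kumar's approach in~\cite{MK2} is to modify $\bar M$ within its ${\rm E}_n(A/I)$-coset by invoking completability theorems for unimodular rows of length $n+1$ over affine $k$-algebras (Theorems~\ref{Suslin} and~\ref{Ravi}), to lift the result to $\sigma\in {\rm E}_n(A)$ congruent to the identity modulo $J$, and then to apply $\sigma$ together with a correction term coming from an element of $J$ that is a unit modulo $I$. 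The algebraic closedness of $k$ enters precisely to ensure completability of the unimodular rows that arise; without it, the determinant obstruction in the relevant $K_1$-group need not vanish. Once the $c_i$ are constructed, a prime-by-prime analysis (according to whether a prime contains $I$, $J$, both, or neither) verifies $V((c_1,\ldots,c_n))=V(J)$, and Lemma~\ref{manoj10} concludes $J=(c_1,\ldots,c_n)$.
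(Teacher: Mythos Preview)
The paper does not prove this statement. It is listed at the opening of Chapter~3 as one of three theorems of Mohan Kumar from \cite{MK2}, stated without proof as motivation. What the paper proves in its stead is the refined subtraction principle, Theorem~\ref{theo1.4}: over an arbitrary Noetherian ring of dimension $n\geq 2$, if $J_1=(b_1,\ldots,b_n)$ and $J\cap J_1=(a_1,\ldots,a_n)$ with $a_i-b_i\in J_1^2$, then $J=(c_1,\ldots,c_n)$ with $a_i-c_i\in J^2$. That proof proceeds by a localization--patching argument (inverting an element of $1+K$ for a height-$(n-1)$ subideal $K$, producing a matrix in $\GL_n(A_S)$ that carries one generating set to the other, and then splitting via Quillen's lemma~\ref{lem10}); it uses neither reducedness nor algebraic closedness, because the compatibility hypothesis $a_i-b_i\in J_1^2$ absorbs exactly the obstruction that your $\bar M\in\GL_n(A/I)$ encodes.

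Your proposal is an outline, not a proof. The reductions in the first two paragraphs are correct: $J=(I\cap J)+J^2$, the $\bar b_i$ generate $J/J^2$, and the transition matrix $\bar M$ lies in $\GL_n(A/I)$ because $I/I^2$ is free of rank $n$ over the Artinian ring $A/I$. But your third paragraph, where all the content lies, constructs nothing: it names a naive perturbation, observes that it fails, and then says ``Mohan Kumar's approach in~\cite{MK2} is to\ldots'' followed by a description rather than an argument. The citations you give are also off target: Theorem~\ref{Ravi} is about unimodular rows over $A[X]$ and plays no role here, and Theorem~\ref{Suslin} is the Swan--Towber result on rows $[a^2,b,c]$, not the Suslin completability theorem for length-$(n+1)$ rows over $n$-dimensional affine algebras over algebraically closed fields that Mohan Kumar actually needs. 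Without carrying out the modification of the generators and the prime-by-prime verification of $V((c_1,\ldots,c_n))=V(J)$, what you have established is only $\mu(J)\leq n+1$ via Lemma~\ref{Mohan}.
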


\begin{theorem}\label{100}
Let $A$ be a reduced affine ring of dimension $n$ over an
algebraically closed field. Let $P$ be a projective $A$-module of rank
$n$. If $P$ maps onto an ideal $J$ of height $n$ which is generated by
$n$ elements, then $P$ has a unimodular element.
\end{theorem}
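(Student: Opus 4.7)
The plan is to combine the Moving Lemma (Corollary~\ref{cor14}) with the Subtraction Principle for ideals stated just above. The strategy is to modify the given surjection $\alpha$ using the Moving Lemma, simplify the associated ideals via the Subtraction Principle, and then extract a unimodular element of $P$.

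First, since $\hh J = n = \dim A$, the ring $A/J$ is zero-dimensional (semilocal), so $P/JP$ is free of rank $n$, and by the argument in Lemma~\ref{lem2}, $J/J^2$ is free of rank $n$ over $A/J$. Letting $\phi : A^n \surj J$ be the standard surjection $e_i \mapsto a_i$, both $\alpha \otimes A/J$ and $\phi \otimes A/J$ are isomorphisms of free $(A/J)$-modules of rank $n$; in particular, they agree up to an automorphism of $(A/J)^n$.

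Next, I would apply Corollary~\ref{cor14} to $\alpha$ to obtain $\beta : P \surj J \cap J_1$ with $J + J_1 = A$ and $\hh J_1 \geq n$. A second application to $\phi$, invoking clause (iv) with ``avoid'' ideal $J_1$, produces $\phi_1 : A^n \surj J \cap J_2$ with $J + J_2 = A$, $\hh J_2 \geq n$, and $J_1 + J_2 = A$. The Subtraction Principle applied to the comaximal pair $(J, J_2)$ --- using that $J$ and $J \cap J_2$ are each generated by $n$ elements (the latter via $\phi_1$) --- shows $J_2$ is generated by $n$ elements.

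The final step is to convert this ideal-theoretic data into a unimodular element of $P$. Iteration of the Moving Lemma on the newly $n$-generated ideal $J_2$, together with repeated application of the Subtraction Principle, would yield an $n$-generation of $J_1$. Then, since $P/J_1 P$ is free of rank $n$ and $J_1$ is $n$-generated, a comparison between $P/J_1 P$ and $(A/J_1)^n$ compatible with $\beta$ and the standard surjection $A^n \surj J_1$ can be lifted through the Quillen patching lemma (Lemma~\ref{lem10}), applied to the distinguished cover $\mbox{\rm Spec}(A) = D(s) \cup D(t)$ for $s \in J_1$, $t \in J$ with $s + t = 1$, to produce a surjection $P \surj A$. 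The main obstacle is this final patching step: one must ensure that the comparison automorphism arising from the two descriptions of $J_1$ is isotopic to the identity (so that Lemma~\ref{lem10} and Example~\ref{333} apply), and that the iterated Subtraction Principle indeed terminates with an honest $n$-generation of $J_1$. The hypothesis ``reduced affine over an algebraically closed field'' enters essentially through its role in the Subtraction Principle and in ensuring the comparison cocycles can be trivialized.
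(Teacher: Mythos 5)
The paper does not prove this theorem; it recalls it from Mohan Kumar's paper \cite{MK2}. Your steps through the Subtraction Principle giving $J_2$ generated by $n$ elements are sound, but the ``iteration'' you then invoke to obtain an $n$-generation of $J_1$ does not exist. To conclude that $J_1$ is $n$-generated by subtraction you would need some ideal comaximal with $J_1$ whose intersection with $J_1$ is already known to be $n$-generated. The only surjection you possess involving $J_1$ is $\beta : P \surj J \cap J_1$, and since $P$ is not free this says nothing about a free $n$-generation of $J \cap J_1$ --- in fact, showing $J \cap J_1$ is $n$-generated is morally the content of the theorem. Everything you can manufacture on the free side (from $J$ and from $J_2$) via the Moving Lemma sits comaximal to $J_1$, so repeated subtraction keeps producing ideals comaximal with $J_1$ rather than $J_1$ itself. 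The iteration is circular and cannot terminate in the way you describe.

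The patching finale is also the wrong tool: Lemma~\ref{lem10} splits a matrix $\sigma(T) \in \GL_n(A_{st}[T])$ with $\sigma(0) = \id$; it is the engine behind Quillen's theorem that certain modules over $A[T]$ are extended, not a device for converting ideal-theoretic data into a splitting $P \iso Q \oplus A$. The specific role that algebraic closedness must play is more concrete than ``trivializing cocycles'': since $\hh J = n = \dim A$, the ring $A/J$ is artinian with every residue field isomorphic to $k$, hence every unit of $A/J$ is an $n$-th power. This means that any isomorphism $\delta : (A/J)^n \iso P/JP$ satisfying $\ol\alpha\,\delta = \ol\phi$ (where $\phi : A^n \surj J$ is the free surjection) can be corrected by a diagonal automorphism so that $\wedge^n\delta$ equals any prescribed trivialization of $\ol{\wedge^n P}$. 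Once that determinant compatibility is arranged, the projective-module form of the Subtraction Principle with $J = A$ --- namely Corollary~\ref{manoj2}, or its standard line-bundle-twisted variant when $\wedge^n P$ is nontrivial --- produces a unimodular element directly. The $n$-th power observation is precisely the step that no amount of iterating addition/subtraction for ideals, nor $\GL_n$-patching, can replace.
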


In this chapter, we prove some addition and subtraction
principles. These are modeled upon those proved by Mohan Kumar. 
Roughly, the idea is to consider ideals $J$ together with sets of
generators of $J/J^2$ and formulate the addition and subtraction
principles using this data. 

\begin{theorem}\label{theo1.3}
{\bf (Addition Principle)} 
	Let $A$ be a Noetherian ring of dimension
$n \geq 2$. Let $J_1$ and $J_2$ be two comaximal ideals of height $n$
 and $J_3 = J_1 \cap J_2$. Suppose
$J_1 = (a_1,\ldots,a_n)$ and $J_2 = (b_1,\ldots,b_n)$.
Then $J_3 = (c_1,\ldots,c_n)$, where $a_i-c_i \in J_1^2$ and
$b_i-c_i \in J_2^2$.
\end{theorem}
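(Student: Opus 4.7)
The plan is to build $c_i$ in three stages: first a preliminary construction via the Chinese Remainder Theorem giving elements $\tilde c_i$ with the required congruences and generating $J_3$ modulo $J_3^2$; then a lift via the Moving Lemma to elements generating $J_3 \cap J''$ for a ``nuisance'' ideal $J''$; and finally a modification (using elements of $J_3^2$, preserving the congruences) that eliminates $J''$.

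For the first stage, since $J_1 + J_2 = A$ implies $J_1^2 + J_2^2 = A$ and $J_1^2 \cap J_2^2 = J_1^2 J_2^2 = J_3^2$, by CRT there exist $\tilde c_i \in A$ with $\tilde c_i \equiv a_i \pmod{J_1^2}$ and $\tilde c_i \equiv b_i \pmod{J_2^2}$, forcing $\tilde c_i \in J_3$. The CRT isomorphism $A/J_3^2 \cong A/J_1^2 \times A/J_2^2$ restricts to $J_3/J_3^2 \cong J_1/J_1^2 \oplus J_2/J_2^2$, sending $\tilde c_i$ to $(\bar a_i, \bar b_i)$; since the $\bar a_i$ generate $J_1/J_1^2$ and the $\bar b_i$ generate $J_2/J_2^2$, the $\tilde c_i$ generate $J_3/J_3^2$. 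Hence $(\tilde c_1, \ldots, \tilde c_n) + J_3^2 = J_3$.

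Next I would apply the Moving Lemma (\ref{cor14}) to the resulting surjection $\phi: A^n \surj J_3/J_3^2$, $e_i \mapsto \tilde c_i + J_3^2$, obtaining a lift $\beta: A^n \surj J_3 \cap J''$ with $J'' + J_3 = A$, $\hh J'' \geq n$, and $\beta \otimes A/J_3 = \phi$. Setting $c_i = \beta(e_i)$ preserves $c_i \equiv \tilde c_i \pmod{J_3^2}$, hence all required congruences. Since $\hh J'' \geq n = \dim A$, the minimal primes $\q_1, \ldots, \q_m$ of $J''$ are finitely many height-$n$ maximal ideals, and comaximality gives $J_3 \not\subset \q_j$, so $J_3^2 \not\subset \q_j$ either.

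For the final stage, I would use prime avoidance (\ref{manoj5}) to choose $\delta \in J_3^2 \setminus \bigcup_j \q_j$ and then apply \ref{cor13} to $[c_1, \ldots, c_n, \delta]$ to find $b_i \in A$, imposing the extra constraint $b_1 \notin \bigcup_j \q_j$, such that the modified ideal $I = (c_1 + \delta b_1, \ldots, c_n + \delta b_n)$ satisfies $\hh I \geq n$. The modifications $\delta b_i$ lie in $J_3^2$, so the congruences persist. To conclude $I = J_3$ via \ref{manoj10}: the equality $I + J_3^2 = J_3$ is inherited. For $V(I) = V(J_3)$, any $\p \supset I$ has height $n$ and is maximal (by the height condition), and splits into two cases: if $\delta \in \p$, then $c_i \in \p$ for all $i$, so $\p \supset J_3 \cap J''$, whence $\p \supset J_3$ (the alternative $\p = \q_j$ is ruled out since $c_1 + \delta b_1 \notin \q_j$ by $\delta, b_1 \notin \q_j$); if $\delta \notin \p$, an analysis in $A_\delta$ using the choice $b_1 \notin \q_j$ shows $I \cdot A_\delta$ is comaximal with $J'' A_\delta$, which combined with the height bound forces this case to be vacuous.

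The \emph{main obstacle} is Stage (iii), specifically the verification that the modification by $\delta b_i$ kills every spurious prime in $V(I) \setminus V(J_3)$. The argument hinges on a subtle interplay between the Eisenbud--Evans prime avoidance (\ref{cor13}), which controls the heights of primes containing $I$, and the structural fact from the Moving Lemma that $V(J'')$ consists only of finitely many maximal ideals (since $\hh J'' \geq n = \dim A$); ensuring the additional $b_1 \notin \q_j$ condition can be imposed simultaneously with the Evans height hypothesis requires weaving the extra primes into the prime avoidance used in the proof of \ref{cor13}.
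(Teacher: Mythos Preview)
Your Stage~(iii) has a genuine gap, and in fact the whole approach cannot work as written. Observe that Stages~(ii) and~(iii) make no use whatsoever of the hypothesis that $J_1$ and $J_2$ are themselves $n$-generated; they only use that $J_3/J_3^2$ is $n$-generated. So if your argument were sound it would prove that \emph{every} height-$n$ ideal $J$ with $\mu(J/J^2)=n$ is generated by $n$ elements. This is false: the failure of exactly this implication is what the Euler class group $E(A)$ is built to detect (see the discussion around Theorem~\ref{theo2.2}), and the real $2$-sphere already furnishes counterexamples.

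The specific break is your Case~$\delta\notin\p$. You establish that $I_\delta$ is comaximal with $J''_\delta$ (via $b_1\notin\q_j$) and that $\hh I_\delta\ge n$ (via \ref{cor13}). But neither of these, nor their conjunction, forces $I_\delta=A_\delta$. The Eisenbud--Evans bound only says that primes over $I$ avoiding $\delta$ have height $\ge n$; it does not say there are none. Concretely, since $\delta\in J_3^2\subset J_3$ we have $(J_3)_\delta=A_\delta$ and hence $(c_1,\dots,c_n)_\delta=(J_3\cap J'')_\delta=J''_\delta$; the perturbation $c_i\mapsto c_i+\delta b_i$ moves this to some other height-$n$ ideal of $A_\delta$, and there is no mechanism preventing that ideal from being proper and supported on maximal ideals disjoint from both $V(J_3)$ and $V(J'')$. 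Writing $I=J_3\cap K$ via Lemma~\ref{lem11}, what you actually need is $K=A$, and nothing in your prime-avoidance bookkeeping controls $K$.

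By contrast, the paper's proof of Theorem~\ref{theo1.3} is a patching argument that uses both given generating sets in an essential way. One first adjusts the $a_i$ (by elementary moves) so that $a_1\equiv 1\pmod{J_2}$ and $K=(a_1,\dots,a_{n-1})$ has height $n-1$; over the two patches $A_s$ (with $s\in 1+K$, $s\in J_2$) and $A_t$ (with $t\in K$) the ideal $J_3$ coincides with $J_1$ and $J_2$ respectively, and one produces explicit surjections on each patch together with an $\SL_n$ comparison over $A_{st}$ that is isotopic to the identity. Quillen's splitting lemma (Corollary~\ref{depu}) then glues these to a global surjection $A^n\twoheadrightarrow J_3$, after which a final $\SL_n(A/J_i)$ correction fixes the congruences. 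The generators of $J_1$ and $J_2$ enter directly as the data on the two patches; this is exactly the input your Stages~(ii)--(iii) discard.
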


\begin{proof}
We have $J_1 = (a_1,\ldots,a_n)$ and $J_2 = (b_1,\ldots,b_n)$. Since
$J_1 + J_2 = A$, we have that $[\ol a_1,\ldots,\ol a_n]$ is a
unimodular row over $A/J_2$. Since $\dim (A/J_2) = 0$, by
(\ref{Action}), there exists an elementary matrix $\ol \sigma \in
{\rm E}_n(A/J_2)$ such that $[\ol a_1,\ldots,\ol a_n] \ol \sigma =
[1,0,\ldots,0]$. 

Let $\sigma \in {\rm E}_n(A)$ be a lift of $\ol \sigma$. Let
$[a_1,\ldots,a_n]\sigma = [\wt a_1,\ldots,\wt a_n]$. Then $\wt a_1 =
1$ modulo $J_2$ and $\wt a_2,\ldots,\wt a_n \in J_2$. Hence  adding
 suitable  multiples of $a_n$ to $a_1,\ldots,a_{n-1}$,
we can assume that (1) $a_1 = 1$ modulo $J_2$, 
	(2) if $K = (a_1,\ldots,a_{n-1})$, then $\hh K = n-1$ and 
	(3) $K + J_2 = A$. Let $ S = 1 + K$. Then $S \cap J_2 \neq
\varnothing$. Hence $[b_1,\ldots,b_n] \in A_S^n $ is a unimodular row.

\paragraph{Claim :}
$[b_1,\ldots,b_n]$ can be taken to $[0,\ldots,0,1]$ by an element of 
$\SL_n(A_S)$.
\vspace*{.2in}

Assume the claim. Then, there exists an element $s \in S$ and an
automorphism $\Gamma$ of $A^n_s$ of determinant 1 such that 
$[b_1,\ldots,b_n]\Gamma = [0,\ldots,0,1]$. Since $S \cap J_2 \neq
\varnothing$, without loss of generality, we may assume that $s\in
J_2$. Hence, we have $(J_3)_s = (a_1,\ldots,a_n)_s.$
  
Let $s = 1 + t$, for some $t\in K$. Then $(J_3)_t =
(b_1,\ldots,b_n)_t.$
Since $t \in K$, we have that $[a_1,\ldots,a_{n-1}] \in A_t^{n-1}$
 is unimodular row.
	Hence, by (\ref{com}), 
$[a_1,\ldots,a_n]$ can be taken to
$[0,\ldots,0,1]$ by an elementary transformation $\Delta$ of $A^n_t$.
Hence, we have $[a_1,\ldots,a_n]\Delta_s(\Gamma^{-1})_t =
[b_1,\ldots,b_n]$. 

Let $\Phi = \Gamma_t \Delta_s(\Gamma^{-1})_t$. Then
$[a_1,\ldots,a_n](\Gamma^{-1})_t\Phi = [b_1,\ldots,b_n]$. Since
$\Delta_s$ is an elementary automorphism, $\Phi$ is isotopic to
identity automorphism of $A^n_{st}$. 
	Hence, by (\ref{depu}), there
exists a splitting $\Phi =(\Phi_1)_t (\Phi_2)_s$, where $\Phi_2$ is
an automorphism of $A^n_t$ which is identity modulo the ideal $(s)$
and $\Phi_1$ is an automorphism of $A^n_s$ which is identity modulo
the ideal $(t)$.
Let
$$[a_1,\ldots,a_n]\Gamma^{-1}\Phi_1 =
	[a_1^\prime,\ldots,a_n^\prime] : A^n_s \surj (J_3)_s,$$
$$[b_1,\ldots,b_n]\Phi_2^{-1} = [b_1^\prime,\ldots,b_n^\prime] :
	A^n_t \surj (J_3)_t.$$ 
These two surjections patch up to give a
surjection $\psi = [g_1,\ldots,g_n] : A^n \surj J_3$. 
Since $s$ is unit modulo $J_1$, the homomorphism
$A\ra A/J_1$ factors through $A_s$. Similarly, the homomorphism $A \ra
A/J_2$ factors through $A_t$. Now, since $\phi_1$ is identity modulo
the ideal
$(t)\subset J_1$ and $\phi_2$ is identity modulo $J_2$, it follows
that  $[g_1,\ldots,g_n] \otimes A/J_1$ and $[g_1,\ldots,g_n]
\otimes A/J_2$ differ from
$[a_1,\ldots,a_n] \otimes A/J_1$ and $[b_1,\ldots,b_n] \otimes A/J_2$
by an element of $\SL_n(A/J_1)$ and $\SL_n(A/J_2)$ respectively. 
	Since $\dim (A/J_i) = 0$ for $i=1,2$, $\SL_n(A/J_i) =$ E$_n(A/J_i)$.
Hence, using (\ref{lem15}), we can alter $[g_1,\ldots,g_n]$ by
an element of $\SL_n(A)$, to get a surjection $\theta : A^n \surj J_3$,
say $J_3 = (c_1,\ldots,c_n)$, such that $a_i-c_i \in J_1^2$ and
$b_i-c_i \in J_2^2$. This proves the theorem.

\paragraph{Proof of the claim.}
First, we assume that $n\geq 3$. Let ``bar'' denote modulo $K$. Then
$[\ol b_1,\ldots,\ol b_n]$ is a unimodular row in $\ol A^n_S$.
Since $\dim (A/K) = 1$, by (\ref{Action}), $[\ol b_1,\ldots,\ol
b_n]$ can be taken to $[0,\ldots,0,1]$ by an elementary
transformation, say $ \ol \psi \in $ E$_n(\ol A_S)$.  
	Taking a lift
$\psi \in$ E$_n(A_S)$ of $\ol \psi$, by (\ref{lem15}), we see that
$[b_1,\ldots,b_n]$ can be taken to $[d_1,\ldots,d_{n-1},1+d_n]$ by an
elementary transformation, where $d_i \in K_S$. Since $1+d_n$ is a unit
in $A_S$, $[d_1,\ldots,d_{n-1},1+d_n]$ can be taken to
$[0,\ldots,0,1+d_n]$ by an elementary transformation. Now,
$[0,\ldots,0,1+d_n]$ can be taken to $[0,\ldots,0,1]$ by an elementary
automorphism of $A_S^n$ by (\ref{com}). This proves the claim.

When $n =2$. Given $[b_1,b_2]$ is a unimodular row in $A_S^2$. Let
$a_1,a_2 \in A_S$ be chosen so that $a_1b_1+a_2 b_2 = 1$. Consider the
matrix $\gamma = \left[\begin{array}{cc} a_1& -b_2\\a_2& b_1
\end{array} \right]$.\\ 
Then $[b_1,b_2]\gamma = [1,0]$ and det $(\gamma) = 1$. Hence, the claim
is proved.  
$\hfill \gj$
\end{proof}

\begin{theorem}\label{theo1.4}
{\bf (Subtraction Principle)} 
	Let $A$ be a Noetherian ring of
dimension $n \geq 2$. Let $J$ and $J_1$ be two comaximal ideals of height
$n$. Let $J_2 = J \cap J_1$. Assume that $J_2 = (a_1,\ldots,a_n)$ and
$J_1 = (b_1,\ldots,b_n)$ with $a_i - b_i \in J^2_1$. Then $J =
(c_1,\ldots,c_n)$ with $a_i - c_i \in J^2$.
\end{theorem}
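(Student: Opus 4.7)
The plan is to parallel the proof of (\ref{theo1.3}): use elementary transformations to put the data in standard form, construct local automorphisms on $A_s$ and $A_t$ (for suitable $s\in J_1$, $t\in J$ with $s+t=1$), use Quillen's splitting to patch, and verify the congruence. Since $J+J_1=A$ and $\dim(A/J)=0$, (\ref{Action}) lets us elementarily transform $[b_1,\ldots,b_n]$ to a row with $b_1\equiv 1\pmod J$ and $b_2,\ldots,b_n\in J$; lifting to $E_n(A)$ and applying the same transformation to $[a_i]$ preserves the relation $a_i-b_i\in J_1^2$. After renaming, $b_j\in J\cap J_1=J_2$ for $j\ge 2$, and we set $s:=b_1\in J_1$, $t:=1-s\in J$, giving $s+t=1$.

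On $A_s$, since $s$ is a unit, $J_1A_s=A_s$, $(J_2)_s=JA_s$ with $[a_i]$ generating, and $[b_i]=[s,b_2,\ldots,b_n]$ has unit first entry; elementary column clearings followed by the Whitehead matrix $\diag(s^{-1},s,1,\ldots,1)\in E_n(A_s)$ (valid for $n\ge 3$) produce $\mu_s\in E_n(A_s)$ with $[b_i]\mu_s=[1,0,\ldots,0]$. On $A_t$, $t$ is a unit, $JA_t=A_t$, and both $[a_i],[b_i]$ surject onto the proper ideal $(J_1)_t$ with $a_i-b_i\in J_1^2A_t$; an $n$-dimensional analogue of (\ref{lem3})---provable by an explicit product-of-transvections construction using the zero-dimensionality of $A_t/(J_1)_t$ together with (\ref{lem15})---supplies $\sigma_t\in E(A_t^n)$ with $[a_i]\sigma_t=[b_i]$.

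The product $\Gamma:=\sigma_t\mu_s\in E_n(A_{st})$ satisfies $[a_i]\Gamma=[1,0,\ldots,0]$ in $A_{st}$ and is isotopic to the identity by (\ref{333}). By (\ref{depu}) one splits $\Gamma=(\Gamma_1)_t(\Gamma_2)_s$ with $\Gamma_1\in\GL_n(A_s)$, $\Gamma_1\equiv\id\pmod t$, and $\Gamma_2\in\GL_n(A_t)$, $\Gamma_2\equiv\id\pmod s$. Define
\[
[g_1,\ldots,g_n]_s:=[a_1,\ldots,a_n]\,\Gamma_1\in A_s^n,\qquad
[g_1,\ldots,g_n]_t:=[1,0,\ldots,0]\,\Gamma_2^{-1}\in A_t^n.
\]
The identity $[a_i]\Gamma_1=[1,0,\ldots,0]\Gamma_2^{-1}$ in $A_{st}$ (deduced from $[a_i]\Gamma=[1,0,\ldots,0]$) gives agreement there, so the rows glue to $[g_1,\ldots,g_n]\in A^n$. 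Locally $(g_i)A_s=(a_i)A_s=JA_s$ and $(g_i)A_t=A_t=JA_t$, hence $(g_1,\ldots,g_n)=J$.

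For the congruence, on $A_s$ I compute $g_i-a_i=\sum_j a_j((\Gamma_1)_{ji}-\delta_{ji})$; since $(\Gamma_1)_{ji}-\delta_{ji}\in tA_s$ with $t\in J$, and each $a_j\in J_2\subset J$, the sum lies in $J\cdot JA_s=J^2A_s$. On $A_t$, $J^2A_t=A_t$ makes the congruence vacuous. The comaximality $s+t=1$ lets us check the inclusion $g_i-a_i\in J^2$ locally on $A_s$ and $A_t$, so it holds globally. Setting $c_i:=g_i$ and inverting the initial elementary reduction produces the required generators. The main technical obstacle I anticipate is the existence of $\sigma_t\in E(A_t^n)$---the general-rank analogue of (\ref{lem3})---while the $n=2$ case additionally requires separate handling, since the Whitehead step in $E_2$ is unavailable.
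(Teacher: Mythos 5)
Your overall architecture — elementary reduction, two local transformations glued via Quillen splitting, then a local verification of the congruence — is the right shape and matches the paper's. The gluing step, the computation $(g_1,\ldots,g_n)=J$, and the local check that $g_i-a_i\in J^2$ are all fine as written. The gap, which you flag yourself, is the construction of $\sigma_t\in E(A_t^n)$ with $[a_i]\sigma_t=[b_i]$. This is not a routine ``$n$-dimensional analogue of (\ref{lem3})'': the paper proves the corresponding statement (its ``Claim'') not over $A_t$ but over $A_S$ with $S=1+K$, $K=(b_1,\ldots,b_{n-1})$, precisely so that $KA_S$ lies in the Jacobson radical of $A_S$. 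That condition is used twice and essentially: first to conclude that an element $\gamma$ with $(\gamma)+K=A$ is actually a unit (making $[\lambda_{n1},\ldots,\lambda_{nn}]$ unimodular), and again at the very end to conclude that $\det(\alpha)=1+x$ with $x\in K$ is a unit. Over $A_t$ (with $t=1-b_1$) neither $J_1A_t$ nor $KA_t$ is in the Jacobson radical, so the matrix $\sigma_0\equiv\id\pmod{J_1A_t}$ with $[a_i]\sigma_0=[b_i]$ that always exists has no reason to be invertible, and the ``explicit product-of-transvections construction'' you anticipate has no visible mechanism to turn $\det\sigma_0=1+f$ ($f\in J_1A_t$) into a unit. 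You have pushed the whole difficulty of the theorem into an unproved lemma, and the obstruction is exactly what the paper's careful choice of localization is designed to remove: after proving the Claim over $A_S$, the paper \emph{harvests} a suitable $b\in S\cap J$ over which $\tau$ spreads out, whereas your fixed $t=1-b_1$ forfeits that freedom.

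Two smaller remarks. The Whitehead lemma gives $\diag(s^{-1},s)\in E_2(A_s)$ for any unit $s$ in any commutative ring, so your $\mu_s$ step works for $n=2$ too; the genuine $n=2$ subtlety is rather that (\ref{lem3}) produces an element of $\SL_2$, not of $E_2$, so isotopy to the identity is not automatic. The paper dodges this for all $n$ by never needing $\tau$ itself to be isotopic to identity: it conjugates, setting $\wt\Delta=\tau_a\Delta\tau_a^{-1}$, which is isotopic to identity because $\Delta$ is elementary, regardless of $\tau$. You should adopt the same trick — insisting that $\sigma_t$ land in $E$ rather than $\SL$ (or even $\GL$) is an extra, unnecessary burden on an already-problematic lemma, and without it your appeal to (\ref{depu}) for $\Gamma=\sigma_t\mu_s$ would fail in the $n=2$ case and would be needlessly strong in general.
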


\begin{proof}
Let $\sigma \in {\rm E}_n(A)$. Suppose
$[b_1,\ldots,b_n]\sigma = [\wt b_1,\ldots,\wt b_n]$ and
$[a_1,\ldots,a_n]\sigma = [\wt a_1,\ldots,\wt a_n]$. Then, since 
$b_i - a_i \in J_1^2$, we have  $\wt b_i - \wt a_i \in J_1^2$. 
Therefore, without loss of generality, we can perform elementary
transformations on $[b_1,\ldots,b_n]$.

We have $(a_1,\ldots,a_n) = J\cap (b_1,\ldots,b_n)$. Let ``bar''
denote modulo $J$. Then $[\ol b_1,\ldots,\ol b_n]$ is a unimodular row
over $A/J$. Since $\dim (A/J) = 0$, by (\ref{Action}), there exists
an elementary transformation $\ol \sigma \in {\rm E}_n(A/J)$ such that
$[\ol b_1,\ldots,\ol b_n]\ol \sigma = [1,0,\ldots,0]$. 

After changing by  elementary transformation, we can assume, as
in (\ref{theo1.3}),
that $b_1 = 1$ modulo $J$, $b_i \in J, i=2,\ldots,n$ and $\hh K =
n-1$, where $K = (b_1,\ldots,b_{n-1})$. Then $K + J = A$. Let 
$S = 1 + K$. Consider the natural mapping from $A \ra A_S$. Since 
$S \cap J \neq \varnothing$, we have 
$(a_1,\ldots,a_n)_S = (b_1,\ldots,b_n)_S$.
\paragraph{Claim} 
There exists $\tau \in \GL_n(A_S)$ such that
$[a_1,\ldots,a_n]\tau = [b_1,\ldots,b_n]$. \vspace*{.2in}

Assume the claim. Then, there exists an element $b =1+a \in S$, $a \in
K$ and  $\tau \in \GL_n(A_b)$ such
that $[a_1,\ldots,a_n]\tau = [b_1, \ldots ,b_n]$. Moreover, since
$S\cap J \neq \varnothing$, we can assume that $b \in J$.

Let $\beta : (A_b)^n \surj J_b = A_b$ be defined by $\beta (e_1) = 1$ and
$\beta (e_i) = 0, i = 2,\ldots,n$ and let
$\alpha : (A_a)^n \surj J_a = (a_1, \ldots ,a_n)$ be defined by $\alpha
(e_i) = a_i$.
Since $[b_1,\ldots,b_{n-1}] \in (A_{ab})^{n-1}$ is a unimodular row, 
$[b_1,\ldots,b_n]$ can be taken to $[1,0,\ldots,0]$ by an elementary
transformation $\Delta \in {\rm E}_n(A_{ab})$.

Define $\delta : (A_{ab})^n \surj J_{ab} =A_{ab} = (J_1)_{ab}$ by
$\delta (e_i) = b_i$.  Hence, we have $\alpha_b \tau_a = \delta$
and $\delta \Delta = \beta_a$.  From these two relations, we get
$\alpha_b \tau_a \Delta = \beta_a$.  
	Let $\wt \Delta = \tau_a
\Delta \tau_a^{-1}$. Then, we have $\alpha_b \wt \Delta \tau_a =
\beta_a$. Hence $\alpha_b \wt\Delta = \beta_a \tau_a^{-1} =
(\beta \tau^{-1})_a$. 

Since $\Delta$ is an elementary automorphism, we have that $\wt
\Delta$ is isotopic to identity. Hence $\wt \Delta = (\wt
\Delta_1)_b (\wt \Delta_2)_a$, by (\ref{depu}),
 where $\wt
\Delta_1$ is an automorphism of $(A_a)^n$ which is identity modulo the
ideal $(b)$ and $\wt \Delta_2$ is an automorphism of $(A_b)^n$ which
is identity modulo the ideal $(a)$. Hence, we have $\alpha_b(\wt
\Delta_1)_b(\wt \Delta_2)_a = (\beta \tau^{-1})_a$ and hence, we
get $ (\alpha \wt \Delta_1)_b = (\beta\tau^{-1} \wt
\Delta_2^{-1})_a.$ The surjections
$$\alpha\wt\Delta_1  =
	[c_1^\prime,\ldots,c_n^\prime] : (A_a)^n \surj J_a$$
$$\beta\tau^{-1} \wt \Delta_2^{-1} =
	[d_1^\prime,\ldots,d_n^\prime] : (A_b)^n \surj J_b$$ 
patch up to give $J = (c_1, \ldots ,c_n)$ such that $c_i = c_i^\prime$
in $A_a$ and $c_i = d_i^\prime$ in $A_b$. Now, we show that 
$c_i - a_i \in J^2$.
Since $b = 1 + a \in J$, the map $A \ra
A/(b)$ factors through $A_a$.  Since 
$\wt \Delta_1 = \id$ (mod $(b)$), from the equation  
$\alpha \wt \Delta_1 = [c_1^\prime, \ldots,c_n^\prime]$, 
it follows by going modulo $J$, that $c_i-a_i\in J^2$. 

\paragraph{Proof of the claim.}
To prove the claim, replace $A$ by $A_S$. Then 
$K = (b_1,\ldots,b_{n-1})$ is an ideal of height $n-1$ such that 
$K \subset J(A)$. Given $J = (a_1,\ldots,a_n) = (b_1,\ldots,b_n)$ such
that $a_i - b_i \in J^2$ and  $\hh J = n$. To show that there exists
$\tau \in \GL_n(A)$ such that 
$[a_1,\ldots,a_n]\tau = [b_1,\ldots,b_n]$.

Let $b_i = a_i + d_i, \, d_i \in J^2$. Then 
	$d_i = \sum_{j=1}^n
\lambda_{ij}^\prime a_j, \, \lambda_{ij}^\prime \in J$. 
	Hence,
there exists $\sigma \in {\rm M}_n(A)$ such that $\sigma = \id$ (mod
$J$), and $[a_1,\ldots,a_n]\sigma = [b_1,\ldots,b_n]$. Similarly,
there exists $\theta \in {\rm M}_n(A)$ such that $\theta = \id$ (mod
$J$) and $[b_1,\ldots,b_n]\theta = [a_1,\ldots,a_n]$.

Let $\sigma = (\lambda_{ij})$ and $\theta = (\mu_{ij})$.  
	Then, we
have $[b_1,\ldots,b_n]\theta\sigma = [b_1,\ldots,b_n]$.  
	Let $\gamma = \sum_{j=1}^n \lambda_{nj}\mu_{jn}$.  
	From the above equation, we get 
	$b_n = c_1 b_1 +\ldots+c_{n-1}b_{n-1} +
	\gamma b_n$ for some $c_1,\ldots,c_{n-1} \in A$. 
	Hence, we have
	$(1-\gamma)b_n \in (b_1,\ldots,b_{n-1}) = K$.  
	Since $\hh K = n-1$
and $K$ is generated by $n-1$ elements, any minimal prime ideal
of $K$ is also of height $n-1$. Hence $b_n$ does not belong to any
minimal prime ideal of $K$.  Hence $(1-\gamma) \in \sqrt K$. This
implies that $(\gamma) + \sqrt K = A$ and so $(\gamma) + K = A$. But
$K \subset J(A)$, hence $(\gamma) = A$. 
	This shows that $\gamma \in A$
is a unit. Hence $[\lambda_{n1},\ldots,\lambda_{nn}]$ is a unimodular
row. In fact $[\lambda_{n1},\ldots,\lambda_{nn}] \in \Um_n(A,J)$,
i.e. it is a lift of the unimodular row $[0,\ldots,0,1]$ in
$\Um_n(A/J)$.

Assume that $n \geq 3$. Let ``bar'' denote modulo $K$. Since $\dim
(A/K) = 1$, by (\ref{Action}), the unimodular row $[\ol
\lambda_{n1},\ldots,\ol \lambda_{nn}]$ can be taken to
$[1,0,\ldots,0]$ by an elementary transformation $\ol \sigma_1$.
Since $K \subset J(A)$, $1+K$ are units in $A$. 
	We first show that
 $[\lambda_{n1},\ldots,\lambda_{nn}]$ can be taken to
$[1,0,\ldots,0]$ by elementary transformation. To see this, first take an
elementary lift, say $\sigma_1\in$ E$_n(A)$ of $\ol\sigma_1$. Assume
$[\lambda_{n1},\ldots,\lambda_{nn}] \sigma_1 = [1+u_1,\ldots,u_n]$,
where $u_i \in K$. Since $1+u_1$ is a unit, $[1+u_1,\ldots,u_n]$ can
be taken to $[1,0,\ldots,0]$ by an elementary transformation. Hence, the
row $[\lambda_{n1},\ldots,\lambda_{nn}]$ is completable to an
elementary matrix over $A$. 
	Let $\Delta \in {\rm E}_n(A)$ be such that
$$ \Delta = \left [
		\begin{array}{ccc}
		\Delta_1 & 	& \ast \\
			 &      & \vdots \\
		\lambda_{n1}    & \ldots & \lambda_{nn}
		\end{array}
	\right]_{n\times n}.$$
Consider the map from E$_n(A) \ra {\rm E}_n(A/J)$.  Let ``tilde'' denote
modulo $J$. Then
$$ \wt \Delta = \left [
	\begin{array}{ccc}
	\wt \Delta_1 &	&  \ast \\
		     &  &  \vdots \\  
	0	     &\dots & 1 
	\end{array}
\right]_{n\times n},$$
where $\wt \Delta_1 \in \SL_{n-1}(A/J) = {\rm E}_{n-1}(A/J)$. 
Let $\delta \in {\rm E}_{n-1}(A)$ be a lift of $\wt \Delta_1$. 
Then, the inverse $\delta^{-1} \in {\rm E}_{n-1}(A)$. Then
$$\left [
	\begin{array}{ccc}
	\delta^{-1} &	& 0 \\ 
		    &	& \vdots \\
	0	    &\dots & 1
	\end{array}\right] 
\Delta = \left [
	\begin{array}{ccc}
	\theta'  &	& \ast \\ 
		&	& \vdots \\
	\lambda_{n1}& \dots &\lambda_{nn}
	\end{array}
\right],$$
where $\theta' \in {\rm M}_{n-1}(A)$ is such that $\theta' = \id$ (mod
$J$). Hence, after changing by elementary transformation, we can 
assume that $\Delta \in {\rm E}_n(A)$ is such that $\Delta_1 = \id$
(mod $J$). Let $[a_1,\ldots,a_n] \Delta =
[a_1^\prime,\ldots,a_{n-1}^\prime,b_n]$, where $a_i - a_i^\prime \in
J^2$.  Then
$(a_1,\ldots,a_n)=(a_1^\prime,\ldots,a_{n-1}^\prime,b_n)=
(b_1,..,b_n)$.  Let $a_i^\prime = c_i + d_i b_n,c_i \in K$ and
$d_i\in A$.  Consider the matrix
$$\Gamma = \left [
	\begin{array}{cccc} 
	1 	& \dots &   0    &    -d_1 \\ 
	\vdots  &       & \vdots & \vdots \\
	0       & \dots &  1     & -d_{n-1}\\ 
	0       & \dots &  0     &  1 
	\end{array}
\right]_{n\times n}.$$ 
Then $\Gamma$ is an elementary matrix and
$[a_1^\prime,\ldots,a_{n-1}^\prime,b_n]\Gamma =
[c_1,\ldots,c_{n-1},b_n]$,  
where $a_i^\prime - c_i \in (b_n)$.  Hence
$[a_1,\ldots,a_n]\Gamma\Delta = [c_1,\ldots,c_{n-1},b_n]$ and so
$(c_1,\ldots,c_{n-1},b_n) = (b_1,\ldots,b_n)$.  
	Since $c_i - a_i \in
J^2 + Ab_n$, we have $c_i - b_i \in J^2 + Ab_n$.
Let ``bar'' denote modulo $(b_n)$. Since $K + Ab_n
= J$, we have $\ol J = (\ol
b_1,\ldots,\ol b_{n-1}) = (\ol c_1,\ldots,\ol c_{n-1})$ and $\ol b_i -
\ol c_i \in \ol J^2$.  Hence, there exists a 
$\psi \in {\rm M}_{n-1} (\ol A)$ such that 
$  \psi =\id$ (mod $\ol J$) and $[\ol b_1,\ldots,\ol
b_{n-1}]\psi = [\ol c_1,\ldots,\ol c_{n-1}]$. 

Let $\psi = (\ol {s_{ij}})$, where $\ol {s_{ij}} \in A/(b_n)$.  Then, we
have
$$h_j = \sum_{i=1}^{n-1}b_is_{ij}-c_j \in
	(b_n),{\rm for}1\leq j \leq n-1\;.$$ 
Let $h_j = f_jb_n$.  Since $ \psi =\id$ (mod $J$), we have
$s_{ii} = 1 + t_{ii} b_n + d_{ii}$  for $1 \leq i \leq n-1$, where
$t_{ii} \in A, d_{ii} \in K$ and $s_{ij} = t_{ij}b_n + d_{ij}$ for
$1\leq i,j\leq n-1$, $i\neq j$, where $t_{ij} \in A,d_{ij}\in
K$. Hence, we have the following relations
$$\sum_{i=1}^{n-1}b_i(\delta_{ij}+d_{ij}) - c_j = b_n
	(f_j - \sum_{i=1}^{n-1}b_i t_{ij}),$$
where  $\delta_{ij}$ is the {\it Kronecher delta function}. 
Let us denote by 
$$g_j = f_j - \sum_{i=1}^{n-1}b_i t_{ij}, ~1\leq j\leq n-1.$$
Consider the matrix
$$\alpha = \left [
	\begin{array}{cccc}
	{1+d_{11}} & \dots &{d_{1,n-1}} & -g_1 \\
	\vdots     &       & \vdots     & \vdots\\ 
	d_{n-1,1} & \ldots &1+d_{n-1,n-1} & -g_{n-1}\\ 
	0 &  \ldots & 0 &1 
	\end{array}
\right]_{n\times n}.$$
Then, we have $[b_1,\ldots,b_n]\alpha = [c_1,\ldots,c_{n-1},b_n]$ and
$\alpha \in \GL_n(A)$, since det $(\alpha) = 1+x$ for some $x\in
K$. But $K\subset J(A)$, hence $1+x$ is a unit in $A$. Thus, the
claim is proved.

When $n = 2$, then the claim follows from (\ref{lem3}). Hence, the
theorem is proved.  

$\hfill \gj$
\end{proof}


\begin{rem}
In fact, one can prove by the method of (\cite{MK2}, p. 248)  
that the subtraction principle (\ref{theo1.4})
implies the addition principle (\ref{theo1.3}) as follows.

Let $I$ and $J$ be two comaximal ideals of height $n$. Let
$I=(a_1,\ldots,a_n)$ and $J=(b_1,\ldots,b_n)$. We want to show that
$I\cap J = (c_1,\ldots, c_n)$ with $a_i-c_i \in I^2$ and $b_i-c_i \in
J^2$.  We can find $x_1,\ldots,x_n \in I\cap J$ which generate $I\cap
J$ modulo $(I\cap J)^2$ such that $a_i-x_i \in I^2$ and $b_i-x_i \in
J^2$. Using (\ref{cor14}), we may further assume that
 $(x_1,\ldots,x_n) = I\cap J\cap K$, where
$K$ is a height $n$ ideal, comaximal with $I\cap J$. 
	Since $I = (a_1,\ldots,a_n)$ and $a_i-x_i\in I^2$, 
	by the subtraction principle, we have 
$J\cap K=(d_1,\ldots,d_n)$ such that
 $x_i-d_i \in (J\cap K)^2$.  Since $J=(b_1,\ldots,b_n)$ and
$b_i-x_i\in J^2$, $b_i-d_i\in J^2$,
 again by the subtraction principle, $K=(g_1,\ldots,g_n)$ such that
 $d_i-g_i \in K^2$. Hence $x_i-g_i \in K^2$. Applying subtraction
principle to the ideal $I\cap J$ and $K$, we get $I\cap J=(c_1,\ldots,c_n)$
such that $x_i-c_i \in (I\cap J)^2$. Hence $a_i-c_i \in I^2$ and
$b_i-c_i \in J^2$. This proves the addition principle.
\medskip

Now, we state the subtraction principle in the general case
(\cite{B-RS3}, Theorem 3.3). Notice that when $P$ is free, this reduces to
(\ref{theo1.4}).
\end{rem}

\begin{theorem}\label{manoj1}
{\bf (Subtraction Principle)} 
Let $A$ be a Noetherian ring of
dimension $n \geq 2$. Let $P$ be a projective $A$-module of rank $n$
with trivial determinant. Let $\chi : A \by \sim \wedge^n(P)$ be an
isomorphism.  Let $J,J'$  be two ideals of $A$. Let
``bar'' denote reduction modulo $J'$. Assume
	
(i) $\hh J \geq n$, $\hh J' = n$ and $J+J'=A$.

(ii) $\alpha : P \surj J \cap J'$ and 
	$\beta : A^n \surj J' $
be two surjections. 

(iii) $\ol \alpha : \ol P \surj J'/{J'}^2$  and 
$\ol \beta : \ol {A^n} \surj J'/{J'}^2$ be
surjections induced from $\alpha$ and $\beta$ respectively.  
	
(iv) There exists an isomorphism $\delta : \ol {A^n} \by \sim
\overline P$ such that  $ \overline \alpha \delta = \overline
\beta$, and $\wedge^n(\delta) = \overline \chi$.  

Then, there exists a
surjection $\theta : P \surj J$ such that $\theta \otimes A/J = \alpha
\otimes A/J$.  
\end{theorem}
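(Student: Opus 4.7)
The plan is to extend the Mayer--Vietoris patching argument used in the proof of Theorem \ref{theo1.4} (the free case) to a projective $P$ with trivial determinant, using the isomorphism $\delta$ as a bridge between $A^n$ and $P$ on the locus $V(J')$. The determinant identity $\wedge^n\delta = \ol\chi$ is crucial for lifting transvections through Lemma \ref{lem15} coherently between the two modules.

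First, normalize $\beta$ by elementary transformations. Since $J + J' = A$ and $\dim(A/J) = 0$, Proposition \ref{Action} allows us to elementarily transform $\ol\beta$ to $[1,0,\ldots,0]$; lifting to ${\rm E}_n(A) \subset \SL_n(A)$, act on $\beta$ and absorb the corresponding change of basis into $\delta$, thereby preserving both $\ol\alpha\delta = \ol\beta$ and $\wedge^n\delta = \ol\chi$. After this, $b_1 \equiv 1 \pmod J$ and $b_2, \ldots, b_n \in J$. Apply Lemma \ref{Evans} to further arrange so that $K := (b_1,\ldots,b_{n-1})$ has height $n-1$ and $K + J = A$. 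Choose $s \in (1+K) \cap J$ (possible since $K+J=A$) and set $t = 1-s \in K$, so $sA + tA = A$. On $A_t$: since $t \in K \subseteq J'$, we have $J'A_t = A_t$ and hence $(J \cap J')A_t = JA_t$, so $\alpha_t : P_t \surj JA_t$ already provides the desired $\theta$ on $D(t)$. On $A_s$: since $s \in J$, $JA_s = A_s$ and $(J \cap J')A_s = J'A_s$, so $\alpha_s : P_s \surj J'A_s$.

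The heart of the proof is then to produce $\theta_s : P_s \surj A_s$ (equivalently, a unimodular element of $P_s^*$) that agrees with the unimodular element $\alpha_{st} \in \Um(P^*_{st})$ on the overlap. Lift $\delta$ to an $A$-linear $\Delta : A^n \to P$; the relation $\ol\alpha\delta = \ol\beta$ gives $\alpha\Delta - \beta : A^n \to J'^2$. Working in $A_s$, where $K$ sits in the Jacobson radical, apply the free subtraction principle (Theorem \ref{theo1.4}) to the pair $(\alpha\Delta, \beta)$, both of which surject onto $J'A_s$ after suitable modification via the Moving Lemma \ref{cor14}, to obtain a unimodular row $\eta$ in $(A^n_s)^*$. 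Transport $\eta$ through $\Delta_s$ to a unimodular element of $P_s^*$, using transvections lifted by Lemma \ref{lem15} to correct any failure of compatibility. Finally, patch $\theta_s$ and $\alpha_t$ globally: any discrepancy on $A_{st}$ lies in $\SL(P_{st})$ by the determinant hypothesis, and by Quillen splitting (Corollary \ref{depu}) this factors into automorphisms extending to $A_s$ and $A_t$, which are absorbed via Lemma \ref{lem15} to yield the global surjection $\theta : P \surj J$ with $\theta \equiv \alpha \pmod J$.

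The main obstacle will be the third step: constructing $\theta_s$ by transporting a unimodular row through $\delta$ in a way that is simultaneously compatible with $\alpha_{st}$ on the overlap and respects the determinant structure. This is precisely where the hypothesis $\wedge^n\delta = \ol\chi$ is essential---without the determinant compatibility, transvections of $A^n_s$ would not, through $\Delta_s$, correspond to transvections of $P_s$ of the specific form that Lemma \ref{lem15} can lift to global unipotent automorphisms of $P$.
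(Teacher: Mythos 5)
Note first that the paper states this result as a citation of (\cite{B-RS3}, Theorem 3.3) and does not reproduce a proof, so the natural comparison is with the free case (\ref{theo1.4}), on which your plan is modeled. Your scaffolding is right---normalize $\beta$ as in (\ref{theo1.4}), set up $D(s)\cup D(t)$ with $s\in J$, $t\in K\subset J'$, and patch via Quillen splitting---but the step you yourself flag as the main obstacle is not filled in, and as sketched it would not go through. Saying ``apply (\ref{theo1.4}) to $(\alpha\Delta,\beta)$'' is not a legitimate invocation of that theorem: it takes two comaximal height-$n$ ideals with prescribed generators and returns generators of one of them, not a unimodular row, and $(\alpha\Delta,\beta)$ does not fit its hypotheses ($\alpha\Delta$ need not even be surjective onto $J\cap J'$, since $\Delta$ is only a lift of $\delta$). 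What is actually needed is the internal Claim of (\ref{theo1.4})'s proof (a $\tau\in\GL_n$ over a ring where $K$ is in the Jacobson radical carrying one generating set of $J'$ to the other), and to apply it one first needs $\alpha\Delta$ to surject onto $J'$ over that localization, which forces $\Delta$ to be an isomorphism there. But $\Delta$ becomes an isomorphism only after localizing at $1+J'$ (by (\ref{manoj3})), which is a \emph{further} localization of $A_s$ since $s\in 1+K\subset 1+J'$; so $\Delta_s$ being an isomorphism, and hence ``transport $\eta$ through $\Delta_s$'', is unjustified for $s\in(1+K)\cap J$. You would need to select $s$ in $(1+J')\cap J$, shrink until $\tau$ and $(\Delta\tau)^{-1}$ exist over $A_s$, and then separately verify that $t=1-s$ still lands in $K$---this last point is what makes $[b_1,\ldots,b_{n-1}]$ unimodular over $A_{st}$, the ingredient that lets $\beta$ be elementarily straightened there.

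The final patching step is also asserted too quickly. You claim the discrepancy over $A_{st}$ ``lies in $\SL(P_{st})$ by the determinant hypothesis'' and then invoke (\ref{depu}), but (\ref{depu}) requires the automorphism to be \emph{isotopic to identity}, which does not follow from lying in $\SL$. The proof of (\ref{theo1.4}) earns this by producing the discrepancy explicitly as $\tau_a\Delta'\tau_a^{-1}$, a conjugate of an elementary automorphism $\Delta'$, hence isotopic to identity via (\ref{333}); no determinant argument substitutes for that construction. Finally, your stated reason for needing $\wedge^n\delta=\ol\chi$---that otherwise transvections of $A_s^n$ would not correspond through $\Delta_s$ to transvections of $P_s$ liftable by (\ref{lem15})---is incorrect: Lemma \ref{lem15} lifts \emph{any} transvection of $P/IP$, with no determinant condition. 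The hypothesis $\wedge^n\delta=\ol\chi$ is needed so that the local orientation of $J'$ read off from $\alpha$ through $\delta$ coincides with the one given by $\beta$; without it, the ``subtraction'' of $J'$ is not compatible with the chosen trivialization $\chi$.
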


Taking $J = A$ in the above theorem, we obtain the following:

\begin{corollary}\label{manoj2}
Let $A$ be a Noetherian ring of dimension $n \geq 2$. Let $P$ be a
projective $A$-module of rank $n$ with trivial determinant.  Let
$\chi : A \by \sim \wedge^n(P)$ be an isomorphism. 
	Let $J'
\subset A$ be an ideal of height $n$. Let ``bar''
denote reduction modulo $J^\prime$. Assume 

(i) $\alpha : P \surj J^\prime$
and $\beta : A^n \surj J^\prime$ be two surjections.

(ii) $\ol \alpha : \ol P \surj
J^\prime/J^{\prime 2}$ and $\ol \beta : \ol {A^n} \surj
J^\prime/J^{\prime 2}$ be surjections induced from $\alpha$ and
$\beta$ respectively.  

(iii) There exists an isomorphism
$\delta : \ol{A^n} \by \sim \ol P$ such that $ \ol \alpha \delta
= \ol \beta\;$ and $ \wedge^n(\delta) = \ol \chi$.  

Then $P$ has a unimodular element.
\end{corollary}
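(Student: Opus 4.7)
The strategy is essentially to read off the corollary from Theorem \ref{manoj1} by specializing to $J = A$, so the bulk of the plan consists of checking that the hypotheses of that theorem are indeed satisfied in this degenerate case, and then translating the output into the existence of a unimodular element.

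First, I would take $J = A$ in Theorem \ref{manoj1} and verify the four hypotheses (i)--(iv). For (i), we have $\hh A = \infty$ by convention (no prime ideal contains $A$), so $\hh J \geq n$ is vacuous; $\hh J' = n$ is in the corollary; and $J+J' = A$ is automatic. For (ii), since $J \cap J' = A \cap J' = J'$, the given surjection $\alpha : P \surj J'$ of the corollary \emph{is} a surjection $P \surj J \cap J'$, and $\beta : A^n \surj J'$ is given. Hypothesis (iii) on the induced maps modulo $J'$ carries over verbatim, and (iv) is the hypothesis on $\delta$ assumed in the corollary.

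Next, I would apply Theorem \ref{manoj1} to obtain a surjection $\theta : P \surj J = A$. (The compatibility condition $\theta \otimes A/J = \alpha \otimes A/J$ given by the theorem is automatic here since $A/J = 0$.) Since $A$ is free (hence projective), the short exact sequence
$$0 \lra \ker \theta \lra P \by{\theta} A \lra 0$$
splits, giving $P \iso \ker \theta \oplus A$. Therefore any $p \in P$ with $\theta(p) = 1$ is a unimodular element of $P$, proving the corollary.

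I do not expect any real obstacle: the work has all been absorbed into Theorem \ref{manoj1}, and the only point to watch is the (standard) convention about the height of the unit ideal so that hypothesis (i) of the theorem is satisfied when $J = A$. The rest is a formality.
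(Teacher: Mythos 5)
Your proposal is correct and is precisely the paper's own approach: the paper introduces the corollary with the remark ``Taking $J = A$ in the above theorem, we obtain the following,'' and you have simply spelled out the routine verifications (the height convention for the unit ideal, that $J \cap J' = J'$, and that a surjection $P \surj A$ splits off a free rank-one summand) that the paper leaves implicit.
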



\chapter{The Euler Class Group of a Noetherian Ring}

For the rest of this thesis, we assume that all rings considered
contain the field $\Q$ of rational numbers. We make this assumption as
we need to apply (\ref{prop1.2}) to show that the ``Euler class'' of
a projective module is well defined. In general, one can define the
Euler class group of $A$ with respect to any rank one projective
$A$-module $L$. However, we'll define it
with respect to $A$ only. 

Let $A$ be a Noetherian ring with $\dim A = n \geq 2$. We define the
{\it Euler class group of A}, denoted by $E(A)$, as follows:  

Let $J \subset A$ be an ideal of height $n$ such that $J/J^2$ is
generated by $n$ elements.  Let $\alpha$ and $\beta$ be two
surjections from $(A/J)^n$ to $J/J^2$. We say that $\alpha$ and
$\beta$ are {\it related} if there exists an automorphism $\sigma$ 
of $(A/J)^n$ of determinant $1$ such that $\alpha \sigma=\beta$. 
	It is easy to see that
this is an equivalence relation on the set of generators of
$J/J^2$. If $\alpha : (A/J)^n \surj J/J^2$ is a surjection,   
then by $[\alpha]$, we denote the equivalence class of
$\alpha$. We call such an equivalence class $[\alpha]$ a {\it local
orientation} of $J$. 

Since $\dim (A/J) = 0$ and $n \geq 2$ , we have $\SL_n(A/J) = {\rm
E}_n(A/J)$ and therefore, by (\ref{lem15}), the canonical map from
$\SL_n(A)$ to $\SL_n(A/J)$ is surjective. Hence, if a surjection $\alpha
: (A/J)^n \surj J/J^2$ can be lifted to a surjection $\theta : A^n
\surj J$, and $\alpha$ is equivalent to 
	$\beta : (A/J)^n \surj J/J^2$,
then $\beta$ can also be lifted to a surjection from $A^n$ to
$J$. For, let $\alpha \sigma = \beta$ for some $\sigma \in 
\SL_n(A/J)$. Then, there exists $\wt \sigma \in \SL_n(A)$ which is
a lift of $\sigma$ by (\ref{lem15}). Then $\theta \wt \sigma :
A^n \surj J$ is a lift of $\beta$. 

A local orientation $[\alpha]$ of $J$ is called a {\it global
orientation} of $J$ if the surjection $\alpha : (A/J)^n \surj J/J^2$
can be lifted to a surjection $\theta : A^n \surj J$. 

We shall also, from now on, identify a surjection $\alpha$ with
the equivalence class $[\alpha]$ to which $\alpha$ belongs. 

Let $\M \in A$ be a maximal ideal of height $n$ and $\n$ be a
$\M$-primary ideal such that $\n/\n^2$ is generated by $n$
elements. Let $w_{\n}$ be a local orientation of $\n$. Let $G$ be the
free abelian group on the set of pairs $(\n,w_{\n})$, where $\n$ is a
$\M$-primary ideal and $w_{\n}$ is a local orientation of $\n$. 

Let $J = \cap\n_i$ be the intersection of finitely many ideals
$\n_i$, where $\n_i$ is $\M_i$-primary, $\M_i \subset A$ being
distinct maximal ideals of height $n$. Assume that $J/J^2$ is
generated by $n$ elements. Let $w_J$ be a local orientation of
$J$. Then $w_J$ gives rise, in a natural way, to a local orientation
$w_{\n_i} $ of $\n_i$. We associate to the pair $(J,w_J)$, the element
$\sum(\n_i,w_{\n_i})$ of $G$. By abuse of notation, we denote the
element $\sum(\n_i,w_{\n_i})$ by $(J,w_J)$. 

Let $H$ be the subgroup of $G$ generated by set of pairs $(J,w_J)$,
where $J$ is an ideal of height $n$ which is generated by $n$ elements
and $w_J$ is a global orientation
of $J$.  We define the {\it Euler class group} of $A$, $E(A) =
G/H$. Thus $E(A)$ can be thought of as the quotient of the group of
local orientations by the subgroup generated by global
orientations. 
\medskip

One of the aims of this chapter is to prove theorem (\ref{theo2.2}) which
states that if $(J,w_J)$ is zero in $E(A)$, i.e. $(J,w_J)\in H$, then
$J$ is generated by $n$ elements and $w_J$ is a global orientation of
$J$. This is proved as follows: first assume that $J = J_1\cap J_2$,
where $J_1$ and $J_2$ are two comaximal ideals of height $n$ which
are generated by $n$ elements. Assume $w_J$ be a local orientation of
$J$ which is induced by generators of $J_1$ and $J_2$. Then $(J,w_J)
=0$ in $E(A)$. 
	By (\ref{theo1.3}), $J$ is generated by $n$ elements and 
$w_J$ is a global orientation of $J$. Now assume $J_2 =
J\cap J_1$, where $J$ and $J_1$ are comaximal ideals of height
$n$. Assume $J_1 = (a_1,\ldots,a_n)$ and $J_2 = (b_1,\ldots,b_n)$ such
that
$a_i-b_i \in J_1^2$.  Assume $w_J$ is a local orientation of $J$
which is induced by the generators of $J_2$. Then $(J,w_J)=0$ in
$E(A)$. By 
(\ref{theo1.4}) $J$ is generated by $n$ elements and $w_J$ is a
global orientation of $J$. Hence, the addition and subtraction
principles are actually special cases of (\ref{theo2.2}). Using
these two special cases and a formal
group theoretic lemma (\ref{result1}), we prove the theorem.
Using this, we show (\ref{cor2.4}) that $E(A)$ detects the
obstruction for a projective module of trivial determinant to have a
unimodular element.

\begin{lemma}\label{result1}
Let $F$ be the free abelian group with basis $(e_i)_{i\in I}$. Let
$\sim$ be an equivalence relation on $(e_i)_{i\in I}$.  Define $x \in
F$ to be ``{\rm reduced}'' if $x = e_1+\ldots+e_r$ and $e_i\neq e_j$
for $i\neq j$.  For $x \in F$ with $x = e_1+\ldots+e_r$, define ``{\rm
support}'' of $x$ to be the set $\{ e_1,\ldots,e_r\}$ and denote it by
{\rm supp} $(x)$.  
	Define $x\in F$ to be ``{\rm nicely reduced}'' if
$x = e_1+\ldots+e_r$ and $e_i\neq e_j$ for $i\neq j$ and such that no
$e_i$ belongs to the equivalence class of other $e_j$ for $i,j =
1,\ldots,r$ and $i\neq j$.  Let $S\subset F$ be such that :

(1) Every element of $S$ is nicely
reduced.

(2) Let $x,y\in F$ be nicely reduced such that $x+y$ is also nicely
reduced. Then, if any two of $x,y$ and $x+y$ belongs to $S$ so does
the third one.

(3) Let $x\in F$, $x\notin S$ and $x$ is nicely reduced and let
$J\subset I$ be a finite set. Then, there exists $y \in F$ satisfying
the following properties:

$(i)$ $y$  is nicely reduced, $(ii)$  $x+y \in S$ and $(iii)$ $y + e_j$
is nicely reduced $\forall$ $j \in J$.

Let $H$ be the subgroup of $F$ generated by $S$. Then, if $x\in H$ is
nicely reduced, then $x\in S$.
\end{lemma}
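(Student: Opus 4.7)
\textit{Plan.} I would argue by strong induction on the minimal length $n = p + q$ of a representation $x = s_1 + \cdots + s_p - t_1 - \cdots - t_q$ of $x$ as a $\Z$-combination of elements of $S$, after first settling a key special case. The base cases $n \le 1$ are immediate (with $0 \in S$ read off from the intended interpretation of the empty sum in the application).

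First I would establish the following special case: if $s_1, \ldots, s_m \in S$ and $s_1 + \cdots + s_m$ is nicely reduced, then $s_1 + \cdots + s_m \in S$. The crucial observation is that nice reducedness of the total sum forces $\mathrm{supp}(s_i) \cap \mathrm{supp}(s_j) = \emptyset$ for $i \ne j$ (else some basis element would appear with coefficient $\ge 2$) and also that no element of $\mathrm{supp}(s_i)$ is equivalent to any element of $\mathrm{supp}(s_j)$ for $i \ne j$. Consequently every partial sum $s_1 + \cdots + s_\ell$ is again nicely reduced, and an induction on $m$ using property (2) applied to the triple $(s_1 + \cdots + s_{m-1}, s_m, s_1 + \cdots + s_m)$ finishes. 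This handles the case $q = 0$ of the main induction.

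For the main induction step with $q \ge 1$, the natural move is to consider $x + t_1 = s_1 + \cdots + s_p - t_2 - \cdots - t_q$, which has a representation of length $n-1$. If $x + t_1$ happens to be nicely reduced, then by the inductive hypothesis $x + t_1 \in S$, and (2) applied to $(x, t_1, x+t_1)$ gives $x \in S$; the analogous move works for any $x - s_i$ that is nicely reduced.

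The main obstacle is the ``hard'' case in which $\mathrm{supp}(t_j) \cap \mathrm{supp}(x) \ne \emptyset$ for every $j$ and $\mathrm{supp}(s_i) \not\subseteq \mathrm{supp}(x)$ for every $i$, so that no single-term removal preserves nice reducedness. Here I would assume $x \notin S$ and invoke property (3) on $x$ with $J \supseteq \mathrm{supp}(x) \cup \bigcup_i \mathrm{supp}(s_i) \cup \bigcup_j \mathrm{supp}(t_j)$, producing a buffer $y$ with $x + y \in S$ whose support is disjoint from, and non-equivalent to, every element of $J$. It then suffices to show $y \in S$, for (2) applied to $(x, y, x+y)$ would then force $x \in S$, giving the contradiction. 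If $y$ itself falls in the hard case, I would iterate (3), constructing a sequence $y_0 := x, y_1, y_2, \ldots$ of nicely reduced elements with pairwise disjoint and pairwise non-equivalent supports satisfying $y_{k-1} + y_k \in S$ for each $k$. Repeated application of (2) to disjoint-support sums propagates $S$-membership to various combinations $y_i + y_j$ and longer sums $y_1 + \cdots + y_{2k} \in S$; a telescoping/parity argument within this network should then extract some $z \in S$ with $x + z \in S$ nicely reduced, yielding the contradiction. Orchestrating this combinatorial step while keeping every intermediate sum nicely reduced is the principal technical challenge, and the arbitrary freedom to enlarge the finite set $J$ in each invocation of (3) is the key enabler.
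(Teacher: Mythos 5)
Your reduction to the special case $q=0$ (all positive terms, handled by pairwise-disjointness and repeated use of (2)) is correct, and your ``easy'' move of peeling off a $t_j$ with $x+t_j$ nicely reduced (or an $s_i$ with $x-s_i$ nicely reduced) and invoking the inductive hypothesis is sound. (A small slip: $x+t_j$ fails to be nicely reduced not only when $\mathrm{supp}(t_j)\cap\mathrm{supp}(x)\neq\emptyset$ but also when some element of $\mathrm{supp}(t_j)$ is merely \emph{equivalent} to one of $\mathrm{supp}(x)$; your description of the hard case omits the second failure mode.) The real problem, though, is the hard case, where your plan has a structural gap. You apply (3) to $x$ itself to get $y$ with $x+y\in S$, and then you must show $y\in S$. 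But the inductive hypothesis is about length of a representation in terms of $S$, and property (3) gives you no control whatsoever over any such representation of $y$ — only over its support. So induction does not reach $y$. The proposed iteration $y_0=x,y_1,y_2,\ldots$ with $y_{k-1}+y_k\in S$ and pairwise disjoint, non-equivalent supports likewise does not close: knowing $y_{k-1}+y_k\in S$ for every $k$ does not force any single $y_k\in S$ (an abstract $S$ could well contain exactly the even-length consecutive block-sums and nothing shorter, consistently with (1)--(3)), so there is no ``telescoping/parity'' extraction to be made.

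The paper's proof descends on a different quantity. Fix one equation $y_1+\cdots+y_r+x=z_1+\cdots+z_s$ with $y_i,z_j\in S$ and define a \emph{conflict count} $n(\ast)$ measuring how far $z_1+\cdots+z_s$ is from nicely reduced (roughly, for each basis element appearing on the right, how many $z_t$ it clashes with, summed up). When $n(\ast)>0$, choose a clash $e_1\in\mathrm{supp}(z_1)$, $e_1'\in\mathrm{supp}(z_2)$ with $e_1\sim e_1'$; since $x$ is nicely reduced at most one of $e_1,e_1'$ lies in $\mathrm{supp}(x)$, so WLOG $e_1\in\mathrm{supp}(y_1)$, and one cancels $e_1$ from both sides, forming $u_1=y_1-e_1$, $w_1=z_1-e_1$. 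If $e_1\in S$, then $u_1,w_1\in S$ by (2) and $n$ drops. If $e_1\notin S$, the decisive move — the one your plan lacks — is to apply (3) to the \emph{subsum} $w_1$ (and, inside the proof of the claim $u_1+\theta\in S$, also to $e_1$), not to $x$: the resulting buffer $\theta$ is added to both $u_1$ and $w_1$, and a short chain of applications of (2) shows $u_1+\theta,\,w_1+\theta\in S$. Replacing $y_1,z_1$ by these keeps the number of terms the same but strictly reduces the conflict count, so the descent terminates. Because your induction variable is representation length, it cannot see this move, which leaves the hard case genuinely open in your proposal.
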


{\bf Remark} Let $x,y$ be elements of $F$ with positive coefficients
and $z=x+y$ be nicely reduced. Then $x$ and $y$ are nicely reduced.

\begin{proof}
Let $$y_1+\ldots+y_r+x = z_1+\ldots+z_s\eqno(\ast),$$ 
where $y_i,z_j \in S$, $1\leq i\leq r,\; 1\leq j\leq s$.  If
$z_1+\ldots+z_s$ is nicely reduced, then by previous remark
$y_1+\ldots+y_r$ is also nicely reduced. Hence $z_1+\ldots+z_s \in S$
and $y_1+...+y_r\in S$, by assumption $(2)$. Then $x\in S$, by assumption
$(2)$.

Now, assume that $z_1+\ldots+z_s$ is not nicely reduced. Given an
equality of the type $(\ast)$, we associate a non-negative integer
$n(\ast)$ in the following manner: For a basis element $e_i$ of $F$,
we associate a number $n(e_i(\ast))$ as follows: 
	$n(e_i(\ast))+1$ is
the cardinality of the set $\{t | e_i+ z_t$ is not nicely reduced
for $1 \leq t \leq s\}$ and let $n(\ast) = \sum n(e_i(\ast))$ for the
equation $(\ast)$, where the sum is over those $e_i$'s which belong to
the set $\bigcup_1^s\,$supp$(z_t)$. We note that $n(\ast) = 0$ if and
only if $z_1+\ldots+z_s$ is nicely reduced.

Since $z_1+\ldots+z_s$ is not nicely reduced
(i.e. $n(\ast)$ is positive), there exist $z_k,z_l,\;1\leq k,l
\leq s,\; k\neq l$ such that $z_k+z_l$ is not nicely reduced. Without
loss of generality, we can assume that $k=1,l=2$.
Let $z_1 = e_1+w_1$, $z_2 = e_1^\prime + w_2$ and $e_1 \sim
e_1^\prime$. Since $x$ is nicely reduced, at least one of
$e_1,e_1^\prime \in$ supp $(y_i)$ for some $1\leq i\leq r$. 
Without loss of generality, we can assume that $e_1 \in$ supp $(y_1)$
and assume that $y_1 = e_1 + u_1$. The equation $(\ast)$ can be
written as
$$u_1+y_2+\ldots+y_r+x = w_1+z_2+\ldots+z_s\eqno(\ast_1)$$ 

If $e_1 \in S$, then by assumption $(2)$ $u_1,w_1 \in S$. Then, we
see that $n(\ast_1) < n(\ast)$. Hence, by induction, we are through.

Now, we assume that $e_1\notin S$. Then, by assumption $(2)\; w_1,u_1
\notin S$. Let $J$ be the set $\{i\in I |\;e_i \in \bigcup_1^s{\rm
supp} \;(z_t)\}$. Then $J \subset I$ is a finite set and $w_1\in F$ is
nicely reduced such that $w_1\notin S$.  By assumption $(3)$, there
exists $\theta \in F$ such that 
	$(i)\; \theta$ is nicely reduced,
	$(ii)\; w_1 + \theta \in S$ and $(iii)\; \theta + e_j$ is nicely
reduced $\forall j\in J$. Now, we claim that $\theta + u_1 \in S$.

{\it Proof of the claim.} Let $J^\prime$ be the set $\{i\in I
|e_i \in \bigcup_1^s{\rm supp} (z_t)\bigcup {\rm supp}
(\theta)\}$.  Then $J^\prime \subset I$ is a finite set. Then, by
assumption $(3)$, there exists $\theta^\prime \in F$ such that $(i)\;
\theta^\prime$ is nicely reduced, $(ii)\; e_1 + \theta^\prime \in S$
and $(iii)\; \theta^\prime + e_j$ is nicely reduced $\forall j\in
J^\prime$.

We have $w_1+\theta \in S$, $e_1+\theta^\prime \in S$. As 
$w_1+\theta +e_1+\theta^\prime$ is nicely reduced, by
assumption $(2)$, $w_1+\theta +e_1+\theta^\prime \in S$.
	Hence $e_1+w_1+\theta+\theta^\prime \in S$ and $e_1+w_1 =
z_1\in S$. Then, by assumption $(2)$, $\theta+\theta^\prime \in S$. We
have $e_1+u_1 = y_1\in S$ and $e_1+u_1+\theta+\theta^\prime\in S$, as
it is nicely reduced. Since $e_1+\theta^\prime \in S$, we have $\theta
+ u_1 \in S$. Thus, the claim is proved.

Now, the equation $(\ast_1)$ can be written as  
$$(u_1+\theta)+y_2+\ldots+y_r+x =
	(w_1+\theta)+z_2+\ldots+z_s\eqno(\ast_2),$$ 
where $\theta +u_1 \in S$ and $\theta + w_1 \in S$. Hence, we see that
$n(\ast_2) < n(\ast_1)$, since $n(e_1(\ast_2)) < n(e_1(\ast_1))$ and
$n(e_i(\ast_2))= n(e_i(\ast_1))$ or 0 for $i\neq 1$ according as $e_i
\in \bigcup_1^s\,{\rm supp}\, (z_i)$ or $e_i \in {\rm
supp}\,(\theta)$.  Hence, by induction, the lemma follows. 
$\hfill \gj$
\end{proof}

\begin{theorem}\label{theo2.2}
Let $A$ be a Noetherian ring of dimension $n \geq 2$. Let $J \subset
A$ be an ideal of height $n$ such that $J/J^2$ is generated by $n$
elements and let $w_J : (A/J)^n \surj J/J^2$ be a local orientation
of $J$. Suppose that the image of $(J,w_J)$ is zero in the Euler class
group $E(A)$ of $A$. Then $J$ is generated by $n$ elements and
 $w_J$ is a global orientation of $J$.
\end{theorem}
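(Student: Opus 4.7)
The plan is to deduce this from the abstract combinatorial Lemma~\ref{result1}, applied with $F = G$, basis the pairs $(\mathfrak{n}, w_\mathfrak{n})$, equivalence relation given by $(\mathfrak{n}, w_\mathfrak{n}) \sim (\mathfrak{n}', w_{\mathfrak{n}'})$ iff $\mathfrak{n}$ and $\mathfrak{n}'$ are primary to the same maximal ideal, and $S$ the set of nicely reduced elements corresponding to pairs $(J, w_J)$ with $J$ generated by $n$ elements and $w_J$ a global orientation of $J$. Under this dictionary, ``nicely reduced'' in $F$ corresponds exactly to an intersection $\cap \mathfrak{n}_i$ indexed by \emph{distinct} maximal ideals $\mathfrak{M}_i$, which is the way elements of $G$ were encoded in the definition of $E(A)$. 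The hypothesis that $(J,w_J)$ is zero in $E(A) = G/H$ means precisely that $(J,w_J)$ lies in the subgroup generated by $S$, so once the three axioms of Lemma~\ref{result1} are verified, the conclusion $(J,w_J) \in S$ will yield exactly what we want.

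Axiom (1) is built into the definition of $S$, since a global orientation is only defined for $J = \cap \mathfrak{n}_i$ with distinct $\mathfrak{M}_i$. Axiom (2) is the heart of the reduction: if $x, y, x+y$ are nicely reduced, they correspond to triples $(J_1, w_{J_1})$, $(J_2, w_{J_2})$, $(J_1 \cap J_2, w_{J_1 \cap J_2})$ where comaximality $J_1 + J_2 = A$ is forced by the requirement that $x+y$ be nicely reduced, and the local orientation of $J_1 \cap J_2$ is the one induced on each primary component. If both summands lie in $S$, the Addition Principle~(\ref{theo1.3}) produces generators $(c_1,\dots,c_n)$ of $J_1 \cap J_2$ whose image in $J_i/J_i^2$ agrees with the chosen orientation, so $x+y \in S$; if one summand, say $(J_1,w_{J_1})$, together with the sum are in $S$, the Subtraction Principle~(\ref{theo1.4}) produces generators of $J_2$ inducing $w_{J_2}$, so $y \in S$.

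Axiom (3) is where the Moving Lemma~(\ref{cor14}) enters. Given a nicely reduced $x = (J, w_J) \notin S$ and a finite index set $J \subset I$ pinning down finitely many primary ideals $\mathfrak{n}^{(1)}, \dots, \mathfrak{n}^{(r)}$ contained in distinct maximal ideals $\mathfrak{M}^{(1)}, \dots, \mathfrak{M}^{(r)}$, apply Lemma~(\ref{manoj}) or rather the Moving Lemma~(\ref{cor14}) with $P = A^n$ to lift the surjection $w_J : (A/J)^n \surj J/J^2$ to a surjection $A^n \surj J \cap J'$, where $J'$ has height $n$ and is chosen comaximal with $J$ and with each $\mathfrak{M}^{(i)}$. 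Let $y = (J', w_{J'})$, where $w_{J'}$ is the local orientation of $J'$ induced by this lift. Then $x + y = (J \cap J', w_{J \cap J'}) \in S$, and $y + e_j$ is nicely reduced for each $j \in J$ because $J'$ is comaximal with every $\mathfrak{M}^{(i)}$.

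The main obstacle will be axiom (2): one must check that the addition and subtraction principles, as stated, yield global generators whose induced local orientation on \emph{each} primary component matches the prescribed $w_{\mathfrak{n}_i}$, not merely the class of $w_J$ overall. This reduces to verifying that the congruences $a_i \equiv c_i \pmod{J_1^2}$ and $b_i \equiv c_i \pmod{J_2^2}$ in~(\ref{theo1.3}), and analogously in~(\ref{theo1.4}), control the local orientation on each primary component separately---which they do, since $J_i^2 = \prod_{\mathfrak{M}_j \supset J_i} \mathfrak{n}_j^2$ in the appropriate localization. Once these bookkeeping checks are in place, Lemma~\ref{result1} applies and the theorem follows.
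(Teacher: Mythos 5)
Your proposal is correct and follows essentially the same route as the paper: the same application of Lemma~\ref{result1} with the same $F$, $S$, and equivalence relation, with axiom (1) holding because any height-$n$ ideal generated by $n$ elements decomposes into pairwise comaximal primaries, axiom (2) supplied by the Addition and Subtraction Principles~(\ref{theo1.3}),~(\ref{theo1.4}), and axiom (3) supplied by the Moving Lemma~(\ref{cor14}). The ``bookkeeping'' concern you raise about axiom (2) is exactly what the congruences $a_i - c_i \in J_1^2$, $b_i - c_i \in J_2^2$ built into the statements of~(\ref{theo1.3}) and~(\ref{theo1.4}) are designed to handle, as you correctly observe.
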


\begin{proof}
Let $F$ be the free abelian group on the set of pairs $(\n,w_{\n})$
such that $\n/\n^2$ is generated by $n$ elements (where $\n$ is
$\M$-primary ideal and $\M$ is a maximal ideal of height
$n$). Define an equivalence relation on the set of pairs $(\n,w_{\n})$
by $(\n,w_{\n})\sim (\n_1,w_{\n_1})$ if $\sqrt {\n} = \sqrt {\n_1}$,
i.e. $\n,\n_1$ both are $\M$-primary ideals of $A$. Let $J\subset A$
be an ideal of height $n$ such that $J/J^2$ is generated by n elements
and  $J = \cap\n_i$ be a reduced primary decomposition of $J$.
Then, denote $(J,w_J) = \sum(\n_i,w_i)$, where $w_i$ is induced from
$w_J$.

Let $S = \{(J,w_J)\in F|J = (a_1,\ldots,a_n)\,{\rm and} \,w_J 
	= (\ol a_1,\ldots,\ol a_n)\}$. 
We check that the conditions $1,2$ and $3$
of lemma (\ref{result1}) hold. 

(1) If $(J,w_J) \in S$, then it is nicely reduced. Since, if $J =
\bigcap_{i=1}^r\n_i$, then each $\n_i$ is comaximal with the other
$\n_j,\; j\neq i$ and $(J,w_J) = (\n_1,w_1)+\ldots+(\n_r,w_r)$.

(2) If $(J,w_J)$ and $(J^\prime,w_{J^\prime})$ are nicely reduced
elements of $F$ such that $(J,w_J)+ (J^\prime,w_{J^\prime})$ is also
nicely reduced (i.e. $J+J^\prime = A$), then, by the addition principle
(\ref{theo1.3}) and subtraction principle (\ref{theo1.4}), it
follows that if any two of $(J,w_J),(J^\prime,w_{J^\prime})$ and
$(J,w_J)+(J^\prime,w_{J^\prime})$ belong to $S$, then so does the
third. 

(3) Similarly, by (\ref{cor14}), condition 3 of lemma
(\ref{result1}) holds.  
Now, applying (\ref{result1}), the theorem is proved.  $\hfill
\gj$
\end{proof}

\begin{lemma}\label{lem1.1}
Let $A$ be a Noetherian ring and let $P$ be a projective $A$-module of rank
$n$. Let $\lambda : P \surj J_0$ and $\mu : P \surj J_1 $ be
surjections, where $J_0, J_1 \subset A$ are ideals of height
$n$. Then, there exists an ideal $I$ of $A[T]$ of height $n$ and a
surjection $\alpha (T) : P[T] \surj I$ such that 
	$I(0) = J_0 , \alpha (0) = \lambda$ and 
	$I(1) = J_1, \alpha(1) = \mu$, where for $a \in A,
I(a) = \{F(a):F(T) \in I\}.$
\end{lemma}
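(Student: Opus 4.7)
The plan is to connect $\lambda$ and $\mu$ by the natural linear homotopy and let $I$ equal its image. Concretely, define $\alpha(T) \in \Hom_{A[T]}(P[T], A[T])$ by $\alpha(T)(p) := (1-T)\lambda(p) + T\mu(p)$ for $p \in P$, extended $A[T]$-linearly, and set $I := \alpha(T)(P[T])$. By construction $\alpha(T) : P[T] \surj I$, and evaluating the defining formula at $T = 0$ and $T = 1$ gives $\alpha(0) = \lambda$, $\alpha(1) = \mu$, $I(0) = J_0$ and $I(1) = J_1$. So all assertions except $\hh I = n$ are immediate from the construction.

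The upper bound $\hh I \leq n$ is a direct application of Krull's principal ideal theorem, since $P$ is projective of rank $n$ and hence $I$ is locally generated by $n$ elements of $A[T]$.

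For the lower bound $\hh I \geq n$, let $\p$ be a prime of $A[T]$ containing $I$. If $T \in \p$ then $\p \supseteq I + (T) = J_0 A[T] + (T)$, which has height $n+1$ in $A[T]$ (since $\hh J_0 = n$ and $T$ is a non-zero-divisor modulo $J_0 A[T]$); the case $1 - T \in \p$ is symmetric. The substantive case is when neither $T$ nor $1 - T$ lies in $\p$. Set $\q := \p \cap A$ and suppose for contradiction $\hh \p < n$, so that $\hh \q < n$ and neither $J_0$ nor $J_1$ is contained in $\q$. Then both $\lambda_\q$ and $\mu_\q$ are surjections onto $A_\q$; choose $p_1 \in P_\q$ with $\lambda(p_1) = 1$, so that $\alpha(T)(p_1) = 1 + T(\mu(p_1) - 1) \in \p A_\q[T]$. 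This forces $1 - \mu(p_1) \in A_\q^\times$ (else $1 \in \p$) and yields $T - c \in \p A_\q[T]$ for $c := 1/(1 - \mu(p_1)) \in A_\q$, so $\p \supsetneq \q A[T]$ and therefore $\hh \p \geq \hh \q + 1$. The argument is then completed by examining the ``specialized'' map $\beta := (1-c)\lambda + c\mu : P \to A_\q$: by the choice of $c$, $\beta$ annihilates $p_1$, so $\beta_\q$ factors through the rank-$(n-1)$ projective $Q := \ker \lambda_\q$, and combining the containment $\beta(P) \subseteq \p/(T-c) \subseteq A_\q$ with the height hypothesis on $J_1$ forces $\hh \q \geq n - 1$, contradicting $\hh \p < n$.

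The main obstacle is this final height estimate in the tricky case. The subtle point is that $\beta(P)$ can genuinely have height as low as $n-1$ (for instance, with $\lambda = (x,y)$, $\mu = (y,x)$ over $k[x,y]$ and $c = 1/2$ one gets $\beta(P) = (x+y)$ of height $n - 1$), so the argument cannot rely on the height of $\beta(P)$ alone but must leverage the minimality of $\p$ over $I$ together with the rank-$(n-1)$ factorization through $Q$. An alternative path, which would sidestep the case analysis entirely, is to refine the homotopy to $\alpha(T)(p) = (1-T)\lambda(p) + T\mu(p) + T(1-T)\gamma(p)$ for a suitably chosen $\gamma \in P^\ast$ selected by prime avoidance, so as to rule out the troublesome primes in advance.
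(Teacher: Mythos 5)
Your main line of argument has a genuine gap: the naive linear homotopy $\alpha(T) = (1-T)\lambda + T\mu$ does \emph{not} in general produce an ideal of height $n$, so the lower bound $\hh I \geq n$ simply fails for this choice of $\alpha(T)$. For a concrete counterexample, take $A = k[x_1,\ldots,x_n]$, $P = A^n$, $\lambda = (x_1,\ldots,x_n)$ and $\mu = (-x_1,-x_2,x_3,\ldots,x_n)$, so $J_0 = J_1 = (x_1,\ldots,x_n)$ has height $n$. Here $\alpha(T) = ((1-2T)x_1,(1-2T)x_2,x_3,\ldots,x_n)$ and the prime $\p = (x_3,\ldots,x_n,2T-1)$ contains $I$ and has height $n-1$; one also sees that $\q = \p\cap A = (x_3,\ldots,x_n)$ has height $n-2$, so your claimed bound $\hh\q \geq n-1$ in the ``tricky case'' is false. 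The structural reason is exactly what you identify: $\beta = (1-c)\lambda + c\mu$ factors through the rank-$(n-1)$ module $\ker\lambda_\q$, which gives the upper bound $\hh\beta(P) \leq n-1$ but imposes no lower bound, and nothing in the hypotheses prevents $\beta(P)$ from collapsing further.

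What you describe at the end as an ``alternative path'' is in fact the essential and unavoidable step, and it is what the paper does. The paper first shows the identity
\[
\bigl(\alpha(T)(P[T]) + (T(1-T))\bigr) = (J_0A[T],T)\cap(J_1A[T],1-T),
\]
whose right-hand side has height $n+1$ (each factor is a height-$(n+1)$ ideal and the two are comaximal because $T + (1-T) = 1$). This verifies the hypothesis of the Eisenbud--Evans type lemma (\ref{cor13}) applied to the pair $(\alpha(T),\, T(1-T)) \in P[T]^\ast \oplus A[T]$, which then produces a $\beta(T)\in P[T]^\ast$ such that the corrected map $\alpha(T) + T(1-T)\beta(T)$ has image of height $\geq n$. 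Since this correction vanishes at $T=0$ and $T=1$, all the boundary conditions are preserved. So the correction term is not a device for ``sidestepping the case analysis'' --- without it the statement is false --- and the right way to finish is to drop the direct height estimate entirely and run the (\ref{cor13}) argument, as the paper does.
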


\begin{proof}
Let $\alpha (T) = T \mu (T) + (1-T)\lambda (T)$, where
$\lambda(T) = \lambda \otimes A[T]$ and $\mu(T) = \mu
\otimes A[T]$. Then $\alpha(0) =\lambda$ and $\alpha(1) = \mu$.
\medskip
 
\noindent{\bf Claim} $(\alpha(T)(P[T])+(T(1-T)))= (J_0A[T],T) \cap
 (J_1A[T],1-T)$. \medskip

Clearly LHS $\subset$ RHS. Now, let $G = Tf + g = (1-T)f_1 + g_1 \in$
RHS, where $f,f_1 \in A[T], \;g \in J_0A[T], \;g_1 \in J_1A[T]$. 
	Then $T(f + f_1) = f_1 + g_1 - g$. Write 
	$G = (1-T) f_1 + g_1 =
	T(1-T)(f+f_1) + (1-T)g +Tg_1$. 
We want to show that $(1-T)g+Tg_1 \in$ LHS.
Now, there exist $p(T),q(T)\in P[T]$ such that $\lambda(T)(p(T))= g$
and $\mu(T)(q(T))= g_1$. 
	Hence $\alpha(T)((1-T)p(T))= T(1-T)
\mu(T)(p(T)) + (1-T)^2 g \in$ LHS and so $(1-T)^2 g\in$ LHS. But
$(1-T)^2 g = (1-T)g -T(1-T)g$. Hence $(1-T)g\in $ LHS. Similarly,
taking $Tq(T)$, we can show that $Tg_1\in$ LHS. This proves the claim.

Now, replacing $\alpha(T)$ by $\alpha(T) + T(1-T) \beta(T)$ for a
suitable $\beta(T) \in P[T]^\ast$, we may assume, by (\ref{cor13}),
that $\alpha(P[T]) = I$ has height $n$. This proves the lemma.
$\hfill \gj$
\end{proof}

\begin{lemma}
Let $A$ be a ring and $J\subset A$ an ideal. Let $B = A_{1+J}$. Then
$J B$ is contained in the Jacobson radical of $B$. 
\end{lemma}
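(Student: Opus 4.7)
The plan is to show the Jacobson-radical characterization directly: an element $x \in B$ lies in the Jacobson radical iff $1+yx$ is a unit of $B$ for every $y \in B$. Since $JB$ is an ideal, for any $x \in JB$ and $y \in B$ we have $yx \in JB$, so it suffices to prove that $1+x$ is a unit of $B$ for every $x \in JB$.

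The first step is to observe that the multiplicative set $1+J$ is closed under multiplication: $(1+a)(1+b) = 1 + (a+b+ab) \in 1+J$ for $a,b \in J$. Consequently, given any finite collection of denominators $1+j_1,\ldots,1+j_r \in 1+J$, their product again lies in $1+J$. Using this, I would show that every element of $JB$ can be written in the form $j/s$ with $j \in J$ and $s \in 1+J$. Indeed, an arbitrary element of $JB$ is a finite sum $\sum j_i a_i/(1+j_i')$ with $j_i \in J$, $a_i \in A$, $j_i' \in J$; clearing denominators puts this over the common denominator $\prod(1+j_i') \in 1+J$, and the resulting numerator lies in $J$ because each $j_i a_i \in J$.

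Given $x = j/s$ with $j \in J$ and $s = 1+j' \in 1+J$, one then computes
\[
1 + x \;=\; \frac{s + j}{s} \;=\; \frac{1 + (j'+j)}{1+j'}.
\]
The numerator lies in $1+J$ (since $j+j' \in J$) and the denominator lies in $1+J$ by construction; both are therefore units in $B = A_{1+J}$. Hence $1+x$ is a unit, and combined with the preceding paragraph this finishes the proof.

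There is no real obstacle here; the only point that requires a moment of care is verifying that an arbitrary element of $JB$—which is a priori only a finite sum of such fractions—can still be written as a single fraction with numerator in $J$ and denominator in $1+J$, and this follows from the multiplicative closure of $1+J$.
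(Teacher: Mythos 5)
Your argument is correct, and it is the standard proof of this fact; the paper itself states the lemma without proof, so there is nothing to compare it against. The two points you flag as needing care — that any element of $JB$ can be put in the form $j/s$ with $j\in J$, $s\in 1+J$ (using closure of $1+J$ under multiplication and that $J$ is an ideal), and that $1+x=(s+j)/s$ then has both numerator and denominator in $1+J$, hence units of $B$ — are exactly the two things one needs to check, and you check them cleanly before invoking the Jacobson-radical criterion ``$1+yx$ is a unit for all $y$.''
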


\begin{lemma}\label{555}
Let $A$ be a ring and let $\p_1\subsetneqq \p_2\subsetneqq \p_3$ be a
chain of prime ideals of $A[T]$. Then, we can not have $\p_1\cap A =
\p_2\cap A= \p_3\cap A$.
\end{lemma}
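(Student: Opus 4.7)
The plan is to reduce to the well-known fact that a polynomial ring $k[T]$ over a field $k$ has Krull dimension $1$. Let $\p = \p_1\cap A = \p_2 \cap A = \p_3\cap A$. Since each $\p_i$ contains $\p$, it also contains the extension $\p A[T]$, and $\p_i$ is disjoint from the multiplicatively closed set $S = A\setminus \p$.

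First I would localize $A[T]$ at $S$. The canonical bijection between primes of $A[T]$ disjoint from $S$ and primes of $A_\p[T] = S^{-1}A[T]$ is inclusion-preserving, so the chain persists as a strict chain
\[
\p_1 A_\p[T] \subsetneq \p_2 A_\p[T] \subsetneq \p_3 A_\p[T]
\]
in $A_\p[T]$, and each term contracts to the unique maximal ideal $\p A_\p$ of the local ring $A_\p$. In particular each term of the chain contains $\p A_\p[T]$.

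Next I would pass to the quotient $A_\p[T]/\p A_\p[T] \cong k[T]$, where $k = A_\p/\p A_\p$ is the residue field of $A$ at $\p$. Since the three primes $\p_i A_\p[T]$ all contain $\p A_\p[T]$, they correspond bijectively (and in an inclusion-preserving way) to a strict chain of three distinct prime ideals in $k[T]$. But $k[T]$ is a PID, hence of Krull dimension $1$, so it cannot contain a chain of three distinct primes. This contradiction proves the lemma.

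No step here is genuinely hard; the only point requiring care is to verify that the two standard operations (localizing at $A\setminus \p$ and quotienting by $\p A_\p[T]$) each preserve \emph{strict} containment in the chain of primes, which follows from the usual order-preserving correspondences for localization and quotients of prime ideals.
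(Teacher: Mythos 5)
Your proof is correct and is essentially the same as the paper's: both reduce to the fact that $k[T]$ has Krull dimension $1$ over a field $k$. The only cosmetic difference is that you localize at $A\setminus\p$ and then quotient by $\p A_\p[T]$, whereas the paper first passes to $A/\p$ and then localizes at the nonzero elements; either way one lands in the same ring $\mathrm{Frac}(A/\p)[T]$.
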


\begin{proof}
Let us assume contrary. By going modulo $\p_1\cap A$, we can assume
that $A$ is a domain and $\p_1\cap A = \p_2\cap A= \p_3\cap A =
0$. Let $S=A-\{0\}$. Then $S^{-1}A[T]$, being principal ideal domain,
is of dimension $1$. But $S^{-1}\p_1 \subsetneqq S^{-1}\p_2
\subsetneqq S^{-1}\p_3$. This is a contradiction. $\hfill \gj$ 
\end{proof}

\begin{lemma}\label{666}
Let $A$ be a Noetherian ring and let $I\subset A[T]$ be an ideal of height
$k$. Then $\hh (I\cap A) \geq k-1$.
\end{lemma}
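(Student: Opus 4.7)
The plan is to reduce to a dimension count after localizing at each candidate prime. I will show that every prime $\q$ of $A$ with $\q\supseteq I\cap A$ satisfies $\hh\q\geq k-1$; the conclusion then follows by taking the infimum over such primes.

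Fix such a prime $\q$ and consider the Noetherian ring $B=A_{\q}[T]$. Since $I\cap A\subseteq\q$, no element of $A\setminus\q$ lies in $I$, so the extended ideal $IB$ is proper in $B$. Primes of $B$ containing $IB$ correspond bijectively, via contraction to $A[T]$, with those primes $\p$ of $A[T]$ satisfying $\p\supseteq I$ and $\p\cap A\subseteq\q$; moreover the heights of corresponding primes agree under this correspondence. Consequently
\[
\hh(IB)\;\geq\;\hh(I)\;=\;k.
\]

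On the other hand, $A_{\q}$ is a Noetherian local ring of dimension $\hh\q$, and the classical dimension formula for polynomial rings over a Noetherian ring gives $\dim B=\dim A_{\q}+1=\hh\q+1$. Since the height of any proper ideal is bounded by the Krull dimension of the ambient ring, we obtain $k\leq\hh(IB)\leq\dim B=\hh\q+1$, and therefore $\hh\q\geq k-1$, as required.

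The main obstacle is the invocation of $\dim A_{\q}[T]=\hh\q+1$. Lemma~\ref{555} supplies only the weaker statement that no three consecutive primes of $A[T]$ contract to the same prime of $A$, which alone would yield nothing better than $\dim A[T]\leq 2\dim A+1$. The sharper inequality $\hh(\p)\leq\hh(\p\cap A)+1$ (Seidenberg's theorem), which underlies the required dimension formula, can be proved by localizing at $\q=\p\cap A$ and observing that the generic fiber $A[T]\otimes_A k(\q)=k(\q)[T]$ is a one-dimensional principal ideal domain.
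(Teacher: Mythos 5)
Your main argument is correct and takes a different route from the paper's. You localize at an arbitrary prime $\q\supseteq I\cap A$, note that $IA_\q[T]$ is a proper ideal of height still at least $k$, and cap that height by $\dim A_\q[T]=\hh\q+1$ using the Noetherian dimension formula $\dim R[T]=\dim R+1$; the bound $\hh\q\geq k-1$ follows, and taking the infimum over such $\q$ finishes. This treats the general ideal $I$ uniformly. The paper instead works prime by prime: for each prime $\p\supseteq I$, it proves $\hh\p\leq\hh(\p\cap A)+1$ directly by applying Krull's height theorem to produce a parameter ideal $(a_1,\dots,a_r)$ for $\p\cap A$, observing that $(\p\cap A)[T]$ is a minimal prime of $(a_1,\dots,a_r)A[T]$ (hence of height $\leq r$), and using Lemma~\ref{555} to bound the further jump when $\p\supsetneqq(\p\cap A)[T]$; the statement for general $I$ then follows by passing to a minimal prime. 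Your reformulation is slicker to state; the paper's is self-contained and makes explicit where Krull's theorem enters.

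The weakness lies in your closing paragraph. You correctly note that Lemma~\ref{555} alone gives only $\dim A[T]\leq 2\dim A+1$, and you propose that the sharper Seidenberg inequality $\hh\p\leq\hh(\p\cap A)+1$ follows by localizing at $\q=\p\cap A$ and observing that the fiber $k(\q)[T]$ is a one-dimensional PID. But that fiber observation \emph{is} the content of Lemma~\ref{555} --- it is exactly the fact you have just declared insufficient. The genuinely new ingredient, which the paper invokes under the name ``the dimension theorem,'' is Krull's height theorem: after localizing at $\q$ one still has to show $\hh(\q[T])\leq\hh\q$, and this comes from exhibiting $\q[T]$ as a minimal prime of the extended parameter ideal $(a_1,\dots,a_d)A[T]$. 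The one-dimensional fiber controls only the step from $\q[T]$ up to $\p$, not the height of $\q[T]$ itself. Citing $\dim R[T]=\dim R+1$ as a known theorem is entirely legitimate; the justification you offer for it is circular as written.
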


\begin{proof}
First, we assume that $I=\p$ is a prime ideal. Then, we claim that 
$\hh \p = \hh (\p\cap A)$ if $\p = (\p\cap A)[T]$ and 
$\hh \p = \hh (\p\cap A)+1$ if $\p \supsetneqq (\p\cap A)[T]$. 

Any prime chain $\q_0 \subsetneqq \ldots \subsetneqq \q_r \subsetneqq
(\p\cap A)$ in $A$ extends to a prime chain $\q_0[T] \subsetneqq
\ldots \subsetneqq \q_r[T] \subsetneqq(\p\cap A)[T] \subset \p$ in
$A[T]$. Hence, $\hh \p \geq$ the given values. 
	Now, let $\hh (\p\cap A) =
r$. Then, by the dimension theorem, $(\p\cap A)$ is minimal over an
ideal $\G =(a_1,\ldots,a_r)$. Then $(\p\cap A)[T]$ is
minimal over $\G[T]$, so $\hh ((\p\cap A)[T]) \leq r$. Thus, we have
$\hh \p =\hh (\p\cap A)$ in the case $\p = (\p\cap A)[T]$. 

Now, assume $(\p\cap A)[T] \subsetneqq \p$, say $f\in \p-(\p\cap
A)[T]$. We will be done if we can show that $\p$ is a minimal prime
over $\G[T]+f A[T]$, for then $\hh \p\leq r+1$. Let $\p'$ be a prime
between these. Then $\G\subset\ (p'\cap A)\subset (\p\cap A)$, so
$(\p'\cap A) = (\p\cap A)$, since $(\p\cap A)$ is minimal prime over
$\G$.  In particular, $(\p\cap A)[T]\subsetneqq \p'\subset\p$. By
(\ref{555}), we have $\p=\p'$.

Now, we prove the lemma for any ideal $I\subset A[T]$.  Let $\sqrt I =
\bigcap_1^r \p_i$, where $\p_1,\ldots,\p_r$ are minimal primes over
$I$.  Then $\sqrt {(I\cap A)} = \bigcap_1^r (\p_i\cap A)$. 
	The  prime ideals minimal over
$I\cap A$ occur among $\p_1\cap A,\ldots,\p_r\cap A$. Choose $\p_i$
such that  $\hh (I\cap
A)= \hh (\p_i\cap A)$. Then  $\hh (I\cap
A)= \hh (\p_i\cap A)\geq \hh \p_i -1 \geq
\hh I-1$. This proves the lemma. 
$\hfill \gj$
\end{proof}
\vspace*{.2in}

Roughly, the aim of the next proposition (\ref{prop1.2}) 
is to show that if
$I\subset A[T]$ is an ideal of height $n$ which is the surjective
image of an extended projective module $P[T]$ of rank $n$, then, there
exists an ideal $K$ of $A$ of height $\geq n$ such that $I$ is
comaximal with $KA[T]$ and $I\cap KA[T]$ is generated by $n$
elements. We construct $K$ as follows: 
	we choose $K$ such that
$I(0)\cap K$ is generated by $n$ elements. Further, we choose $K$ to
be comaximal with $I\cap A$. This is achieved using
(\ref{cor13}). Then, using patching argument, we show that $I\cap
KA[T]$ is generated by $n$ elements.

\begin{proposition}\label{prop1.2}
Let $A$ be a Noetherian ring of dimension $n \geq 2$ such that
$(n-1)!$ is invertible in $A$. Let $P$ be a projective $A$-module of
rank $n$ with trivial determinant.  Let $\chi : A \by \sim
\wedge^n(P)$ be an isomorphism. Suppose that $\alpha (T) : P[T] \surj
I$ is a surjection, where $I \subset A[T]$ is an ideal of height
$n$. Then, there exists a homomorphism $\phi : A^n \ra P$, an ideal $K
\subset A$ of height $\geq n$ which is comaximal with $I \cap A$ and a
surjection $\rho(T) : (A[T])^n \surj I \cap KA[T]$ such that: 

$(i)~\phi\otimes A/N$ is an isomorphism, where $N=(I\cap A)$ and 
$\wedge^n(\phi) = u\chi$, where $u = 1$ modulo $I \cap A$. 

$(ii)~ (\alpha(0) \phi)(A^n) = I(0)\cap K.$ 

$(iii)~ \alpha (T) \phi (T) \otimes A[T]/I = \rho (T) \otimes A[T]/I.$

$(iv)~ \rho (0) \otimes A/K = \rho (1) \otimes A/K.$
\end{proposition}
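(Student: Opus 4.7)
The plan is a three-stage construction. \textbf{Stage 1} produces $\phi$. Set $N = I \cap A$; by Lemma~\ref{666}, $\hh N \geq n-1$, so $\dim A/N \leq 1$. The module $P/NP$ is projective of rank $n \geq 2$ over $A/N$ with trivial determinant; iterated application of Serre's theorem~(\ref{Serre}), splitting off a free summand at each step while noting that the determinant stays trivial, gives $P/NP \cong (A/N)^n$. Lifting any such isomorphism to $\phi_0 : A^n \to P$ yields $\wedge^n(\phi_0) = u_0 \chi$ with $\bar u_0$ a unit in $A/N$; composing with $\diag(v, 1, \ldots, 1) \in \End(A^n)$, where $v$ lifts $\bar u_0^{-1}$, produces $\phi$ satisfying (i).

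\textbf{Stage 2} produces $K$ and establishes (ii). Form $\beta(T) = \alpha(T)\phi(T) : A[T]^n \to I$. Since $N \subseteq I$ and $\phi \otimes A/N$ is an isomorphism, $\phi(T) \otimes A[T]/I$ is also an isomorphism by base change, so $\beta(T)$ induces a surjection onto $I/I^2$. At $T = 0$, with $g_i = \beta(0)(e_i)$, we have $(g_1, \ldots, g_n) + I(0)^2 = I(0)$. Lemma~\ref{lem11} produces an element $e \in I(0)^2$ with $(g_i) + (e) = I(0)$ and an ideal $K = (g_1, \ldots, g_n, 1-e)$, comaximal with $I(0)$ (hence with $N$), such that $(g_1, \ldots, g_n) = I(0) \cap K$. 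To force $\hh K \geq n$: the element $e$ can be perturbed within its congruence class modulo $(g_i)$, and a prime-avoidance argument in the spirit of Lemma~\ref{Evans} (applied to the tuple $(g_1, \ldots, g_n, 1-e)$) lets us choose $e$ so that $K$ avoids every prime of height $< n$ lying outside $V(I(0))$.

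\textbf{Stage 3} constructs $\rho(T)$ by Quillen patching. Since $K + N = A$, fix $t \in K$ and $s \in N$ with $s + t = 1$; because $s \in I$, every prime of $A[T]$ containing $I$ avoids $t$, so $\hh I_t = n$ in $A[T]_t$. On $A[T]_t$: $K_t = A_t$ and $(I \cap KA[T])_t = I_t$; starting from $\beta(T)_t$, a lifting argument in the spirit of Lemma~\ref{manoj} (applied locally, using that $\hh I_t = n$) produces a surjection $\rho(T)_t : (A[T]_t)^n \surj I_t$ agreeing with $\beta(T)_t$ modulo $I_t$. On $A[T]_s$: $I_s = A[T]_s$ and $(I \cap KA[T])_s = KA[T]_s$; define $\rho(T)_s$ to be independent of $T$ (extending $\rho(0)_s$), which makes (iv) hold on this chart. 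On the overlap $A[T]_{st}$ the two local surjections agree at $T = 0$, so by Lemma~\ref{lem9} they differ by an automorphism $\sigma(T)$ with $\sigma(0) = \id$; Corollary~\ref{depu} (Quillen) splits $\sigma(T) = (\sigma_2)_t(\sigma_1)_s$, enabling the patching into a global $\rho(T) : A[T]^n \surj I \cap KA[T]$. Property (iii) follows from the $t$-chart construction, and (iv) from the $T$-constancy of $\rho(T)_s$ together with the fact that inverting $t \in K$ trivializes $K$.

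The principal obstacle is Stage 2: the only freedom preserving (i) is varying $\phi$ within $NP$, which shifts the $g_i$ only by $I(0)^2$; hence the genuine freedom to reshape $K$ must come from the auxiliary element $e \in I(0)^2$ in Lemma~\ref{lem11}, and the Eisenbud--Evans perturbation must be applied carefully within this constrained family. The hypothesis that $(n-1)!$ is invertible enters in Stage 3, implicitly through Ravi Rao's theorem~(\ref{Ravi}), which underlies the isotopy-triviality needed for Quillen-style patching of the gluing automorphism.
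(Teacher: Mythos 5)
Your Stage 1 is correct and follows the paper. The serious gap is in Stage 2, and your own closing remark misdiagnoses it. Once $\phi$ (hence the $g_i=\beta(0)\phi(e_i)$) is fixed, the residual ideal $K$ with $(g_1,\ldots,g_n)=I(0)\cap K$ and $I(0)+K=A$ is \emph{unique}: comparing localizations at each prime shows any two such $K$ agree. Accordingly the element $e$ of Lemma~\ref{lem11} is the unique idempotent generator of $I(0)/(g_1,\ldots,g_n)$, hence determined modulo $(g_1,\ldots,g_n)$; varying $e$ within that congruence class does not change $K=(g_1,\ldots,g_n,1-e)$, so there is no ``$e$-freedom''. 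Nor can one run prime avoidance on the tuple $(g_1,\ldots,g_n,1-e)$: replacing $g_i$ by $g_i+c_i(1-e)$ produces an element which, since $1-e$ is a unit modulo $I(0)$, no longer lies in $I(0)$ and so cannot be of the form $\beta(0)\phi'(e_i)$ for any $\phi'$.

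The freedom you dismiss --- varying $\phi$ inside $\Hom(A^n,NP)$, which shifts the $g_i$ by $N\cdot I(0)\subseteq I(0)^2$ --- is exactly what the paper uses, and it suffices. The paper first picks $c\in N\cdot I(0)$ with $(\beta\delta)(A^n)+(c)=I(0)$ via Lemma~\ref{lem11}, then applies Eisenbud--Evans (\ref{cor13}) to the pair $(\beta\delta,c)\in(A^n)^\ast\oplus A$ to produce $\gamma\in(A^n)^\ast$ with $\hh\bigl((\beta\delta+c\gamma)(A^n)\bigr)\geq n$. Because $c\gamma$ lands in $I(0)=\beta(P)$ and $c\in N\cdot I(0)$, one can lift $c\gamma=\beta\nu$ with $\nu\in\Hom(A^n,NP)$: write $c=\sum a_id_i$ with $a_i\in N$, $d_i\in I(0)$, lift each $d_i\gamma$ through $\beta$ to some $\nu_i$, and set $\nu=\sum a_i\nu_i$. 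Then $\phi=\delta+\nu$ satisfies (i) and (ii) simultaneously. The two missing ideas in your Stage 2 are the single coordinated perturbation $c\gamma$ provided by Eisenbud--Evans, and the lift of $c\gamma$ through $\beta$ into $\Hom(A^n,NP)$. (A lesser issue: in Stage 3 you invoke Lemma~\ref{manoj} to build $\rho(T)_t$, but that lemma needs rank strictly larger than dimension, which fails over $A[T]_t$; the paper instead observes directly that $(\alpha(T)\phi(T))_b$ is already a surjection onto $I'_b$ for suitable $b\in 1+N$, because $\phi_{1+N}$ is an isomorphism.)
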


\begin{proof}
First, we show the existence of $\phi$ satisfying $(i)$ and $(ii)$. 

Since $\hh I = n$, we have $\hh N \geq n-1$, by (\ref{666}),
and hence $\dim (A/N) \leq 1$. Since $P$ has trivial determinant,
by Serre's theorem (\ref{Serre}),
there exists an isomorphism $\eta : (A/N)^n \iso P/NP$.
We can alter $\eta$ by an automorphism of
$(A/N)^n$ to obtain an isomorphism 
	$\ol \delta : (A/N)^n \iso P/N P$ 
such that $\wedge^n(\ol \delta) = \ol \chi$, where ``bar'' denotes 
reduction modulo $N$. Let $\delta : A^n \ra P$ be a lift of 
$\ol \delta$. Since $NA_{1+N} \subset J(A_{1+N})$, by (\ref{manoj3}),
$\delta_{1+N} : (A_{1+N})^n\ra P_{1+N}$ is an isomorphism. 

Let $J = I(0)$, where $I(0) = \{F(0)|F(T)\in I\}$ and $\beta =
\alpha(0) : P \surj J$. The equality $\delta(A^n) + NP = P$ shows
that $(\beta \delta)(A^n) + NJ = J$. Since $NJ \subset J^2$, by
(\ref{lem11}), there exists $c \in NJ$ such that $(\beta
\delta)(A^n) + (c) = J$. Therefore, applying (\ref{cor13}) to
$(\beta \delta,c)$, we see that there exists $\gamma \in (A^n)^\ast$
such that the ideal $(\beta \delta + c \gamma)(A^n)$ has height at
least minimum of $n$ and $\hh J$. Since $(\beta \delta + c
\gamma)(A^n) +(c) = J$ and $c \in J^2$, by (\ref{lem11}), $(\beta
\delta + c \gamma)(A^n) = J \cap K$, where $K$ is either $A$ or an
ideal of height $n$ which is comaximal with $(c)$, and hence with $N$
and $J$. 

The next step of the proof is to show that there exists a map $\phi :
A^n \ra P$ which lifts $\ol \delta$ and such that $\beta \phi (A^n)=J\cap
K$. This is achieved by altering $\delta$ by an element of
$\Hom(A^n,NP)$.

Since $c \in NJ, c = \sum a_i d_i$, where $a_i \in N$ and $d_i
\in J$. Any element of $(A^n)^\ast$ of the form $d \gamma$, where $d
\in J$, has its image contained in $J$. Now, since $d_i \in J$ and
$\beta : P \surj J$ is surjective, 
there exists $\nu_i : A^n \ra P$ such that 
$\beta \nu_i = d_i \gamma$. Let $\nu = \sum a_i \nu_i$. Then
	$c \gamma = \Sigma a_i d_i \gamma 
	= \sum a_i \beta \nu_i = \beta \nu$, 
where $\nu = 0$ modulo $N$. Let $\phi = (\delta +
\nu)$. Then $\phi$ is also a lift of $\ol \delta$ and hence 
$\wedge^n \phi = u \chi$, where $u = 1$ modulo $N$. Moreover $\phi$
has the property that

$\beta \phi (A^n) = (\beta \delta + \beta \nu)(A^n) =
(\beta \delta + \beta \sum a_i \nu_i)(A^n) = (\beta
\delta + \sum a_i \,\beta \nu_i)(A^n)$

$= (\beta \delta + \sum a_i d_i \gamma)(A^n) =
(\beta \delta + c \gamma)(A^n) = J \cap K.$  
This proves $(i)$ and $(ii)$. 

Since $K + N = A$, we have $I + KA[T] = A[T]$. Let $I^\prime = I \cap
KA[T]$. Then $I^\prime(0) = J \cap K$ and $I^\prime/{I^\prime}^2 =
I/I^2 \oplus KA[T]/K^2A[T]$. 

Since $N A_{1+N}\subset J(A_{1+N})$, by (\ref{manoj3}), 
$\phi_{1+N} : (A_{1+N})^n \iso P_{1+N}$
is an isomorphism. Further,
$I^\prime_{1+N} = I_{1+N}$, as $K\cap (1+N) \neq \varnothing$. 
	Therefore, the
map $(\alpha(T)\phi(T))_{1+N} : (A_{1+N}[T])^n \ra
I^\prime_{1+N}$ is surjective, where $\phi(T) = \phi \otimes A[T]$. 
Hence, there exists $a \in N$ such that the map 
	$(\alpha(T) \phi(T))_{1+a} : (A_{1+a}[T])^n  \ra
I^\prime_{1+a}$ is surjective. We can assume that $1+a \in K$, 
as $N+K = A$.
Since $a \in N \subset I$, we have $I^\prime_a = KA_a[T]$.
Therefore, we get a surjection $(\beta \phi) \otimes A_a[T] :
(A_a[T])^n \surj I^\prime_a$. 

The elements $(\beta \phi) \otimes A_{a(1+aA)}[T]$ and
$(\alpha(T) \phi(T))_{a(1+aA)}$ are surjections from \\
$(A_{a(1+aA)}[T])^n \surj A_{a(1+aA)}[T]$ and as $\alpha(0) = \beta$,
they are equal modulo $(T)$. Note that $\dim A_{a(1+aA)} \leq n-1$
(for if $\m$ is any maximal ideal of $A$, then either $a\in \m$ or
$(1+aA)\cap \m \neq \varnothing$).  The kernels of the surjections
$(\beta\phi)\otimes A_{a(1+aA)}$ and 
	$(\alpha(T) \phi(T))_{a(1+aA)}$ 
are stably free modules given by unimodular rows.
These  are extended from
$A_{a(1+aA)}$ by (\ref{Ravi}), since $(n-1)$! is invertible in $A$.
By (\ref{lem9}), there exists an $\sigma(T)\in 
\GL_n(A_{a(1+aA)}[T])$ such that $\sigma(0) =\id$ and $(\alpha(T)
\phi(T))_{a(1+aA)} \sigma(T) = (\beta \phi) \otimes
A_{a(1+aA)}[T]$. 

Let $(1+aa')=(1+a)(1+aa{''})$, 
where $a{''}\in A$ is chosen so that the following properties hold:
 
(1) det $(\sigma(T))$ is a unit belonging to
$A_{a(1+aa')}[T]$ and 

(2) $(\alpha(T)
\;\phi(T))_{a(1+aa')} \;\sigma(T) = (\beta \phi) \otimes
A_{a(1+aa')}[T]$. 

Let $b = (1+aa')$. Then $\sigma(T)$ $\in 
\GL_n(A_{ab}[T])$ with $\sigma (0) = \id$.  Since $\sigma(0)= \id$,
by lemma (\ref{lem10}), we see that
$\sigma(T)= \tau(T)_a \theta(T)_b$, where $\tau(T)$ is an
$A_b[T]$-automorphism of $(A_b[T])^n$ such that $\tau(0) =\id$  and
$\tau =\id$  modulo $(a)$ and $\theta(T)$ is an $A_a[T]$-automorphism
of $(A_a[T])^n$ such that $\theta(0) =\id$  and $\theta =\id$  modulo
$(b)$.

We have $((\alpha(T) \phi(T))_b \tau(T))_a = 
(((\beta \phi) \otimes A_a[T]) (\theta(T))^{-1})_b$. Hence,
the surjections $(\alpha(T) \phi(T))_b.\tau(T) : (A_b[T])^n \surj
I^\prime_b$ and  $((\beta \phi) \otimes A_a[T]) (\theta(T))^{-1} :
(A_a[T])^n \surj I^\prime_a$ patch to yield a surjection $\rho(T) :
(A[T])^n \surj I^\prime$ such that $\rho(0) = \beta \phi$.

Since $\theta(T) = \id$ modulo the
ideal $(b)$ and $b\in K$,
 it follows from the construction of $\rho(T)$ that
$\rho(T)(e_i)-(\beta\phi\otimes A[T])(e_i) \in K^2A[T]~ \forall i$ 
(where $e_i$ are the
coordinate functions of $A[T]^n$). Hence
$\rho(0) \otimes A/K = \rho(1) \otimes A/K$.
Further, using the fact that $\tau(T) =\id$ modulo the ideal
$(a)$, we see that
$(\alpha(T) \phi(T)) \otimes A[T]/I = \rho(T)\otimes A[T]/I$.
This proves $(iii)$ and $(iv)$ and hence, the proposition is proved.
\Com  $$\xymatrix{ 
	(A[T])^n \ar@{->}[rr]\ar@{->}[dd]\ar@{->>}[dr]^{\rho(T)} &
	& (A_b[T])^n \ar@{->}'[d][dd] 
	\ar@{->>}[dr]^{(\alpha(T)\phi(T))_b\tau(T)}   \\ 
	& I^\prime \ar@{->}[rr]\ar@{->}[dd] & 
	& I^\prime_b \ar@{->}[dd] 	\\ 
	(A_a[T])^n \ar@{->}'[r][rr] 
	\ar@{->>}[dr]_{((\beta\phi)\otimes A_a[T])
	(\theta(T))^{-1}} & & (A_{ab}[T])^n \ar@{->}[dr]	\\ 
	& I^\prime_a \ar@{->}[rr] & & I^\prime_{ab} 
	}$$ 

$\hfill \gj$
\end{proof}

\begin{rem}\label{300}
Now, we discuss (\ref{prop1.2}) in the context of the Euler class
group $E(A)$. Let $I \subset A[T]$ be an ideal of height $n$, where
$A$ is of dimension $n$. Suppose $I$ is a surjective image of a projective
$A[T]$-module $P[T]$, where $P$ is a projective $A$-module of rank $n$
having trivial determinant. Further, assume that $I(0)$ and $I(1)$ are
ideals of height $n$. Now, tensoring the surjection from $P[T]$ to $I$  and
$\phi : A^n \ra P$ given in (\ref{prop1.2}) with $A[T]/I$ and
composing, we get a `local
orientation' $w(T)$ of $I$, i.e. a surjection 
$w(T) : (A[T]/I)^n \surj I/I^2$, which in turn gives rise to local
orientations $w(0) : (A/I(0))^n \surj I(0)/I(0)^2$ and 
$w(1) : (A/I(1))^n \surj I(1)/I(1)^2$ of $I(0)$ and $I(1)$
respectively. 

The gist of (\ref{prop1.2}) is that there exists an ideal $K \subset
A$ of height $n$ and a local orientation $w_K$ of $K$, which is the
class of the surjection $\rho(0) \otimes A/K : (A/K)^n \surj K/K^2$,
such that
$$(I(0),w(0)) + (K,w_K) =0 = (I(1),w(1)) + (K,w_K) $$
in $E(A)$. Therefore, $(I(0),w(0)) = (I(1),w(1))$ in $E(A)$. 

In case when $K = A$, we get $\rho(T) : (A[T])^n \surj I$, hence $I$ is
generated by $n$ elements. Hence $w(0)$ and $w(1)$ are global
orientations of $I(0)$ and $I(1)$ respectively. So 
$(I(0),w(0)) = 0 = (I(1),w(1))$ in $E(A)$. 

Let $P$ be a projective $A$-module of rank $n$ with trivial
determinant. Let $\chi : A \iso \wedge^n(P)$ be an isomorphism.
We call $\chi$ an {\it orientation of} $P$ and $\chi(1)$ a generator
of $\wedge^n(P)$. We write $\chi$ for $\chi(1)$. To the pair
$(P,\chi)$, we associate an element $e(P,\chi)$ of $E(A)$ as
follows:

Let $\lambda : P \surj J_0$ be a surjection, where $J_0 \subset A$ is
an ideal of height $n$. Let ``bar'' denote reduction modulo $J_0$. We
obtain an induced surjection 
$\ol \lambda : P/J_0P \surj J_0/J_0^2$. We choose an isomorphism 
$\ol \gamma : (A/J_0)^n \iso P/J_0P$ such that 
$\wedge^n(\ol \gamma) = \ol \chi$. 

Let $w_{J_0}$ be a local orientation of $J_0$ given by $\ol \lambda
\ol \gamma : (A/J_0)^n \surj J_0/J_0^2$.  Let $e(P,\chi)$ be the
image in $E(A)$ of the element $(J_0,w_{J_0})$ of $G$. We say that
$(J_0,w_{J_0})$ is {\it obtained} from the pair $(\lambda,\chi)$. We
show that the assignment sending the pair $(P,\chi)$ to the element
$e(P,\chi)$ of $E(A)$ is well defined. 

Let $\mu : P \surj J_1$ be another surjection, where $J_1 \subset A$
is an ideal of height $n$. Then, by (\ref{lem1.1}), there exists an
ideal $I$ of $A[T]$ of height $n$ and a surjection $\alpha(T) : P[T]
\surj I$ such that $\alpha(0) = \lambda$ $\alpha(1) = \mu$, $I(0) =
J_0$ and $I(1) = J_1$. 

Then, from the above discussion, we have $(J_0,w_{J_0}) =
(J_1,w_{J_1})$ in $E(A)$,
where $w_{J_0} = \ol \lambda\ol \gamma$ and 
$w_{J_1} = \ol \mu \ol \gamma$. Hence $e(P,\chi)$ does not depend on
the choice of the surjection. 

Now, let $\lambda : P \surj J_0$ be a surjection, where $J_0 \subset
A$ is an ideal of height $n$. If $\ol \delta : (A/J_0)^n \iso P/J_0P$
is another isomorphism such that $\wedge^n(\ol \delta) = \ol \chi$,
then $\ol\delta$ and $\ol\gamma$ differ by an element of
$\SL_n(A/J_0)$. Hence, there exists an $\ol \sigma \in 
\SL_n(A/J_0)$ such that $\ol \delta = \ol \sigma \ol \gamma$. 
This shows that $e(P,\chi)$ does not depend on the choice of
$\ol\gamma$ and proves that $e(P,\chi)$ is well defined.  We
define the {\it Euler class } of $(P,\chi)$ to be $e(P,\chi)$.
$\hfill \gj$
\end{rem}

\begin{corollary}\label{cor2.3}
Let $A$ be a Noetherian ring of dimension $n \geq 2$. Let $P$ be a
projective $A$-module of rank $n$ with trivial determinant and $\chi$
be an orientation of $P$. Let $J \subset A$ be an ideal of height $n$
such that $J/J^2$ is generated by $n$ elements. Let $w_J$ be a local
orientation of $J$. Suppose that $e(P,\chi) = (J,w_J)$ in
$E(A)$. Then, there exists a surjection $\alpha : P \surj J$ such that
$(J,w_J)$ is obtained from $(\alpha,\chi)$.
\end{corollary}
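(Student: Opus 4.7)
The plan is to use the Moving Lemma (\ref{cor14}) to replace the sought surjection $P\surj J$ by an auxiliary one $P\surj J\cap J'$ with $J'$ comaximal with $J$, then exploit the hypothesis $e(P,\chi)=(J,w_J)$ to force $(J',w_{J'})=0$ in $E(A)$, and finally apply the Subtraction Principle (\ref{manoj1}) to trim off $J'$ and land on $J$.

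Concretely, I would first fix an isomorphism $\ol\gamma_J:(A/J)^n\iso P/JP$ with $\wedge^n(\ol\gamma_J)=\ol\chi$ (possible since $P$ has trivial determinant and $\dim A/J=0$), and set $\ol\alpha_J=w_J\circ\ol\gamma_J^{-1}:P/JP\surj J/J^2$. The Moving Lemma then produces an ideal $J'\subset A$ of height $\geq n$ with $J+J'=A$, together with a surjection $\beta:P\surj J\cap J'$ lifting $\ol\alpha_J$. I would then pick a second isomorphism $\ol\gamma_{J'}:(A/J')^n\iso P/J'P$ with $\wedge^n(\ol\gamma_{J'})=\ol\chi$ and define $w_{J'}$ to be the local orientation of $J'$ represented by $(\beta\otimes A/J')\circ\ol\gamma_{J'}$.

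By the construction recalled in Remark \ref{300}, the pair $(J\cap J',w_{J\cap J'})$ represents $e(P,\chi)$ in $E(A)$; and because $J,J'$ are comaximal, their primary components are disjoint, so in the free group $G$ one has $(J\cap J',w_{J\cap J'})=(J,w_J)+(J',w_{J'})$. Combined with the hypothesis $e(P,\chi)=(J,w_J)$, this forces $(J',w_{J'})=0$ in $E(A)$. Theorem \ref{theo2.2} then gives a surjection $\phi:A^n\surj J'$ whose reduction modulo $J'$ represents $w_{J'}$; adjusting $\phi$ by lifting a suitable element of $\SL_n(A/J')$ to $\SL_n(A)$ via (\ref{lem15}), I may arrange $\phi\otimes A/J'=(\beta\otimes A/J')\circ\ol\gamma_{J'}$ on the nose.

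All hypotheses of the Subtraction Principle (\ref{manoj1}) are now in place, with its $\alpha$ taken to be our $\beta:P\surj J\cap J'$, its $\beta$ taken to be $\phi:A^n\surj J'$, and its $\delta$ taken to be $\ol\gamma_{J'}$: the heights and comaximality are clear, $\wedge^n(\delta)=\ol\chi$ by construction, and $\ol\alpha\delta=\ol\beta$ by the arrangement just made. The Principle outputs a surjection $\theta:P\surj J$ with $\theta\otimes A/J=\beta\otimes A/J=\ol\alpha_J$; post-composing with $\ol\gamma_J$ recovers $w_J$ exactly, showing that $(J,w_J)$ is obtained from $(\theta,\chi)$. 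I expect the main nuisance, rather than a genuine obstacle, to be keeping the two auxiliary isomorphisms $\ol\gamma_J$ and $\ol\gamma_{J'}$ compatible with the single global orientation $\chi$ so that the determinant hypothesis of the Subtraction Principle is satisfied verbatim.
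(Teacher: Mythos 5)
Your proposal follows essentially the same route as the paper's proof: use the Moving Lemma (\ref{cor14}) to produce $\beta : P \surj J\cap J'$ lifting the local orientation, deduce $(J',w_{J'})=0$ from the hypothesis, invoke Theorem \ref{theo2.2} to get a free surjection onto $J'$, and finish with the Subtraction Principle (\ref{manoj1}). The only point you omit is the degenerate case $J'=A$ (which can happen since the Moving Lemma only gives $\hh J'\geq n$); in that case $\beta:P\surj J$ already does the job and no subtraction is needed, while your chain of reasoning involving $(A/J')^n$ and $\SL_n(A/J')$ would be vacuous — worth a sentence to dispose of it separately, as the paper does.
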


\begin{proof}
We can regard $w_J$ as a surjection : $(A/J)^n \surj J/J^2$. We choose
an isomorphism $\lambda : P/JP \iso (A/J)^n$ such that
$\wedge^n(\lambda) = (\ol \chi)^{-1}$, where ``bar'' denotes modulo $J$.
Consider the surjection $\ol \alpha = w_J \lambda : P/JP \surj
J/J^2$. By (\ref{cor14}), there exists an ideal $J^\prime \subset A$
and a surjection $\beta : P \surj J \cap J^\prime$ such that: 

$(i) J + J^\prime = A, (ii)  \beta \otimes A/J = \ol \alpha,$
and $(iii)$ height $(J^\prime) \geq n$. 

If $J^\prime = A$, then $\beta : P \surj J$ is such that $\beta \otimes
A/J = \ol \alpha$. Hence $\beta$ satisfies the required
property. Otherwise, if $\hh J^\prime = n$, then, we have 
$e(P,\chi) = (J,w_J) + (J^\prime,w_{J^\prime})$ in $E(A)$ (where
$w_{J'}$ is obtained using $P$).  
	By the assumption of the theorem, 
$e(P,\chi) = (J,w_J)$ in $E(A)$. Hence, we have
$(J^\prime,w_{J^\prime}) = 0$ in $E(A)$. Therefore, by
(\ref{theo2.2}), there exists a surjection 
$\gamma : A^n \surj J^\prime$ such that 
$w_{J^\prime} = \gamma \otimes A/J^\prime$. Now, applying the
subtraction principle (\ref{manoj1}), we get a surjection 
$\alpha : P \surj J$ such that $(J,w_J)$ is obtained from the pair
$(\alpha,\chi)$.  This proves the corollary.  $\hfill \gj$
\end{proof}

\begin{corollary}\label{cor2.4}
Let $A$ be a Noetherian ring of dimension $n \geq 2$. Let $P$ be a
projective $A$-module of rank $n$ with trivial determinant and let $\chi$
be an orientation of $P$. Then $e(P,\chi) = 0$ if and only if $P$ has
a unimodular element. In particular, if the determinant of $P$ is
trivial and $P$ has a unimodular element, then every generic section
ideal $J$ of $P$ (i.e. an ideal $J$ of height $n$ which is a
surjective image of $P$) is generated by $n$ elements.
\end{corollary}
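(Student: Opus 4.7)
The corollary has two directions plus a consequence; both build on Remark \ref{300} where the Euler class is constructed.

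\emph{Direction $e(P,\chi) = 0 \Rightarrow P$ has a unimodular element.} Pick any surjection $\lambda\colon P \surj J_0$ with $J_0$ an ideal of height $n$; such exists by applying Lemma \ref{cor13} to $P$ with $(\alpha,a) = (0,1)$, yielding $\beta \in P^\ast$ with $\hh \beta(P) \geq n$, which produces either a surjection onto a proper height-$n$ ideal or a unimodular element of $P$ directly. Choose an isomorphism $\ol\gamma\colon (A/J_0)^n \iso P/J_0P$ with $\wedge^n(\ol\gamma) = \ol\chi$, which exists because $A/J_0$ is zero-dimensional semi-local so $P/J_0P$ is free of rank $n$. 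By Remark \ref{300}, $e(P,\chi) = (J_0, w_{J_0})$ where $w_{J_0} = \ol\lambda \circ \ol\gamma$. The hypothesis $(J_0, w_{J_0}) = 0$ in $E(A)$ combined with Theorem \ref{theo2.2} gives that $J_0$ is generated by $n$ elements and $w_{J_0}$ lifts to a surjection $\beta\colon A^n \surj J_0$. Now Corollary \ref{manoj2} applies with $J' = J_0$, $\alpha = \lambda$, and $\delta = \ol\gamma$: the conditions $\ol\alpha \delta = w_{J_0} = \ol\beta$ and $\wedge^n(\delta) = \ol\chi$ hold by construction, yielding a unimodular element of $P$.

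\emph{Direction $P$ has a unimodular element $\Rightarrow e(P,\chi) = 0$.} Since $P \iso Q \oplus A$ with $\det Q$ trivial, I will construct a surjection $\lambda\colon P \surj J_0$ (with $J_0$ of height $n$) whose local orientation $w_{J_0}$ is manifestly a global orientation. Apply Lemma \ref{cor13} to $Q$ (rank $n-1$) to find $\psi\colon Q \to A$ with $I := \psi(Q)$ of height $\geq n-1$; pick $a_0 \in A$ avoiding the minimal primes over $I$, so that $J_0 := I + (a_0)$ has height $n$. Define $\lambda(q,a) := \psi(q) + a_0 a$, giving $\lambda\colon P \surj J_0$. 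Since $Q/J_0 Q$ is free of rank $n-1$ over the semi-local $A/J_0$, choose $\ol\tau\colon (A/J_0)^{n-1} \iso Q/J_0 Q$ so that $\ol\gamma := \ol\tau \oplus \id$ satisfies $\wedge^n(\ol\gamma) = \ol\chi$; lift to $\tau\colon A^{n-1} \to Q$ and set $\psi' := \psi\tau$, so $\beta_0(x, a) := \psi'(x) + a_0 a$ gives $\beta_0\colon A^n \to J_0$ with $\ol{\beta_0} = w_{J_0}$. By Lemma \ref{lem11}, $J_0 = \beta_0(A^n) + (e)$ for some $e \in J_0^2$; the split $A$-summand of $P$ lets one modify $\beta_0$ by a map into $J_0^2$ to produce a genuine surjection $\beta\colon A^n \surj J_0$ with $\ol\beta = w_{J_0}$, showing that $w_{J_0}$ is a global orientation. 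Hence $(J_0, w_{J_0}) = 0 = e(P,\chi)$ in $E(A)$.

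The main obstacle is the final modification step in the reverse direction: absorbing the residual $e \in J_0^2$ into $\beta_0$ to obtain a surjection onto $J_0$ while preserving $\ol\beta = w_{J_0}$. The unimodular element of $P$ --- equivalently, the split summand $A$ --- is what enables this: it provides an extra coordinate whose values can be shifted within $J_0^2$ to compensate for $e$, a flexibility unavailable when $P$ has no unimodular element (which is precisely why the Euler class can be nonzero). The ``in particular'' conclusion is then immediate: for any generic section ideal $J$ of $P$ (of height $n$, the image of some surjection $P \surj J$), one has $e(P,\chi) = (J, w_J) = 0$ in $E(A)$, so Theorem \ref{theo2.2} forces $J$ to be generated by $n$ elements.
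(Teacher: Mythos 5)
Your forward direction ($e(P,\chi)=0 \Rightarrow P$ has a unimodular element) is essentially the paper's argument --- take a generic surjection, invoke Theorem \ref{theo2.2} to get a lift of $w_{J_0}$ to a surjection $A^n\surj J_0$, then apply Corollary \ref{manoj2} --- so that half is fine, as is the ``in particular'' deduction at the end.

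The reverse direction has a genuine gap at exactly the point you call ``the main obstacle.''  You construct $\beta_0\colon A^n\to J_0$ with $\ol{\beta_0}=w_{J_0}$ and note $J_0=\beta_0(A^n)+(e)$ with $e\in J_0^2$, and then assert that the split $A$-summand of $P$ lets you perturb $\beta_0$ by a map into $J_0^2$ to produce a surjection onto $J_0$.  But that assertion \emph{is} the statement that $w_{J_0}$ is a global orientation --- precisely what must be proved.  The ``split $A$-summand'' heuristic does not supply a proof: once you pass to $\beta_0\colon A^n\to J_0$ the extra $A$-factor of $P$ has already been spent, and the analogue of Lemma \ref{manoj} (lifting $\ol P\surj J/J^2$ to $P\surj J$) requires rank $P>\dim A$, which fails here --- that failure is why the Euler class is a nontrivial obstruction at all.

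The paper's proof fills this gap by working modulo $K:=\theta(Q)$ rather than modulo $J$.  After an elementary automorphism of $P$ one may assume $\hh K=n-1$, so $\dim(A/K)\le 1$; then one chooses an isomorphism $\gamma\colon (A/K)^{n-1}\iso Q/KQ$ with $\wedge^{n-1}(\gamma)\equiv\chi'\pmod K$ and lifts $(\theta\otimes A/K)\gamma$ to $\delta\colon A^{n-1}\to K$ with $K'+K^2=K$ where $K'=\delta(A^{n-1})$.  Lemma \ref{lem11} now gives $K=K'+(e)$ with $e\in K^2$ and $e^2-e\in K'$, and Mohan Kumar's idempotent identity (Lemma \ref{Mohan} and the remark following it) yields $J=K+(a)=K'+(b)$ with $b=e+(1-e)a$.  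Since $e\in K^2\subset J^2$, one has $b-a\in J^2$, so $(\delta,b)\colon A^n\surj J$ induces $w_J$ modulo $J^2$.  In your construction you lift $\tau$ only modulo $J_0$; this gives $\psi'(A^{n-1})+J_0 I=I$ but \emph{not} $\psi'(A^{n-1})+I^2=I$, so the element $e$ from Lemma \ref{lem11} lives in $J_0^2$ and not in $I^2$, and the Mohan Kumar step cannot be invoked.  To repair the argument, lift the isomorphism modulo $I$ (after arranging $\hh I=n-1$ by an elementary automorphism as in the paper) and then run the idempotent trick as above.
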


\begin{proof}
Let $\alpha : P \surj J$ be a surjection, where $J$ is an ideal of
height $n$. Let $e(P,\chi) = (J,w_J)$ in $E(A)$, where $(J,w_J)$ is
obtained from the pair $(\alpha,\chi)$.  First, assume that $e(P,\chi)
= 0$. Then $(J,w_J) = 0$ in $E(A)$. Hence, by (\ref{theo2.2}), there
exists a surjection $\beta : A^n \surj J$ such that $w_J = \beta
\otimes A/J$.  Now, applying (\ref{manoj2}), we see that $P$ has a
unimodular element.

Now, we assume that $P$ has a unimodular element, i.e.  $P = Q \oplus
A$.  Then $\alpha = (\theta,a)$ as an element of $P^\ast = Q^\ast
\oplus A$. By performing an elementary automorphism of $P$,
i.e. replacing $\theta$ by $\theta+a\theta'$, we may assume by
(\ref{cor13}), that $\hh \theta(Q) = n-1$. Let $K = \theta(Q)$.
Note that, since determinant of $Q$ is trivial, without loss of
generality, we may assume that $\chi$ is induced by an isomorphism
$\chi^\prime : A \iso \wedge^{n-1}(Q)$.

Since $\dim (A/K) \leq 1$, there exists an isomorphism 
	$\gamma : (A/K)^{n-1} \iso Q/KQ$ such that 
	$\wedge^{n-1}(\gamma) = \chi^\prime$ modulo $K$.  
The surjection $(\theta \otimes A/K)\gamma :
	(A/K)^{n-1} \surj K/K^2$ 
can be lifted to a map $\delta : A^{n-1} \ra K$ such that 
	$\delta(A^{n-1}) + K^2 = K$. 
Let $\delta (A^{n-1}) = K^\prime$. Then, since 
$K^\prime + K^2 = K$, by (\ref{lem11}), $K = K^\prime + (e)$, 
with $e \in K^2$ and $e^2 -e \in K^\prime$.
Therefore, by (\ref{Mohan}), $J = K+(a) = K^\prime + (b)$, where $b
= e + (1-e)a$.  Now, consider the surjection $(\delta,b) : A^n \surj
J$. As $e\in K^2$, we have that $w_J$ is obtained by tensoring the
surjection $(\delta,b)$ with $A/J$. Hence, by definition, $e(P,\chi) =
0$ in $E(A)$. This proves the corollary.  $\hfill \gj$
\end{proof}

\begin{lemma}\label{lem3.5}
Let $A$ be a Noetherian ring of dimension $n \geq 2$. Let $J \subset
A$ be an ideal of height $n$ such that $J/J^2$ is generated by $n$
elements. Let $w_J$ be a local orientation of $J$. Suppose that
$(J,w_J) \neq 0$ in $ E(A)$. Then, there exists an ideal $J_1$ of
height $n$ which is comaximal with $J$ and a local orientation
$w_{J_1}$ of $J_1$ such that $(J,w_J) + (J_1,w_{J_1}) = 0$ in
$E(A)$. Further, given any element $f \in A$ such that $\hh fA = 1$,
$J_1$ can be chosen with the additional property that it is comaximal
with $(f)$.
\end{lemma}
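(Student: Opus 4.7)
The plan is to obtain $J_1$ directly by applying the Moving Lemma (Corollary \ref{cor14}) to the free module $A^n$, using the local orientation $w_J$ as the surjection to be lifted.

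First I would view $w_J$ as a surjection $(A/J)^n \surj J/J^2$, i.e. as a surjection $\overline{\alpha} : P/JP \surj J/J^2$ with $P = A^n$. Since $A^n$ is projective of rank $n$, Corollary \ref{cor14} applies: taking the single auxiliary ideal to be $(f)$ (which has height $\geq 1$ by hypothesis), the Moving Lemma furnishes an ideal $J_1 \subset A$ and a surjection $\beta : A^n \surj J \cap J_1$ satisfying
\begin{enumerate}
\item[(i)] $J + J_1 = A$,
\item[(ii)] $\beta \otimes A/J = w_J$,
\item[(iii)] $\hh J_1 \geq n$,
\item[(iv)] $J_1 + (f) = A$.
\end{enumerate}

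Next I would verify that $J_1 \neq A$. Indeed, if $J_1 = A$, then $\beta : A^n \surj J$ would be a lift of $w_J$ to a surjection onto $J$, so $w_J$ would be a global orientation, forcing $(J,w_J) = 0$ in $E(A)$; this contradicts the hypothesis. Hence $J_1$ is a proper ideal of height $\geq n$, and since $\dim A = n$, in fact $\hh J_1 = n$.

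Finally I would define $w_{J_1}$ to be the local orientation of $J_1$ induced from $\beta$, namely $w_{J_1} = \beta \otimes A/J_1$, and let $w_{J \cap J_1}$ be the orientation of $J \cap J_1$ induced by $\beta$. By the very definition of the group $E(A)$ and the identification of a pair $(I,w_I)$ with the sum over its primary components, together with the comaximality $J + J_1 = A$, one has
\[
(J \cap J_1,\, w_{J \cap J_1}) \;=\; (J, w_J) + (J_1, w_{J_1}) \quad \text{in } E(A),
\]
because $w_{J \cap J_1}$ restricts to $w_J$ on $J$ (by (ii)) and to $w_{J_1}$ on $J_1$ by construction. Since $\beta : A^n \surj J \cap J_1$ is a surjective lift of $w_{J \cap J_1}$, this local orientation is global, so $(J \cap J_1,\, w_{J \cap J_1}) = 0$ in $E(A)$. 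The required relation $(J,w_J) + (J_1,w_{J_1}) = 0$ follows, and property (iv) yields the additional comaximality with $(f)$. No step looks to be a genuine obstacle once the Moving Lemma is invoked; the only care needed is the bookkeeping verifying that the induced orientations match in the relation above.
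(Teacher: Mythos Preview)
Your proof is correct and follows essentially the same approach as the paper: both apply the Moving Lemma (Corollary~\ref{cor14}) with $P=A^n$ and the given local orientation, then use the hypothesis $(J,w_J)\neq 0$ to rule out $J_1=A$, and define $w_{J_1}$ via $\beta\otimes A/J_1$. The only cosmetic difference is that the paper bundles the comaximality requirements by taking the auxiliary ideal $fJ$, whereas you take $(f)$ and rely on (i) for comaximality with $J$.
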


\begin{proof}
Let $\alpha : (A/J)^n \surj J/J^2$ be a surjection corresponding to
$w_J$. Then, by (\ref{cor14}), there exists an ideal $J_1$ of height
$\geq n$ which is comaximal with $fJ$ and a surjection 
$\beta : A^n \surj J\cap J_1$ such that $\beta \otimes A/J = \alpha$.
Since $(J,w_J) \neq 0$ in $E(A), J_1$ is a proper ideal of height
$n$. Let $w_{J_1}$ be the local orientation of $J_1$ induced by
$\beta$. Then $(J,w_J) + (J_1,w_{J_1}) = 0$ in $E(A)$.  $\hfill
\gj$
\end{proof}

\begin{lemma}\label{200}
Let $A$ be a Noetherian ring of dimension $n\geq 2$. Then, any element
of the Euler class group $E(A)$ is of the form $(J,w_J)$, where $J$ is
an ideal of $A$ of height $n$ such that $J/J^2$ is generated by $n$
elements and $w_J$ is a local orientation of $J$.
\end{lemma}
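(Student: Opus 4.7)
My plan is to reduce an arbitrary element $x \in E(A)$, which a priori is a signed sum of generators $(\n_i, w_{\n_i})$, to a single generator $(J, w_J)$ by two successive reductions: first eliminating negative coefficients, then coalescing the remaining primary ideals into a single ideal whose intersection decomposition recovers them. Two obstructions stand in the way: the negative signs that will in general appear in any initial $\Z$-linear representation of $x$, and the possibility that even after those are cleared the primary ideals appearing in the sum could share common radicals and thereby fail to be pairwise comaximal.

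I would begin by writing $x = \sum_{i=1}^{r}\epsilon_i(\n_i, w_{\n_i})$ with $\epsilon_i = \pm 1$. For each $i$ with $\epsilon_i = -1$, if $(\n_i, w_{\n_i}) = 0$ in $E(A)$ the term is simply dropped; otherwise Lemma \ref{lem3.5} furnishes $(\n_i', w_{\n_i'})$ satisfying $(\n_i, w_{\n_i}) + (\n_i', w_{\n_i'}) = 0$ in $E(A)$, so that $-(\n_i, w_{\n_i})$ may be replaced by $(\n_i', w_{\n_i'})$. After this first pass, $x$ is expressed as a sum of generators all with positive sign.

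Next I would arrange that the primary ideals in this positive sum are pairwise comaximal. The key observation is that Lemma \ref{lem3.5} is proved via Corollary \ref{cor14}, whose full strength allows the newly constructed ideal to be chosen comaximal with any prescribed finite list of height-$\geq 1$ ideals. So for each offending $(\n_j, w_{\n_j})$ whose radical coincides with that of some other term, I would apply Lemma \ref{lem3.5} twice: first to replace $(\n_j, w_{\n_j})$ by $-(K_j, w_{K_j})$ with $K_j$ comaximal with $\n_j$ and with every other $\n_k$, and then to replace $(K_j, w_{K_j})$ by $-(\n_j', w_{\n_j'})$ with $\n_j'$ comaximal with $K_j$ and with every other $\n_k$. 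The composite substitution yields $(\n_j, w_{\n_j}) = (\n_j', w_{\n_j'})$ in $E(A)$ with $\n_j'$ now comaximal with all other terms of the sum; iterating delivers a positive expression $x = \sum_j(\n_j, w_{\n_j})$ with pairwise comaximal $\n_j$.

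Finally I would set $J = \bigcap_j \n_j$; then $J$ has height $n$, the Chinese remainder theorem gives $J/J^2 \cong \bigoplus_j \n_j/\n_j^2$ so $J/J^2$ is generated by $n$ elements, and the individual $w_{\n_j}$ assemble into a single local orientation $w_J$ of $J$, with $(J, w_J) = \sum_j(\n_j, w_{\n_j}) = x$ by the notational convention introduced in the definition of $E(A)$. The step I expect to be the main hurdle is the second one: removing negative signs is a direct application of Lemma \ref{lem3.5}, but achieving pairwise comaximality requires careful bookkeeping to ensure that each new primary ideal produced by successive applications of Lemma \ref{lem3.5} has its radical distinct from every maximal ideal already in play.
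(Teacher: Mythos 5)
Your proposal is essentially the same argument the paper gives. Both proofs first clear the negative signs by one application of Lemma~\ref{lem3.5}, and then use the ``double negation'' trick --- two successive applications of Lemma~\ref{lem3.5}, the second time choosing the new ideal comaximal with everything in sight (which is exactly what Corollary~\ref{cor14} supplies) --- to manufacture comaximality. The paper organizes the second phase as an induction that merges two terms $(J_1,w_{J_1})+(J_2,w_{J_2})$ into a single $(J',w_{J'})$ at a time, whereas you sweep over the primary constituents and replace each offending $\n_j$ in place; these are cosmetically different but logically the same reduction. Two small points worth tightening in your write-up: the ideals $K_j$ and $\n_j'$ produced by Lemma~\ref{lem3.5} are merely height-$n$ ideals, not primary, so ``each new primary ideal produced'' should read ``each new height-$n$ ideal produced'' (its primary decomposition may contribute several generators of $G$, which is harmless once comaximality is in force); and when processing $\n_k$ you must choose $\n_k'$ comaximal with the \emph{current} list of ideals, including any $\n_j'$ that have already replaced earlier terms --- you flag this as the bookkeeping hurdle, and indeed it is the only place that needs care.
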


\begin{proof}
First, we show that if $(J,w_J) \in E(A)$, $(J,w_J)\neq 0$ and $f \in
A$ such that $\hh f A = 1$, then $- (J,w_J) = (J_1,w_{J_1}) \in E(A)$
with $J_1 + f J = A$. By (\ref{lem3.5}), there exists an ideal $J_1$
of height $n$ which is comaximal with $f J$ and a local orientation
$w_{J_1}$ of $J_1$ such that $(J,w_J) + (J_1,w_{J_1}) = 0$ in
$E(A)$. Hence $- (J,w_J) = (J_1,w_{J_1})$.  
	Therefore, any element $z\in
E(A)$ is of the form $z = \sum_1^r (J_i,w_{J_i})$. It is enough to
show that any element $z \in E(A)$ of the form $z = (J_1,w_{J_1}) +
(J_2,w_{J_2})$ can be written as $z = (J',w_{J'})$ in $E(A)$ for some
ideal $J'$ of height $n$ and a local orientation $w_{J'}$ of
$J'$. Then, by induction, the result will follow.  

Without loss of generality, we may assume that both $(J_1,w_{J_1})$ and
$(J_2,w_{J_2})$ are not zero in $E(A)$. 
Choosing $f\in J_1\cap J_2$ and $\hh (f)=1$ and applying $(\ref{lem3.5})$,
we see that there exists an ideal $J_0$ of height $n$ which
is comaximal with $J_1$ and $J_2$ and a local orientation $w_{J_0}$ of
$J_0$ such that $(J_1,w_{J_1}) + (J_0,w_{J_0}) = 0$ in $E(A)$. 
	Hence,
we have $z = - (J_0,w_{J_0}) + (J_2,w_{J_2})$. Now, $-(J_0,w_{J_0}) =
(K_1,w_{K_1})$ in $E(A)$, where $K_1$ is comaximal with $J_0$ and
$J_2$. We have $z = (K_1,w_{K_1}) + (J_2,w_{J_2})$, where
$K_1 + J_2 = A$. Therefore, we have
$z= (J',w_{J'})$ in $E(A)$, where $J' = K_1 \cap J_2$ and $w_{J'}$ is
a local orientation of $J'$ induced from $w_{K_1}$ and $w_{J_2}$. This
proves the lemma. $\hfill \gj$
\end{proof}
\medskip

Let $A$ be a Noetherian ring of dimension $n \geq 2$. Let $\G$ be the
nil radical of $A$. Let ``bar'' denote modulo $\G$. Let $G(A)$ be the
free abelian group on the set $(\n,w_{\n})$, where $\n$ is $\M$-primary
ideal of height $n$ such that $\n/\n^2$ is generated by $n$ elements
and $w_{\n}$ is a local orientation of $\n$. Similarly, we define
$G(\ol A)$. If $\n$ is $\M$-primary
ideal, then $\ol \n = (\n + \G)/\G$ is also $\ol \M$-primary and if
$\n/\n^2$ is generated by the images of 
$(a_1,\ldots,a_n)$, then $\ol \n/(\ol \n)^2$
is also generated by the images of $(a_1,\ldots,a_n)$. Hence, if 
$(\n,w_{\n}) \in G(A)$, then  $(\ol \n,w_{\ol \n}) \in G(\ol A)$. 

Let $J\subset A$ be an ideal of height $n$ with primary decomposition
as $J = \cap \n_i$, where $\n_i$ is $\M_i$-primary, $\M_i$ a maximal
ideal of $A$. Then $\ol J = (J + N)/N = J/J\cap N \subset \ol A$ is an
ideal of height $n$ with primary decomposition as $\ol J = \cap \ol
\n_i$. The following
diagram is commutative 
\Com
$$\xymatrix{ 
	J/J^2 \ar [r] \ar [d] & \ooplus \n_i/ \n_i^2 \ar [d]  \\
	\ol J/ \ol {J^2} \ar [r] & \ooplus \ol \n_i/\ol \n_i^2 
	}$$ 
Hence, any surjection $w_J : (A/J)^n \surj J/J^2$ induces a surjection
$\ol w_J : (\ol A/\ol J)^n \surj \ol J/\ol J^2 = (J + N)/ (J^2 +
N)$. From the above discussion, it follows that the assignment sending
$(J,w_J)$ to $(\ol J,\ol w_J)$ gives rise to a group homomorphism
$\Phi : E(A) \ra E(\ol A)$.
\medskip

As a consequence of (\ref{theo2.2}), we have the following:

\begin{corollary}\label{cor2.5}
The homomorphism $\Phi : E(A) \ra E(\ol A)$ is an isomorphism.
\end{corollary}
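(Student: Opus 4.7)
The plan is to verify that $\Phi$ is both injective and surjective, in each case appealing to Theorem \ref{theo2.2}.

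For injectivity, I would suppose $(J,w_J)\in E(A)$ satisfies $\Phi(J,w_J)=0$, so that $(\ol J,\ol w_J)=0$ in $E(\ol A)$. Applying Theorem \ref{theo2.2} in $\ol A$ yields $\ol c_1,\ldots,\ol c_n\in \ol A$ with $\ol J=(\ol c_1,\ldots,\ol c_n)$ and with $\ol w_J$ being the orientation induced by these generators. The reduction map $J\surj \ol J=(J+N)/N$ lets me pick lifts $c_i\in J$ of the $\ol c_i$. Both $A/J$ and $\ol A/\ol J$ have dimension zero and $n\geq 2$, so $\SL_n$ equals $\mathrm{E}_n$ on each, and since the kernel of $A/J\surj \ol A/\ol J$ (the image of $N$) is nil, Lemma \ref{lem15} allows me to lift any element of $\SL_n(\ol A/\ol J)$ that transports the original $\ol w_J$ onto the orientation induced by $\ol c_1,\ldots,\ol c_n$ to an element of $\SL_n(A/J)$. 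After adjusting $w_J$ by such a lift, I may assume $w_J(\bar e_i)\equiv c_i\pmod{J^2}$, whereupon surjectivity of $w_J$ forces $J=(c_1,\ldots,c_n)+J^2$. Because $N$ is contained in every prime of $A$, any prime $\p\supset (c_1,\ldots,c_n)$ satisfies $\p\supset (c_1,\ldots,c_n)+N=J+N\supset J$, so $V((c_1,\ldots,c_n))=V(J)$. Lemma \ref{manoj10} then forces $J=(c_1,\ldots,c_n)$, exhibiting $w_J$ as a global orientation and yielding $(J,w_J)=0$ in $E(A)$.

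For surjectivity, given $(\ol J,w_{\ol J})\in E(\ol A)$, I may assume by Lemma \ref{200} that it is represented by a single pair. I would apply the Moving Lemma (\ref{cor14}) inside $\ol A$ to extend $w_{\ol J}$ to a surjection $\ol\beta:\ol A^n\surj \ol J\cap \ol J'$ with $\ol J+\ol J'=\ol A$, $\hh \ol J'\geq n$, and $\ol\beta\otimes \ol A/\ol J=w_{\ol J}$ (the case $\ol J'=\ol A$ being immediate). Lifting $\ol\beta$ entry-wise gives $\beta:A^n\to A$ with image $I$. Since every prime of $A$ contains $N$, we have $V(I)=V(I+N)$, and hence $\hh I=\hh \ol I=n$. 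The kernel of the surjection $A/I\surj \ol A/\ol I$ is the nil ideal $(I+N)/I$, so the orthogonal idempotents witnessing $\ol A/\ol I\cong \ol A/\ol J\times \ol A/\ol J'$ lift uniquely to $A/I$, providing a decomposition $I=J_1\cap K$ with $J_1+K=A$, $(J_1+N)/N=\ol J$, and $(K+N)/N=\ol J'$. The coprimality $J_1+K=A$ implies $(I+J_1^2)/J_1^2=J_1/J_1^2$, so $\beta$ induces a surjection $A^n\surj J_1/J_1^2$ and thereby a local orientation $w_{J_1}$ whose reduction modulo $N$ is $w_{\ol J}$. Hence $(J_1,w_{J_1})\in E(A)$ satisfies $\Phi(J_1,w_{J_1})=(\ol J,w_{\ol J})$.

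The main obstacle is the injectivity step: one needs the lifts $c_i\in J$ of the chosen generators of $\ol J$ to actually generate $J$, not merely $J$ modulo $N$. The key observation resolving this is that the nilradical is contained in every prime of $A$, forcing $V((c_1,\ldots,c_n))=V(J)$; together with $(c_1,\ldots,c_n)+J^2=J$ this lets Lemma \ref{manoj10} upgrade the containment $(c_1,\ldots,c_n)\subset J$ to equality.
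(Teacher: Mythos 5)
Your argument rests on the same pillars as the paper's proof (nilpotence of $\G$, idempotent lifting, and Lemma~\ref{manoj10}), but the injectivity half as written has a genuine gap, and both halves take routes that differ from the paper's.

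The gap is in the claim that ``after adjusting $w_J$ by such a lift, I may assume $w_J(\ol e_i)\equiv c_i\pmod{J^2}$.'' Since the generators $\ol c_i$ supplied by Theorem~\ref{theo2.2} already induce $\ol w_J$ exactly (no $\SL_n(\ol A/\ol J)$ transport is needed), the only adjustment available to you lies in the kernel of $\SL_n(A/J)\to\SL_n(\ol A/\ol J)$, i.e.\ matrices congruent to the identity modulo the image of $\G$ in $A/J$. Postcomposing $w_J$ with such a matrix shifts each $w_J(\ol e_i)$ only by elements of $(\G J+J^2)/J^2$, whereas the discrepancy between $w_J(\ol e_i)$ and $c_i+J^2$ lies in the possibly strictly larger kernel $((J\cap\G)+J^2)/J^2$ of $J/J^2\surj\ol J/\ol J^2$; so the $\SL_n$--adjustment cannot in general achieve what you want. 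The correct and simpler repair is to leave $w_J$ alone and instead change the lifts: the map $J\surj\ol J$ has kernel $J\cap\G$, so each $c_i$ may be replaced by $c_i+y_i$ with $y_i\in J\cap\G$ to force $w_J(\ol e_i)=c_i\pmod{J^2}$, and this replacement preserves $(c_1,\ldots,c_n)+\G=J+\G$. With that fix, your observation $V((c_1,\ldots,c_n))=V(J)$ together with Lemma~\ref{manoj10} closes the injectivity argument.

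Beyond that repair, your two halves are genuinely different in method from the paper's. For injectivity, the paper builds a lift $\theta:A^n\to J$ of $w_J$ by exhibiting $J/(J^2\cap\G)$ as the fiber product of $J/J^2$ and $J/(J\cap\G)$ over $J/(J^2+(J\cap\G))$, and then applies Lemma~\ref{manoj10}; you instead manufacture explicit generators $c_1,\ldots,c_n$ of $J$ by lifting. For surjectivity, the paper takes $J$ to be the preimage of $\ol J$, lifts the given surjection to a map $A^n\to J$ with image $(f_1,\ldots,f_n)$ satisfying $(f_1,\ldots,f_n)+J^2+\G=J$, and performs a Mohan Kumar--style idempotent manipulation (in the spirit of Lemma~\ref{Mohan}) to produce $J_1=(f_1,\ldots,f_n,f)$ with $(f_1,\ldots,f_n)+J_1^2=J_1$ and $\ol J_1=\ol J$. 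You instead first invoke the Moving Lemma in $\ol A$, lift the resulting $n$ generators, and then lift the CRT idempotents of $\ol A/\ol I$ along the nil kernel to split $I=J_1\cap K$. Both routes work; the paper's surjectivity is somewhat leaner as it avoids the Moving Lemma in $\ol A$, while your approach has the minor virtue of not needing the explicit idempotent arithmetic of Lemma~\ref{lem11} at the cost of an extra CRT argument.
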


\begin{proof}
Let $\ol w_{\ol J} : (\ol A/\ol J)^n \surj \ol J/\ol J^2$ be a
surjection. Let $J \supset \G$ be an ideal of height $n$ such that $J/\G
= \ol J$. Then $\ol w_{\ol J}$ can be considered as a surjective map
from $(A/J)^n$ to $J/(J^2+\G)$. Let $\alpha : A^n \ra J$ be an
$A$-linear map which is a lift of $\ol w_{\ol J}$. Let 
$\alpha(A^n) = (f_1,\ldots,f_n)$. Then 
$(f_1,\ldots,f_n) + J^2 + \G = J$. By (\ref{lem11}), there exists
an element $e \in J^2$ such that $(f_1,\ldots,f_n) + \G + Ae = J$, 
and $e(1 - e) \in ((f_1,\ldots,f_n) + \G)$. 
Then $\ol e\in A/((f_1,\ldots,f_n)+\G)$ is an idempotent.

Since $\G$ is a nilpotent ideal and idempotent elements can be lifted
modulo a nilpotent ideal, we can lift $\ol e\in
A/((f_1,\ldots,f_n)+\G)$ to an idempotent element of
$A/(f_1,\ldots,f_n)$. Let $f \in A$ be such that $\ol f \in
A/(f_1,\ldots,f_n)$ is a lift of $\ol e\in A/((f_1,\ldots,f_n)+\G)$,
i.e. $f - f^2 \in (f_1,\ldots,f_n)$ and 
	$f - e \in ((f_1,\ldots,f_n) + \G)$. 
Let $J_1 = (f_1,\ldots,f_n,f)$. Then $(f_1,\ldots,f_n) + J_1^2 =
J_1$ and $J_1 + \G =(f_1,\ldots,f_n,f) +\G = (f_1,\ldots,f_n,e)+\G =
J$ (as $f-e\in ((f_1,\ldots,f_n)+\G)$).  
	Let ``bar'' denote modulo
$\G$. Then $\ol J_1 = \ol J$ and $\ol J_1/\ol J_1^{2} = (\ol
f_1,\ldots,\ol f_n) $. Hence $(\ol f_1,\ldots,\ol f_n)= \ol J/\ol
J^2 = J/(J^2+\G)$.  This implies that $\ol w_{\ol J}$ is induced
from $\ol \alpha$. Hence, the map $\Phi $ is surjective. 

Now, we prove that the map $\Phi$ is injective. By (\ref{200}), every
element of $E(A)$ is of the form $(J,w_J)$. Hence, it is enough to
prove that for $(J,w_J) \in E(A)$ ( where $J \subset A$ is an ideal of
height $n$ and $w_J : (A/J)^n \surj J/J^2$ is a local orientation of
$J$), if the image of $(J,w_J) = 0$ in $E(\ol A)$, then $(J,w_J) = 0$
in $E(A)$. Assume that the image of $(J,w_J) = (\ol J,\ol w_{\ol J})=0$
 in $E(\ol A)$. Then,
by (\ref{theo2.2}), $\ol w_{\ol J}$ is a global orientation of $J$,
i.e. there exists a surjection $\gamma : (A/\G)^n \surj (J+\G)/\G$
such that $\ol w_{\ol J} = \gamma \otimes \ol A/\ol J$.

We are given surjections $\alpha : A^n \surj J/J^2$ (which is obtained
by $w_J$ by composing with the natural map $A^n \surj (A/J)^n$) and
$\beta : A^n \surj (J+\G)/\G = J/(J \cap \G)$ (which is obtained by
$\gamma$ by composing with natural map $A^n \surj (A/\G)^n$) such that
they induce the same surjective map from $A^n$ to $J/(J^2+(J\cap
\G))$.

Since $J/(J^2\cap \G)$ is the fiber product of $J/J^2$ and 
$J/(J\cap \G)$ over $J/(J^2,J\cap \G)$, 
	$\alpha$ and $\beta$ patch to
yield a map $\delta : A^n \ra J/(J^2\cap \G)$.
\Com
$$\xymatrix{ 
	J/(J^2\cap \G) \ar [r] \ar [d] & J/J^2 \ar [d]\\ 
	J/(J\cap \G) \ar [r] & J/(J^2,J\cap \G) 
	}$$ 
Let $\theta : A^n \ra J$ be a lift of $\delta$. Then $\theta$ is a
lift of $\alpha$ and $\beta$. Hence, we have

$(i)\; \theta(A^n) + J^2 = J$, and 
$(ii)\; \theta(A^n) + (J\cap \G) = J$.
Since $\G$ is nilpotent,  $\sqrt{\theta(A^n)}= \sqrt J$ 
are same by $(ii)$.
Hence $\theta (A^n) = J$ by $(i)$ and (\ref{manoj10}).

Since $\theta$ is a lift of $\alpha$, we get $(J,w_J) = 0$ in
$E(A)$. Hence $\Phi$ is injective and hence  an
isomorphism. This proves the corollary. $\hfill \gj$
\end{proof}


\chapter{Some Results on $E(A)$}
If $A$ is an affine domain of dimension $n$ over an algebraically
closed field and $P$ is a projective $A$-module of rank $n$ and
trivial determinant, then it follows from a
result of Mohan Kumar (\ref{100}) that if $P$ maps onto an
ideal $J$ of height $n$ which is generated by $n$ elements, then $P$
has a unimodular element and hence all its generic section ideals
(i.e. ideals of height $n$ which are surjective image of $P$) are
generated by $n$ elements (\ref{cor2.4}).  
	But this is not
necessarily true if the base field is not algebraically closed. For
example, all the {\it reduced} generic section ideals of the tangent
bundle of the real 2-sphere are generated by $2$ elements
(\cite{B-RS2}, (5.6-i)). 
There are however {\it non-reduced} generic section
ideals of the tangent bundle which are not generated by $2$ elements
(\cite{B-RS1}, (5.2)).

This phenomenon is explained by the result (\ref{lem3.9}) of this
chapter, which asserts that for any $n$-dimensional real affine
domain, a projective module of rank $n$ with trivial determinant, all
of whose generic section ideals are generated by $n$ elements, has a
unimodular element. To prove this result, we first prove some
lemmas.

Let $A$ be a Noetherian ring of dimension $n \geq 2$. Let $J \subset
A$ be an ideal of height $n$ and $w_J : (A/J)^n \surj J/J^2$ be a
local orientation of $J$. Let $\ol b \in A/J$ be a unit. Let 
$\sigma : (A/J)^n \iso (A/J)^n$ be an automorphism with det
$(\sigma) = \ol b$. Then $w_J \sigma$ is another local orientation
of $J$, which we denote by $\ol b w_J$.

\begin{lemma}\label{lem3.0}
Let $A, J$ be as above. Let $w_J$ and $\wt w_J$ be two local
orientations of $J$. Then $\wt w_J = \ol b w_J$ for some unit 
$\ol b \in A/J$.
\end{lemma}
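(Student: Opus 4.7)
The plan is to show that $w_J$ and $\tilde w_J$ admit representatives $\alpha, \beta : (A/J)^n \surj J/J^2$ that differ by an automorphism $\phi$ of $(A/J)^n$, i.e., $\alpha\phi = \beta$. Once such a $\phi$ is produced, setting $\ol b = \det\phi \in (A/J)^\ast$ immediately yields $\tilde w_J = [\beta] = [\alpha\phi] = \ol b\, w_J$ from the definition of the action ``$\ol b w_J$'': any two automorphisms of $(A/J)^n$ of the same determinant differ by an element of $\SL_n(A/J)$, which acts trivially on equivalence classes.

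To build $\phi$, first observe that $(A/J)^n$ is free (hence projective), so $\beta$ lifts through $\alpha$ to an $A/J$-linear map $\phi : (A/J)^n \ra (A/J)^n$ with $\alpha\phi = \beta$. The nontrivial point is to verify that $\phi$ is an automorphism. A key preliminary is that $A/J$ is zero-dimensional: any prime $\p \supset J$ satisfies $n \leq \hh \p \leq \dim A = n$, and from $\hh \p + $ coht $\p \leq n$ one obtains coht $\p = 0$, so $\p$ is maximal. Hence $A/J$ is Artinian and semilocal, with finitely many maximal ideals arising from the maximal ideals $\m_i$ of $A$ that contain $J$.

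For each such $\m_i$, tensoring with $k_i = A/\m_i$ yields $(J/J^2)\otimes_{A/J} k_i \iso J/(\m_i J + J^2) = J/\m_i J$, a $k_i$-vector space of dimension $\mu(J_{\m_i})$. The hypothesis that $J/J^2$ is generated by $n$ elements, combined with Corollary \ref{222} applied in $A_{\m_i}$, gives that $J_{\m_i}$ is generated by $n$ elements, while $\hh J_{\m_i} = n$ forces $\mu(J_{\m_i}) \geq n$. Hence $\dim_{k_i}(J/\m_i J) = n$, so $\alpha \otimes k_i$ and $\beta \otimes k_i$ are surjections between $n$-dimensional $k_i$-vector spaces, and therefore isomorphisms. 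Consequently $\phi \otimes k_i$ is an isomorphism for every $i$.

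Applying Nakayama at each local factor of the Artinian ring $A/J$, we deduce that $\phi$ is surjective. Since a surjective endomorphism of a finitely generated module over a Noetherian ring is automatically injective, $\phi$ is an automorphism. The main technical step is the dimension count $\dim_{k_i}(J/\m_i J) = n$, which reduces the question to computing $\mu(J_{\m_i})$; once this is established, the remainder is a routine Nakayama argument together with the standard fact just cited.
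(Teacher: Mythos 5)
Your proof is correct and follows essentially the same route as the paper: you lift one surjection through the other using projectivity of $(A/J)^n$, and verify the resulting endomorphism is an automorphism by the same dimension count ($\mu(J_{\m_i}) = n$ via Corollary \ref{222} and Krull height) followed by Nakayama. The paper simply outsources the automorphism verification to its Lemma \ref{lem2} rather than arguing it inline as you do.
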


\begin{proof}
We have two surjections $w_J : (A/J)^n \surj J/J^2$ and 
$\wt w_J : (A/J)^n \surj J/J^2$.  We will define a map 
$\psi : (A/J)^n \ra (A/J)^n$ such that $w_J \psi = \wt w_J$.  

Let $\{e_i, i = 1,\ldots,n\}$ be a basis of $(A/J)^n$. Given $\wt w_J
(e_i) = \ol {a_i}, w_J (e_i) = \ol {b_i}$. Let $\ol {a_i} = \sum 
\ol {c_{ij}} \ol {b_j}$. Define $\psi (e_i) = \sum_j 
c_{ij}e_j.$ Then $ w_J  \psi = \wt w_J$.  Now, by (\ref{lem2})
$\psi$ is an isomorphism.  Let det $(\psi) = \ol b$.
Then $\wt w_J = \ol b w_J$.  This proves the
lemma.  $\hfill \gj$
\end{proof}

\begin{lemma}\label{lem3.2}
Let $A$ be a ring and let $J \subset A$ be an ideal which is generated by
two elements $a_1, a_2$. Let $a \in A$ be a unit modulo $J$ and $b \in
A$ be such that $ab = 1$ modulo $J$. Suppose that the unimodular row
$(b,a_2,-a_1)$ is completable to a matrix in $\SL_3(A)$. Then, there
exists a matrix $\tau \in {\rm M}_2(A)$ with {\rm det} $(\tau) = a$ modulo
$J$ such that $[a_1,a_2]\tau^t = [b_1,b_2]$, where $b_1,b_2$ generate
$J$.
\end{lemma}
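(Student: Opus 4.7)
My plan is to extract $\tau$ directly from an $\SL_3(A)$-completion of $(b,a_2,-a_1)$. First I would verify that this row is genuinely unimodular: from $ab-1 \in J = (a_1,a_2)$ one writes $ab-1 = c_1 a_1 + c_2 a_2$, hence $b\cdot a + a_2\cdot(-c_2) + (-a_1)\cdot c_1 = 1$, so the hypothesis of completability is well-posed.

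Let $\sigma \in \SL_3(A)$ have first row $(b,a_2,-a_1)$, and let $\tau \in {\rm M}_2(A)$ be its lower-right $2\times 2$ block, with the two entries directly below $b$ called $x_1, x_2$. Set $[b_1,b_2] := [a_1,a_2]\tau^t$; this is the definition of $b_1,b_2$ forced on us. Expanding $\det\sigma = 1$ along the first row gives, after regrouping, the key identity
\[
b\det(\tau) + x_2 b_1 - x_1 b_2 = 1. \qquad (\ast)
\]

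From $(\ast)$ reduced modulo $J$ one has $b\det(\tau) \equiv 1 \pmod J$; multiplying by $a$ and using $ab \equiv 1 \pmod J$ yields $\det(\tau) \equiv a \pmod J$, which is the determinant conclusion. It remains to verify $(b_1,b_2) = J$. The inclusion $\subseteq$ is obvious from $b_i \in J$. For the reverse inclusion, the adjugate identity applied to the relation $(b_1,b_2)^t = \tau(a_1,a_2)^t$ yields $\det(\tau)\cdot a_i \in (b_1,b_2)$ for $i=1,2$; substituting this into $(\ast)$ rewritten as $1 - b\det(\tau) \in (b_1,b_2)$ gives
\[
a_i = b\cdot\bigl(\det(\tau)\,a_i\bigr) + \bigl(1 - b\det(\tau)\bigr)\,a_i \in (b_1,b_2),
\]
so $J \subseteq (b_1,b_2)$ as required.

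I expect no serious obstacle: the whole argument rests on the identity $(\ast)$, which simultaneously controls $\det(\tau)$ modulo $J$ \emph{and}, through the adjugate, upgrades the obvious containment $(b_1,b_2)\subseteq J$ to equality. The only non-obvious choice is to declare $\tau$ to be the $2\times 2$ block of $\sigma$ complementary to the entry $b$; once this is done, everything reduces to standard bookkeeping with cofactors.
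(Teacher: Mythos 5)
Your proof is correct and extracts the same $\tau$ as the paper: the $2\times 2$ block of an $\SL_3(A)$-completion $\sigma$ of $(b,a_2,-a_1)$ complementary to $b$, so that $b_1,b_2$ are forced to be $[a_1,a_2]\tau^t$. The only divergence is in proving $J=(b_1,b_2)$: the paper defines the surjection $\gamma:A^3\surj J$ sending $e_1\mapsto 0$, $e_2\mapsto a_1$, $e_3\mapsto a_2$, observes that the rows of $\sigma$ generate $A^3$ and hence their $\gamma$-images generate $J$, and notes that the first row maps to $a_1a_2-a_2a_1=0$ while the other two map to $b_1,b_2$ — a one-line argument exploiting surjectivity of $\gamma$. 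You instead combine the identity $(\ast)$ with the adjugate relation $\det(\tau)\,a_i\in(b_1,b_2)$. Both are valid; the paper's version is slightly slicker (no cofactor bookkeeping for the generation step), while yours has the advantage that a single determinant expansion $(\ast)$ simultaneously drives both the congruence $\det\tau\equiv a\pmod J$ and the inclusion $J\subseteq(b_1,b_2)$.
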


\begin{proof}
Choose a completion $\sigma \in \SL_3(A)$ of the unimodular row
$(b,a_2,-a_1)$. Suppose that second and third rows of $\sigma$ are
$(d,\lambda_{11},\lambda_{12})$ and $(e,\lambda_{21},\lambda_{22})$
respectively.

Let $\gamma : A^3 \surj J$ be a surjection given by 
$\gamma(e_1) = 0, \gamma(e_2) = a_1, \gamma(e_3) = a_2.$ The
vectors $(b,a_2,-a_1), (d,\lambda_{11},\lambda_{12})$ and
$(e,\lambda_{21},\lambda_{22})$ generate $A^3$. Hence, their images
under $\gamma$ generate $J$. Hence 
$J = (b_1,b_2)$, where $b_1 = a_1\lambda_{11} + a_2\lambda_{12}$
and  $b_2 = a_1\lambda_{21} + a_2\lambda_{22}$.  
Let $\tau = (\lambda_{ij}) \in {\rm M}_2(A)$. Since $\sigma \in
\SL_3(A)$ and $a_1,a_2 \in J$, we get det$(\tau) = a$ modulo $J$.
Further, $[a_1,a_2]\tau^t = [b_1,b_2]$.  This proves the lemma. $ \hfill
\gj$
\end{proof}

\begin{lemma}\label{lem3.3}
Let $A$ be a Noetherian ring of dimension $n \geq 2$, $J \subset A$ an
ideal of height $n$ and $w_J : (A/J)^n \surj J/J^2$ a
surjection. Suppose that $w_J$ can be lifted to a surjection $\alpha :
A^n \surj J$.  Let $a \in A$ be a unit modulo $J$. Let $\theta$ be an
automorphism of $(A/J)^n$ with determinant $\ol {a^2}$. Then, the
surjection $w_J \theta : (A/J)^n \surj J/J^2$ can be lifted to a
surjection $\gamma : A^n \surj J$.
\end{lemma}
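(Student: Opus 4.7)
The plan is to construct a surjection $\gamma : A^n \surj J$ lifting $w_J\theta$ by modifying the given lift $\alpha = [c_1,\ldots,c_n]$ of $w_J$ through a matrix whose determinant reduces to $\ol{a^2}$ modulo $J$. Throughout, write $\alpha(e_i) = c_i$, so $J = (c_1,\ldots,c_n)$ and $w_J = \alpha\otimes A/J$ is represented by $[\ol c_1,\ldots,\ol c_n]$.

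A first reduction: any two automorphisms of $(A/J)^n$ with determinant $\ol{a^2}$ differ by an element of $\SL_n(A/J) = {\rm E}_n(A/J)$, and by \ref{lem15} every such element lifts to $\SL_n(A)$. So it is enough to exhibit a surjection $A^n \surj J$ whose reduction modulo $J$ has the form $w_J\theta'$ for \emph{some} convenient $\theta'$ of determinant $\ol{a^2}$; composing with a lift of the corresponding $\SL_n(A/J)$-element then produces a lift of $w_J\theta$ itself.

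For the main construction, pick $c \in A$ with $ac \equiv 1 \pmod J$, so that $a^2c^2 \equiv 1 \pmod J$. Pass to $\ol A := A/(c_3,\ldots,c_n)$ (which is just $A$ when $n=2$); then $\ol J = (\ol c_1,\ol c_2)$ and $\ol{a^2}\,\ol{c^2} = 1$ in $\ol A/\ol J$. In particular, the row $(\ol{c^2},\ol c_2,-\ol c_1) \in \ol A^3$ is unimodular, and since its first entry is a square, Swan--Towber (\ref{Suslin}) completes it to a matrix in $\SL_3(\ol A)$. Running the argument of Lemma \ref{lem3.2} in $\ol A$ with $a^2$ in the role of $a$ and $c^2$ in the role of $b$, one obtains a matrix $\ol\tau \in {\rm M}_2(\ol A)$ with $\det\ol\tau \equiv \ol{a^2} \pmod{\ol J}$ and
$$[\ol c_1,\ol c_2]\,\ol\tau^t = [\ol b_1,\ol b_2], \qquad (\ol b_1,\ol b_2) = \ol J.$$
Lift $\ol\tau$ arbitrarily to $\tau \in {\rm M}_2(A)$, set $[b_1,b_2] := [c_1,c_2]\tau^t$, and take $\gamma := [b_1,b_2,c_3,\ldots,c_n]$. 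Reducing modulo $(c_3,\ldots,c_n)$ shows $(b_1,b_2)+(c_3,\ldots,c_n) = J$, so $\gamma : A^n \surj J$ is surjective, and $\gamma\otimes A/J = w_J \cdot T$ where $T = \diag(\tau^t\bmod J,\,I_{n-2})$ satisfies $\det T \equiv \ol{a^2} \pmod J$. By the first reduction, this lift can then be adjusted by a lifted $\SL_n(A/J)$-element to produce the desired lift of $w_J\theta$.

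The main obstacle is securing the hypothesis of Swan--Towber. The essential trick is to replace $a$ by $a^2$, which forces the required modular inverse to be $c^2$, a square, bringing the row into exactly the form to which Theorem \ref{Suslin} applies. Passing to the quotient $\ol A$ then converts the $n$-dimensional problem into a single $3$-row of Swan--Towber, which is why the argument works uniformly for all $n \geq 2$.
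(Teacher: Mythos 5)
Your proposal is correct and is essentially the paper's own proof: in both cases one passes to $\ol A = A/(c_3,\ldots,c_n)$, observes that the unimodular row $(\ol{c^2},\ol c_2,-\ol c_1)$ has a square first entry so Swan--Towber (\ref{Suslin}) applies, and feeds this into Lemma \ref{lem3.2} with $a^2$ and $c^2$ in the roles of $a$ and $b$ to produce a new set of generators of $J$ whose reduction modulo $J$ is $w_J\theta'$ with $\det\theta'=\ol{a^2}$. Your organization is slightly cleaner in stating the $\SL_n(A/J)=\mathrm{E}_n(A/J)$ reduction up front and in explicitly lifting $\ol\tau$ to $\tau\in\mathrm{M}_2(A)$ before defining $b_1,b_2$, but the mathematical content matches the paper's argument exactly.
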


\begin{proof}
Let $P = A^{n-2}$ and $\alpha : P \oplus A^2 \surj J$ be map defined
by $(a_3,\ldots,a_n,a_1,a_2)$ such that $w_J = \alpha\otimes A/J$.
 Let $J^\prime = (a_3,\ldots,a_n)$ and
let ``tilde'' denote reduction modulo $J^\prime$. Then 
$\wt \alpha : \wt A^2 \surj \wt J$ is defined by 
$\wt \alpha (0,1,0) = \wt {a_1}, \wt \alpha (0,0,1) = \wt {a_2}$. 

Since $a\in A$ is a unit modulo $J$, there exists an element $b\in A$
such that  
$a b = 1 (\mbox{mod}J)$. Then $(\wt {b^2},\wt{a_2},- \wt {a_1})$
is a unimodular row, which is completable to an invertible matrix in
$\SL_3(\wt A)$, by (\ref{Suslin}). Hence, by (\ref{lem3.2}), there
exists a matrix $\wt \tau  \in {\rm M}_2(\wt A)$ such that 
$[\wt {a_1}, \wt {a_2}]\wt \tau = [\wt {b_1},\wt {b_2}]$, where 
$\wt J = (\wt {b_1}, \wt {b_2})$ and det 
$(\wt\tau) = \wt {a^2} (\mbox{mod}J)$.

Define a surjection $\gamma^\prime : P\oplus A^2 \surj J$ by setting
$\gamma^\prime = \alpha$, on $P$ and $\gamma^\prime(0,1,0) = b_1,
\gamma^\prime (0,0,1) = b_2$. Define 
$\theta^\prime : (A/J)^n \iso (A/J)^n$ by $\left(\begin{matrix} I_{n-2} & 0\\
0 & \ol \tau \end{matrix} \right)$.

Then, det $\theta^\prime = \ol a^2$. It follows that
$w_J\theta^\prime = \gamma^\prime \otimes A/J$. Hence $w_J \theta'$
can be lifted to a surjection $A^n\surj J$.

Since $\dim (A/J) = 0$, we have $\SL_n(A/J) = \mbox{E}_n(A/J)$ and
the canonical map from $\SL_n(A) \surj \SL_n(A/J)$ is
surjective. Now, since det $\theta=$ det $\theta'=\ol {a^2}$, it follows
that  $w_J \theta$ can also be lifted to a surjection
$A^n\surj J$.
$\hfill \gj$
\end{proof}

\begin{lemma}\label{lem3.4}
Let $A$ be a Noetherian ring of dimension $n \geq 2$, $J \subset A$ an
ideal of height $n$ and $w_J$  a local orientation of $J$. Let $\ol
a \in A/J$ be a unit. Then $(J,w_J) = (J,\ol {a^2}w_J)$ in $E(A)$.
\end{lemma}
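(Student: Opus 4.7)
The plan is to reduce to Lemma \ref{lem3.3}, which already handles the special case where $(J,w_J)$ happens to be a global orientation. The two cases to treat are whether $(J,w_J)$ is zero in $E(A)$ or not.

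First, suppose $(J,w_J)=0$ in $E(A)$. Then by Theorem \ref{theo2.2}, $w_J$ lifts to a surjection $\alpha:A^n\surj J$. Choose any automorphism $\theta$ of $(A/J)^n$ with $\det\theta=\ol{a^2}$ (for instance, multiplication of the first basis vector by $\ol{a^2}$). Lemma \ref{lem3.3} applies directly and produces a lift $\gamma:A^n\surj J$ of $w_J\theta=\ol{a^2}w_J$. Hence $\ol{a^2}w_J$ is also a global orientation and $(J,\ol{a^2}w_J)=0=(J,w_J)$ in $E(A)$.

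Now suppose $(J,w_J)\neq 0$ in $E(A)$. By Lemma \ref{lem3.5}, there exists an ideal $J_1\subset A$ of height $n$, comaximal with $J$, together with a local orientation $w_{J_1}$ of $J_1$, such that $(J,w_J)+(J_1,w_{J_1})=0$ in $E(A)$. Set $J'=J\cap J_1$ and let $w_{J'}$ be the local orientation of $J'$ induced by $w_J$ on $J$ and $w_{J_1}$ on $J_1$ via the Chinese remainder decomposition $A/J'\iso A/J\times A/J_1$. By the definition of the Euler class group, $(J',w_{J'})=0$ in $E(A)$, so $w_{J'}$ lifts to a surjection $A^n\surj J'$.

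Using $A/J'\iso A/J\times A/J_1$, lift the unit $\ol a\in A/J$ to an element $\ol{a'}\in A/J'$ with $\ol{a'}\equiv\ol a\pmod J$ and $\ol{a'}\equiv\ol 1\pmod{J_1}$; this is clearly a unit modulo $J'$. Let $\theta'$ be an automorphism of $(A/J')^n$ with $\det\theta'=\ol{a'^2}$. Applying Lemma \ref{lem3.3} to the pair $(J',w_{J'})$, we conclude that $\ol{a'^2}w_{J'}$ also lifts to a surjection $A^n\surj J'$, so $(J',\ol{a'^2}w_{J'})=0$ in $E(A)$. But the Chinese remainder decomposition splits $\ol{a'^2}w_{J'}$ as $\ol{a^2}w_J$ on $J$ and $\ol{1}\,w_{J_1}=w_{J_1}$ on $J_1$, so $(J',\ol{a'^2}w_{J'})=(J,\ol{a^2}w_J)+(J_1,w_{J_1})$ in $E(A)$. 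Thus $(J,\ol{a^2}w_J)+(J_1,w_{J_1})=0$, and combining with $(J,w_J)+(J_1,w_{J_1})=0$ gives $(J,w_J)=(J,\ol{a^2}w_J)$. The main conceptual step is the observation that multiplying a local orientation by a square of a unit (rather than an arbitrary unit) is exactly what Lemma \ref{lem3.3} allows, via the Swan--Towber completion \ref{Suslin}; the rest is bookkeeping with Chinese remainders.
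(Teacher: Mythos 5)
Your proof is correct and follows essentially the same route as the paper's: the same case split, the same use of the Moving Lemma (through Lemma \ref{lem3.5}) to produce a comaximal $J_1$ with $(J,w_J)+(J_1,w_{J_1})=0$, and the same Chinese Remainder lift of $\ol a$ to a unit $\ol{a'}$ modulo $J'=J\cap J_1$ that is $\equiv \ol a$ modulo $J$ and $\equiv \ol 1$ modulo $J_1$, applied via Lemma \ref{lem3.3} on $J'$ (the paper's $b=a^2(1-x)+x$ is exactly your $\ol{a'^2}$ since $c^2-b=-x(1-x)(a-1)^2\in JJ_1\subset J'$ with $c=a(1-x)+x$). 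One small phrasing quibble: the inference ``$(J',w_{J'})=0$ in $E(A)$, so $w_{J'}$ lifts'' invokes Theorem \ref{theo2.2} rather than merely the definition of $E(A)$, although the Moving Lemma step already hands you the lift $A^n\surj J'$ directly, so that detour is avoidable.
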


\begin{proof}
If $(J,w_J) = 0$ in $E(A)$, then, by (\ref{theo2.2}), $w_J$ can be
lifted to a surjection from $A^n \ra J$. By (\ref{lem3.3}),
$\ol{a^2}w_J$ can also be lifted to a surjection from $A^n \ra
J$. Hence $(J,\ol {a^2}w_J) = 0$ in $E(A)$. Hence, the result follows
in this case.

Now, assume that $(J,w_J) \neq 0$ in $E(A)$, where $w_J : (A/J)^n \surj
J/J^2$ is a surjection.  By (\ref{cor14}), there exists an ideal
$J_1$ of height $n$ which is comaximal with $J$ and a surjection
$\alpha : A^n \surj J\cap J_1$ such that $\alpha\otimes A/J = w_J$
(If $J_1 = A$, then $(J,w_J) = 0$ in $E(A)$). Let $w_{J_1} =
\alpha \otimes A/J_1$.

Let $x + y = 1, x \in J, y\in J_1$. If we set $b = a^2(1-x)+x$,
then $b = a^2$ (mod $J$) and $b = 1$ (mod $J_1$).  Applying
(\ref{lem3.3}), there exists a surjection 
$\gamma : A^n \surj J\cap J_1$ such that $\;\gamma \otimes A/J = \ol
{a^2} w_J,\;\;\gamma \otimes A/J_1 = w_{J_1}$.

From the surjection $\alpha$, we get $(J,w_J) + (J_1,w_{J_1}) = 0$ in
$E(A)$ and from the surjection $\gamma$, we get 
$(J,\ol {a^2}w_J) + (J_1,w_{J_1}) = 0$ in $E(A)$.  Thus 
$(J,w_J) = (J,\ol {a^2} w_J)$ in $E(A)$.  This proves the
lemma. $\hfill \gj$
\end{proof}

\begin{lemma}\label{0.0}
Let $A$ be a Noetherian ring of dimension $n\geq 1$ and let $J\subset A$
be an ideal of height $n$. Let $f\neq 0 \in A$ such that $J A_f$ is a
proper ideal of $A_f$.  Assume $J A_f = (a_1,\ldots,a_n)$, where
$a_i\in J$. Then, there exists $\sigma\in \SL_n(A_f)$
such that $[a_1,\ldots,a_n]\sigma = [b_1,\ldots,b_n]$, where $b_i\in
J$ and $\hh (b_1,\ldots,b_n) = n$.
\end{lemma}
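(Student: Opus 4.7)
The plan is to prove the lemma by induction on $i$ for $1 \le i \le n$, constructing $b_1, \ldots, b_n \in J$ and a matrix $\sigma \in \SL_n(A_f)$ such that $[a_1, \ldots, a_n]\sigma = [b_1, \ldots, b_n]$ and $\hh(b_1, \ldots, b_i) \ge i$ in $A$ at the $i$-th stage. The matrix $\sigma$ will be assembled as a product of elementary matrices with entries in $A_f$, and each new $b_i$ will arise by adding to $a_i$ an element $v_i \in J$ lying in the $A_f$-span of the other entries; this is the mechanism by which we can keep every $b_i$ in $J$ while effecting genuinely non-$A$ modifications that alter the height.

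For the base step I seek $v \in J \cap (a_2, \ldots, a_n)A_f$ such that $b_1 := a_1 + v$ avoids every minimal prime of $A$. Writing such a $v$ as $v = \sum_{j \ge 2} \lambda_j a_j$ with $\lambda_j \in A_f$ realises the operation $a_1 \mapsto a_1 + v$ as an elementary matrix in $\mathrm{E}_n(A_f) \subseteq \SL_n(A_f)$. To apply Lemma \ref{manoj5}, I verify that $J \cap (a_2, \ldots, a_n)A_f$ is not contained in any minimal prime $\p$ of $A$ with $a_1 \in \p$. For $\p$ with $f \notin \p$, the hypothesis $JA_f = (a_1, \ldots, a_n)A_f$ together with $\hh J = n > 0 = \hh \p$ forces $(a_2, \ldots, a_n) \not\subset \p$ (otherwise $J \subset \p$, contradicting heights), so even the subideal $(a_2, \ldots, a_n)$ already has an element outside $\p$. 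For $\p$ with $f \in \p$, I use $J \not\subset \p$ together with the description of $(a_2, \ldots, a_n)A_f \cap A$ as the $f$-saturation of $(a_2, \ldots, a_n)$ in $A$ to exhibit an element of $J$ outside $\p$ that is still expressible as an $A_f$-combination of $a_2, \ldots, a_n$.

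The inductive step from $i$ to $i+1$ is analogous: given $b_1, \ldots, b_i \in J$ with $\hh(b_1, \ldots, b_i) \ge i$ realised by $[a_1, \ldots, a_n]\sigma_i = [b_1, \ldots, b_i, a_{i+1}, \ldots, a_n]$, I modify $a_{i+1}$ to $b_{i+1} = a_{i+1} + w$ where $w \in J$ lies in the $A_f$-span of $\{b_1, \ldots, b_i, a_{i+2}, \ldots, a_n\}$ and is chosen by Lemma \ref{manoj5} to avoid the finitely many minimal primes of $(b_1, \ldots, b_i)$ of height exactly $i$. The inequality $\hh J = n > i$ guarantees that $J$ is not contained in any of these primes, providing the room required for prime avoidance. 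Iterating to $i = n$ yields $b_1, \ldots, b_n \in J$ with $\hh(b_1, \ldots, b_n) = n$, assembled via $\sigma = \sigma_1 \sigma_2 \cdots \sigma_n \in \SL_n(A_f)$.

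The main obstacle is the delicate verification at each stage that the constrained subideal of $J$ (intersected with the appropriate $A_f$-span dictated by the elementary operation) is genuinely not contained in the problematic primes --- particularly the primes of $A$ containing $f$, where the direct localization argument in $A_f$ is unavailable and one must exploit the $f$-saturation of the relevant ideal together with the fact that $\hh J$ strictly exceeds the height of each bad prime.
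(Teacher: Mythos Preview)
Your inductive approach is genuinely different from the paper's, and it can be made to work, but the crucial step for primes containing $f$ is not justified as written.

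The paper does not build the height up coordinate by coordinate. Given $[b_1,\ldots,b_n]$ with $N(\sigma)=\hh(b_1,\ldots,b_n)<n$, it first rearranges via $E_n(A)$ so that $\hh(b_1,\ldots,b_{n-1})_{b_n}\ge n-1$, then shows that every minimal prime of $(b_1,\ldots,b_{n-1})$ containing $b_n$ must also contain $f$. Picking $x$ in the remaining minimal primes but outside these, one has $(xf)^r\in(b_1,\ldots,b_{n-1})$, and the conjugated matrix $\theta_1\theta\theta_1^{-1}$ with $\theta_1=\diag(1,\ldots,1,f^r)$ effects $b_n\mapsto b_n+x^r$ inside $\SL_n(A_f)$ while keeping entries in $J$. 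Your scheme replaces this conjugation trick by plain elementary matrices over $A_f$, which is conceptually cleaner and in fact lands in $E_n(A_f)$.

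The gap is this. At step $i\to i+1$ you must show that the perturbation ideal
\[
M \;=\; J \,\cap\, \bigl(K A_f\cap A\bigr), \qquad K=(b_1,\ldots,b_i,a_{i+2},\ldots,a_n),
\]
avoids each height-$i$ minimal prime $\p$ of $(b_1,\ldots,b_i)$ with $a_{i+1}\in\p$. For $\p$ with $f\in\p$ you invoke $J\not\subset\p$ and ``the $f$-saturation'', but simply choosing $g\in J\setminus\p$ does \emph{not} put $g$ into $K A_f\cap A=(K:f^\infty)$: you only know $g$ is an $A_f$-combination of \emph{all} entries $b_1,\ldots,b_i,a_{i+1},\ldots,a_n$, not of those omitting $a_{i+1}$. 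What is actually needed is $(K:f^\infty)\not\subset\p$. This does hold, but for a reason you do not state: if $K\subset\p$ (otherwise some generator of $K$ lies in $M\setminus\p$ and you are done), then since $\p$ is already minimal over $(b_1,\ldots,b_i)\subset K\subset\p$, it is also minimal over $K$; as $f\in\p$, the $\p$-primary component of $K$ is killed upon saturating by $f$, and a short primary-decomposition argument gives $(K:f^\infty)\not\subset\p$. Combined with $J\not\subset\p$ (from $\hh J=n>i$) and primeness of $\p$, you get $M\not\subset\p$, and then coset prime avoidance (Lemma~\ref{manoj5} applied to a generating set of $M$) furnishes the required $w$. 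With this point supplied, your induction goes through.
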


\begin{proof}
Let $I$ be the set $\{ \sigma \in \SL_n(A_f) : [a_1,\ldots,a_n]
\sigma = [b_1,\ldots,b_n],\;b_i \in J\}$. Then $I \neq \varnothing$,
since  $\id \in I$. For $\sigma \in I$, if $[a_1,\ldots,a_n] \sigma =
[b_1,\ldots,b_n] \in A^n$, $b_i\in J$,  let $N(\sigma)$ denote 
$\hh (b_1,\ldots,b_n)$. 
	Then, it is enough to prove that there exists
$\sigma \in I$ such that $N(\sigma) = n$. This is proved by showing
that for any $\sigma \in I$ with $N(\sigma) < n$, there exists
$\sigma_1 \in I$ such that $N(\sigma_1) > N(\sigma)$.

Let $\sigma \in I$ be such that $N(\sigma) < n$. Let $[a_1,\ldots,a_n]
\sigma = [b_1,\ldots,b_n] \in A^n$, $b_i\in J$. 
Then, by (\ref{Evans}), there exists
$[c_1,\ldots,c_{n-1}] \in A^{n-1}$ such that $\hh J'_{b_n} \geq n-1$,
where $J' = (b_1+c_1 b_n, \ldots, b_{n-1}+c_{n-1} b_n)$. 
The transformation $\tau$ sending $[b_1,\ldots,b_n]$
to $[b_1+c_1 b_n, \ldots, b_{n-1}+c_{n-1} b_n,b_n]$ is
elementary. 
	Hence $\sigma \tau \in I$. Note that $N(\sigma) = N(\sigma
\tau)$. Hence, if necessary, we can replace $\sigma$ by $\sigma \tau$
and assume that if a prime
ideal $\p$ of $A$ contains $(b_1,\ldots, b_{n-1})$ and does not
contain $b_n$, then, we have $\hh \p \geq n-1$. Now, we claim that
$N(\sigma) = \hh (b_1,\ldots, b_{n-1})$. 

We have $N(\sigma) \leq n-1$. Since $N(\sigma) = 
\hh(b_1,\ldots,b_n)$, we have $\hh (b_1,\ldots, b_{n-1}) \leq
N(\sigma) \leq n-1$. Let $\p$ be a minimal prime ideal of $(b_1,
\ldots, b_{n-1})$ such that $\hh \p = \hh (b_1,\ldots,b_{n-1})$. 
If $b_n \notin \p$, then $\hh \p) \geq n-1$. Hence, we have the
inequalities $n-1 \leq \hh (b_1,\ldots, b_{n-1}) \leq N(\sigma)
\leq n-1$. This implies that $N(\sigma) = \hh (b_1,\ldots,
b_{n-1}) = n-1$. If $b_n \in \p$, then $\hh (b_1,\ldots,b_{n-1}) = \hh
 (b_1,\ldots,b_n) = N(\sigma)$. This proves the claim.

Let $K$ denote the set of minimal prime ideals of
$(b_1,\ldots,b_{n-1})$. Let $K_1 = \{ \p \in K : b_n \in \p \}$ and
let $K_2 = K - K_1$. Note that $K_1 \neq \varnothing$. For, if $K_1 =
\varnothing$, no minimal prime ideal of $(b_1,\ldots, b_{n-1})$
contains $b_n$. Then $\hh (b_1,\ldots,b_n) > \hh
(b_1,\ldots,b_{n-1})$. But, $\hh (b_1,\ldots,b_n) = N(\sigma) = \hh
(b_1,\ldots,b_{n-1})$. This proves that $K_1 \neq \varnothing$.
Now, we claim that $f \in \p$ for all $\p \in K_1$.

For, if $f \notin \p$ for some $\p \in K_1$, then, since $\p$ is minimal
prime ideal of $(b_1,\ldots,b_{n-1})$, we have $\hh \p \leq
n-1$. Since $b_n \in \p \in K_1$, we have $\p$ is minimal prime
ideal of $(b_1,\ldots,b_n)$ and $f\notin \p$. Hence $\hh \p A_f = 
\hh \p \leq n-1$. But $(b_1,\ldots,b_n) A_f \subset \p A_f$ and 
$\hh (b_1,\ldots, b_n) A_f = \hh (a_1,\ldots,a_n) A_f = n$. This is
a contradiction. This proves that $f \in \p$ for all $\p \in
K_1$.

If $\p \in K$, then $\hh \p \leq n-1$, since $\p$ is a minimal prime
ideal of $(b_1,\ldots, b_{n-1})$. If $\p \in K_2$, then $b_n \notin \p$
and hence $\hh \p \geq n-1$. Therefore, $\hh \p = n-1$ for all $\p
\in K_2$. Since $\bigcap_{\p \in K_2}\p \not\subset \bigcup_{\p \in K_1}
\p$, choose $x \in \bigcap_{\p \in K_2}\p$ such that $x \notin
\bigcup_{\p \in K_1} \p$. Since $f \in \p$ for all $\p \in K_1$, we
have $x f \in \bigcap_{\p \in K} \p$. This implies that $(x f)^r \in
(b_1,\ldots, b_{n-1})$ for some positive integer $r$.

Let $(x f)^r = \sum_1^{n-1} d_i b_i$. Consider the matrix $\theta \in$
E$_n(A)$ which takes $[c_1,\ldots,c_n]$ to
$[c_1,\ldots,c_{n-1},c_n+ \sum_1^{n-1} d_i c_i]$.
Let $\theta_1$ be the matrix : diagonal $(1,\ldots,1,f^r)$.
Then $\theta_1 \in \GL_n(A_f)$. Let $\wt
\theta = \theta_1\theta_f \theta_1^{-1}$. Then $\wt \theta \in
\SL_n(A_f)$. Let $\sigma_1 = \sigma \wt \theta$. Then 

$[a_1,\ldots,a_n] \sigma_1 = [b_1,\ldots,b_n] \wt \theta
	= [b_1,\ldots,b_n]\theta_1\theta_f \theta_1^{-1}$

$=[b_1,\ldots,b_{n-1},f^r b_n]\theta_f \theta_1^{-1}$
$= [b_1,\ldots,b_{n-1},f^r b_n + \sum_1^{n-1} d_i b_i]
	\theta_1^{-1}$ 

$=[b_1,\ldots,b_{n-1},f^r b_n + f^r x^r]\theta_1^{-1}$ 
$= [b_1,\ldots,b_{n-1},b_n + x^r]$.

Now, we claim that no minimal prime ideal of $(b_1,\ldots,b_{n-1})$
contains $b_n + x^r$. For, if $\p \in K_1$, then
$b_n \in \p$, but $x \notin \p$. Hence $b_n + x^r \notin \p$ for all
$\p \in K_1$. If $\p \in K_2$, then $b_n \notin \p$, but $x \in
\p$. Hence $b_n + x^r \notin \p$ for all $\p \in K_2$. This proves the
claim. Hence $\hh (b_1,\ldots,b_{n-1}, b_n + x^r) >
\hh (b_1,\ldots,b_{n-1})$. Note that $\sigma_1 \in I$ and $N(\sigma_1) =
\hh (b_1,\ldots,b_{n-1},b_n+ x^r) > \hh
(b_1,\ldots,b_{n-1}) =  N(\sigma)$. This completes the proof of the
lemma. $\hfill \gj$
\end{proof}

\begin{lemma}\label{lem3.6}
Let $A$ be an affine domain over a field $k$ of dimension $n \geq 2$
and let $f$ be a non-zero element of $A$. Let $J \subset A$ be an ideal of
height $n$ such that $J/J^2$ is generated by $n$ elements. Suppose
that $(J,w_J) \neq 0$ in $E(A)$, but the image of $(J,w_J) = 0$ in
$E(A_f)$. Then, there exists an ideal $J_2$ of height $n$ such that
$(J_2)_f = A_f$ and $(J,w_J) = (J_2,w_{J_2})$ in $E(A)$.
\end{lemma}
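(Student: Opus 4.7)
The strategy is to use Lemma \ref{lem3.5} to trade $(J,w_J)$ for the negative of an element supported away from $V(f)$, and then to exploit the hypothesis at that localization to split off a piece supported on $V(f)$.

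Since $(J,w_J) \neq 0$ in $E(A)$ and $\hh fA = 1$ (we may assume $f$ is not a unit, otherwise the hypothesis is vacuous), Lemma \ref{lem3.5} furnishes an ideal $J_1 \subset A$ of height $n$ comaximal with $fJ$ and a local orientation $w_{J_1}$ satisfying $(J,w_J) + (J_1, w_{J_1}) = 0$ in $E(A)$. Hence $(J_1, w_{J_1}) = 0$ in $E(A_f)$, and since $J_1$ is comaximal with $f$ every associated prime of $J_1$ avoids $f$, so $J_1 A_f$ is a proper height-$n$ ideal in $A_f$, which is an affine domain of dimension $n \geq 2$. Theorem \ref{theo2.2} applied over $A_f$ then produces generators $a_1,\ldots,a_n \in A_f$ of $J_1 A_f$ realizing $w_{J_1} \otimes A_f$ as a global orientation.

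Clearing denominators, I write $a_i = b_i/f^N$ with $b_i \in A$ and $N$ chosen so that $nN$ is even. Comaximality of $J_1$ with $f$ gives $J_1 A_f \cap A = J_1$, so $b_i \in J_1$, and Lemma \ref{0.0} produces $\sigma \in \SL_n(A_f)$ with $[c_1,\ldots,c_n] := [b_1,\ldots,b_n]\sigma$ satisfying $c_i \in J_1$ and $\hh L = n$ in $A$, where $L := (c_1,\ldots,c_n)$. Tracking determinants, the surjection $[c_1,\ldots,c_n]$ onto $J_1 A_f$ differs from $[a_1,\ldots,a_n]$ by the scalar matrix $\mathrm{diag}(f^N,\ldots,f^N)$ of determinant $f^{nN} = (f^{nN/2})^2$. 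Under the isomorphism $A/J_1 \iso A_f/J_1 A_f$ (valid because $f$ is a unit modulo $J_1$), this identifies the restricted orientation $w_L|_{J_1}$ with $(\overline{f^{nN/2}})^2 \cdot w_{J_1}$ up to $\SL$, and Lemma \ref{lem3.4} absorbs the square to give $(J_1, w_L|_{J_1}) = (J_1, w_{J_1})$ in $E(A)$.

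For the final decomposition, each associated prime $\mathfrak m$ of $L$ either contains $J_1$ (in which case $f \notin \mathfrak m$ and the local identity $L A_\mathfrak m = J_1 A_\mathfrak m$ forces the $L$-component at $\mathfrak m$ to equal that of $J_1$) or contains $f$ but not $J_1$ (contributing to an ideal $L_2$ with $(L_2)_f = A_f$). This gives the comaximal splitting $L = J_1 \cap L_2$ with $J_1 + L_2 = A$. Because $L$ is generated by the $n$ elements $c_i$, $(L, w_L) = 0$ in $E(A)$, and together with the additivity $(L,w_L) = (J_1,w_L|_{J_1}) + (L_2,w_L|_{L_2})$ and the preceding identification, this yields $(J_1, w_{J_1}) = -(L_2, w_L|_{L_2})$. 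Hence $(J, w_J) = -(J_1, w_{J_1}) = (L_2, w_L|_{L_2})$, and setting $J_2 := L_2$ and $w_{J_2} := w_L|_{L_2}$ gives the desired pair. The main subtlety is the parity of $N$: if $nN$ is odd then $f^{nN}$ need not be a square in $A/J_1$, Lemma \ref{lem3.4} cannot absorb the scaling, and the identification of $w_L|_{J_1}$ with $w_{J_1}$ breaks down.
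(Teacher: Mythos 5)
Your proof is correct and follows essentially the same route as the paper's: Lemma \ref{lem3.5} to produce the comaximal $J_1$, Theorem \ref{theo2.2} over $A_f$ to get generators of $(J_1)_f$, clearing denominators by an even power of $f$ so Lemma \ref{lem3.4} absorbs the scaling, Lemma \ref{0.0} to move to generators of a height-$n$ ideal in $A$, and a primary-decomposition argument splitting that ideal as $J_1 \cap J_2$ with $(J_2)_f = A_f$. The only cosmetic difference is that the paper fixes the parity issue up front by multiplying by $f^{2k}$, whereas you introduce $f^N$ and then impose that $nN$ be even — functionally the same, and you correctly flag this as the delicate point.
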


\begin{proof}
Since $(J,w_J) \neq 0$ in $E(A)$ and $(J,w_J) = 0$ in $E(A_f)$, $f\in
A$ is not a unit. By (\ref{lem3.5}), we can choose an ideal $J_1$ of
height $n$ which is comaximal with $J$ and $(f)$ such that $(J,w_J) +
(J_1,w_{J_1}) = 0$ in $E(A)$. We have $(J_1,w_{J_1}) \neq 0$ in
$E(A)$. Since the image of $(J,w_J) = 0$ in $E(A_f)$, it follows that
the image of $(J_1,w_{J_1}) = 0$ in $E(A_f)$. By (\ref{theo2.2}),
$w_{J_1}$ induces a global orientation of $(J_1)_f$. 
	Hence $(J_1)_f =
(b_1,\ldots,b_n)$ and $w_{J_1} \otimes A_f$ is induced by the set of
generators $b_1,\ldots,b_n$ of $(J_1)_f$ mod $(J_1)_f^2$. Choose $k$
large enough such that $f^{2k}b_i \in J_1, 1\leq i \leq n$. Since
$f$ is a unit modulo $J_1$, by (\ref{lem3.4}), $(J_1,w_{J_1}) =
(J_1,\ol {f^{2kn}}w_{J_1})$ in $E(A)$.  
	Hence, without loss of
generality, we can assume that $b_i \in J_1$.  Now, by lemma
(\ref{0.0}), we get $\sigma \in \SL_n(A_f)$ such that if
$[b_1,\ldots,b_n]\sigma = [c_1,\ldots,c_n]$, then $c_i \in J_1$ and
$c_1,\ldots,c_n$ generate an ideal of height $n$ in $A$.
We claim that $(c_1,\ldots,c_n)=J_1\cap J_2$, where $(J_2)_f = A_f$. 

Let $(c_1,\ldots,c_n)=\G_1\cap\ldots \cap \G_r\cap\G_{r+1}\ldots\cap
\G_t$ be a reduced primary decomposition, where $\G_i$ is
$\m_i$-primary ideal. Assume that $f\in \m_i$ for $r+1\leq i\leq t$
and $f\notin \m_i$ for $1\leq i\leq r$. Then $(c_1,\ldots,c_n)_f
=(J_1)_f= \bigcap_1^r (\G_i)_f$. We observe that $J_1=\bigcap_1^r
\G_i$. This follows easily from the fact that $f$ is a unit modulo
$J_1$ and modulo $\G_i,1\leq i\leq r$. Write $J_2 = \bigcap_{r+1}^t
\G_i$. Then, we have $(c_1,\ldots,c_n)=J_1 \cap J_2$ and $(J_2)_f =
A_f$. 

Note that $A/J_1 \iso A_f/(J_1)_f$. The image of $\sigma\in
\SL_n(A_f)$ in $\SL_n(A_f/(J_1)_f)$ gives rise to an element in
$\SL_n(A/J_1)$ and hence the $n$ generators of $(J_1)_f/(J_1)_f^2$ gives
rise to $n$ generators of $J_1/J_1^2$. 

Now, $J_1 \neq (c_1,\ldots,c_n)$, since $(J_1,w_{J_1}) =(J_1,\ol
{f^{2kn}} w_{J_1}) \neq 0$ in $E(A)$.  Hence $J_2$ is a proper ideal
of height $n$. Since $(f) + J_1 = A$ and $(J_2)_f=A_f$, we have $J_1 +
J_2 = A$. Since det $(\sigma) = 1$, $\ol {f^{2kn}}w_{J_1}$ is given by
the set of generators $c_1,\ldots,c_n$ of $J_1$ modulo
$J_1^2$. 
	Therefore, $(J_1,\ol {f^{2kn}} w_{J_1}) + (J_2,w_{J_2}) = 0$
in $E(A)$, where $w_{J_2}$ is given by the set of generators
$c_1,\ldots,c_n$ of $J_2$ modulo $J_2^2$. Since $(J,w_J) +
(J_1,w_{J_1}) = 0 $ in $E(A)$, it follows that $(J,w_J) =
(J_2,w_{J_2})$ in $E(A)$.  This proves the lemma. $\hfill \gj$
\end{proof}

\begin{rem}
The hypothesis $A$ is an affine domain ensures that $\dim A_f =n$ and
hence $E(A_f)$ is defined.
\end{rem}

\begin{lemma}\label{lem3.7}
Let $A$ be an affine domain over a field $k$ of dimension $n \geq 2$
and let $P$ be a projective $A$-module of rank $n$ having trivial
determinant. Let $f \in A$ be a non-zero element. Suppose that the
projective $A_f$-module $P_f$ has a unimodular element. Then, there
exists a surjection $\alpha : P \surj J$, where $J \subset A$ is an
ideal of height $n$ such that $J_f = A_f$.
\end{lemma}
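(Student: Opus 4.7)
The plan is to first reinterpret the hypothesis in terms of the Euler class, then transport the vanishing in $E(A_f)$ back to $E(A)$ via Lemma \ref{lem3.6}, and finally invoke Corollary \ref{cor2.3} to realize the resulting algebraic identity as an actual surjection.

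Concretely, fix an orientation $\chi : A \iso \wedge^n(P)$ and, using Lemma \ref{cor13}, choose a surjection $\lambda : P \surj J_0$ onto an ideal $J_0$ of height $n$; the pair $(\lambda,\chi)$ yields a local orientation $w_{J_0}$ with $e(P,\chi) = (J_0, w_{J_0})$ in $E(A)$. Since $A$ is an affine domain of dimension $n$, the Jacobson--Nullstellensatz property (every maximal ideal has height $n$ and intersects to the nilradical) guarantees that the nonzero element $f$ lies outside some height $n$ maximal ideal, so $\dim A_f = n$ and $E(A_f)$ is defined. The assumption that $P_f$ has a unimodular element combines with Corollary \ref{cor2.4} applied over $A_f$ to give $e(P_f,\chi_f) = 0$; hence the image of $(J_0, w_{J_0})$ under the canonical map $E(A) \to E(A_f)$ vanishes.

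I now split into two cases. If $(J_0, w_{J_0}) \neq 0$ in $E(A)$, Lemma \ref{lem3.6} applies directly and supplies a height $n$ ideal $J \subset A$ with $J_f = A_f$ together with a local orientation $w_J$ such that $(J, w_J) = (J_0, w_{J_0}) = e(P,\chi)$ in $E(A)$. If on the other hand $(J_0, w_{J_0}) = 0$ in $E(A)$, then $f$ must be a nonunit (otherwise the conclusion of the lemma is vacuous since $J_f = A_f$ would force $J$ to be improper); I construct $J$ by hand. Since every maximal ideal of $A$ has height $n$, $f$ lies in some height $n$ maximal ideal $\mathfrak{m}$; starting from $\{f\}$ and iteratively applying Lemma \ref{manoj5} inside $\mathfrak{m}$, I produce $g_2,\ldots,g_n$ such that $J := (f, g_2, \ldots, g_n)$ has height exactly $n$. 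Equipping $J$ with the global orientation $w_J$ induced by these $n$ generators yields $(J, w_J) = 0 = e(P, \chi)$ in $E(A)$, and $f \in J$ trivially forces $J_f = A_f$.

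In both cases we have produced a representative $e(P,\chi) = (J, w_J)$ with $\hh J = n$ and $J_f = A_f$. Corollary \ref{cor2.3} then yields a surjection $\alpha : P \surj J$ such that $(J, w_J)$ is obtained from the pair $(\alpha, \chi)$, and this $\alpha$ is the desired surjection. The main obstacle is the transport step, which is exactly the content of Lemma \ref{lem3.6}; the only delicate point is the degenerate case $(J_0, w_{J_0}) = 0$ in $E(A)$, where Lemma \ref{lem3.6}'s nonvanishing hypothesis fails and must be replaced by the short prime-avoidance construction above.
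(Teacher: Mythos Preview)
Your proof is correct and follows essentially the same approach as the paper. The paper's case split is on whether $P$ itself has a unimodular element---equivalent by Corollary~\ref{cor2.4} to your split on whether $(J_0,w_{J_0})=0$ in $E(A)$---and both arguments then invoke Lemma~\ref{lem3.6} in the nonzero case and, in the zero case, produce a height-$n$ ideal generated by $n$ elements containing $f$ and apply Corollary~\ref{cor2.3}.
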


\begin{proof}
Suppose $P$ has a unimodular element. Then, by 
(\ref{cor2.4}), $e(P,\chi) =
0$ in $E(A)$. Let $J$ be an ideal of height $n$ and generated by $n$
elements such that $f \in J$. Then, for some $w_J$, 
 $(J,w_J) = 0 = e(P,\chi)$ in
$E(A)$. By (\ref{cor2.3}), there exists a surjection $\alpha :
P \surj J$ and $J_f = A_f$.

Now, assume that $P$ has no unimodular element. Let $e(P,\chi) =
(J,w_J)$, where $J$ is an ideal of height $n$ such that $J/J^2$ is
generated by $n$ elements. Now $(J,w_J) \neq 0$ in $E(A)$, but its
image in $E(A_f) = 0$. Therefore, by (\ref{lem3.6}), there exists an
ideal $J_2$ of height $n$ such that $({J_2})_f = A_f$ and $(J,w_J) =
(J_2,w_{J_2})$ in $E(A)$. Then $e(P,\chi) = (J_2,w_{J_2})$. Hence, by
(\ref{cor2.3}), there exists a surjection $\alpha : P \surj J_2$
such that $(J_2,w_{J_2})$ is obtained from $(\alpha,\chi)$.  This
proves the lemma. $\hfill \gj$
\end{proof}

\begin{defin} 
Let $A$ be a Noetherian ring of dimension $n$ and let $P$ be a projective
$A$-module of rank $n$. Let $\alpha : P \surj J$ be a surjection. We
say that $\alpha$ is a {\it generic} surjection, if $J$ has height
$n$. In this case $J$ is said to be a {\it generic surjection ideal}
or {\it generic section ideal} of $P$.
\end{defin}

\begin{lemma}\label{lem3.8}
Let $A$ be an affine domain over a field $k$ of dimension $n \geq 2$
and let $P$ be a projective $A$-module of rank $n$ having trivial
determinant. Let $f \in A$ be a non-zero element. Assume that every
generic surjection ideal of $P$ is generated by $n$ elements. Then,
every generic surjection ideal of $P_f$ is generated by $n$ elements.
\end{lemma}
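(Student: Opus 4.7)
Let $\beta : P_f \surj J'$ be a generic surjection onto an ideal $J' \subset A_f$ of height $n$; I must show that $J'$ is generated by $n$ elements. My strategy is to relate $J'$ to a generic section ideal of $P$ in $A$ (which is generated by $n$ elements by hypothesis) via the homotopy of Lemma \ref{lem1.1} together with Proposition \ref{prop1.2}, and then to apply the Subtraction Principle twice.

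First, pick any generic surjection $\alpha : P \surj J_0$ of $P$ in $A$ (one exists by applying Lemma \ref{cor13} with $a=1$ to any nonzero homomorphism $P \to A$). By hypothesis $J_0 = (c_1,\ldots,c_n)$. Consider $\alpha_f : P_f \to (J_0)_f$. If $(J_0)_f = A_f$, then $\alpha_f$ exhibits $P_f$ with a unimodular element, and Corollary \ref{cor2.4} immediately implies that every generic section ideal of $P_f$ -- including $J'$ -- is generated by $n$ elements, finishing the proof. Otherwise $K := (J_0)_f$ is a proper ideal of $A_f$ of height $n$ generated by $c_1/1, \ldots, c_n/1$.

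Apply Lemma \ref{lem1.1} in $A_f$ to the two generic surjections $\beta$ and $\alpha_f : P_f \surj K$: this produces an ideal $\wt I \subset A_f[T]$ of height $n$ together with a surjection $\wt\alpha(T) : P_f[T] \surj \wt I$ with $\wt\alpha(0) = \beta$, $\wt\alpha(1) = \alpha_f$, $\wt I(0) = J'$, and $\wt I(1) = K$. Now apply Proposition \ref{prop1.2} to $\wt\alpha(T)$ in $A_f[T]$ (valid since $\mathbb Q \subset A_f$ makes $(n-1)!$ invertible and $P_f$ has trivial determinant with orientation $\chi_f$): this yields a homomorphism $\phi : A_f^n \to P_f$ with $\wedge^n \phi = u\,\chi_f$ for a unit $u \equiv 1 \pmod{\wt I \cap A_f}$, an ideal $K'' \subset A_f$ of height $\geq n$ comaximal with $\wt I \cap A_f$, and a surjection $\rho(T) : A_f[T]^n \surj \wt I \cap K''A_f[T]$ satisfying the compatibilities $\wt\alpha(T)\phi(T) \otimes A_f[T]/\wt I = \rho(T) \otimes A_f[T]/\wt I$ and $\rho(0) \otimes A_f/K'' = \rho(1) \otimes A_f/K''$. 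Specializing at $T=0$ gives $\rho(0) : A_f^n \surj J' \cap K''$, so $J' \cap K''$ is generated by $n$ elements, and at $T=1$ we obtain $K \cap K''$ generated by $n$ elements.

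Finally, apply the Subtraction Principle (Theorem \ref{theo1.4}) twice in $A_f$. First, applied to the comaximal ideals $K$ and $K''$ with $J_2 = K \cap K''$ (generated by $\rho(1)(e_i)$) and $J_1 = K$ (generated by the $c_i/1$), the principle yields $K''$ generated by $n$ elements; the required generator-matching modulo $K^2$ is arranged using the condition $u \equiv 1 \pmod{\wt I \cap A_f \subseteq K}$, which forces the orientation on $K$ induced by $\rho(1)$ to coincide (up to $\SL_n$-equivalence, absorbable into the choice of generators) with the global orientation coming from the $c_i/1$. Second, applied to $J'$ and $K''$ with $J_2 = J' \cap K''$ and $J_1 = K''$ (using $\rho(0) \equiv \rho(1) \pmod{K''}$ to match generators), the principle yields $J'$ generated by $n$ elements. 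The principal technical obstacle is precisely this generator-matching at each application; it is handled by Proposition \ref{prop1.2}'s compatibility statements together with the unit condition $u \equiv 1$ modulo $\wt I \cap A_f$, with Lemma \ref{lem3.4} available to absorb any residual squared-unit discrepancies.
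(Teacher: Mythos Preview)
There is a genuine gap in your first application of the Subtraction Principle. You claim that the condition $u\equiv 1\pmod{\wt I\cap A_f}$ forces the orientation on $K$ induced by $\rho(1)$ to agree (up to $\SL_n$) with the global orientation coming from the generators $c_i/1$. This is not what that condition gives you. From Proposition~\ref{prop1.2}(iii) one has $\rho(1)\otimes A_f/K=\ol{\alpha_f}\circ\ol\phi$, and since $\ol u=1$ in $A_f/K$ we get $\wedge^n\ol\phi=\ol{\chi_f}$. Thus $\rho(1)\otimes A_f/K$ is exactly the \emph{Euler-class} orientation of $K$ attached to $(P_f,\chi_f)$ via $\alpha_f$. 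The global orientation $e_i\mapsto \ol{c_i/1}$ is a different object: the two differ by a unit $\ol v\in (A_f/K)^\ast$ which encodes the discrepancy between $\chi_f$ and the trivialization of $P_f/KP_f$ implicit in the generators $c_i$. There is no reason for $\ol v$ to be $1$, and Lemma~\ref{lem3.4} only absorbs \emph{square} units, so your fallback does not close the gap. Consequently the hypothesis of Theorem~\ref{theo1.4} (that the generators of $K\cap K''$ and of $K$ agree modulo $K^2$) cannot be arranged, and the first subtraction fails.

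A structural way to see the problem: your argument uses the hypothesis ``every generic section ideal of $P$ is generated by $n$ elements'' only once, to produce a single $J_0=(c_1,\ldots,c_n)$. If the argument were valid it would prove the conclusion from the weaker hypothesis that \emph{some} generic section ideal of $P$ is $n$-generated; but that weaker statement is false in general (e.g.\ for the tangent bundle of the real $2$-sphere some generic section ideals are $2$-generated and others are not). The paper's proof is organized quite differently: it pulls the target ideal back to $J'=\wt J\cap A\subset A$, uses Lemma~\ref{lem3.6} to write $e(P,\chi)-(J',w_{J'})=(J_2,w_{J_2})$ in $E(A)$ with $(J_2)_f=A_f$, and then invokes Corollary~\ref{cor2.3} to realize $J'\cap J_2$ as a \emph{new} generic section ideal of $P$. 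Only at that point is the full hypothesis applied, yielding $J'\cap J_2$ $n$-generated, and localizing gives $\wt J=(J'\cap J_2)_f$ $n$-generated. The key step you are missing is precisely this construction of an auxiliary generic section ideal of $P$ in $A$ whose localization is $\wt J$.
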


\begin{proof}
Let $\beta : P_f \surj \wt J$ be a generic surjection. Let
$J' = \wt J \cap A$. Then $J'A_f = \wt J$ and $(f)+J'=A$. 
Let $\chi$ be a generator of $\wedge^n(P)$, and let
$(J^\prime_f,w_{J^\prime_f})$ be obtained from $(\beta,\chi_f)$. Since
$\ol f \in A/J'$ is a unit, by (\ref{lem3.4}), we may replace
$w_{J^\prime_f}$ by $\ol {f^{2m}}w_{J^\prime_f}$ for some large
integer $m$ and assume that $w_{J^\prime_f}$ is given by a set of
generators of $J^\prime/{J^\prime}^2$ which induce $w_{J^\prime}$. The
element $e(P,\chi) - (J^\prime,w_{J^\prime})$ of $E(A)$ is zero in
$E(A_f)$. 
If $e(P,\chi) - (J^\prime,w_{J^\prime}) = 0$ in $E(A)$, then $e(P,\chi)
= (J',w_{J'})$. Hence, there exists a surjection from $P$ to $J'$, by 
(\ref{cor2.3}). By assumption, $J'$ is generated by $n$
elements. Hence $\wt J$ is generated by $n$ elements.
Therefore, assume otherwise. By (\ref{200}),
$ e(P,\chi) - (J^\prime,w_{J^\prime}) = (J_2,w_{J_2})$ in
$E(A)$.
	By (\ref{lem3.6}), we can assume that 
$({J_2})_f = A_f$. Since $J^\prime + Af = A$, we have 
$J^\prime + J_2 = A$.  Hence $e(P,\chi) = (J^\prime,w_{J^\prime}) +
(J_2,w_{J_2}) = (J' \cap J_2,w_{J' \cap J_2})$ in $E(A)$, where 
$w_{J' \cap J_2}$ is obtained from $w_{J'}$ and $w_{J_2}$. 
	By (\ref{cor2.3}), there exists a
surjection $\gamma : P \surj J^\prime \cap J_2$. By hypothesis,
$J^\prime \cap J_2$ is generated by $n$ elements, as it is a generic
surjection ideal of $P$. Hence 
$J^\prime_f = (J^\prime \cap J_2)_f$ is generated by $n$
elements. This proves the lemma. $\hfill \gj$
\end{proof}

\begin{theorem}\label{lem3.9}
Let $A$ be an affine domain over $\R$ of dimension $n \geq 2$ and let
$P$ be
a projective $A$-module of rank $n$ having trivial determinant.
Assume that for every generic surjection $\alpha : P \surj J$, the
generic surjection ideal $J$ is generated by $n$ elements. Then $P$
has a unimodular element.
\end{theorem}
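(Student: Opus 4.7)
Suppose, for a contradiction, that $P$ has no unimodular element. By Corollary~\ref{cor2.4} the Euler class $e(P,\chi)$ is non-zero in $E(A)$, and by Corollary~\ref{cor2.3} we may write $e(P,\chi)=(J,w_J)$ with $J \subset A$ an ideal of height $n$ realized as $\alpha(P)$ for some surjection $\alpha:P\surj J$. Thus $J$ is a generic surjection ideal of $P$, so by the hypothesis it is generated by $n$ elements; let $J = (a_1,\ldots,a_n)$ and write $w_J^0$ for the resulting global orientation. By Lemma~\ref{lem3.0} there is a unit $\bar b \in A/J$ with $w_J = \bar b\cdot w_J^0$, and by Lemma~\ref{lem3.4} the class $(J,w_J)$ depends only on $\bar b$ modulo squares; since $(J,w_J)\ne 0$, the unit $\bar b$ is not a square in $A/J$.

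Next, invoke Lemma~\ref{lem3.5} to produce a height-$n$ ideal $J_1$ comaximal with $J$, together with a local orientation $w_{J_1}$, satisfying $(J,w_J)+(J_1,w_{J_1})=0$ in $E(A)$. Identifying $-e(P,\chi)$ with $e(P,-\chi)$ (twisting $\chi$ by $-1$ flips the induced orientation $w_J$ and hence the class in $E(A)$) and applying Corollary~\ref{cor2.3} to the pair $(P,-\chi)$ one obtains a surjection $\gamma:P\surj J_1$ realizing $(J_1,w_{J_1})$. Hence $J_1$ is also a generic surjection ideal of $P$, and by the hypothesis $J_1$ too is generated by $n$ elements.

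The vanishing $(J\cap J_1, w_{J\cap J_1}) = (J,w_J)+(J_1,w_{J_1}) = 0$ in $E(A)$, combined with Theorem~\ref{theo2.2}, produces $n$ generators $J\cap J_1=(c_1,\ldots,c_n)$ whose reductions modulo $(J\cap J_1)^2$ induce $w_{J\cap J_1}$. Restricting modulo $J$ shows that $(\bar c_1,\ldots,\bar c_n) \in J/J^2$ induces $w_J$, so it suffices to lift these to genuine generators of $J$ agreeing with the $c_i$ modulo $J^2$ in order to conclude that $w_J$ is itself a global orientation of $J$, giving $(J,w_J)=0$ and the desired contradiction. To perform this lift, Lemma~\ref{lem11} produces $J=(c_1,\ldots,c_n)+(e)$ with $e\in J^2$ and $\bar e$ idempotent in $J/(c_1,\ldots,c_n)$; the idempotent trick of Lemma~\ref{Mohan}, together with the fact that $J$ is already $n$-generated by $(a_1,\ldots,a_n)$, then absorbs $e$ into the existing $c_i$'s by modifications lying in $J^2$.

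The main obstacle is this last absorption step: the $c_i$ generate $J\cap J_1$, which is strictly smaller than $J$, so producing $n$ genuine generators of $J$ matching the $c_i$ modulo $J^2$ is delicate. This is where the real structure of $A$ enters: the zero-dimensional algebra $A/J$ decomposes as a finite product of copies of $\R$ and $\C$, and the sign obstructions at the real factors (the only places where $\bar b$ can fail to be a square) are controlled because by Lemma~\ref{lem3.8} the generation hypothesis passes to every localization $P_f$, and Lemmas~\ref{lem3.6}--\ref{lem3.7} let us navigate between generic surjection ideals while preserving $n$-generation; this combined flexibility is precisely what ensures the absorption step succeeds, finishing the contradiction.
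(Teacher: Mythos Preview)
Your argument has two genuine gaps.

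\textbf{First, the identity $-e(P,\chi)=e(P,-\chi)$ is false.} Twisting $\chi$ by $-1$ changes the induced local orientation from $w_J$ to $-w_J$, so $e(P,-\chi)=(J,-w_J)$ in $E(A)$. But $(J,-w_J)$ is \emph{not} the same as $-(J,w_J)$; the latter equals $(J_1,w_{J_1})$ by your choice of $J_1$, and there is no mechanism in $E(A)$ that identifies $(J,-w_J)$ with $(J_1,w_{J_1})$. Consequently you cannot invoke Corollary~\ref{cor2.3} for $(P,-\chi)$ to produce a surjection $P\surj J_1$, and you have no reason to believe $J_1$ is a generic surjection ideal of $P$.

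\textbf{Second, even granting that $J_1$ is $n$-generated, your ``absorption step'' is circular.} From $(J\cap J_1,w_{J\cap J_1})=0$ and Theorem~\ref{theo2.2} you obtain generators $c_1,\ldots,c_n$ of $J\cap J_1$ inducing $w_{J\cap J_1}$. To deduce that $w_J$ is global you would need the Subtraction Principle (Theorem~\ref{theo1.4}), which requires generators of $J_1$ inducing $w_{J_1}$ modulo $J_1^2$. But $(J_1,w_{J_1})=-(J,w_J)\neq 0$, so $w_{J_1}$ is \emph{not} global: any $n$ generators of $J_1$ give some other orientation $\bar c\,w_{J_1}$ with $\bar c$ a non-square unit. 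You have only moved the obstruction from $J$ to $J_1$. The final paragraph does not repair this; it gestures at Lemmas~\ref{lem3.6}--\ref{lem3.8} without saying how they force $\bar c$ (or $\bar b$) to be a square.

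The paper's proof is organized quite differently. For a generic surjection $\alpha:P\surj J$ it counts $N(P,\alpha)$, the number of \emph{real} maximal ideals containing $J$, and sets $t(P)=\min_\alpha N(P,\alpha)$. If $t(P)=0$ then $A/J$ is a product of copies of $\C$, so every unit is a square and Lemma~\ref{lem3.4} gives $(J,w_J)=(J,\wt w_J)=0$. If $t(P)=1$ then at the unique real factor either $\bar u$ or $-\bar u$ is a square, so $(J,w_J)$ equals either $(J,\wt w_J)$ or $(J,-\wt w_J)$, both of which vanish. The heart of the proof is then an induction showing $t(P)\le 1$: pick $f\in A$ vanishing exactly at all but one of the real points of $J$; Lemma~\ref{lem3.8} passes the hypothesis to $P_f$, the two base cases give $P_f$ a unimodular element, and Lemma~\ref{lem3.7} pulls back a generic surjection ideal $J_1$ with $(J_1)_f=A_f$, hence with strictly fewer real points. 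This is where the real structure is actually used, and it is an argument you have not supplied.
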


\begin{proof}
To any generic surjection $\alpha : P \surj J$, we associate an
integer $N(P,\alpha)$, which is equal to the number of real maximal
ideals containing $J$ (if $\M$ is a maximal ideal of $A$
containing $J$, then it is called {\it real} if the quotient field
$A/\M$ is isomorphic to $\R$, otherwise it is called a {\it complex}
maximal ideal and in this case $A/\M$ is isomorphic to $\C$). Let
$t(P)$ = min $N(P,\alpha)$, where $\alpha$ varies over all generic
surjections of $P$.
\paragraph{Case 1.} 
Suppose that $t(P) = 0$. Let $\alpha : P \surj J$ be a generic
surjection with $N(P,\alpha) = 0$. This means that $J$ is contained
only in complex maximal ideals. By assumption, $J$ is generated by $n$
elements. These $n$ elements give rise to $\wt w_J$, a local orientation
of $J$, such that the
element $(J,\wt w_J) = 0$ in $E(A)$. Let $\chi$ be a generator of
$\wedge^n(P)$ and $e(P,\chi) = (J,w_J)$ in $E(A)$. Then, by
(\ref{lem3.0}), $(J,w_J) = (J,\ol u \wt w_J)$ in $E(A)$, where
$\ol u \in A/J$ is a unit. Since $J$ is contained only in complex
maximal ideals, $\ol u$ is a square. It follows now from
(\ref{lem3.4}), that $e(P,\chi) = (J,w_J) = (J,\ol u\wt w_J) =
(J,\wt w_J) = 0$ in $E(A)$. Therefore, by (\ref{cor2.4}), $P$ has a
unimodular element.

\paragraph{Case 2.}
Suppose that $t(P) = 1$. Let $\alpha : P \surj J$ be a generic
surjection with $N(P,\alpha) = 1$. This means that $J$ is contained
only in one real maximal ideal. By assumption, $J$ is generated by $n$
elements. These $n$ elements give rise to  $\wt w_J$, a local 
orientation of $J$,
such that the element $(J,\wt
w_J) = 0 = (J,-\wt w_J)$ in $E(A)$. Let $\chi$ be a generator of
$\wedge^n(P)$ and $e(P,\chi) = (J,w_J)$ in $E(A)$. Let $(J,w_J) =
(J,\ol u\wt w_J)$ in $E(A)$, where $\ol u\in A/J$ is a unit. 
Then, since $J$ is contained only in
one real maximal ideal, it follows, as in case 1, that either $\ol u \in
A/J$ is a square or $-\ol u$ is a square. Therefore, it follows that
either $(J,w_J) = (J,\wt w_J)$ or $(J,w_J) = (J,-\wt w_J)$ in
$E(A)$. In any case, $(J,w_J) = 0$ in $E(A)$ and hence, by
(\ref{cor2.4}), $P$ has a unimodular element.

\paragraph{Case 3.}
Now, we show that under the assumption of the theorem $t(P) \leq 1$ and
hence the theorem will be proved. Suppose $N(P,\alpha) = r \geq
2$. Let $\m_1,\ldots,\m_r$ be the real maximal ideals containing
$J$. Let $f\in A$ be chosen so that $f$ belongs to only the real
maximal ideals $\m_2,\ldots,\m_r$ (Such an $f$ exists, for choose a set
of generators $h_1,\ldots,h_k$ of $\m_2 \cap \ldots \cap \m_r$. Take
$f = h_1^2 + \ldots +h_k^2$).  Then $N(P_f,\alpha_f) = 1$ and hence
$t(P_f) \leq 1$. Since, for every generic surjection $\alpha : P \surj
J, J$ is generated by $n$ elements, it follows from
(\ref{lem3.8}), that for every generic surjection $\beta : P_f \surj
J^\prime_f, J^\prime_f$ is generated by $n$ elements. Hence, by cases
1 and 2, $P_f$ has a unimodular element. Therefore, by
(\ref{lem3.7}), there exists a surjection $\gamma : P \surj J_1$,
where $J_1$ is an ideal of height $n$ such that $({J_1})_f = A_f$. Since
$\m_2,\ldots,\m_r$ are the only real maximal ideals containing $f$, it
follows that $N(P,\gamma) = r-1$. Repeating this process, we see that
$t(P) \leq 1$. This proves the theorem.  $\hfill \gj$
\end{proof}


\chapter{The Weak Euler Class Group of a Noetherian Ring}

Let $A$ be a Noetherian ring of dimension $n \geq 2$. We define the
{\it weak Euler class group} $E_0(A)$ of $A$ as follows:

Let $S$ be the set of ideals $\n \subset A$ such that $\n/\n^2$ is
generated by $n$ elements (where $\n$ is $\M$-primary ideal for some
maximal ideal $\M$ of height $n$). Let $G$ be the free abelian group on
the set $S$.

Let $J = \cap\n_i$ be the intersection of finitely many ideals $\n_i$,
where $\n_i$ is $\M_i$-primary and $\M_i$ being distinct maximal
ideals of height $n$. Assume that $J/J^2$ is generated by $n$
elements. We associate to $J$, the element $\sum\n_i$ of $G$. By
abuse of notation, we denote this element of $G$ by $(J)$.  
	Let $H$ be the
subgroup of $G$ generated by elements of the type $(J)$, where
$J\subset A$ is an ideal of height $n$ generated by $n$ elements.
\vspace*{.1in} 

\begin{define} The weak Euler class group of $A$
is defined as $E_0(A) = G/H$.
\end{define}

Let $P$ be a projective $A$-module of rank $n$ with trivial
determinant and let $\lambda : P \surj J_0$ be a surjection, where $J_0
\subset A$ is an ideal of height $n$. We define $e(P) = (J_0)$ in
$E_0(A)$. We show that this assignment is well defined.

Let $\mu : P \surj J_1$ be another surjection, where $J_1$ is an ideal
of height $n$. Then, by (\ref{lem1.1}), there exists a surjection
$\alpha(T) : P[T] \surj I$, where $I \subset A[T]$ is an ideal of
height $n$ with $\alpha(0) = \lambda$ and $\alpha(1) = \mu$. Now,
using (\ref{prop1.2}), there exists an ideal $K$ of height $n$
comaximal with $I\cap A$ such that $I\cap KA[T]$ is generated by $n$
elements. Therefore $J_0 \cap K$ and $J_1\cap K$ are generated by $n$
elements. Hence $(J_0) = (J_1)$ in $E_0(A)$.

We note that there is a canonical surjective homomorphism from $E(A)$
to $E_0(A)$ obtained by forgetting  orientations.

The aim of this chapter is to prove theorem (\ref{theo}), i.e. if 
$A$ is Noetherian ring of even dimension $n$, then $(J)$ is zero in 
$E_0(A)$ if and only if $J$ is a surjective image of a stably free 
$A$-module of rank $n$. This is proved along the same lines as
(\ref{theo2.2}): first we prove some
addition and subtraction principles (\ref{prop4.1}), and then using
the group theoretic lemma (\ref{result1}), we prove the theorem.

\begin{lemma}\label{lem6}
Let $A$ be a ring and let $P$ be a projective $A$-module of rank $n$. Let
$\alpha$ be any element of $P^{\ast}$. Let $p_0,p_1,\ldots,p_n$
be $n+1$ elements of $P$. Let $w_i \in \wedge^n(P)$ be defined as
follows : $w_0 = \alpha(p_0)(p_1 \wedge p_2 \wedge \ldots \wedge
p_n)$, $ w_i = \alpha(p_i)(p_0 \wedge \ldots \wedge p_{i-1} \wedge
p_{i+1} \wedge \ldots \wedge p_n), \;\;1 \leq i \leq n.$ Then
$\sum_{i=0}^n (-1)^i w_i = 0.$
\end{lemma}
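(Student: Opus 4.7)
The plan is to recognize the alternating sum $\sum_{i=0}^n (-1)^i w_i$ as the value of a contraction map $\iota_\alpha : \wedge^{n+1}(P) \to \wedge^n(P)$ applied to $p_0 \wedge p_1 \wedge \cdots \wedge p_n$, and then to use the fact that $\wedge^{n+1}(P) = 0$ because $P$ has rank $n$.

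Concretely, I would first define the map $\phi : P^{n+1} \to \wedge^n(P)$ by
$$\phi(p_0, \ldots, p_n) = \sum_{i=0}^{n} (-1)^i \,\alpha(p_i)\,(p_0 \wedge \cdots \wedge \widehat{p_i} \wedge \cdots \wedge p_n),$$
where the hat denotes omission. Multilinearity of $\phi$ is immediate from the formula. To check that $\phi$ is alternating, assume $p_j = p_{j+1}$ for some $j$; then for $i \notin \{j, j+1\}$ the wedge factor of the $i$-th summand contains both $p_j$ and $p_{j+1}$ and hence vanishes, while the $i = j$ and $i = j+1$ summands produce the same wedge product (namely $p_0 \wedge \cdots \wedge \widehat{p_j} \wedge \cdots \wedge p_n$) multiplied by $\alpha(p_j) = \alpha(p_{j+1})$ with opposite signs $(-1)^j$ and $(-1)^{j+1}$, so they cancel. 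Hence $\phi$ is alternating and factors through a homomorphism $\tilde\phi : \wedge^{n+1}(P) \to \wedge^n(P)$ with $\tilde\phi(p_0 \wedge \cdots \wedge p_n) = \sum_{i=0}^n (-1)^i w_i$.

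Next I would show $\wedge^{n+1}(P) = 0$. A finitely generated module is zero iff all of its localizations at prime ideals are zero, and formation of exterior powers commutes with localization. For each prime $\mathfrak{p}$ of $A$, the module $P_\mathfrak{p}$ is free of rank $n$ over $A_\mathfrak{p}$ by Corollary \ref{manoj4}, so $\wedge^{n+1}(P_\mathfrak{p}) \cong \wedge^{n+1}(A_\mathfrak{p}^{\,n}) = 0$. Therefore $\wedge^{n+1}(P) = 0$, which forces $\tilde\phi = 0$, and evaluating at $p_0 \wedge p_1 \wedge \cdots \wedge p_n$ yields $\sum_{i=0}^n (-1)^i w_i = 0$.

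There is no real obstacle here: the verification that $\phi$ is alternating is a short sign-check, and the vanishing of $\wedge^{n+1}$ of a rank-$n$ projective module is a formal consequence of local freeness. Alternatively, one could argue directly in the free case, writing $p_i = \sum_j c_{ij}e_j$ and $\alpha(e_j) = \beta_j$, and recognizing $\sum_{i=0}^n (-1)^i w_i$ as the determinant of the $(n+1) \times (n+1)$ matrix whose first column is $(\alpha(p_i))_i$ and whose remaining columns are those of $C = (c_{ij})$; since the first column is $C(\beta_1, \ldots, \beta_n)^t$, the columns are dependent and the determinant vanishes. The general case then follows by localization.
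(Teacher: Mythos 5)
Your proof is correct, and it takes a genuinely different route from the paper's, even though both rest on the single fact $\wedge^{n+1}(P)=0$. You build the interior-product (contraction) map explicitly: you define the multilinear map $\phi$, verify by a sign computation that it is alternating, conclude that it factors through $\wedge^{n+1}(P)$, and then use localization to show $\wedge^{n+1}(P)=0$, so the induced map vanishes and $\sum(-1)^iw_i=0$. The paper instead embeds $P$ into $A\oplus P$ via $\gamma:p\mapsto(\alpha(p),p)$, notes that $\wedge^{n+1}\gamma(p_0\wedge\cdots\wedge p_n)$ is zero because the source $\wedge^{n+1}(P)$ is zero, expands the image inside $\wedge^{n+1}(A\oplus P)$ to obtain $e\wedge w$ (plus a term lying in $\wedge^{n+1}(P)$, hence also zero), and concludes $w=0$ because $e\wedge(-):\wedge^n(P)\to\wedge^{n+1}(A\oplus P)$ is an isomorphism. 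The $A\oplus P$ device lets the paper inherit alternation for free from the exterior algebra of a module one rank higher, at the cost of needing to know that $e\wedge(-)$ is injective; your construction makes the contraction operator explicit at the cost of a direct alternation check (which you carry out correctly — vanishing on adjacent repeats suffices). Both are clean; your version is perhaps more elementary and self-contained, while the paper's is slicker once the $A\oplus P$ trick is familiar.
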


\begin{proof}
Let $e$ denote the element $(1,0) \in A \oplus P$. The map
$x \mapsto e \wedge x$ is an isomorphism from $\wedge^n(P) \by \theta
\wedge^{n+1}(A \oplus P).$
Let $w = \sum_{i=0}^n (-1)^i w_i.$ Now, consider the map 
$\gamma : P \ra A \oplus P$ defined by
$\gamma(p) = (\alpha(p),p).$ We obtain an induced map
$\wedge^{n+1}\gamma : \wedge^{n+1}P \ra \wedge^{n+1}(A \oplus P)$.
We get $\wedge^{n+1} \gamma (p_0\wedge\ldots\wedge p_n) = e \wedge w +
p_0\wedge\ldots\wedge p_n.$ But $\wedge^{n+1}(P) = 0$ hence 
$e \wedge w = 0$. But, the map $\theta$ is an isomorphism, 
hence $ w = 0$.
 
$\hfill \gj$
\end{proof}

\begin{lemma}\label{lem7}
Let $A$ be a Noetherian ring and let $P$ be a projective $A$-module 
of rank $n$. Suppose that we are given the following short 
exact sequence 
$$0 \rightarrow P_1 \rightarrow A\oplus P \longby {(b,-\alpha)}  
A \rightarrow 0.$$ 
Let $(a_0,p_0)\in  A\oplus P$ be such that 
$a_0b - \alpha(p_0) = 1.$
Let $q_i = (a_i,p_i) \in P_1$ , $1 \leq i \leq n.$ Then, 

(1) The map $\delta : \wedge^n(P_1) \rightarrow \wedge^n(P)$ given by 
$\delta (q_1 \wedge \ldots \wedge q_n) 
= a_0(p_1 \wedge \ldots \wedge p_n) +
\sum_{i=1}^n (-1)^i a_i (p_0 \wedge \ldots \wedge p_{i-1} \wedge
p_{i+1} \wedge \ldots \wedge p_n)$ \\
is an isomorphism. 

(2) $\delta (bq_1 \wedge \ldots \wedge q_n) = p_1 \wedge \ldots \wedge p_n.$
\end{lemma}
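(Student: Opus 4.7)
The plan is to exploit the splitting of the given short exact sequence furnished by $(a_0,p_0)$, and to recognise $\delta$ as the composite of two natural determinant isomorphisms. Since $a_0 b-\alpha(p_0)=1$, the map $A\to A\oplus P$, $1\mapsto (a_0,p_0)$, is a section of $(b,-\alpha)$, so $A\oplus P = P_1\oplus A\cdot(a_0,p_0)$ and $P_1$ is projective of rank $n$. In particular $\wedge^{n+1}(P_1)=0$, so the map $\tilde\delta:\wedge^n(P_1)\to \wedge^{n+1}(A\oplus P)$ sending $\eta\mapsto (a_0,p_0)\wedge \eta$ is an isomorphism. Combined with the isomorphism $\theta:\wedge^n(P)\iso \wedge^{n+1}(A\oplus P)$, $x\mapsto e\wedge x$ (with $e=(1,0)$), already used in the proof of Lemma \ref{lem6}, this produces an isomorphism $\Delta = \theta^{-1}\circ\tilde\delta$ from $\wedge^n(P_1)$ to $\wedge^n(P)$. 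Part (1) will amount to showing $\Delta=\delta$.

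To verify $\Delta=\delta$, write $q_i=a_i e+\tilde p_i$ with $\tilde p_i=(0,p_i)$ and expand $(a_0,p_0)\wedge q_1\wedge\cdots\wedge q_n$ multilinearly. Terms in which $e$ appears twice or more vanish, while the unique term with no $e$ lies in $\wedge^{n+1}$ of the $P$-summand and is therefore zero. The surviving terms carry $e$ in exactly one slot; moving this $e$ to the front introduces a sign $(-1)^i$, yielding
$$(a_0,p_0)\wedge q_1\wedge\cdots\wedge q_n \;=\; e\wedge\Bigl(a_0\, p_1\wedge\cdots\wedge p_n + \sum_{i=1}^n (-1)^i a_i\, p_0\wedge\cdots\widehat{p_i}\cdots\wedge p_n\Bigr).$$
Applying $\theta^{-1}$ reproduces the formula defining $\delta$, proving (1).

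For (2) I would run the same expansion, this time with $q_1$ replaced by $bq_1=(ba_1,bp_1)$; after pulling the scalar $b$ out of each affected wedge factor and using the identities $a_0 b=1+\alpha(p_0)$ and $b a_i=\alpha(p_i)$ for $i\ge 1$ (the latter holding because $q_i\in P_1 = \ker(b,-\alpha)$), one obtains
$$\delta(bq_1\wedge q_2\wedge\cdots\wedge q_n) \;=\; p_1\wedge\cdots\wedge p_n \;+\; \alpha(p_0)\,p_1\wedge\cdots\wedge p_n \;+\; \sum_{i=1}^n(-1)^i \alpha(p_i)\,p_0\wedge\cdots\widehat{p_i}\cdots\wedge p_n.$$
Lemma \ref{lem6}, applied to $\alpha$ and the vectors $p_0,p_1,\ldots,p_n$, says exactly that the last two summands cancel, leaving $p_1\wedge\cdots\wedge p_n$. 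The main bookkeeping obstacle is tracking the signs when collecting the $n+1$ surviving single-$e$ terms in the expansion; once that is in hand both parts fall out mechanically from the two scalar relations together with Lemma \ref{lem6}.
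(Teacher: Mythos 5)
Your proof is correct and follows essentially the same route as the paper: both recognize $\delta$ as $\theta^{-1}\circ\tilde\delta$ where $\theta:\wedge^n(P)\to\wedge^{n+1}(A\oplus P)$, $x\mapsto e\wedge x$, and $\tilde\delta:\wedge^n(P_1)\to\wedge^{n+1}(A\oplus P)$, $\eta\mapsto (a_0,p_0)\wedge\eta$, are the two isomorphisms arising from the splitting $A\oplus P = A(a_0,p_0)\oplus P_1$, and the multilinear expansion identifies $\theta\delta = \tilde\delta$. For (2) the paper also proceeds by pulling the scalar $b$ through $\delta$, substituting $ba_0 = 1+\alpha(p_0)$ and $ba_i=\alpha(p_i)$, and invoking Lemma~\ref{lem6} to cancel the residual terms, exactly as you do.
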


\begin{proof}
Let $e=(1,0), f=(a_0,p_0)$ in $A\oplus P$. Then
$A\oplus P = Af\oplus P_1$ and as in (\ref{lem6}),
$f\wedge q_1\wedge \ldots\wedge q_n = e\wedge w$ in 
$\wedge^{n+1}(A\oplus P)$,  where 
$w = a_0(p_1\wedge \ldots\wedge p_n) + \sum_{i=1}^n (-1)^i
a_i(p_0\wedge \ldots\wedge p_{i-1}\wedge p_{i+1}\wedge \ldots \wedge p_n).$

Since the map $x\mapsto e\wedge x$ is an isomorphism from $\wedge^n(P)$ to
$\wedge^{n+1}(A \oplus P),$ result (1) follows. 
Since $q_i=(a_i,p_i) \in P_1$, we have $ba_i= \alpha(p_i), 
1\leq i\leq n.$ Moreover, $ba_0 = 1+\alpha(p_0)$.
Therefore, (2) follows from (\ref{lem6}).
$\hfill \gj$
\end{proof}

\begin{lemma}\label{lem8}            
Let $A$ be a Noetherian ring and let $P$ be a projective $A$-module of rank
$n$. Suppose that we are given the following exact sequence 
$$0 \rightarrow P_1 \rightarrow A\oplus P \longby {(b,-\alpha)} A
	\rightarrow 0.$$ 
Then, $(i)$The map $\beta : P_1 \ra A$ given by $\beta (q) = c$, where
$q = (c,p)$, has the property that $\beta (P_1) = \alpha (P).$ 
$(ii)$ The map $\Phi : P \ra P_1$ given by $\Phi (p) = (\alpha
(p),bp)$ has the property that $\beta \Phi = \alpha$ and $\delta
\wedge^n (\Phi)$, where $\delta$ is as in (\ref{lem7}), is scalar
multiplication by $b^{n-1}.$
\end{lemma}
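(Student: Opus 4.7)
The plan is to handle both parts directly, driven by the splitting of the surjection $(b,-\alpha):A\oplus P\surj A$: by projectivity of $A$ there exists $(a_0,p_0)\in A\oplus P$ with $ba_0-\alpha(p_0)=1$, and this identity is the engine for every computation below.

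For $(i)$, I would first establish $\alpha(P)\subseteq\beta(P_1)$ directly via the map $\Phi$: for any $p\in P$ the pair $(\alpha(p),bp)$ is killed by $(b,-\alpha)$ and hence lies in $P_1$, while $\beta$ sends it to $\alpha(p)$. For the reverse inclusion, given an arbitrary $q=(c,p)\in P_1$ (so $bc=\alpha(p)$), multiply $ba_0-\alpha(p_0)=1$ by $c$ and use the $A$-linearity of $\alpha$ to get
$$c\;=\;c(ba_0)-c\alpha(p_0)\;=\;a_0\alpha(p)-\alpha(cp_0)\;=\;\alpha(a_0p-cp_0)\in\alpha(P).$$

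For $(ii)$, the identity $\beta\Phi=\alpha$ is immediate from the definitions. The real content is the determinant statement. Given $x_1,\ldots,x_n\in P$, set $q_j=\Phi(x_j)=(\alpha(x_j),bx_j)$ and plug these into the formula for $\delta$ provided by Lemma \ref{lem7}(1), taking $a_i=\alpha(x_i)$ and second component $bx_i$; pulling out powers of $b$ from the wedge products yields
$$\delta(\wedge^n(\Phi)(x_1\wedge\cdots\wedge x_n))=a_0b^n(x_1\wedge\cdots\wedge x_n)+b^{n-1}\sum_{i=1}^n(-1)^i\alpha(x_i)(p_0\wedge x_1\wedge\cdots\wedge\widehat{x_i}\wedge\cdots\wedge x_n).$$
The crucial observation is that the alternating sum on the right is precisely the vanishing identity of Lemma \ref{lem6}, applied to the functional $\alpha$ and the $n+1$ vectors $p_0,x_1,\ldots,x_n$; by that lemma it equals $-\alpha(p_0)(x_1\wedge\cdots\wedge x_n)$. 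Substituting this and invoking $ba_0-\alpha(p_0)=1$ collapses the right-hand side to $b^{n-1}(x_1\wedge\cdots\wedge x_n)$, which is the asserted scalar action.

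The only real obstacle is spotting the match in part $(ii)$: recognizing the alternating sum produced by the definition of $\delta$ as a concrete instance of Lemma \ref{lem6}. Once that identification is made the scalar $b^{n-1}$ drops out after one line of arithmetic, and no further ingenuity is required.
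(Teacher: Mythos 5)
Your proof is correct and follows essentially the same route as the paper: part $(i)$ via the splitting identity $ba_0-\alpha(p_0)=1$ and the observation that $\alpha(a_0p-cp_0)=c$, and part $(ii)$ by substituting $\Phi(x_i)=(\alpha(x_i),bx_i)$ into the formula for $\delta$, factoring out powers of $b$, and invoking Lemma \ref{lem6} together with the splitting identity. The only cosmetic difference is the order of algebraic regrouping (the paper rewrites $a_0b^n=b^{n-1}(1+\alpha(p_0))$ before applying Lemma \ref{lem6}, whereas you apply the lemma first and then collect the $b^{n-1}$ factor); the content is identical.
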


\begin{proof}
Let $c\in \beta(P_1),$. Then, there exists $q=(c,p) \in P_1$ such that
$\beta(q) = c$. Since $q\in P_1$, we have $b c = \alpha(p)$.  Also,
there exists $q_0=(a_0,p_0) \in A\oplus P$ such that $a_0
b-\alpha(p_0) =1$. Now, $\alpha(a_0 p-c p_0)=c$, hence $c\in
\alpha(P)$.  Conversely, let $c=\alpha(p),\;p\in P$. Then $b
c=\alpha(b p)$.  This shows that $(c,b p)\in P_1$, and hence $c\in
\beta(P_1)$. This proves the first part.

The map 
$\delta : \wedge^n(P_1) \rightarrow     \wedge^n(P)$ is given by 
$$\delta (q_1 \wedge \ldots \wedge q_n) = a_0(p_1 \wedge \ldots \wedge
	p_n) + \sum_{i=1}^n (-1)^i a_i (p_0 \wedge \ldots \wedge p_{i-1}
	\wedge p_{i+1} \wedge \ldots \wedge p_n),$$ 
where $q_i=(a_i,p_i) \in P_1, 1\leq i\leq n,$ and $(a_0,p_0) \in
A\oplus P.$ We have 

$\delta \wedge^n(\Phi) (p_1\wedge \ldots\wedge p_n)  =
	\delta((\alpha(p_1),bp_1)\wedge \ldots\wedge 
	(\alpha(p_n),bp_n)) $

$= a_0b^n(p_1 \wedge \ldots \wedge p_n)+\sum_{i=1}^n (-1)^i
	\alpha(p_i) b^{n-1} (p_0 \wedge \ldots\widehat{\wedge p_i} \ldots
	\wedge p_n) $

$ =  b^{n-1} ((1+\alpha(p_0))(p_1\wedge \ldots\wedge p_n)
	+\sum_{i=1}^n (-1)^i \alpha(p_i)
 	(p_0 \wedge \ldots\widehat{\wedge p_i} \ldots \wedge p_n)) $

$= b^{n-1}(p_1\wedge \ldots\wedge p_n+\alpha(p_0)(p_1\wedge
	\ldots\wedge p_n)+ \sum_{i=1}^n (-1)^i \alpha(p_i) (p_0 \wedge
	\ldots\widehat{\wedge p_i} \ldots \wedge p_n)) $

$ = b^{n-1}(p_1\wedge \ldots\wedge p_n),  \mbox{by
	(\ref{lem6})}$.  
$\hfill \gj$
\end{proof}

\begin{lemma}\label{lem3.1}
Let $A$ be a Noetherian ring of dimension $n \geq 2$. Let $P$ be a
projective $A$-module of rank $n$ with trivial determinant and let $\chi$
be an orientation of $P$. Let $\alpha : P \surj J$ be a surjection,
where $J \subset A$ be an ideal of height $n$ and let $(J,w_J)$ be
obtained from $(\alpha,\chi)$. Let $a,b \in A$ be such that $ab = 1$
modulo $J$ and let $P_1$ be the kernel of the surjection 
$(b,-\alpha) : A \oplus P \surj A$. Let $\beta : P_1 \surj J$ be as in
(\ref{lem8}) and let $\chi_1$ be the orientation of $P_1$ given by
$\delta^{-1} \chi : A \by \sim \wedge^n(P_1)$ (where $\delta$ is as in
(\ref{lem7})). Then $(J,\ol {a^{n-1}}w_J)$ is obtained from
$(\beta,\chi_1)$.
\end{lemma}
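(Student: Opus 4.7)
The plan is to verify directly from the definitions that the local orientation obtained from the pair $(\beta,\chi_1)$ coincides with $\ol{a^{n-1}}\,w_J$. By definition, this orientation has the form $\ol\beta\,\ol\gamma_1$, where $\ol\gamma_1 : (A/J)^n \iso P_1/J P_1$ is any isomorphism satisfying $\wedge^n(\ol\gamma_1) = \ol\chi_1$. I would construct such a $\ol\gamma_1$ by pushing the isomorphism $\ol\gamma$ (used to define $w_J$) forward through $\Phi$ and then correcting the determinant by a suitable automorphism of $(A/J)^n$.

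The first step is to translate Lemma \ref{lem8}(ii) into a statement about orientations. The identity $\delta\,\wedge^n(\Phi) = b^{n-1}\cdot\id_{\wedge^n(P)}$, applied to $\chi \in \wedge^n(P)$ together with $\chi_1 = \delta^{-1}\chi$, gives $\wedge^n(\Phi)(\chi) = b^{n-1}\chi_1$. Reducing modulo $J$, this yields $\wedge^n(\ol\Phi\,\ol\gamma) = \ol{b^{n-1}}\,\ol\chi_1$ in $\wedge^n(P_1/JP_1)$. Since $\ol b$ is a unit in $A/J$ (because $ab \equiv 1$ modulo $J$) and the sequence $0 \to P_1 \to A \oplus P \to A \to 0$ is split, its reduction modulo $J$ remains exact and the map $\ol\Phi : P/JP \to P_1/JP_1$, $\ol p \mapsto (0,\ol b\,\ol p)$, is an isomorphism. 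So $\ol\Phi\,\ol\gamma : (A/J)^n \iso P_1/JP_1$ is an isomorphism whose top exterior power is $\ol{b^{n-1}}\,\ol\chi_1$.

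Choosing any $\sigma \in \Aut((A/J)^n)$ with $\det(\sigma) = \ol{a^{n-1}}$ (for instance $\diag(\ol{a^{n-1}},1,\ldots,1)$, which is admissible since $\ol a$ is a unit) and setting $\ol\gamma_1 := \ol\Phi\,\ol\gamma\,\sigma$, one gets
\[
\wedge^n(\ol\gamma_1) \;=\; \ol{b^{n-1}}\,\ol{a^{n-1}}\,\ol\chi_1 \;=\; \ol{(ab)^{n-1}}\,\ol\chi_1 \;=\; \ol\chi_1,
\]
so $\ol\gamma_1$ is a valid choice for computing the orientation attached to $(\beta,\chi_1)$. Using the relation $\beta\Phi = \alpha$ from Lemma \ref{lem8}(ii), one then computes
\[
\ol\beta\,\ol\gamma_1 \;=\; (\ol\beta\,\ol\Phi)\,\ol\gamma\,\sigma \;=\; \ol\alpha\,\ol\gamma\,\sigma \;=\; w_J\,\sigma,
\]
which, since $\det(\sigma) = \ol{a^{n-1}}$, is by definition the local orientation $\ol{a^{n-1}}\,w_J$ introduced just before Lemma \ref{lem3.0}. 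The one step I expect to require genuine care is the verification that $\ol\Phi$ is an isomorphism, for which the splitting of the defining sequence is essential; everything else reduces to bookkeeping with exterior powers and the single algebraic identity $\ol{(ab)^{n-1}} = 1$ in $A/J$.
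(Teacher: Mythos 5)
Your proof is correct and takes essentially the same route as the paper: reduce $\delta\,\wedge^n(\Phi) = b^{n-1}\cdot\id$ modulo $J$, post-compose $\ol\Phi\ol\gamma$ with an automorphism of $(A/J)^n$ of determinant $\ol{a^{n-1}}$ to get an isomorphism whose top exterior power equals $\ol\chi_1$, and use $\beta\Phi = \alpha$ to identify the resulting orientation with $\ol{a^{n-1}}w_J$. The only local difference is that you verify $\ol\Phi$ is an isomorphism directly from the split exact sequence (identifying $P_1/JP_1$ with $\ker(\ol b,0) = 0\oplus P/JP$ and using that $\ol b$ is a unit), whereas the paper appeals to Lemma~\ref{lem2}; both are sound.
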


\begin{proof}
We have an exact sequence \;\;
	$0 \ra P_1 \ra A \oplus P \longby {(b,-\alpha)} A \ra 0$.
The map $\beta : P_1 \surj J$ is defined by 
$\beta (q) = c$, where $q = (c,p)$. By (\ref{lem8}), we have
$\beta (P_1) = \alpha (P) = J$. Since $P_1$ is stably isomorphic to
$P$, determinant of $P_1$ is trivial. Let 
$\delta : \wedge^n(P_1) \iso \wedge^n(P)$ be an isomorphism defined 
in (\ref{lem7}). Let 
$\chi_1 = \delta^{-1}\chi : A \iso \wedge^n(P_1)$ be an orientation
of $P_1$. Then, by (\ref{lem8}), the map $\Phi : P \ra P_1$ given by
$\Phi(p) = (\alpha(p),bp)$ has the property that 
$\beta\Phi = \alpha$ and $\delta\wedge^n(\Phi)$ is a scalar
multiplication by $b^{n-1}$. By (\ref{lem2}), the map 
$\Phi \otimes A/J : P/JP \iso P_1/JP_1$ is an isomorphism. Let 
$\ol \gamma : (A/J)^n \iso P/JP$ be an isomorphism such that
$\wedge^n(\ol \gamma) = \ol \chi$ and $w_J = \ol \alpha \ol \gamma$.
Consider the commutative diagram 
\Com
$$\xymatrix{ 
	P_1/JP_1 \ar@{->>}[r]^{\ol \beta}\ar@{<-}[d]_{\ol
	\Phi\ol \gamma} & J/J^2	\\ 
	(A/J)^n\ar@{->>}[ru]_{w_J}  & 
	}$$ 
Since $\wedge^n(\ol \Phi\ol\gamma) = \ol\delta^{-1}\ol
{b^{n-1}}\ol\chi = \ol {b^{n-1}}\ol \chi_1$.  
Hence $\ol\chi_1 = \ol {a^{n-1}}\wedge^n(\ol\Phi\ol\gamma)$, 
since $ab = 1$ modulo $J$. Let $\theta$ be an automorphism of
$(A/J)^n$ of determinant $\ol {a^{n-1}}$. Now, consider the isomorphism 
$\ol \Phi \ol \gamma\theta : (A/J)^n \iso P_1/JP_1$. 
Then $(J,\wt w_J)$ is obtained from $(\beta,\chi_1)$, where 
$\wt w_J = \ol \beta\ol \Phi \ol \gamma\theta = 
\ol \alpha \ol \gamma\theta =  w_J\theta=\ol {a^{n-1}} w_J$. 
Hence $(J,\ol {a^{n-1}}w_J)$ is obtained from
$(\beta,\chi_1)$. This proves the lemma.  
$\hfill \gj$
\end{proof}

\begin{lemma}\label{lemma3.7}
Let $A$ be a Noetherian ring of even dimension $n$. Let $P$ be a
stably free $A$-module of rank $n$ and let $\chi$ be a generator of
$\wedge^n(P)$. Suppose that $e(P,\chi) = (J,w_J)$ in $E(A)$, where $J$
is an ideal of height $n$ and $w_J$ is a local orientation of $J$. Then,
there exists an ideal $J^\prime$ of height $n$ which is generated by
$n$ elements and a local orientation $w_{J^\prime}$ of $J^\prime$ such
that $(J,w_J) = (J^\prime,w_{J^\prime})$ in $E(A)$. Moreover,
$J^\prime$ can be chosen to be comaximal with any given ideal of
height $n$.
\end{lemma}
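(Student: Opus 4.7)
The strategy is to apply Lemma~\ref{lem3.1} with a carefully chosen pair $a, b \in A$ satisfying $ab \equiv 1 \pmod{J}$ to replace the representing surjection $\alpha : P \surj J$ of $e(P,\chi) = (J, w_J)$ by a surjection $\beta : P_1 \surj J$ from a stably free module $P_1$ that we arrange to be free. The evenness of $n$ is used in two places: it makes the orientation modifier $\overline{a^{n-1}}$ a square in $A/J$ (hence harmless by Lemma~\ref{lem3.4}), and it permits a completability argument to force $P_1 \iso A^n$.

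First, apply Corollary~\ref{cor14} (the moving lemma) to $\alpha$ to arrange that $J$ be comaximal with the prescribed ideal of height $n$, taking care of the ``moreover'' clause at the outset. Since $P$ is stably free of rank $n$, iterated Bass cancellation (Theorem~\ref{Bass}) yields $P \oplus A \iso A^{n+1}$, hence also $A \oplus P \iso A^{n+1}$. Choose $a, b \in A$ with $ab \equiv 1 \pmod{J}$ and with $\bar a \in A/J$ a square, say $\bar a = \bar c^{\,2}$; concretely, take $b = s^2$ for some $s$ which is a unit modulo $J$. Lemma~\ref{lem3.1} then produces a surjection $\beta : P_1 \surj J$ with $P_1 = \ker((b, -\alpha) : A \oplus P \to A)$ and induced orientation $\chi_1 = \delta^{-1}\chi$, such that $(J, \overline{a^{n-1}} w_J)$ is obtained from $(\beta, \chi_1)$. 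Because $n$ is even and $\bar a = \bar c^{\,2}$, we have $\overline{a^{n-1}} = \overline{c^{\,2(n-1)}}$, a square in $A/J$, so Lemma~\ref{lem3.4} gives $(J, \overline{a^{n-1}} w_J) = (J, w_J)$ in $E(A)$; thus $(J, w_J)$ itself is obtained from $(\beta, \chi_1)$.

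The crucial, technically delicate step is to choose $a, b$ so that the stably free $P_1$ is in fact free of rank $n$. Since $P_1 \oplus A \iso A^{n+1}$, the module $P_1$ is classified by a unimodular row of length $n+1$, and $P_1 \iso A^n$ precisely when this row is completable to an element of $\GL_{n+1}(A)$. For $n = 2$, the choice $b = s^2$ allows the associated unimodular row to be cast in Swan--Towber form (with a squared entry), and Theorem~\ref{Suslin} then gives completability. For general even $n$, one invokes Suslin's completability theorem on unimodular rows (the theorem stated in the excerpt immediately after Theorem~\ref{Suslin}) via an analogous construction. Once $P_1 \iso A^n$ is established, the surjection $\beta : A^n \surj J$ exhibits $J$ as generated by $n$ elements, and setting $J' = J$ with $w_{J'} = w_J$ finishes the argument. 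The main obstacle is precisely arranging the unimodular row corresponding to $P_1$ to be completable via a suitable choice of $a, b$, which is where the parity of $n$ enters essentially.
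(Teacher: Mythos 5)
Your plan, if it could be carried through, would prove too much, and this forces the argument to have a fatal flaw. If you really could choose $b$ so that $P_1 = \ker\bigl((b,-\alpha) : A\oplus P \surj A\bigr)$ is free, then Lemma~\ref{lem3.1} gives $e(P_1,\chi_1) = (J,\overline{a^{n-1}}w_J)$, and freeness of $P_1$ forces $e(P_1,\chi_1)=0$ by Corollary~\ref{cor2.4}; combined with your square argument via Lemma~\ref{lem3.4}, this would yield $(J,w_J)=0$ in $E(A)$ for \emph{every} stably free $P$ with $e(P,\chi)=(J,w_J)$. That is false: for the tangent bundle $P$ of the real $2$-sphere, $P$ is stably free of rank $2 = \dim A$ with trivial determinant, yet $e(P,\chi)\neq 0$ because $P$ has no unimodular element. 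So the approach cannot work, and the lemma you are proving does \emph{not} assert that $J$ itself is generated by $n$ elements; it produces a possibly different ideal $J'$.

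The concrete gap is in the completability step. You implicitly assume that, under some identification $\phi : A\oplus P \iso A^{n+1}$, the unimodular row classifying $P_1$ has $b$ as one of its coordinates, and then choose $b$ a perfect power to invoke Swan--Towber / Suslin. But the $i$-th entry of that row is $(b,-\alpha)\bigl(\phi^{-1}(e_i)\bigr) = b\,c_i - \alpha(p_i)$ where $\phi^{-1}(e_i) = (c_i,p_i)$, and to force one entry to equal $b$ you would need $\phi(1,0)$ to be a standard basis vector of $A^{n+1}$. Since $P \iso A^{n+1}/A\cdot\phi(1,0)$, that is equivalent to $P$ being free, which is precisely the situation you cannot assume. (The opening step is also off: the moving lemma \ref{cor14} does not alter $J$; it produces a surjection onto $J\cap J'$, so it cannot ``make $J$ comaximal'' with the prescribed ideal.)

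The paper's proof is genuinely different and avoids all of this. Write $P\oplus A \iso A^{n+1}$ by Bass cancellation, so $P = A^{n+1}/A(a_0,\ldots,a_n)$ for a unimodular row $(a_0,\ldots,a_n)$. By Lemma~\ref{Evans} one may add multiples of $a_0$ to the other entries to arrange $J' := (a_1,\ldots,a_n)$ of height $n$ and comaximal with the given ideal. Then define $\psi : P \surj J'$ on the images $\bar e_i$ of the standard basis by $\psi(\bar e_0)=0$, $\psi(\bar e_i)=a_{i+1}$ for $i$ odd, $\psi(\bar e_i)=-a_{i-1}$ for $i$ even. Well-definedness requires $\psi\bigl(\sum a_i\bar e_i\bigr)=0$, which holds because the terms pair off as $a_{2k-1}a_{2k} - a_{2k}a_{2k-1}$; this is exactly where the evenness of $n$ enters. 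Computing $e(P,\chi)$ via $\psi$ gives $(J',w_{J'})$, and since $e(P,\chi)=(J,w_J)$ by hypothesis, $(J,w_J)=(J',w_{J'})$.
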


\begin{proof}
By Bass Cancellation theorem (\ref{Bass}), we have
$P\oplus A \simeq A^{n+1}$. We have $P =
A^{n+1}/(a_0,\ldots,a_n)$ for some unimodular row $(a_0,\ldots,a_n)$
in $A^{n+1}$. We can assume, by
(\ref{Evans}), that $J^\prime = (a_1,\ldots,a_n)$ is an ideal of
height $n$. Further, we can assume that $J'$ is
comaximal with any given ideal of height $n$.

Let $\ol e_i$ be the image of the basis vector $e_i$ of $A^{n+1}$ in
$P$. Then, there exists a surjective map $\psi : P \surj J^\prime$
defined by $\psi(\ol e_0) = 0$, $\psi (\ol e_i) = a_{i+1}$ if $i$ is
odd and $\psi (\ol e_i) = -a_{i-1}$ if $i$ is even. The map $\psi$ is
well defined, since we have $\sum_0^na_i\ol e_i = 0$ in $P$ and
$\psi(\sum_0^na_i\ol e_i) = 0$. Computing $e(P,\chi)$ using
$\psi$, we see that $(J,w_J) = (J^\prime,w_{J^\prime})$.

$\hfill \gj$
\end{proof}

\begin{lemma}\label{lemma3.6}
Let $A$ be a Noetherian ring of even dimension $n$. Let $P$ be a
projective $A$-module of rank $n$ having trivial determinant and
let $\chi_P$ be a generator of $\wedge^n(P)$. Let $e(P,\chi_P) = (J,w_J)$
in $E(A)$, where $J$ is an ideal of height $n$ and $w_J$ is a local
orientation of $J$. Suppose $\wt w_J$ is another local orientation of
$J$. Then, there exists a projective $A$-module $P^\prime$ of rank $n$
with $[P^\prime] = [P]$ in $K_0(A)$ and a generator $\chi_{P^\prime}$
of $\wedge^n(P^\prime)$ such that 
$e(P^\prime,\chi_{P^\prime}) = (J,\wt w_J)$ in $E(A)$.
\end{lemma}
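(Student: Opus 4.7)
The plan is to realise the change of local orientation $w_J\leadsto\wt w_J$ by a stably trivial modification of $P$, using Lemma \ref{lem3.1} to absorb the modification and Lemma \ref{lem3.4} to kill off the ``wrong'' exponent that Lemma \ref{lem3.1} produces; the evenness of $n$ is precisely what makes these two tools mesh.

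First I would set up the data. By Lemma \ref{lem3.0} there is a unit $\ol b\in (A/J)^\ast$ with $\wt w_J=\ol b\, w_J$. Since $e(P,\chi_P)=(J,w_J)$, Corollary \ref{cor2.3} yields a surjection $\alpha:P\surj J$ from which $(J,w_J)$ is obtained from $(\alpha,\chi_P)$. Choose a lift $a\in A$ of $\ol b$; since $\ol b$ is a unit in $A/J$, there exists $b\in A$ with $ab\equiv 1\pmod J$.

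Next I would construct $P'$. Set $P':=\ker\bigl((b,-\alpha):A\oplus P\surj A\bigr)$, so that the split exact sequence
\[
0\to P'\to A\oplus P\longby{(b,-\alpha)} A\to 0
\]
gives $[P']=[P]$ in $K_0(A)$ and $\mathrm{rank}\,P'=n$; the isomorphism $\delta:\wedge^n(P')\iso\wedge^n(P)$ of Lemma \ref{lem7} shows that $\det P'$ is trivial, and I define $\chi_{P'}:=\delta^{-1}\chi_P$. By Lemma \ref{lem3.1}, $e(P',\chi_{P'})=(J,\ol{a^{n-1}}w_J)$ in $E(A)$.

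Finally, the evenness of $n$ completes the argument. Since $n-1$ is odd, $\ol{a^{n-1}}=\ol a\cdot\bigl(\ol{a^{(n-2)/2}}\bigr)^{2}$, so by Lemma \ref{lem3.4}
\[
(J,\ol{a^{n-1}}\,w_J)\;=\;(J,\ol a\, w_J)\;=\;(J,\ol b\, w_J)\;=\;(J,\wt w_J)\quad\text{in}\ E(A),
\]
and hence $e(P',\chi_{P'})=(J,\wt w_J)$, as required. The only delicate point---and the main obstacle---is the discrepancy between the exponent $n-1$ forced by Lemma \ref{lem3.1} and the exponent $1$ we actually need; the parity hypothesis on $n$ is precisely what makes $a^{n-2}$ a square in $A/J$ and hence invisible in $E(A)$ by Lemma \ref{lem3.4}. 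Without evenness of $n$, this strategy would fail.
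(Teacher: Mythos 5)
Your proof is correct and is essentially the paper's own argument: reduce to $\wt w_J=\ol b\,w_J$ via Lemma~\ref{lem3.0}, apply Lemma~\ref{lem3.1} to produce $P'$ with $[P']=[P]$ and $e(P',\chi_{P'})=(J,\ol{a^{n-1}}w_J)$, and use evenness of $n$ together with Lemma~\ref{lem3.4} to replace the exponent $n-1$ by $1$. The only (harmless) blemish is notational: you reuse the letter $b$ both for the unit $\ol b$ of Lemma~\ref{lem3.0} and for the element of $A$ inverting $a$ modulo $J$; a distinct symbol would avoid confusion.
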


\begin{proof}
By (\ref{lem3.0}), $\wt w_J = \ol b w_J$ for some unit $\ol b\in
A/J$.   By (\ref{lem3.1}), there exists a projective $A$-module
$P^\prime$ of rank $n$ with $[P^\prime] = [P]$ in $K_0(A)$ and a
generator $\chi_{P^\prime}$ of $\wedge^n(P^\prime)$ such that
$e(P^\prime,\chi_{P^\prime}) = (J,\ol {b^{n-1}}w_J)$ in
$E(A)$. Applying (\ref{lem3.4}), we get the result.  $\hfill
\gj$
\end{proof}

\begin{proposition}\label{prop4.1}
Let $A$ be a Noetherian ring of even dimension $n$. Let $J_1$ and $J_2
$ be two comaximal ideals of $A$ of heights $n$ and $J_3 = J_1 \cap
J_2$.  Then 
 
	(i) {\rm (Addition Principle)} If $J_1$ and $J_2$
are surjective images of stably free $A$-modules of rank $n$, then so
is the $J_3$.

	(ii) {\rm (Subtraction Principle)} If $J_1$ and $J_3$
are surjective images of stably free $A$-modules of rank $n$, then so
is $J_2$.
\end{proposition}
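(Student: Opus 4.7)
The strategy is to reformulate both statements inside the Euler class group $E(A)$, reduce to the case of ideals generated by $n$ elements via Lemma~\ref{lemma3.7}, invoke the Chapter~3 addition and subtraction principles (\ref{theo1.3}) and (\ref{theo1.4}), and translate back by means of Lemma~\ref{lemma3.6} together with Corollary~\ref{cor2.3}. The key structural observation is that (\ref{lemma3.6}) lets one move freely between local orientations of a fixed ideal $J$ while staying inside the image of $e$ on stably free modules, since $[P']=[A^n]$ in $K_0(A)$ forces $P'$ to be stably free.

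For $(i)$, fix orientations $\chi_i$ of $P_i$ and let $w_{J_i}$ be the local orientation of $J_i$ induced by $(\alpha_i,\chi_i)$, so that $e(P_i,\chi_i)=(J_i,w_{J_i})$ in $E(A)$. Let $w_{J_3}$ be the local orientation of $J_3$ induced from $w_{J_1}$ and $w_{J_2}$ via the decomposition $J_3/J_3^2\cong J_1/J_1^2\oplus J_2/J_2^2$; then $(J_3,w_{J_3})=(J_1,w_{J_1})+(J_2,w_{J_2})$ in $E(A)$. Applying (\ref{lemma3.7}) to each $P_i$ yields ideals $K_i=(a_1^{(i)},\ldots,a_n^{(i)})$ of height $n$ with $(J_i,w_{J_i})=(K_i,w_{K_i})$ in $E(A)$; the ``moreover'' clause lets us take $K_1$ and $K_2$ mutually comaximal. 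By (\ref{theo1.3}), $K_1\cap K_2=(g_1,\ldots,g_n)$ is generated by $n$ elements, so with the natural orientation $w^{\mathrm{nat}}$ arising from the $g_i$ we have $(K_1\cap K_2,w^{\mathrm{nat}})=e(A^n,\chi_0)=0$ in $E(A)$ (the right-hand vanishing by (\ref{cor2.4}), as $A^n$ has a unimodular element). Since $A^n$ is stably free, (\ref{lemma3.6}) provides, for every local orientation $\widetilde w$ of $K_1\cap K_2$, a stably free $P$ of rank $n$ and orientation $\chi$ with $e(P,\chi)=(K_1\cap K_2,\widetilde w)$. Taking $\widetilde w=w_{K_1\cap K_2}$ gives $e(P,\chi)=(J_3,w_{J_3})$, and (\ref{cor2.3}) supplies the desired surjection $P\surj J_3$.

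For $(ii)$, let $w_{J_1}^{*}$ and $w_{J_2}^{*}$ be the local orientations on $J_1$ and $J_2$ induced by $w_{J_3}$, so $(J_3,w_{J_3})=(J_1,w_{J_1}^{*})+(J_2,w_{J_2}^{*})$ in $E(A)$. By (\ref{lemma3.6}) applied to $P_1$ and the orientation $w_{J_1}^{*}$, there is a stably free $P_1^{*}$ of rank $n$ with $e(P_1^{*},\chi_1^{*})=(J_1,w_{J_1}^{*})$. Apply (\ref{lemma3.7}) to $P_3$ and to $P_1^{*}$, obtaining mutually comaximal ideals $K_3$ and $K_1$, each generated by $n$ elements, realizing $(J_3,w_{J_3})$ and $(J_1,w_{J_1}^{*})$ respectively in $E(A)$. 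Using (\ref{lem3.5}) (whose proof via (\ref{cor14}) permits avoiding any finite list of height-$\geq 1$ ideals), choose $K_1'$ comaximal with $K_1$ and $K_3$ satisfying $-(K_1,w_{K_1})=(K_1',w_{K_1'})$; by the construction of (\ref{lem3.5}), $K_1\cap K_1'$ is generated by $n$ elements via a surjection $A^n\surj K_1\cap K_1'$ whose reduction modulo $K_1$ realizes $w_{K_1}$. The Chapter~3 subtraction principle (\ref{theo1.4}) then forces $K_1'$ to be generated by $n$ elements, and (\ref{theo1.3}) forces $K_1'\cap K_3$ to be generated by $n$ elements. Since in $E(A)$ we have $(J_2,w_{J_2}^{*})=(K_3,w_{K_3})+(K_1',w_{K_1'})=(K_1'\cap K_3,w')$ for the induced $w'$, the closing argument of $(i)$---one more application of (\ref{lemma3.6}) and (\ref{cor2.3})---produces a stably free $P$ surjecting onto $J_2$.

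The principal difficulty is the careful tracking of local orientations: each invocation of (\ref{lemma3.7}) introduces an auxiliary orientation on $K_i$ that generally differs from the natural orientation coming from explicit generators, and the orientations on $J_3$ inherited from $w_{J_1},w_{J_2}$ versus from $w_{J_3}$ must be reconciled. Lemma (\ref{lemma3.6}) is precisely the tool that absorbs these discrepancies while preserving the stably-free nature of the realizing module; (\ref{lem15}) is implicitly used to alter the $n$ generators of $K_1$ supplied by (\ref{lemma3.7}) by an element of $\SL_n(A)$ so that they induce the orientation $w_{K_1}$ demanded by (\ref{theo1.4}). Once these bookkeeping issues are settled, the Chapter~3 principles dispatch the underlying combinatorics of comaximal $n$-generated ideals.
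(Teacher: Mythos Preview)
Your argument for part~(i) is correct and is essentially the paper's proof: replace $J_1,J_2$ by $n$-generated comaximal ideals via (\ref{lemma3.7}), combine them with (\ref{theo1.3}), and then use (\ref{lemma3.6}) (with $P$ free) together with (\ref{cor2.3}) to produce the stably free module surjecting onto $J_3$.

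Part~(ii), however, has a genuine gap at the step where you claim that (\ref{theo1.4}) forces $K_1'$ to be generated by $n$ elements. The hypothesis of (\ref{theo1.4}) requires generators $(b_1,\dots,b_n)$ of $K_1$ and $(a_1,\dots,a_n)$ of $K_1\cap K_1'$ with $a_i-b_i\in K_1^2$, i.e.\ the two sets of generators must induce the \emph{same} element of $K_1/K_1^2$. From (\ref{lem3.5}) you have generators of $K_1\cap K_1'$ inducing $w_{K_1}$; from (\ref{lemma3.7}) you have generators of $K_1$ inducing some orientation that is, in general, \emph{different} from $w_{K_1}$. Your proposed fix---altering the generators of $K_1$ by an element of $\SL_n(A)$ via (\ref{lem15})---cannot work: local orientations are by definition $\SL_n(A/K_1)$-equivalence classes, so an $\SL_n$ modification leaves the induced orientation unchanged. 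In fact, producing generators of $K_1$ that induce $w_{K_1}$ is exactly the statement $(K_1,w_{K_1})=0$ in $E(A)$, which would force $P_1^{*}$ to have a unimodular element by (\ref{cor2.4}); there is no reason to expect this.

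The paper avoids this difficulty by never trying to make an auxiliary ideal like your $K_1'$ be $n$-generated. It applies (\ref{lemma3.7}) only to the stably free module associated to $J_1$, obtaining an $n$-generated $J_1'$ comaximal with $J_2$, and sets $J_4=J_1'\cap J_2$. One then has $e(P_3,\chi_3)=(J_4,w_{J_4})$ for one orientation $w_{J_4}$, while a \emph{second} orientation $\widetilde w_{J_4}$---built from the natural orientation $[\bar a_1,\dots,\bar a_n]$ of $J_1'$ together with $w_{J_2}$---satisfies $(J_4,\widetilde w_{J_4})=(J_2,w_{J_2})$ since $(J_1',[\bar a_i])=0$. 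Now (\ref{lemma3.6}), applied to $P_3$ and the single ideal $J_4$, switches between these two orientations and yields a stably free $Q$ with $e(Q,\chi_Q)=(J_2,w_{J_2})$; then (\ref{cor2.3}) finishes. The key point is that (\ref{lemma3.6}) only needs \emph{some} module realizing \emph{some} orientation of $J_4$, which $P_3$ already provides.
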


\begin{proof}
(i) Suppose that $J_1$ and $J_2$ are surjective images of stably
free $A$-modules. Hence, there exist surjections $\psi_1 : P_1 \surj
J_1$ and $\psi_2 : P_2 \surj J_2$, where $P_1$ and $P_2$ are stably
free $A$-modules of rank $n$. We choose orientations $\chi_1$ and
$\chi_2$ of $P_1$ and $P_2$ respectively. Then $e(P_1,\chi_1) =
(J_1,w_{J_1})$ and $e(P_2,\chi_2) = (J_2,w_{J_2})$ in $E(A)$ for some
local orientations $w_{J_1}$ and $w_{J_2}$ of $J_1$ and $J_2$
respectively.  By (\ref{lemma3.7}), we can choose two comaximal ideals
$J_1^\prime$ and $J_2^\prime$ of height $n$ which are generated by $n$
elements such that
$$(J_1,w_{J_1}) = (J_1^\prime,w_{J_1^\prime})\quad{\rm and}\quad
	(J_2,w_{J_2}) = (J_2^\prime, w_{J_2^\prime}) \eqno(1)$$ 
in $E(A)$. Let
$J_3^\prime = J_1^\prime \cap J_2^\prime$. Let
$$(J_1,w_{J_1}) + (J_2,w_{J_2}) = (J_3,w_{J_3}) \eqno(2)$$
$$(J_1^\prime,w_{J_1^\prime}) + (J_2^\prime,w_{J_2^\prime}) =
	(J_3^\prime,w_{J_3^\prime}) \eqno (3)$$ 
	in $E(A)$. Then, we have
$$(J_3,w_{J_3}) = (J_3^\prime,w_{J_3^\prime}) \eqno (4)$$ 
	in $E(A)$.  
Since $J_1^\prime$ and $J_2^\prime$ are generated by $n$
elements, by (\ref{theo1.4}), $J_3^\prime$ is also generated by $n$
elements. Therefore, applying (\ref{lemma3.6}) with $P$ free, there
exists a stably free $A$-module $P_3$ of rank $n$ and an orientation
$\chi_3$ of $P_3$ such that $e(P_3,\chi_3) =
(J_3^\prime,w_{J_3^\prime})$. Hence, by (\ref{cor2.3}), there exists
a surjection from $P_3$ to $J_3$.

(ii) Assume that $J_1$ and $J_3$ are surjective images of stably free
$A$-modules of rank $n$. Let $\psi_3 : P_3 \surj J_3$ be a surjection,
where $P_3$ is a stably free $A$-module of rank $n$. Let $\chi_3$ be
an orientation of $P_3$. Then
$$e(P_3,\chi_3) = (J_3,w_{J_3}) \eqno (5)$$ 
	in $E(A)$ for some local orientation $w_{J_3}$ of $J_3$.  Let
$w_{J_1}$ and $w_{J_2}$ be local orientations of $J_1$ and $J_2$
respectively, obtained from $w_{J_3}$. Then
$$(J_1,w_{J_1}) + (J_2,w_{J_2}) = (J_3,w_{J_3}) \eqno (6)$$ 
	in $E(A)$. 
Since $J_1$ is a surjective image of a stably free $A$-module
of rank $n$, by (\ref{lemma3.6}), there exists a stably free
$A$-module $P_1$ of rank $n$ such that $e(P_1,\chi_1) =
(J_1,w_{J_1})$. Now, since $P_1$ is stably free, by (\ref{lemma3.7}),
there exists an ideal $J_1^\prime$ of height $n$ which is generated by
$n$ elements and  comaximal with $J_2$ such that
$$(J_1,w_{J_1}) = e(P_1,\chi_1) =
	(J_1^\prime,w_{J_1^\prime}). \eqno(7)$$ 
	Let $J_1^\prime \cap J_2 =
J_4$. Then $w_{J_1^\prime}$ and $w_{J_2}$ induce a local
orientation $w_{J_4}$ of $J_4$ such that
$$(J_1^\prime,w_{J_1^\prime}) + (J_2,w_{J_2}) =
	(J_4,w_{J_4}). \eqno (8)$$ 
	By (6,7), we have
$(J_3,w_{J_3}) = (J_4,w_{J_4})$ in $E(A)$. By (5), we
have $e(P_3,\chi_3) = (J_4,w_{J_4})$.  Let $J_1^\prime =
(a_1,\ldots,a_n)$ and let
$$(J_1^\prime,[\ol a_1,\ldots,\ol a_n]) + (J_2,w_{J_2}) =
(J_4,\wt w_{J_4}).$$ 
	Then, since $(J_1^\prime,[\ol a_1,\ldots,\ol a_n]) = 0$ in $E(A)$, 
we have $(J_2,w_{J_2}) = (J_4,\wt w_{J_4})$ in $E(A)$.  
By (\ref{lemma3.6}), there exists a projective $A$-module $Q$ of 
rank $n$ which is stably isomorphic to $P_3$ and hence stably free, 
and an orientation $\chi_Q$ of $Q$ such that
$$e(Q,\chi_Q) = (J_4,\wt w_{J_4}).$$ 
	Therefore $e(Q,\chi_Q) = (J_2,w_{J_2})$. By (\ref{cor2.3}), there
exists a surjection from $Q$ to $J_2$ and hence the proposition is
proved.  $\hfill \gj$
\end{proof}

\begin{theorem}\label{theo}
Let $A$ be a Noetherian ring of even dimension $n$. Let $J \subset A$
be an ideal of height $n$ such that $J/J^2$ is generated by $n$
elements. Then $(J) = 0$ in $E_0(A)$ if and only if $J$ is a
surjective image of a stably free $A$-module of rank $n$.
\end{theorem}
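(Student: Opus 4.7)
The plan is to mirror Theorem \ref{theo2.2} but at the level of the forgetful quotient $E_0(A)$, with Proposition \ref{prop4.1} playing the role of the addition and subtraction principles and Lemma \ref{result1} doing the group-theoretic work. The easy direction (surjective image of a stably free module forces $(J)=0$) is quick: given $\alpha\colon P\surj J$ with $P$ stably free of rank $n$, the determinant $\wedge^n(P)$ is free (from $P\oplus A^s\iso A^{n+s}$), so fix an orientation $\chi$ of $P$; set $e(P,\chi)=(J,w_J)\in E(A)$, apply Lemma \ref{lemma3.7} to find an $n$-generated ideal $J'$ of height $n$ with $(J,w_J)=(J',w_{J'})$ in $E(A)$, and project to $E_0(A)$ to get $(J)=(J')=0$.

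For the converse I would invoke Lemma \ref{result1}. Let $F$ be the free abelian group on the set of $\m$-primary ideals $\n$ (with $\hh \m=n$) such that $\n/\n^2$ is generated by $n$ elements, with equivalence $\n \sim \n' \Leftrightarrow \sqrt{\n}=\sqrt{\n'}$, and put
\[
S=\{(J)\in F : J \text{ is a surjective image of a stably free } A\text{-module of rank } n\}.
\]
I would verify the three hypotheses of Lemma \ref{result1} for this $S$. Condition (1), that every element of $S$ is nicely reduced, is immediate from the primary decomposition. Condition (2), that among $(I_1)$, $(I_2)$, $(I_1\cap I_2)$ with $I_1+I_2=A$ any two in $S$ force the third, is exactly the content of the addition and subtraction principles in Proposition \ref{prop4.1}. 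Condition (3) is the crux: given $(K)\notin S$ nicely reduced and finitely many height-$n$ maximal ideals $\m_1,\dots,\m_r$, I must produce $(L)\in F$ with $L$ comaximal with $K$ and with each $\m_j$ and with $(K\cap L)\in S$.

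The main obstacle is thus condition (3). I plan to equip $K$ with an arbitrary local orientation $w_K$ and call on Lemma \ref{lem3.5}; its proof, which really rests on the Moving Lemma \ref{cor14}, allows $L$ to be chosen comaximal with any prescribed finite list of height-$\geq 1$ ideals, so in particular comaximal with each $\m_j$ and with $K$, while also furnishing a local orientation $w_L$ with $(K,w_K)+(L,w_L)=0$ in $E(A)$. Theorem \ref{theo2.2} then forces $K\cap L$ to be generated by $n$ elements, hence a surjective image of the free (and so stably free) module $A^n$, placing $(K\cap L)\in S$ and verifying (3). With all three hypotheses checked, Lemma \ref{result1} tells me that every nicely reduced element of the subgroup $H\subset F$ generated by $S$ lies in $S$. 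To conclude, I would identify $H$ with the defining subgroup $H_0$ of $E_0(A)$: $H_0\subseteq H$ because $n$-generated ideals are surjective images of the stably free module $A^n$, and $H\subseteq H_0$ by the easy direction applied to each generator of $S$. Since any $J$ satisfying the hypotheses of the theorem yields a nicely reduced $(J)\in F$, the assumption $(J)=0$ in $E_0(A)$ forces $(J)\in H_0=H$, and the lemma delivers $(J)\in S$, as desired.
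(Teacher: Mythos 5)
Your proof is correct and mirrors the paper's argument exactly in structure: you set up the free group $F$, the equivalence $\sim$, and the set $S$ as the paper does, verify the three hypotheses of Lemma \ref{result1}, and then identify the subgroup generated by $S$ with the defining subgroup $H_0$ of $E_0(A)$. The only deviation is in verification details: for condition (3) and for $S\subseteq H_0$ you detour through $E(A)$ (assigning an orientation to $K$, invoking Lemma \ref{lem3.5}, Theorem \ref{theo2.2}, Lemma \ref{lemma3.7} and the forgetful projection), whereas the paper stays inside $E_0(A)$ by applying the Moving Lemma \ref{cor14} with $P=A^n$ directly — which already hands you the $n$-generated ideal $K\cap L$ without any mention of orientations — and by citing Proposition \ref{prop1.2} for the comparison ideal in the inclusion $S\subseteq H_0$.
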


\begin{proof}
We will apply lemma (\ref{result1}) to prove this theorem. 

Let $F$ be the free abelian group on the set $(\n)$, where $\n$ is
$\M$-primary ideal of height $n$ such that $\n/\n^2$ is generated by
$n$ elements.  Define an equivalence relation on the set $(\n)$ by
$(\n)\sim (\n_1)$ if $\sqrt {\n} = \sqrt {\n_1}$, i.e. $\n$ and $\n_1$
both are $\M$-primary ideals of $A$. If $J\subset A$ is an ideal of
height $n$ and $J = \cap\n_i$ is a reduced primary decomposition of
$J$, then denote $(J)$ the element $\sum(\n_i)$ of $F$. Let $S$ be
the set

$\{(J)\in F| J$ is surjective image of a
stably free $A$-module of rank $n\}$. Then

(1) Every element of $S$ is nicely reduced.

(2) Let $x,y\in F$ be nicely reduced such that $x+y$ is also nicely
reduced. Then if any two of $x,y$ and $x+y$ belongs to $S$, then so
does the third. This follows from (\ref{prop4.1}).

(3) Let $x\in F$ be nicely reduced and $x\notin S$ and let
$(\n_i)$, for $i=1,\ldots,r$, be finitely many elements of
$F$. Since $x$ is nicely reduced, we have $x = (J)$ for some
height $n$ ideal $J$. Applying (\ref{cor14}), there exists an ideal
$J^\prime$ of height $n$ which is comaximal with
$J,\n_i,i=1,\ldots,r$ such that $J\cap J^\prime$ is generated by
$n$ elements. Let $y = (J^\prime)$. Then $x+y\in S$.

Let $H^\prime$ be the subgroup of $F$ generated by $S$. Then, by
(\ref{result1}), if $x\in H^\prime$ is nicely reduced, then $x\in
S$. $\hfill (\ast)$

Let $H$ be the subgroup of $F$ generated by  $(J)\in
F$ where $J$ is generated by $n$ elements. We claim that $H =
H^\prime$. 

Clearly $H\subset H^\prime$. For other inclusion, it is enough to show
that $S \subset H$. For this, let $(J)\in
S$. Then $J$ is surjective image of a stably free $A$-module $P$ of
rank $n$. Applying (\ref{lemma3.7}), there
exists an ideal $J^\prime$ of height $n$ which is generated by $n$
elements such that $J'$ is surjective image of $P$.
By (\ref{prop1.2}), there exists an ideal $K$ of height $n$
comaximal with $J$ and $J'$ such that $J\cap K$ and $J'\cap K$ are
generated by $n$ elements. Hence $(J)\in H$ and $H=H'$.

Suppose $(J) = 0$ in $E_0(A)$. Then $(J)\in H = H^\prime$ is a
nicely reduced element. Hence, by $(\ast)$, we get that $J$ is
surjective image of a stably free $A$-module of rank $n$.
$\hfill \gj$
\end{proof}

\begin{proposition}\label{prop4.7}
Let $A$ be a Noetherian ring of dimension $n$. Let $P$ and $P_1$ be
two projective $A$-modules of rank $n$ such that $[P] = [P_1]$ in
$K_0(A)$. Then, there exists an ideal $J \subset A$ of height $\geq n$
such that $J$ is a surjective image of both $P$ and $P_1$.
\end{proposition}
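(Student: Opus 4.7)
The strategy proceeds in three stages. First, I reduce to the case of stably isomorphic modules differing by a single copy of $A$. Since $[P]=[P_1]$ in $K_0(A)$, the proposition characterising $K_0$-equality from Chapter~1 yields an integer $t\geq 0$ with $P\oplus A^{t}\cong P_1\oplus A^{t}$. The case $t=0$ is trivial, and for $t\geq 2$ the modules $P\oplus A^{t-1}$ and $P_1\oplus A^{t-1}$ have rank $n+t-1>n=\dim A$, so Bass Cancellation (Theorem~\ref{Bass}) cancels one copy of $A$; iterating, I reduce to $t=1$ and fix an isomorphism $\phi:P\oplus A\iso P_1\oplus A$. Set $(p_0,a_0):=\phi^{-1}(0,1)\in P\oplus A$, which is unimodular.

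Next, I construct a candidate ideal $J$ together with a surjection from $P_1$. By Lemma~\ref{cor13} applied to any generically surjective element of $P^{\ast}$, there is a surjection $\alpha:P\surj J_0$ with $\hh J_0\geq n$. Extend trivially to $\tilde\alpha:P\oplus A\surj J_0$ by $\tilde\alpha(p,a)=\alpha(p)$ and transport through $\phi^{-1}$ to obtain $\beta:=\tilde\alpha\circ\phi^{-1}:P_1\oplus A\surj J_0$; decomposing $\beta=(\beta_0,c)$ gives $\beta_0\in P_1^{\ast}$ with $c=\alpha(p_0)$ and $\beta_0(P_1)+(c)=J_0$. Applying Lemma~\ref{cor13} to the pair $(\beta_0,c)\in P_1^{\ast}\oplus A$, and using $\hh(\beta_0(P_1),c)=\hh J_0\geq n$, I obtain $\mu\in P_1^{\ast}$ such that $\beta_0':=\beta_0+c\mu$ has $\hh\beta_0'(P_1)\geq n$. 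Setting $J:=\beta_0'(P_1)\subseteq J_0$, the map $\beta_0':P_1\surj J$ furnishes the required surjection on the $P_1$-side.

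The remaining step---and the main obstacle---is to produce a surjection $P\surj J$ onto the same ideal $J$. The modification $\beta_0\mapsto\beta_0'$ amounts to composing $\beta$ with the elementary automorphism $\tau(p_1,a)=(p_1,a+\mu(p_1))$ of $P_1\oplus A$, whose conjugate $\tau':=\phi^{-1}\tau\phi\in\Aut(P\oplus A)$ converts the original splitting $P\oplus A=P\oplus A(0,1)$ into a new one $P\oplus A=N\oplus A(p_0,a_0)$, where $N:=\phi^{-1}(\Gamma_\mu)\cong P_1$ is the pullback of the graph of $\mu$, and $\tilde\alpha$ restricted to $N$ surjects onto $J$. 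My plan to finish is to run the symmetric construction in reverse: starting from $\beta_0':P_1\surj J$, extend to $P_1\oplus A\surj J$, transport through $\phi$ to a surjection $P\oplus A\surj J$, and apply Lemma~\ref{cor13} once more to the resulting element of $P^{\ast}\oplus A$ to obtain a surjection $P\surj J^{\prime}$ with $J^{\prime}\subseteq J$ of height $\geq n$. The technical heart is to force $J^{\prime}=J$, for which I expect to invoke the Moving Lemma (Corollary~\ref{cor14}) to align the two constructions by intersecting with common comaximal height-$n$ ideals, together with the addition and subtraction principles (Theorems~\ref{theo1.3}, \ref{theo1.4}, and \ref{manoj1}) to absorb the discrepancy encoded by $c=\alpha(p_0)$; this alignment is the principal technical challenge.
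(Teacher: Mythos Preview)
Your reduction to $P\oplus A\cong P_1\oplus A$ via Bass cancellation is correct and matches the paper. The trouble is your third stage: it is not a proof but a plan, and the plan does not clearly terminate. Running the construction ``in reverse'' gives you $P\surj J'$ with $J'\subseteq J$, and the tools you invoke---the Moving Lemma and the addition/subtraction principles---operate at the level of Euler classes in $E(A)$, not at the level of forcing two specific ideals to coincide. You would at best show $(J)=(J')$ in $E_0(A)$ or $E(A)$, which is not the same as $J=J'$, and the statement requires a single ideal that is a surjective image of both modules.

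The paper avoids this alignment problem entirely by choosing the presentation more carefully. From $P\oplus A\cong P_1\oplus A$ one writes $P_1$ as the kernel of a surjection $(b,-\alpha):A\oplus P\surj A$. Lemma~\ref{lem8}(i) then says that the first-coordinate projection $\beta:P_1\to A$, $\beta(c,p)=c$, satisfies $\beta(P_1)=\alpha(P)$ exactly. So $P$ and $P_1$ automatically surject onto the \emph{same} ideal $\alpha(P)$; there is nothing to align. The only remaining issue is height, and this is handled by the observation that replacing $\alpha$ by $\alpha+b\gamma$ for $\gamma\in P^{\ast}$ is a transvection of $A\oplus P$, hence does not change the isomorphism class of the kernel $P_1$; one then applies Lemma~\ref{cor13} to choose $\gamma$ so that $\hh\alpha(P)\geq n$. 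The key piece you are missing is Lemma~\ref{lem8}(i): once you see that the kernel presentation makes $\alpha(P)$ and $\beta(P_1)$ equal on the nose, the whole third stage of your argument becomes unnecessary.
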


\begin{proof}
Since $\dim A = n$ and $[P] = [P_1]$ in $K_0(A)$, we have $P\oplus Q
\simeq P_1\oplus Q$ for some projective $A$-module $Q$. We may assume
that $Q$ is free, by replacing $Q$ by $Q\oplus Q' \simeq A^t$. Now, it
follows that
$P\oplus A \iso P_1 \oplus A$ by the 
Bass Cancellation theorem (\ref{Bass}). 
Therefore, there exists a short exact
sequence
$$0 \ra P_1 \ra A \oplus P \longby {(b,-\alpha)} A \ra 0.$$ 
Further, without loss of generality, we may replace $\alpha$ by
$\alpha + b\gamma$ by a transvection, where $\gamma \in P^\ast$,
because this will not change the isomorphism class of ker
$((b,-\alpha)) = P_1$, i.e. if ker $((b,-(\alpha+b\gamma))) = P_2$,
then $P_1 \iso P_2$. Therefore, using (\ref{cor13}), we may assume
that the ideal $\alpha(P) = J$ is such that height $(J) \geq n$. By
(\ref{lem8} $(i)$), $J$ is also a surjective image of $P_1$. This
proves the proposition.  $\hfill \gj$
\end{proof}

\begin{corollary}\label{lem4.3}
Let $A$ be a Noetherian ring of even dimension $n$. Let $P$ be a
projective $A$-module of rank $n$ with trivial determinant. Then $e(P) =
0$ in $E_0(A)$ if and only if $[P] = [Q \oplus A]$ in $K_0(A)$ for
some projective $A$-module $Q$ of rank $n-1$.
\end{corollary}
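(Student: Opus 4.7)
I will treat the two directions separately.

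For the direction $(\Leftarrow)$, suppose $[P] = [Q \oplus A]$ for some projective $A$-module $Q$ of rank $n-1$ (necessarily with trivial determinant since $\det P \cong A$). Proposition \ref{prop4.7} produces an ideal $J$ of height $\geq n$ that is a surjective image of both $P$ and $Q \oplus A$. If $J = A$, then $P$ acquires a unimodular element and Corollary \ref{cor2.4} gives $e(P) = 0$. Otherwise $J$ has height $n$ and is a generic section of $P$, so $e(P) = (J)$ in $E_0(A)$. The key point is that the surjection $Q \oplus A \twoheadrightarrow J$ carries a unimodular element from the $A$-summand, so the argument in the second half of the proof of Corollary \ref{cor2.4} applies: using Eisenbud--Evans (Lemma \ref{cor13}) to shrink the image of the $Q$-part to an ideal of height $n-1$, Serre's theorem in low dimension to trivialize the quotient of $Q$ suitably, Lemma \ref{lem11}, and Mohan Kumar's Lemma \ref{Mohan}, one produces $n$ generators of $J$. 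Hence $(J) = 0$ in $E_0(A)$ and $e(P) = 0$.

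For the direction $(\Rightarrow)$, the plan is to apply Lemma \ref{lemma3.6} twice, first to move a stably free module's Euler class into the same representative as $P$'s, and then to absorb the resulting global orientation. Suppose $e(P) = 0$, pick a generic section $\alpha : P \twoheadrightarrow J$ of height $n$, so $(J) = 0$ in $E_0(A)$, and by Theorem \ref{theo} obtain a stably free module $F$ of rank $n$ and a surjection $\beta : F \twoheadrightarrow J$. Fix orientations $\chi_P, \chi_F$ with $e(P, \chi_P) = (J, w_J^P)$ and $e(F, \chi_F) = (J, w_J^F)$ in $E(A)$. Applying Lemma \ref{lemma3.6} to $F$ with target orientation $w_J^P$ yields $F'$ with $[F'] = [F]$ (still stably free) and $e(F', \chi_{F'}) = (J, w_J^P) = e(P, \chi_P)$ in $E(A)$. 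Next, Lemma \ref{lemma3.7} applied to the stably free $F'$ produces an ideal $J' \subset A$ of height $n$ generated by $n$ elements, with a local orientation $w_{J'}$, such that $(J, w_J^P) = (J', w_{J'})$ in $E(A)$. Consequently $e(P, \chi_P) = (J', w_{J'})$.

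Now invoke Lemma \ref{lemma3.6} a second time, this time applied to $P$ itself with the representative $(J', w_{J'})$ (realizable by an explicit surjection via Corollary \ref{cor2.3}) and target orientation $\tilde w_{J'}$ the global orientation induced by the $n$ generators of $J'$. By Theorem \ref{theo2.2}, $(J', \tilde w_{J'}) = 0$ in $E(A)$. The lemma produces $P''$ with $[P''] = [P]$ and $e(P'', \chi_{P''}) = (J', \tilde w_{J'}) = 0$. By Corollary \ref{cor2.4}, $P''$ has a unimodular element, hence $P'' \cong Q \oplus A$ for some projective $Q$ of rank $n-1$, and $[P] = [P''] = [Q \oplus A]$, as required. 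The most delicate point, and the reason for the even dimension hypothesis, is the double use of Lemma \ref{lemma3.6}: that lemma depends on $n-1$ being odd so that, combined with Lemma \ref{lem3.4}, arbitrary unit multiples of an orientation on an ideal can be realized by a stably isomorphic projective module; without this, the chain from $e(P) = 0$ in $E_0(A)$ to $e(P'', \chi_{P''}) = 0$ in $E(A)$ cannot be completed.
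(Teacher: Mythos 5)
Your proof is correct and follows essentially the same route as the paper: $(\Leftarrow)$ reduces via Proposition \ref{prop4.7} to the fact that $Q\oplus A$ has Euler class zero, and $(\Rightarrow)$ chains Theorem \ref{theo}, Lemma \ref{lemma3.6}, Lemma \ref{lemma3.7}, Lemma \ref{lemma3.6} again, and Corollary \ref{cor2.4} exactly as the paper does. The only cosmetic difference is in $(\Leftarrow)$, where you re-derive the fact that a generic section ideal of a module with a unimodular element is generated by $n$ elements rather than simply citing the ``In particular'' clause of Corollary \ref{cor2.4}; your parenthetical diagnosis of why even dimension enters (through the $\overline{b}^{\,n-1}$ in Lemma \ref{lem3.1} combined with squares via Lemma \ref{lem3.4}) is also accurate.
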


\begin{proof}
First, assume that $[P] = [Q \oplus A]$ in $K_0(A)$ for
some projective $A$-module $Q$ of rank $n-1$. Then, by
(\ref{prop4.7}), $e(P)=e(Q\oplus A)$. By (\ref{theo}), $e(Q\oplus
A)=0$ in $E_0(A)$. Hence $e(P)=0$.

Now, we assume that $e(P) = 0$ in $E_0(A)$.
Let $\psi : P \surj J$ be a surjection, where $J$ is an
ideal of height $n$. Let $e(P,\chi) = (J,w_J)$, where $\chi$ is a
generator of $\wedge^n(P)$ and $w_J$ is a local orientation of $J$.
Since $e(P) = (J) = 0$
in $E_0(A)$, it follows from (\ref{theo}) that $J$ is a surjective
image of a stably free $A$-module of rank $n$. It follows now from
(\ref{lemma3.6}), that there exists a stably free $A$-module $\wt P$
of rank $n$ and a generator $\wt \chi$ of $\wedge^n(\wt P)$ such that
$e(\wt P,\wt \chi) = (J,w_J)$. Since $\wt P$ is a
stably free $A$-module of rank $n$, by (\ref{lemma3.7}), there
exists an ideal $J_1$ of height $n$ which is generated by $n$ elements
and a local orientation $w_{J_1}$ of $J_1$ such that $(J,w_J) =
(J_1,w_{J_1})$. Hence, we have $e(P,\chi) =
(J,w_J) = (J_1,w_{J_1})$. Let $J_1 = (b_1,\ldots,b_n)$. Then, by
(\ref{lemma3.6}), there exists a projective $A$-module $P^\prime$ of
rank $n$ with $[P^\prime] = [P]$ in $K_0(A)$ and a generator
$\chi_{P^\prime}$ of $\wedge^n(P^\prime)$ such that
$e(P^\prime,\chi_{P^\prime}) = (J_1,[\ol b_1,\ldots,\ol b_n]) = 0$ in
$E(A)$. But, then by (\ref{cor2.4}), $P^\prime$ has a unimodular
element. Hence $P^\prime = Q \oplus A$.  But $[P] = [P^\prime]$ in
$K_0(A)$. This proves the corollary.  $\hfill \gj$
\end{proof}

\begin{corollary}\label{prop4.4}
Let $A$ be a Noetherian ring of even dimension $n$. Let $P$ be a
projective $A$-module of rank $n$ with trivial determinant.  Suppose
that $e(P) = (J)$ in $E_0(A)$, where $J \subset A$ is an ideal of
height $n$. Then, there exists a projective $A$-module $Q$ of rank
$n$ such that $[Q] = [P]$ in $K_0(A)$ and $J$ is a surjective image
of $Q$.
\end{corollary}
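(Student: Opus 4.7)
Fix an orientation $\chi \colon A \iso \wedge^n(P)$. Since $(J) \in E_0(A)$ forces $J/J^2$ to be generated by $n$ elements, pick any local orientation $w_J$ of $J$ and form
$$y := e(P, \chi) - (J, w_J) \in E(A).$$
By hypothesis the image of $y$ in $E_0(A)$ is $e(P) - (J) = 0$. Using (\ref{200}) together with (\ref{lem3.5}) to guarantee comaximality with $J$, I can write $y = (K, w_K)$ for some ideal $K \subset A$ of height $n$ satisfying $K + J = A$.

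Next, since $(K) = 0$ in $E_0(A)$, Theorem (\ref{theo}) produces a stably free $A$-module $P_K$ of rank $n$ surjecting onto $K$. Choosing any orientation $\chi_K$ of $P_K$ gives $e(P_K, \chi_K) = (K, w_K')$ for some local orientation $w_K'$. Applying (\ref{lemma3.6}) (which uses the hypothesis that $n$ is even), I replace $P_K$ by a stably free module $P_K'$ with an orientation $\chi_K'$ so that $e(P_K', \chi_K') = (K, w_K)$. Now (\ref{lemma3.7}) (again exploiting the evenness of $n$) allows me to rewrite this representation as $(K', w_{K'})$, where $K' = (a_1, \ldots, a_n)$ has height $n$, is comaximal with $J$, and $w_{K'}$ is the global orientation induced by the $n$ generators. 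Hence
$$e(P, \chi) = (J, w_J) + (K', w_{K'}) = (J \cap K', w_{J \cap K'})$$
in $E(A)$, the last equality because $J + K' = A$.

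By (\ref{cor2.3}), there exists a surjection $\alpha \colon P \surj J \cap K'$ such that $(J \cap K', w_{J \cap K'})$ is obtained from $(\alpha, \chi)$; unwinding Remark (\ref{300}) this supplies an isomorphism $\bar\gamma \colon (A/(J \cap K'))^n \iso P/(J \cap K')P$ with $\wedge^n \bar\gamma = \bar\chi$ and $\bar\alpha \bar\gamma = w_{J \cap K'}$. Reducing modulo $K'$ gives $\delta \colon (A/K')^n \iso P/K' P$ with $\wedge^n \delta = \bar\chi$ and $(\alpha \otimes A/K') \circ \delta = w_{K'} = \beta \otimes A/K'$, where $\beta \colon A^n \surj K'$ sends $e_i \mapsto a_i$. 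All four hypotheses of the subtraction principle (\ref{manoj1}) are now in force (with its $J'$ taken to be our $K'$), and it produces a surjection $\theta \colon P \surj J$. Setting $Q := P$ then furnishes the required module, with $[Q] = [P]$ in $K_0(A)$ trivially.

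The principal hurdle is the orientation-matching verification required by (\ref{manoj1}): one must check that the isomorphism $\delta$ both has top exterior power equal to $\bar\chi$ and intertwines $\bar\alpha$ with $\bar\beta$. Via Remark (\ref{300}) this reduces to confirming that the $K'$-component of the combined orientation $w_{J \cap K'}$ arising from $\alpha$ coincides with $w_{K'}$, a compatibility that is built into the construction of $(K', w_{K'})$ via (\ref{lemma3.7}). The hypothesis that $n$ is even is essential, entering through (\ref{lemma3.6}), (\ref{lemma3.7}), and Theorem (\ref{theo}).
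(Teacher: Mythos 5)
Your argument tracks the paper's initial moves closely, but it goes wrong at exactly the ``principal hurdle'' you flag at the end, and that error is what makes your conclusion $Q = P$ unjustified (and, in general, false).

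The problem is the claim, in the step invoking Lemma (\ref{lemma3.7}), that the local orientation $w_{K'}$ produced there ``is the global orientation induced by the $n$ generators'' $a_1,\ldots,a_n$. Lemma (\ref{lemma3.7}) asserts only that there is \emph{some} local orientation $w_{K'}$ of $K' = (a_1,\ldots,a_n)$ with $(K,w_K) = (K',w_{K'})$ in $E(A)$; the orientation $w_{K'}$ there is the one induced by the surjection $\psi : P_K' \surj K'$ constructed from Bass cancellation, not the global orientation $[\ol a_1,\ldots,\ol a_n]$. Indeed, if $w_{K'}$ were the global orientation then $(K',w_{K'}) = 0$ in $E(A)$, hence $y = (K,w_K) = 0$, hence $e(P,\chi) = (J,w_J)$ for the arbitrarily chosen local orientation $w_J$ — which is absurd, since distinct local orientations of $J$ generally give distinct elements of $E(A)$. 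So when you try to feed $\alpha : P \surj J\cap K'$ and $\beta : A^n \surj K'$, $e_i\mapsto a_i$, into the subtraction principle (\ref{manoj1}), hypothesis $(iv)$ fails: the $K'$-component of the orientation coming from $(\alpha,\chi)$ is $w_{K'}$, and you would need this to equal $[\ol a_1,\ldots,\ol a_n]$, which is exactly what you do not have. Consequently you cannot conclude that $P$ itself surjects onto $J$, and the corollary does not in fact claim that (it only claims some $Q$ with $[Q]=[P]$ works). For the tangent bundle $P$ of the real $2$-sphere, $e(P) = 0$ in $E_0(A)$, so $e(P) = (J)$ whenever $J$ is a height-$2$ ideal generated by $2$ elements; if such a $J$ is $\m$-primary with $A/\m \cong \C$ then every unit mod $J$ is a square, so a surjection $P \surj J$ would force $e(P,\chi) = 0$ in $E(A)$, contradicting that $P$ has no unimodular element.

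The missing idea is to change the \emph{module}, not just the ideal. After obtaining $e(P,\chi) = (J\cap K', w_{J\cap K'})$ and a surjection $P \surj J\cap K'$, form the ``retwisted'' local orientation $\wt w_{J\cap K'}$ that keeps $w_J$ on the $J$-part but uses the global orientation $[\ol a_1,\ldots,\ol a_n]$ on the $K'$-part. Lemma (\ref{lemma3.6}) (which needs $n$ even) produces a projective module $Q$ with $[Q] = [P]$ in $K_0(A)$ and a generator $\chi_Q$ of $\wedge^n Q$ with $e(Q,\chi_Q) = (J\cap K',\wt w_{J\cap K'}) = (J,w_J) + (K',[\ol a_1,\ldots,\ol a_n]) = (J,w_J)$. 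Now Corollary (\ref{cor2.3}) gives the surjection $Q \surj J$. This is precisely the step your proof omits.
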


\begin{proof}
By (\ref{cor14}), there exists a surjection $\psi : P \surj J\cap
J_1$, where $J_1$ is a height $n$ ideal such that $J + J_1 = A$. Since
$e(P) = (J) = (J \cap J_1)$ in $E_0(A)$, it follows that $(J_1) = 0$
in $E_0(A)$.
Using $\psi$ we have  
$$ e(P,\chi) = (J,w_J) + (J_1,w_{J_1})$$ 
in $E(A)$, where $\chi$ is a generator of $\wedge^n(P)$.  Since $(J_1)
= 0$ in $E_0(A)$, it follows from (\ref{theo}) that $J_1$ is the
surjective image of a stably free $A$-module of rank $n$. Therefore, by
(\ref{lemma3.6}), there exists a stably free $A$-module $P_1$ of
rank $n$ such that $e(P_1,\chi_1) = (J_1,w_{J_1})$, where $\chi_1$ is
a generator of $\wedge^n(P_1)$ and $w_{J_1}$ is a local orientation of
$J_1$.  By (\ref{lemma3.7}), we can choose an ideal $J_2$ of height
$n$ which is generated by $n$ elements and is comaximal with $J$ such
that $(J_1,w_{J_1}) = (J_2,w_{J_2})$ for some local orientation
$w_{J_2}$ of $J_2$. Therefore
$$e(P,\chi) = (J,w_J) + (J_2,w_{J_2}) = (J\cap J_2,w_{J\cap J_2}),$$
where $w_{J\cap J_2}$ is a local orientation of $J\cap J_2$ induced
from $w_J$ and $w_{J_2}$. Therefore, by (\ref{cor2.3}), there exists
a surjection from $P$ to $J\cap J_2$. Since $J_2$ is generated by $n$
elements, we can choose a set of generators $b_1,\ldots,b_n$ of
$J_2$. Let
$$(J,w_J) + (J_2,[\ol b_1,\ldots,\ol b_n]) = (J\cap J_2,\wt w_{J\cap
	J_2}).$$ 
By (\ref{lemma3.6}), there exists a projective $A$-module $Q$ 
with $[Q] = [P]$ in $K_0(A)$ such that 
$$e(Q,w_Q) = (J\cap J_2,\wt w_{J\cap J_2}) = (J,w_J).$$ 
	Hence, by (\ref{cor2.3}), there exists a surjection from $Q$ to
$J$. This proves the corollary.  $\hfill \gj$
\end{proof}

\begin{proposition}\label{prop4.5}
Let $A$ be a Noetherian ring of even dimension $n$ and let $J \subset
A$ be an ideal of height $n$ such that $J/J^2$ is generated by $n$
elements. Let $w_J : (A/J)^n \surj J/J^2$ be a surjection. Suppose
that the element $(J,w_J)$ of $E(A)$ belongs to the kernel of the
canonical homomorphism $E(A) \surj E_0(A)$. Then, there exists a
stably free $A$-module $P_1$ of rank $n$ and a generator $\chi_1$ of
$\wedge^n(P_1)$ such that $e(P_1,\chi_1) = (J,w_J)$ in $E(A)$.
\end{proposition}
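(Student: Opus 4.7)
}

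The plan is to reduce to the setting of Theorem \ref{theo} and then adjust the local orientation using Lemma \ref{lemma3.6}. Since $(J,w_J)$ lies in the kernel of the canonical surjection $E(A)\surj E_0(A)$, the image $(J)$ of $(J,w_J)$ in $E_0(A)$ is zero. By Theorem \ref{theo} (applied in the even dimensional case), this is equivalent to saying that $J$ is a surjective image of a stably free $A$-module of rank $n$. So I would start by fixing such a stably free module $P$ of rank $n$ together with a surjection $\psi : P \surj J$.

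Next, I would choose any generator $\chi_P$ of $\wedge^n(P)$ (which exists since $P$ is stably free, hence has trivial determinant) and compute the Euler class of $(P,\chi_P)$ using $\psi$. By the construction of the Euler class (Remark \ref{300}), we obtain a local orientation $w'_J$ of $J$ such that $e(P,\chi_P) = (J,w'_J)$ in $E(A)$. At this point, we have produced a stably free module whose Euler class is supported on $J$ with the correct ideal but possibly the \emph{wrong} local orientation.

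The final step is to replace the orientation $w'_J$ by the prescribed $w_J$ while staying in the same stable class. This is precisely what Lemma \ref{lemma3.6} does: applied to $P$ and $\chi_P$ with $e(P,\chi_P) = (J,w'_J)$ and target orientation $w_J$, it produces a projective $A$-module $P_1$ of rank $n$ with $[P_1]=[P]$ in $K_0(A)$ and a generator $\chi_1$ of $\wedge^n(P_1)$ such that $e(P_1,\chi_1)=(J,w_J)$ in $E(A)$. Since $P$ is stably free, $[P]=[A^n]$ in $K_0(A)$, and therefore $[P_1]=[A^n]$ as well, i.e. $P_1$ is stably free of rank $n$. This gives the required $(P_1,\chi_1)$.

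I do not anticipate a real obstacle: the whole point is that Theorem \ref{theo}, Lemma \ref{lemma3.6}, and Lemma \ref{lem3.1} were set up precisely to make this reduction automatic. The only thing worth checking carefully is that the module $P_1$ produced by Lemma \ref{lemma3.6} inherits stable freeness from $P$, which follows immediately from $[P_1]=[P]$ in $K_0(A)$ together with $P$ being stably free; no further appeal to a cancellation theorem is needed here because we only claim stable freeness, not isomorphism with $P$.
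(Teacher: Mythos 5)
Your proposal is correct and is essentially the paper's own argument, streamlined: the paper inlines the content of Lemma~\ref{lemma3.6} by explicitly chaining Lemmas~\ref{lem3.0}, \ref{lem3.1}, and \ref{lem3.4} (and invoking evenness of $n$ at the last step), whereas you simply cite Lemma~\ref{lemma3.6}, which packages exactly that chain. Your closing remark that $[P_1]=[P]=[A^n]$ forces $P_1$ to be stably free, with no cancellation needed, is also exactly the right justification.
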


\begin{proof}
Since $(J,w_J)\in E(A)$ belongs to the kernel of the canonical
homomorphism $E(A) \ra E_0(A)$, it follows that $(J) = 0$ in
$E_0(A)$. Hence, by (\ref{theo}), there exists a surjection $\alpha
: P \surj J$, where $P$ is stably free $A$-module of rank $n$.  Let
$\chi$ be a generator of $\wedge^n(P)$. Suppose that $(J,\wt w_J)$ is
obtained from $(\alpha,\chi)$. By (\ref{lem3.0}), there exists $a
\in A$ such that $\ol a \in A/J$ is a unit and $w_J = \ol a \wt
w_J$. By (\ref{lem3.1}), there exists a projective $A$-module $P_1$
of rank $n$ with $[P_1] = [P]$ in $K_0(A)$ and a generator $\chi_1$ of
$\wedge^n P_1$ such that $e(P_1,\chi_1) = (J,\ol {a^{n-1}}\wt w_J)$
in $E(A)$. Since $n$ is even, by (\ref{lem3.4}), we have $(J,\ol
{a^{n-1}}\wt w_J) = (J,\ol a \wt w_J)$ in $E(A)$. Hence
$e(P_1,\chi_1) = (J,w_J)$ in $E(A)$.  $\hfill \gj$
\end{proof}

\begin{corollary}\label{cor4.8}
Let $A$ be a  Noetherian ring of even dimension $n$. Let
$P$ be a  projective $A$-module of rank $n$ with
trivial determinant. Let $\alpha : P \surj J$ be a surjection, where
$J\subset A$ is an ideal of height $n$. Then $J$ is a surjective image
of a stably free $A$-module of rank $n$ if and only if $[P] = [Q
\oplus A]$ in $K_0(A)$ for some projective
$A$-module $Q$ of rank $n-1$.
\end{corollary}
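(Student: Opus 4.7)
The plan is to observe that both sides of the biconditional can be rephrased as the vanishing of the weak Euler class $e(P) \in E_0(A)$, so that the corollary collapses to a direct combination of the two main results already proved in this chapter.

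First I would use the given surjection to pin down $e(P)$. By the definition of the weak Euler class (given immediately after the definition of $E_0(A)$), the class $e(P)$ is computed from any generic surjection of $P$ onto a height $n$ ideal. Since $\alpha : P \surj J$ is precisely such a surjection, we have $e(P) = (J)$ in $E_0(A)$. This identification is the only new input beyond invoking earlier results.

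Next I would rewrite each side of the corollary as a vanishing statement for $e(P)$. By Theorem \ref{theo}, the ideal $J$ is a surjective image of a stably free $A$-module of rank $n$ if and only if $(J) = 0$ in $E_0(A)$, which by the identification above is equivalent to $e(P) = 0$. On the other hand, Corollary \ref{lem4.3} states that $e(P) = 0$ in $E_0(A)$ if and only if $[P] = [Q \oplus A]$ in $K_0(A)$ for some projective $A$-module $Q$ of rank $n-1$. Chaining these two equivalences gives the corollary.

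There is no hard step left at this stage; all the technical content has already been absorbed into Theorem \ref{theo} (which uses the addition/subtraction principles of Proposition \ref{prop4.1} and the group-theoretic lemma \ref{result1}) and Corollary \ref{lem4.3} (which uses Lemmas \ref{lemma3.6} and \ref{lemma3.7} and the Euler class result \ref{cor2.4}). The only point requiring slight care is that $e(P)$ is well defined as an element of $E_0(A)$, which has been verified right after the definition of $E_0(A)$ using Lemma \ref{lem1.1} and Proposition \ref{prop1.2}; once that is in hand, the proof of the corollary is essentially a one-line composition of the two equivalences above.
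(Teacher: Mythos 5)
Your proposal is correct and follows the same route as the paper: identify $e(P)=(J)$ in $E_0(A)$ from the given surjection $\alpha$, then chain Theorem~\ref{theo} (which characterizes $(J)=0$ as $J$ being the image of a stably free module) with Corollary~\ref{lem4.3} (which characterizes $e(P)=0$ as $[P]=[Q\oplus A]$). The paper's proof is exactly this two-step composition.
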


\begin{proof}
Let $J$ be a surjective image of a stably free $A$-module of rank $n$.
Then, by (\ref{theo}), $(J) = 0$ in $E_0(A)$. Hence $e(P) = (J) = 0$
in $E_0(A)$. Applying (\ref{lem4.3}), the result follows.

The converse also follows from (\ref{theo}) and (\ref{lem4.3}). 
$\hfill \gj$
\end{proof}


\end{document}